\def\ptt{\mathrel{\ThisStyle{\raisebox{-.2ex}{$\SavedStyle\scalerel*%
    {\stackinset{c}{}{t}{.5ex}{\smash{-}}{\pitchfork}}{\pitchfork}$}}}}
\title[]{Equivalence of neighborhoods of embedded compact complex manifolds and higher codimension foliations}
\author[]{Xianghong Gong$^{\dag}$}
\address{Department of Mathematics,
 University of Wisconsin-Madison, Madison, WI 53706, U.S.A.}
 \email{gong@math.wisc.edu}
\author{Laurent Stolovitch$^{\dag\dag}$}
\address{CNRS and Laboratoire J.-A. Dieudonn\'e
U.M.R. 7351, Universit\'e C\^ote d'Azur, Parc Valrose
06108 Nice Cedex 02, France}
\email{stolo@unice.fr}
\thanks{$^{\dag}$Partially supported by a grant from the Simons
Foundation (award number: 505027). $^{\dag\dag}$Research of L. Stolovitch was supported by ANR grant "ANR-14-CE34-0002-01" for the project "Dynamics
and CR geometry".  }
 \keywords{Neighborhood of a complex manifold, normal bundle, solution of cohomological equations with bounds, holomorphic extension, holomorphic linearization, resonances, {\it small divisors} condition, holomorphic foliations}
 \subjclass[2010]{32Q57,  32L30, 32L10, 37F50, 58F36}
\newcommand{\dist}{\operatorname{dist}}
\newcommand{\DD}[2]{\frac{\partial #1}{\partial #2}}
\newtheorem{thm}{Theorem}[section]
\newtheorem{cor}[thm]{Corollary}
\newtheorem{prop}[thm]{Proposition}
\newtheorem{lemma}[thm]{Lemma}
\theoremstyle{definition}
\newtheorem{defn}[thm]{Definition}
\newtheorem{exmp}[thm]{Example}
\newtheorem{rem}[thm]{Remark}
\renewcommand{\th}[1]{\begin{thm}\label{#1}}
\newcommand{\co}[1]{\begin{cor}\label{#1}}
\newcommand{\eco}{\end{cor}}
\renewcommand{\le}[1]{\begin{lemma}\label{#1}}
\newcommand{\ele}{\end{lemma}}
\newcommand{\pr}[1]{\begin{prop}\label{#1}}
\newcommand{\epr}{\end{prop}}
\newcommand{\pf}[1]{\begin{proof}#1 \end{proof}}
\newcommand{\ga}{\begin{gather}}
\newcommand{\ega}{\end{gather}}
\newcommand{\gan}{\begin{gather*}}
\newcommand{\egan}{\end{gather*}}
\newcommand{\al}{\begin{align}}
\newcommand{\eal}{\end{align}}
\newcommand{\aln}{\begin{align*}}
\newcommand{\ealn}{\end{align*}}
\newcommand{\eq}[1]{\begin{equation}\label{#1}}
\newcommand{\eeq}{\end{equation}}
\newcommand{\ci}{~\cite}
\newcommand{\sk}{\vspace{2ex}\noindent}
\newcommand{\f}[2]{\frac{#1}{#2}}
\newcommand{\cc}{{\bf C}}
\newcommand{\nn}{{\bf N}}
\newcommand{\zz}{{\bf Z}}
\newcommand{\rr}{{\bf R}}
\newcommand{\ov}{\overline}
\newcommand{\ord}{\operatorname{ord}}
\newcommand{\codim}{\operatorname{codim}}
\newcommand{\id}{\operatorname{I}}
\newcommand{\IM}{\operatorname{Im}}
\newcommand{\rank}{\operatorname{rank}}
\newcommand{\cL}{\mathcal}
\newcommand{\I}{\operatorname{I}}
\newcommand{\T}{ T}
\newcommand{\all}{\alpha}
\newcommand{\gaa}{\gamma}
\newcommand{\Gaa}{\Gamma}
\newcommand{\del}{\delta}
\newcommand{\Del}{\Delta}
\newcommand{\var}{\varphi}
\newcommand{\e}{\epsilon}
\newcommand{\om}{\omega}
\newcommand{\Om}{\Omega}
\newcommand{\la}{\lambda}
\newcommand{\ta}{\tau}
\newcommand{\pd}{\partial}
\newcommand{\re}[1]{(\ref{#1})}
\newcommand{\rea}[1]{$(\ref{#1})$}
\newcommand{\rl}[1]{Lemma~\ref{#1}}
\newcommand{\nrc}[1]{Corollary~\ref{#1}}
\newcommand{\rp}[1]{Proposition~\ref{#1}}
\newcommand{\rt}[1]{Theorem~\ref{#1}}
\newcommand{\rd}[1]{Definition~\ref{#1}}
\newcommand{\rrem}[1]{Remark~\ref{#1}}
\newcommand{\rla}[1]{Lemma~$\ref{#1}$}
\newcommand{\rpa}[1]{Proposition~$\ref{#1}$}
\newcommand{\rta}[1]{Theorem~$\ref{#1}$}
\newcounter{pp}
\newcommand{\bpp}{\begin{list}{$\hspace{-1em}(\alph{pp})$}{\usecounter{pp}}}
\newcommand{\epp}{\end{list}}
\newcounter{ppp}
\newcommand{\bppp}{\begin{list}{$\hspace{-1em}(\roman{ppp})$}{\usecounter{ppp}}}
\newcommand{\eppp}{\end{list}}
\def\beq{\begin{equation}}
\def\eeq{\end{equation}}
\newcommand{\dindice}[2]{{\stackrel{\scriptstyle #1}{\scriptstyle #2}}}
\newcommand{\scalxx}[2]{\left \langle #1, #2 \right   \rangle}
\newcommand{\mf}{\text{mf}}
\begin{document}
\begin{abstract}
We consider an embedded $n$-dimensional compact complex manifold
in $n+d$ dimensional complex manifolds.
We are interested in the holomorphic classification of neighborhoods as part of Grauert's formal principle program.
We will give conditions ensuring that a neighborhood of $C_n$ in $M_{n+d}$ is biholomorphic to a neighborhood of the zero section of its normal bundle. This extends Arnold's result about neighborhoods of a complex torus
in a surface.
We also prove the existence of a holomorphic foliation in $M_{n+d}$ having $C_n$ as a compact leaf, extending Ueda's theory to the high codimension case.
Both problems appear as a kind linearization problem involving {\it small divisors condition} arising from solutions to their cohomological equations.
\end{abstract}

\date{\today}
 \maketitle

\tableofcontents


\setcounter{thm}{0}\setcounter{equation}{0}
\setcounter{section}{0}
\section{Introduction}

We are interested in the classification of the germs of neighborhood of an embedded compact complex manifold $C$ in a complex manifold $M$. Here two germs $(M,C)$ and $(\tilde M,C)$ are holomorphically equivalent if there is a biholomorphic mapping $F$ fixing $C$ pointwise and  sending a neighborhood $V$ of $C$ in $M$ into a neighborhood $\tilde V$ of $C$ in $\tilde M$.  These considerations can be useful 
to
 extend
  holomorphic objects such as cohomology classes of holomorphic sections of bundles over $C$ or functions on $C$ to a neighborhood of $C$ in $M$. Indeed, it might be that such an extension problem is much easy to  %
  solve on an equivalent neighborhood. We are also interested in the existence of a non-singular holomorphic foliation of the germ of neighborhood of $C$ in a complex manifold having $C$ as a compact leaf.  We refer to it as a ``horizontal foliation".

A neighborhood $V$ of an embedded complex  manifold  $C_n$ in    $M_{n+d}$ has local holomorphic charts $(h_j,v_j)=\Phi_j$ mapping $V_j$ onto   $\hat V_j$ in $\cc^{n+d}$ with $n=\dim C$. Here $\cup V_j$ is a neighborhood of $C$ and  $U_j:=V_j\cap C$ is defined by $v_j=0$. The above-mentioned classification of the germs of neighborhoods of $C$ is then the classification of transition functions $\Phi_{kj}:=\Phi_k\Phi_{j}^{-1}$ under
holomorphic conjugacy $F_k^{-1}\Phi_{kj}F_j$.
To such an embedding, one can associate the normal bundle $N_C(M)$
of $C$ in $M$, which has
the transition matrices $g_{kj}(p)$, $p\in U_k\cap U_j$. To this embedding one can associate another natural embedding, namely the embedding of $C$ as the zero section of $N_C(M)$.
Under a mild assumption, this last embedding $(N_C(M), C)$ naturally
serves as
a first order approximation of $(M,C)$.
Let $\var_j=\Phi_j|_{U_j}$ and let $\var_{kj}=\var_k\var_j^{-1}$  be the transition functions of $C$.
To have a neighborhood of $C$ in $M$ equivalent to a neighborhood of the zero
section in $N_C(M)$ is equivalent to 
seeking
 $F_j$ such that $\hat\Phi_{kj}=F_k^{-1}\Phi_{kj}F_j$ are of the form $N_{kj}(h_j,v_j)=(\var_{kj}(h_j),t_{kj}(h_j)v_j)$
with $t_{kj}(h_j)=g_{kj}$, the latter being  %
regarded as the transition functions of a neighborhood of the zero
section of $N_C(M)$. 
We call this process a ``full linearization" of the neighborhood.
The   
above-mentioned
 ``horizontal foliation" will be obtained as a consequence of a ``vertical linearization" of the neighborhood which amounts to 
 seeking  $F_j$ such that $\hat\Phi_{kj}=(\var_{kj}(h_j)+\hat\phi_{kj}^h(h_j,v_j), t_{kj}(h_j)v_j)$. 

Without even considering holomorphic equivalence problem, it is known that there are formal obstructions to
linearizing \cite{NS60,Gr66} or to
 linearizing
vertically~\ci{Ue82} 
a neighborhood; see section~\ref{sec:formal}.  Part of the Grauert formal principle \cite{Gr62, HR64, Ko88, CMS03} is to seek geometry conditions that ensure a holomorphic linearization when the formal obstructions are absent.
In this paper, we will obtain linearizations of a neighborhood of an embedded compact complex manifold $C_n$ at the  absence of formal obstructions under {\it small divisor conditions} in the form of bounds of solutions of cohomology equations involving all symmetric powers of $N_C^*$, the dual of the normal bundle $N_C$ of $C_n$ in $M_{n+d}$.  Because of the very nonlinear nature of the problem, we need to work with a {\it family of nested domains}
on which we
solve and eventually bound the solutions of $1$-cohomological equations. Indeed, we are naturally led to consider shrinking of the domains as we need to get estimates of derivatives of sections (by Cauchy estimates for instance). To be more precise, assume that a $1$-cocycle $f$ with value in the sheaf of sections of holomorphic bundle (involving symmetric power $S^mN_C^*$ for some $m\geq 2$) on $C$   
vanishes
in the $1$st cohomology group over a covering $\cL W$. Then there is
a $0$-cochain $w$ over $\cL W$ such that $\del w=f$. Nevertheless, we need to prove the existence of
a (possibly different) solution $u$ satisfying the linear equation $\del u=f$ and a ``linear" estimate of the form
$\|u\|_{
\cL W}\leq K \|f\|_{\cL W}$ (the norm is either $L^2$ or the sup-norm).
 Because of the nonlinear nature of our problem, we need to solve the linear equation
 iteratively and estimate
solutions
 of the form $\del u_m=
 F_m(f_2,\dots, f_m,u_2,\ldots, u_{m-1})$, $m\geq 2$. Here $F_m(f_2,\dots, f_{m}, u_2,\ldots, u_{m-1})$ is a nonlinear function 
 and vanishes
 in a first cohomology group.  Therefore the bound $K$, depending on $m$, will compound,  which leads to a problem on non-linear estimates.   Here come some of the main issues : we need that, at the limit, the sequence of nested domains, over which the solutions are estimated iteratively,  remains to cover the manifold. 
 And we need to control the growth of the bound $K$ with respect to $m$, that gives rise to the so-called {\it small divisors condition}.
  Therefore,  the existence of {any} bound $K$ for  linear
  solutions $u$ {\it without} shrinking the covering $\cL W$ is a basic
  question. The latter was
  solved affirmatively by Kodaira-Spencer~\cite[eq.~(9),  p.~499]{KS59} for the case of line bundles for a general covering.
For higher rank vector bundles,
  we provide a positive solution in
 the following result~:
%
\begin{prop}
	\label{KnDn}Let $C$ be a compact complex manifold.
	There exists a family of   coverings $\cL U^r=\{U_j^r\}$, $r_*\leq r  
 < r^*$,  of $C$ such that  for any holomorphic vector bundle $E$ over $C$, and
	each $f\in C^1(\cL U
	^{r'}, E)$, the space of $1$-cochains on $\cL U^{r'}$ of holomorphic sections with values in $E$, 
satisfying $
	f=\del u_0$ for some $u_0\in  C^0(\cL U^{r'},E)$, there	exist $u\in  C^0(\cL U^{r'}, E)$   
 and $v\in C^0(\cL U^{r''},E)
	$ such that $\delta u=f$ and $\del v=f$,  and
	\ga
	\label{bKn}
	|u|_{r'}\leq K(E)|f|_{r'},\\
	|v|_{r''}\leq \frac{D(E)}{(r'-r'')^\tau}|f|_{r'}.
	\label{bDn}
\end{gather}
Here   
$r',r''$ are any numbers satisfying
$r_*<r''<r'\leq \tilde r<r^*$ and $r'-r''\leq r^*- %
\tilde r$, and   
$\tau, K(E), D(E)$ are independent of $ r',r''$.
\end{prop}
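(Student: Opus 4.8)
The plan is to exhibit a single family $\{\cL U^r\}$ that is a Leray covering for \emph{every} holomorphic vector bundle, and then to read \eqref{bDn} off the classical bumping argument on the compact manifold $C$ and \eqref{bKn} off the finite dimensionality of $H^1(C,E)$. For the construction, fix a finite covering of $C$ by holomorphic charts $(\Omega_j,z^{(j)})$ with $z^{(j)}(\Omega_j)$ a polydisc, and for $r_*\le r<r^*$ let $U_j^r$ be the preimage of the concentrically shrunk polydisc of ``radius'' $r$, chosen so that $\{U_j^{r_*}\}$ already covers $C$; then $U_j^{r''}\Subset U_j^{r'}$ for $r''<r'$, and $\{U_j^r\}$ covers $C$ for all $r\ge r_*$. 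Since a polydisc is Stein, a biholomorphic image of one is Stein, and a finite intersection of Stein domains is Stein, every nonempty intersection $U_{j_0}^r\cap\cdots\cap U_{j_p}^r$ is Stein; hence $\cL U^r$ is Leray for each holomorphic $E$, so $H^1(\cL U^r,E)\cong H^1(C,E)$ and, $C$ being compact, both groups are finite dimensional.

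For \eqref{bDn} I would run the standard \v{C}ech--Dolbeault/bumping argument, which automatically carries a loss measured by a power of $r'-r''$. Pick a smooth partition of unity $\{\chi_\ell\}$ with $\supp\chi_\ell\subset U_\ell^{r'}$, $\sum_\ell\chi_\ell\equiv1$ on $C$, and $\|\chi_\ell\|_{C^N}\le\mathrm{const}_N\,(r'-r'')^{-N}$ (possible because $\{U_\ell^{r''}\}$ already covers $C$, so each $\chi_\ell$ may be taken to vanish off $U_\ell^{r'}$ yet decay only across the collar $U_\ell^{r'}\setminus\ov{U_\ell^{r''}}$ of width $\gtrsim r'-r''$). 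Given $f\in C^1(\cL U^{r'},E)$ with $f=\del u_0$, set $\sigma_j:=\sum_\ell\chi_\ell f_{j\ell}$ on $U_j^{r'}$; then $\sigma_j-\sigma_k=f_{jk}$, so $g:=\db\sigma_j=\sum_\ell(\db\chi_\ell)f_{j\ell}$ is a well-defined global $\db$-closed $E$-valued $(0,1)$-form on $C$ with $\|g\|_{C^N}\le\mathrm{const}\,(r'-r'')^{-(N+1)}|f|_{r'}$. Its Dolbeault class equals the \v{C}ech class of $f$, which vanishes; solving $\db h=g$ on $C$ by $h=\db^*Gg$ ($G$ the Green operator of $E$) and then passing from a Sobolev bound to a sup-norm bound --- which costs $N\approx n$ derivatives on the $2n$-real-dimensional $C$ and fixes the exponent $\tau=\tau(n)$ --- gives $\|h\|_{C^0(C)}\le D_0(E)\,(r'-r'')^{-\tau}|f|_{r'}$. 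Then $v_j:=\sigma_j-h$ is holomorphic on $U_j^{r'}$, $\del v=f$, and $|v|_{r''}\le|v|_{r'}\le|f|_{r'}+\|h\|_{C^0(C)}$, which is \eqref{bDn}; the hypothesis $r'-r''\le r^*-\tilde r$ serves only to keep $\{U_\ell^{r''}\}$ a covering and to leave room for the cutoffs.

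For \eqref{bKn}, endow $E$ with a Hermitian metric and let $\cb{q}_{r'}(E)$ be the Banach space of holomorphic $q$-cochains on $\cL U^{r'}$ with finite $|\cdot|_{r'}$-norm. The coboundary $\del\colon\cb{0}_{r'}(E)\to\cb{1}_{r'}(E)$ is bounded with kernel the finite-dimensional space $H^0(C,E)$; its image is $\cb{1}_{r'}(E)\cap B^1(\cL U^{r'},E)$, which is closed in $\cb{1}_{r'}(E)$ because, $\cL U^{r'}$ being Leray, $B^1(\cL U^{r'},E)$ is closed in the Fr\'echet space of all holomorphic $1$-cochains, and which is \emph{attained} because, by the construction in the proof of \eqref{bDn}, any bounded coboundary is the coboundary of a bounded $0$-cochain on the same covering. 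By the open mapping theorem $\del$ induces a topological isomorphism $\cb{0}_{r'}(E)/H^0(C,E)\xrightarrow{\ \sim\ }\cb{1}_{r'}(E)\cap B^1$, so the least-$|\cdot|_{r'}$-norm solution $u$ of $\del u=f$ obeys $|u|_{r'}\le K(E,r')|f|_{r'}$. \textbf{The main obstacle} is the uniformity in $r'$: the open mapping theorem only provides a constant for each fixed covering. I would obtain it by exploiting the explicit polydisc structure of the family --- on a polydisc of polyradius $\lesssim r^*$ the elementary Cauchy- and $\db$-estimates behind the \v{C}ech--Dolbeault comparison have constants bounded uniformly in $r$ --- together with a normal-families argument: were $K(E,r'_n)\to\infty$ for some $r'_n\in[r_*,\tilde r]$, one normalizes the offending cochains, restricts them to the fixed Leray covering $\cL U^{r_*}$, extracts a locally uniform limit, and contradicts the finiteness of the right-inverse norm at the limiting parameter. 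The dependence of $K(E)$, $D(E)$, $\tau$ on $E$ is then traced through the harmonic representatives of $H^1(C,E)$, a basis of $H^0(C,E)$, the Green operator of $E$, and the fixed geometry of the charts.
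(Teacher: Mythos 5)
Your treatment of \rea{bDn} is correct and is a genuinely different route from the paper's: you solve $\db$ globally on the compact manifold with the Green operator of $E$ after a cutoff construction, whereas the paper runs Donin's smoothing decomposition (local $L^2$ solving of $\db$ on the polydisc charts, \rt{donin}) through a quantified Grauert--Remmert finiteness iteration with a monotone orthogonal basis (\rt{DGR}). Your route is shorter for a vector bundle, and, as you observe, it even produces a solution $v_j=\sigma_j-h$ that is holomorphic and bounded on the \emph{unshrunk} sets $U_j^{r'}$ --- but with a constant that degenerates like $(r'-r'')^{-\tau}$, hence like $(r'-r_*)^{-\tau}$ as $r'\downarrow r_*$. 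So far this only yields a bound $K(E,r')$ for each fixed $r'$ (your open-mapping step, whose surjectivity and closedness claims are fine granted your \rea{bDn} construction and the finite dimensionality of $H^1(C,E)$).

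The genuine gap is exactly at the point you flag as ``the main obstacle'': uniformity of $K(E,r')$ as $r'\downarrow r_*$. Your normal-families scheme does not close. If $K(E,r_n')\to\infty$ with $r_n'\downarrow r_*$, normalizing the minimal solutions $\hat u_n$ ($|\hat u_n|_{r_n'}=1$, $\delta\hat u_n\to0$) and extracting a locally uniform limit on the fixed covering $\cL U^{r_*}$ only controls $\hat u_n$ on compact subsets of the $U_j^{r_*}$; the unit sup-norm can concentrate near $\pd U_j^{r_n'}$, where locally uniform convergence says nothing, and ``finiteness of the right-inverse norm at the limiting parameter'' cannot be contradicted because a bound for solutions on the \emph{smaller} covering does not imply a bound on the larger one --- the monotonicity goes the wrong way. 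The missing ingredient is precisely the paper's \rl{SD-p}: with \emph{nested} coverings (every connected component of $U_j^{r'}\cap U_k^{r_*}$ meets $U_j^{r_*}\cap U_k^{r_*}$), a bounded solution $v$ on $\cL U^{r_*}$ extends to a solution $u$ on $\cL U^{r'}$ --- one writes $u_j=v_k-f_{kj}$ on $U_j^{r'}\cap U_k^{r_*}$, the identity propagating by the uniqueness theorem thanks to nestedness, and $U_j^{r'}$ is covered by the sets $U_j^{r'}\cap U_k^{r_*}$ --- at the price of the transition-matrix norms only, which is what makes $K(E)$ independent of $r',r''$. This in turn forces the covering to be built so that the nestedness property holds: the paper constructs generic analytic polyhedra in general position via transversality (\rp{regcover}, \rp{component}); your concentric polydisc shrinkings are neither shown to have this property nor is it used anywhere in your argument. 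Without such an extension-by-uniqueness mechanism (or a substitute giving sup-norm control on $U_j^{r'}$ up to the boundary from data on $\cL U^{r_*}$), the uniform bound \rea{bKn} is not established by the proposal.
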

Here, we have used the sup-norm (or $L^2$-norm) of cochains of holomorphic sections of bundles (see section~\ref{secA.2} for specific notations).
We do not know if $K(E)$ and $D(E)$ are comparable when they are applied to the symmetric powers of $N_C^*$ except when $N_C$ is unitary.
H\"ormander~\ci{Ho65,ohsawa-book} obtained
solutions with bounds for cohomology groups with respect to the $\bar\partial$ operator acting on the sheaf of $(p,q)$-forms with $L^2$ coefficients on $\cc^n$. 

The  %
estimate \re{bDn} was proved by Donin~\ci{Do71} for a special family of
coverings
by the $L^2$ theory. He also raised the question if 
estimate \re{bKn} exists,
i.e.
the basic question mentioned above.
\rp{KnDn} gives us
 a more flexible kind of results and ultimately an estimate that holds without any shrinking for  higher rank vector bundles via the above mentioned nested coverings.
 We also use the $L^2$-theory. We first obtain \re{bDn} by \rt{DGR}. Then \re{bKn} is obtained by \rl{SD-p}. The constant $K(E)$ is defined for the kind of bundles we need in \rd{defK}. This is summarized in \rt{estim-cohom-summary}.
The main results of this paper are based on the existence of {\it  nested finite coverings} proved in subsection \ref{nested}.

 \rp{KnDn} will be a useful tool in this paper. We now formulate our main results.
We say that $T_CM=TM|_C$ {\it splits} if $T_CM=TC\oplus N_C$ holomorphically.
For instance, $T_CE$ splits for any holomorphic vector bundle $E$ over $C$. Here and in the sequel, we  identify $C$ with the zero section of $E$.
We say that $N_C$ is {\it flat} if the transition matrices of $N_C$ are locally constant. We say that $N_C$ is {\it unitary} if its transition matrices are unitary. Note that the maximum principle  implies that a unitary $N_C$ is flat.
We have the following ``vertical linearization" result:
\begin{thm}\label{vlin-int}
Let  $C_n$ be a compact submanifold of $M_{n+d}$  with splitting $T_CM$ and unitary $N_C$.
 Let $\eta_0=1$ and
\beq\label{def-eta-foliation-int}\nonumber
\eta_m
:=K(N_C\otimes S^m(N_C^*))\max_{m_1+\cdots +m_p+s=m} \eta_{m_1}\cdots \eta_{m_p},
\eeq
where  the maximum is taken in $1\leq m_i<m$ for all $i$ and $s\in\Bbb N$.
Assume that there are positive constants $L$, $L_0$ such that
\eq{eta-cov}\nonumber
\eta_m\leq L_0L^m, \quad
m=1,2\dots.
\eeq
Assume that  $H^0(C,N_C\otimes S^\ell(N_C^*))=0$ for
all $\ell>1$ . Assume that either $ H^1(\cL U,N_C\otimes S^\ell (N_C^*))=0$ for
all $\ell>1$ or a neighborhood of $C$ is formally vertically linearizable 
by a formal holomorphic mapping that is tangent to the identity 
and preserves the splitting of $T_CM$.
Then the  embedding is actually holomorphically vertically linearizable.
\end{thm}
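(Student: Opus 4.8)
The plan is to construct the vertical linearization order by order in the normal variable as a formal power series, and then to prove that this formal series converges by a majorant estimate whose ``small divisors'' are exactly the constants $K(N_C\otimes S^m(N_C^*))$ provided by \rp{KnDn}.

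\textbf{Setting up the recursion.} Using the splitting $T_CM=TC\oplus N_C$, choose an adapted atlas $(h_j,v_j)$ in which the transition maps have the form $\Phi_{kj}(h_j,v_j)=(\varphi_{kj}(h_j)+R^h_{kj}(h_j,v_j),\,g_{kj}(h_j)v_j+R^v_{kj}(h_j,v_j))$ with $R^h_{kj},R^v_{kj}=O(|v_j|^2)$ and $\{g_{kj}\}$ the unitary cocycle of $N_C$. Look for $F_j(h_j,v_j)=(h_j,\,v_j+\sum_{m\ge2}u_{m,j}(h_j)v_j^{\otimes m})$, tangent to the identity, preserving the splitting and keeping the base coordinate fixed, such that $\hat\Phi_{kj}=F_k^{-1}\Phi_{kj}F_j$ has $v$-component exactly $g_{kj}(h_j)v_j$. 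Writing $\Phi_{kj}F_j=F_k\hat\Phi_{kj}$, expanding both sides in powers of $v_j$ and equating the coefficients of $v_j^{\otimes m}$, one gets, for each $m\ge2$, a $1$-cohomological equation
\[
\delta u_m=f_m,\qquad f_m\in C^1(\mathcal U,\,N_C\otimes S^m(N_C^*)),
\]
where $\delta$ is the \v{C}ech coboundary for this bundle and $f_m=f_m(R^h_{\bullet},R^v_{\bullet};u_2,\dots,u_{m-1})$ is a universal polynomial in the (known) remainders and the already-constructed coefficients: it is produced by the expansion of the composition, and by that of $F_k^{-1}$, which is again a tangent-to-the-identity power series with coefficients that are universal polynomials in $u_2,\dots,u_{m-1}$ and the unitary data $S^s(g_{kj})$.

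\textbf{Formal solvability.} The cocycle condition for $\Phi$, together with the fact that $u_2,\dots,u_{m-1}$ already solve their equations, forces $f_m$ to be a $1$-cocycle, so $[f_m]\in H^1(\mathcal U,N_C\otimes S^m(N_C^*))$. If $H^1(\mathcal U,N_C\otimes S^\ell(N_C^*))=0$ for all $\ell>1$ this class vanishes; and if instead one only assumes a formal vertical linearization (tangent to the identity, preserving the splitting) to exist, then, since $H^0(C,N_C\otimes S^\ell(N_C^*))=0$ for $\ell>1$ makes such an expansion unique, that formal map must coincide term by term with the one under construction, so again $[f_m]=0$. In either case \rp{KnDn}, applied to $E=N_C\otimes S^m(N_C^*)$, yields a solution $u_m\in C^0(\mathcal U^{r'},E)$ of $\delta u_m=f_m$ with the non-shrinking bound
\[
|u_m|_{r'}\le K\bigl(N_C\otimes S^m(N_C^*)\bigr)\,|f_m|_{r'},
\]
and the vanishing $H^0(C,E)=0$ makes this $u_m$ unique, so the recursion is unambiguous.

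\textbf{Estimates and convergence.} After rescaling $v_j\mapsto\rho v_j$ so that the fixed remainders $R^h_{kj},R^v_{kj}$ are small, prove by induction on $m$ that $|u_m|_{r}\le\eta_m$ on the nested covering $\mathcal U^r$. Indeed $f_m$ is a finite sum of terms, each a product of finitely many factors chosen among: the coefficients $u_{m_i}$ with $1\le m_i<m$, their base derivatives $\partial_h u_{m_i}$ (controlled by Cauchy estimates along the family $\mathcal U^r$, at the cost of a factor that is only polynomial in $m$ and is absorbed at the end), the symmetric powers $S^s(g_{kj})$, and a bounded contribution of the remainders; since $N_C$ is unitary every $S^s(g_{kj})$ has operator norm $1$, so these cost nothing and merely account for the free index $s\in\mathbb N$. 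Hence $|f_m|_r\lesssim\max_{m_1+\cdots+m_p+s=m}\eta_{m_1}\cdots\eta_{m_p}$, and then $|u_m|_r\le K(N_C\otimes S^m(N_C^*))\,|f_m|_r\lesssim\eta_m$. With the small divisors hypothesis $\eta_m\le L_0L^m$, the majorant series $\sum_{m\ge2}|u_m|_r\,|v_j|^m\le L_0\sum_{m\ge2}(L|v_j|)^m$ converges for $|v_j|<1/L$, uniformly on a neighborhood of $C$; thus every $F_j$ is holomorphic there, and being tangent to the identity it is a biholomorphism near $C$. By construction the transformed $\hat\Phi_{kj}$ has $v$-component $g_{kj}(h_j)v_j$, which is the asserted vertical linearization.

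\textbf{Main obstacle.} The crux is the estimate in the third step: controlling the compounding, through a nonlinear recursion, of the cohomological-solution norms $K(N_C\otimes S^m(N_C^*))$ and of the shrinkings forced by the Cauchy estimates for the base derivatives $\partial_h u_{m_i}$. Two structural facts make it close: \rp{KnDn} lets us solve each cohomological equation on the same covering $\mathcal U^{r'}$, so the cohomology step never shrinks the domain (had we only the estimate \re{bDn}, its factor $(r'-r'')^{-\tau}$ would itself compound, and one would need the full nested-domain machinery just to keep the coverings from collapsing); and the unitarity of $N_C$ forces every transition-matrix contribution $S^s(g_{kj})$ to have norm one, so it does not enter the growth --- this is exactly what makes the recursion for $\eta_m$ polynomially closed and turns the small divisors condition into the clean bound $\eta_m\le L_0L^m$, which then gives a genuine polydisc of convergence in the normal direction. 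One must also verify carefully that the expansion of $F_k^{-1}$ and of the base composition with $\varphi_{kj}$ generates only terms of the combinatorial shape recorded in the definition of $\eta_m$, and use $H^0(C,N_C\otimes S^m(N_C^*))=0$ at every step to keep the $u_m$ canonical; without the latter neither the recursion nor the estimates would close.
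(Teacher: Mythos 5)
Your overall strategy is the paper's strategy (solve the cohomological equation degree by degree, use the non-shrinking bound $K(N_C\otimes S^m(N_C^*))$ from \rp{KnDn}, exploit unitarity of $t_{kj}$, and use $H^0=0$ to make the solutions canonical so that the formal hypothesis transfers), but the quantitative core of your third step has a genuine gap: the induction $|u_m|_r\le\eta_m$ does not close. The right-hand side $f_m$ is a sum over all decompositions $m=m_1+\cdots+m_p+s$, and the number of such terms, together with the geometric growth of the coefficients $\phi^\bullet_{kj,Q}$ (bounded only by $R^{|Q|}$), produces an extra factor that grows geometrically in $m$; so from $|u_{m_i}|\le\eta_{m_i}$ you can only conclude $|f_m|\le C^m\max\prod\eta_{m_i}$, not $|f_m|\lesssim\max\prod\eta_{m_i}$ with an $m$-independent constant, and a single initial rescaling of $v_j$ does not remove this because the factor is regenerated at every order by the composition. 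This is exactly why the paper runs Ueda's majorant scheme: the induction hypothesis is $|[f_j]^k|\le\eta_k A_k$ where the $A_k$ are the Taylor coefficients of an auxiliary series $A(t)$ defined by an implicit analytic functional equation (see \re{exp-g} and \re{estim-f-induc}); the $\eta_m$ carry only the compounded small divisors, while $A(t)$ absorbs all the combinatorial and geometric growth, and analyticity of $A$ gives $A_m\le D^m$, whence $|[f^v]^m|\le L_0(LD)^m$ and convergence on a smaller polydisc. Your direct summation $\sum\eta_m|v|^m$ skips this device, and as written the estimate it rests on is false.

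A second, related gap is your treatment of the terms coming from $h''_{kj}$, i.e.\ $f_k^v(\hat\Phi^h_{kj},t_{kj}v_j)-f_k^v(\var_{kj},t_{kj}v_j)$: this forces you to estimate base derivatives $\partial_h^P f_k$ of \emph{all} orders $P$, and you assert the Cauchy-estimate cost is ``polynomial in $m$'' along the nested family. If you shrink the covering at each order $m$ the factors $(r_m-r_{m+1})^{-|P|}$ compound and the loss is far worse than polynomial; if you shrink only once you lose the ability to apply the solution bound on the original covering. The paper resolves this by shrinking once to a relatively compact subcovering $\cL U^*$ with a fixed gap $M$ (so the derivative losses sum to a geometric series $\sum(g(t)/M)^{|P|}$ inside the majorant), and then recovering the estimate on the full intersections $\hat U_{kj}$ through the cocycle identity \re{3sums} evaluated at points of $U_k\cap U_j\cap U_i^*$ together with the unitary invariance of the modified Fischer norm under both $v\mapsto t_{kj}v$ and multiplication by $t_{ki}$ — this is the step for which the Fischer norm is introduced, and only after it can \rl{SD-p} be applied without shrinking. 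Your sketch contains neither this return-to-the-original-covering mechanism nor a substitute for it, so the estimate that feeds $K(N_C\otimes S^m(N_C^*))$ is not actually established on the covering where \rp{KnDn} is invoked.
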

When $C$ is  %
a   compact holomorphic curve embedded in a complex surface $M$ with a unitary normal bundle $N_C$, the above vertical linearization is one of main results in Ueda~\cite{Ue82} where $H^0(C,N_C\otimes S^\ell(N_C^*))=0$  for all $\ell>1$ follows from his small-divisor condition. This has been generalized by T. Koike in higher codimension  %
case under a strong assumption that $N_C$ is a direct sum of unitary line bundles~\cite{Ko15}.
The Ueda theory for codimension-one foliations has also been extended by Claudon-Loray-Pereira-Touzet~\cite{CLPT18} and Loray-Thom-Touzet~\ci{LTT17}.
   %
We remark that
\rt{vlin-int} via the flatness of $N_C$
 ensures the  existence of a ``horizontal" foliation~:
\begin{cor}\label{exist-of-foli}
Under assumptions of \rta{vlin-int}, there exists  a neighborhood of $C_n$ in $M_{n+d}$ that admits an
$n$-dimensional  smooth holomorphic
foliation having $C_n$ as a leaf.
\end{cor}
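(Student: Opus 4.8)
The plan is to deduce the horizontal foliation directly from the vertical linearization furnished by \rt{vlin-int}, using the hypothesis that $N_C$ is unitary and hence flat. After vertical linearization, a neighborhood of $C$ in $M$ has transition functions of the form $\hat\Phi_{kj}(h_j,v_j)=(\var_{kj}(h_j)+\hat\phi_{kj}^h(h_j,v_j),\,t_{kj}(h_j)v_j)$ with $t_{kj}=g_{kj}$ the transition matrices of $N_C$; the key point is that the fiber component $v_j\mapsto t_{kj}(h_j)v_j$ is \emph{linear} in $v_j$. I would first record this normal form, so that a neighborhood of $C$ is biholomorphic to a neighborhood of the zero section in a manifold $\tilde M$ glued from polydiscs $U_j^{\mathrm{loc}}\times\Delta^d$ by these $\hat\Phi_{kj}$.

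Next I would exhibit the foliation on $\tilde M$ locally and check that the local leaves match under the transition maps. On each chart $U_j^{\mathrm{loc}}\times\Delta^d$, take the foliation whose leaves are the horizontal slices $\{v_j=\text{const}\}$; the tangent distribution is $\ker(dv_j)$. Under $\hat\Phi_{kj}$ we have $v_k=t_{kj}(h_j)v_j$, so $dv_k = t_{kj}(h_j)\,dv_j + v_j\,d t_{kj}(h_j)$. Here the flatness of $N_C$ is decisive: since $N_C$ is unitary, the maximum principle forces the transition matrices $g_{kj}=t_{kj}$ to be \emph{locally constant}, hence $d t_{kj}(h_j)=0$, and therefore $dv_k = t_{kj}(h_j)\,dv_j$. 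Consequently $\ker(dv_k)=\ker(dv_j)$ on overlaps, so the local horizontal distributions glue to a global holomorphic distribution $\mathcal F$ of rank $n$ on a neighborhood of $C$ in $\tilde M$. Integrability is immediate because each local model $\ker(dv_j)$ is integrable (it is a product foliation), and integrability is a local condition preserved under the biholomorphisms $\hat\Phi_{kj}$; equivalently $\mathcal F$ is locally defined by the exact holomorphic $1$-forms $dv_{j,1},\dots,dv_{j,d}$. Finally, $C=\{v_j=0\}$ is one of the slices in each chart, so $C$ is a (compact, smooth) leaf of $\mathcal F$. Transporting $\mathcal F$ back through the biholomorphism of \rt{vlin-int} gives the desired $n$-dimensional smooth holomorphic foliation of a neighborhood of $C_n$ in $M_{n+d}$ having $C_n$ as a leaf.

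The only genuinely substantive input is \rt{vlin-int} itself, which is already established; the remaining work is the bookkeeping above. The one point deserving care is the passage from flatness to $d t_{kj}=0$: ``flat'' here means the transition matrices are locally constant, which is exactly what kills the second term in the computation of $dv_k$, and one must confirm (via the maximum principle, as noted in the excerpt) that unitary $\Rightarrow$ flat so that this applies under the hypotheses of \rt{vlin-int}. I expect no obstacle beyond verifying that the cocycle $\hat\phi_{kj}^h$, which affects only the $h$-components, does not interfere with the relation $dv_k=t_{kj}\,dv_j$ — and indeed it does not, since $\partial \hat\phi_{kj}^h/\partial v_j$ contributes only to $dh_k$, not to $dv_k$. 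Thus $\ker(dv_j)$ glues cleanly regardless of the horizontal part, and the corollary follows.
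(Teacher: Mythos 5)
Your proposal is correct and follows essentially the same route as the paper: after applying the vertical linearization of Theorem~\ref{vlin-int}, the paper also defines the foliation chartwise by $dv_j=0$, the point being that the vertical transition $v_k=t_{kj}v_j$ with locally constant (unitary, hence flat) $t_{kj}$ makes the horizontal slices glue. Your extra checks (that $dt_{kj}=0$ kills the second term in $dv_k$ and that $\hat\phi^h_{kj}$ only affects the $h$-component) are exactly the bookkeeping the paper leaves implicit.
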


The following results can been understood in the context of  %
the Grauert formal principle for rigidity: If $(M, C)$ is formal equivalent to $(N_C,C)$, then they are holomorphically equivalent under suitable assumptions. We first consider the unitary case.
\begin{thm}\label{lin-unit}
	Let  $C_n$ be a compact submanifold of $M_{n+d}$
	$N_C$ is unitary. Let $\eta_0=1$ and
	\beq\label{def-eta-foliation-int}\nonumber
	\eta_m
	:=\max\left( K(N_C\otimes S^m(N_C^*)),K(T_C\otimes S^m(N_C^*)) \right)\max_{m_1+\cdots +m_p+s=m} \eta_{m_1}\cdots \eta_{m_p},
	\eeq
	where  the maximum is taken in $1\leq m_i<m$ for all $i$ and $s\in\Bbb N$.
	Assume that there are positive constants $L$, $L_0$ such that
	\eq{eta-cov}
	\eta_m\leq L_0L^m, \quad
	m=1,2\dots.
	\eeq If $T_CM$ splits and  $ H^1(\cL U,T_CM\otimes S^\ell (N_C^*))=0$ for
	all $\ell>1$ or more generally if
	a neighborhood of $C$ in $M$ is 
	linearizable 
	by a formal holomorphic mapping which is tangent to the identity
	and preserves the splitting of $T_CM$, then there exists a neighborhood of $C$ in $M$ which is holomorphically equivalent to a neighborhood of $C$ (i.e the $0$th section) in $N_C$
	In that case, we say that the embedding $C\hookrightarrow M$ is holomorphically linearizable.
\end{thm}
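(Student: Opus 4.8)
The plan is to reduce the full linearization to an infinite hierarchy of linear cohomology equations, one for each homogeneous order in the fibre variable of $N_C$, to solve each of them on a single member of the nested coverings of \rp{KnDn} using the non-shrinking estimate \re{bKn}, and to control the compounding of the constants $K(\cdot)$ by exactly the recursion defining $\eta_m$. \emph{Normalization.} Using the splitting $T_CM=T_C\oplus N_C$ and the tautological identification of the degree‑one part of the embedding with $N_C$, one first chooses the charts $\Phi_j$ so that the transition maps take the form
\[
\Phi_{kj}(h,v)=\bigl(\var_{kj}(h)+\phi^h_{kj}(h,v),\ g_{kj}v+\phi^v_{kj}(h,v)\bigr),
\]
where the $g_{kj}$ are the transition matrices of $N_C$ (which may be taken locally constant, since a unitary $N_C$ is flat) and $\phi^h_{kj},\phi^v_{kj}$ vanish to order $2$ in $v$. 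Expanding in homogeneous degree $m\ge2$ in $v$, the degree‑$m$ coefficient of $\phi^h_{kj}$ is a holomorphic section over the overlap with values in $T_C\otimes S^m(N_C^*)$ and that of $\phi^v_{kj}$ a section with values in $N_C\otimes S^m(N_C^*)$; both are pieces of $T_CM\otimes S^m(N_C^*)$.

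\emph{Iteration.} Seek the conjugating maps in the form $F_j=\id+\sum_{m\ge2}F_j^{(m)}$ with $F_j^{(m)}$ homogeneous of degree $m$ in $v$, and impose $F_k^{-1}\Phi_{kj}F_j=N_{kj}$ order by order. At order $m$ this reads $\delta F^{(m)}=f_m$ over the covering, where $f_m$ is a universal polynomial expression in the coefficients of $\Phi_{kj}$ of degree $\le m$ and in $F^{(2)},\dots,F^{(m-1)}$ together with some of their first derivatives, and is a $1$‑cocycle with values in $T_CM\otimes S^m(N_C^*)$. Under the hypothesis $H^1(\cL U,T_CM\otimes S^\ell(N_C^*))=0$ for $\ell>1$ — or, in the more general alternative, from the identity‑tangent, splitting‑preserving formal linearization, whose degree‑$m$ term furnishes an explicit $0$‑cochain solving $\delta(\cdot)=f_m$ — the cocycle $f_m$ is a coboundary, so \rp{KnDn} applies and produces $F^{(m)}$ on the same member of the nested family with
\[
|F^{(m)}|_{r'}\le K\bigl(T_CM\otimes S^m(N_C^*)\bigr)\,|f_m|_{r'}=\max\!\bigl(K(N_C\otimes S^m(N_C^*)),\,K(T_C\otimes S^m(N_C^*))\bigr)\,|f_m|_{r'}.
\]
Since $f_m$ is built from products of lower‑order terms $F^{(m_1)},\dots,F^{(m_p)}$ and of finitely many coefficients of $\Phi_{kj}$ (the free degrees accounting for the extra index $s$ in the recursion), and each of the finitely many derivatives occurring costs a factor $(r'-r'')^{-1}$ via a Cauchy estimate on a slightly smaller member of the nested family — a loss that is paid for with \re{bDn} — the sup‑norms of the $F^{(m)}$ on suitable members of the nested coverings satisfy precisely the recursion defining $\eta_m$, up to a harmless geometric factor $\rho^{-m}$ from the radius of the fibre polydisc. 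The standing hypothesis $\eta_m\le L_0L^m$ then makes $\sum_m F_j^{(m)}(h,v)$ converge for $|v|<\rho$ with $\rho$ small, uniformly on the limiting covering, which still covers the compact $C$ provided the shrinkings are chosen so that $\sum_m(r_{m-1}-r_m)\le r^*-\tilde r$ as allowed by \rp{KnDn}. The resulting holomorphic $F_j$ assemble into a biholomorphism of a neighborhood of $C$ in $M$ onto a neighborhood of the zero section of $N_C$.

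The main obstacle is the bookkeeping that keeps three competing demands compatible: each $f_m$ genuinely needs derivatives of the earlier solutions, which forces a real loss of domain; the total loss over all $m$ must be finite so that the limiting covering still covers $C$; and, after paying for these losses, the bound on $|F^{(m)}|$ must still be $\eta_m$ times a geometric factor, so that the single arithmetic condition $\eta_m\le L_0L^m$ closes the induction. Threading this needle is exactly what the nested coverings and the \emph{pair} of estimates \re{bKn}–\re{bDn} of \rp{KnDn} are for: the non‑shrinking estimate \re{bKn} is used to define and propagate the dominant constant $K(E)$ (see \rd{defK}), while the shrinking estimate \re{bDn} is spent only on the finitely summable derivative losses. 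A secondary issue is resolving the $H^0$‑ambiguity in $\delta F^{(m)}=f_m$, for which one simply always takes the solution furnished by \rp{KnDn}; this is consistent with $F_j$ being tangent to the identity and preserving the splitting.

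Finally, it is worth noting that the statement also follows by splitting the argument into two passes: first the vertical linearization of \rt{vlin-int}, which brings the transition maps to the form $(\var_{kj}(h)+\hat\phi^h_{kj}(h,v),\,g_{kj}v)$ and uses only $K(N_C\otimes S^m(N_C^*))$ at the $m$‑th stage, and then the elimination of the remaining purely horizontal cocycle $\hat\phi^h_{kj}\in T_C\otimes S^m(N_C^*)$ by horizontal corrections $F_j^{(m)}$ alone, which preserves the vertical normalization and uses only $K(T_C\otimes S^m(N_C^*))$. This two‑pass structure is what accounts for the maximum of these two constants in the definition of $\eta_m$, and the convergence of the second pass is governed by the same nested‑covering estimates as above, now applied to the flat bundles $T_C\otimes S^m(N_C^*)$.
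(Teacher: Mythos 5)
There is a genuine gap in the convergence bookkeeping, and it is located exactly where the unitarity hypothesis (which your argument never actually uses beyond flatness) is needed. At order $m$ the right-hand side of the conjugacy equation is not built from lower-order data and ``some of their first derivatives'': expanding $\phi^\bullet_{kj}(h_j+f_j^h,v_j+f_j^v)$ and $\var_{kj}(h_j+f_j^h)$ in the $h$-argument produces terms $\frac1{P!}\partial_h^P(\cdot)\,[(f_j^h)^P]^{m_1}$ with $|P|$ as large as $m/2$ (since $f_j^h=O(|v_j|^2)$). Each such derivative costs a Cauchy factor $\dist^{-1}$, so if, as you propose, the loss at step $m$ is paid by a per-order shrink $r_{m-1}-r_m$ with $\sum_m(r_{m-1}-r_m)$ finite, the cost at order $m$ is of size $(r_{m-1}-r_m)^{-|P|}\gtrsim m^{cm}$, which is super-geometric and cannot be absorbed into $\eta_m\le L_0L^m$ ``up to a harmless geometric factor''. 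A linear, order-by-order scheme with shrinking coverings simply does not close under the hypothesis \re{eta-cov}; shrinking losses can only be beaten by a Newton scheme, which is the paper's Section~5 argument and requires the different small-divisor condition \re{bruno-cond-int}. The paper's proof of this theorem (Section~4) avoids shrinking altogether: it uses the modified Fischer norm, whose invariance under the unitary $t_{kj}$ (both under multiplication and under the substitution $v_j\mapsto t_{kj}v_j$) together with the cocycle identity lets one pay the Cauchy losses once, on a single fixed relatively compact subcovering $\cL U^*\Subset\cL U$, and then transfer the estimate back to the full overlaps $\hat U_{kj}$ at every order; the induction hypothesis $|[f_j]^k|_{\hat U_j}\le\eta_kA_k$ therefore always lives on the original covering, and convergence follows from a majorant functional equation for $A(t)$. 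Your proposal contains no substitute for this transfer mechanism, and \re{bKn}--\re{bDn} alone do not provide one.

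Two further points. First, the asserted identity $K(T_CM\otimes S^m(N_C^*))=\max\bigl(K(N_C\otimes S^m(N_C^*)),K(T_C\otimes S^m(N_C^*))\bigr)$ is unjustified as stated; what is true (and what the paper uses) is that in the flat case the horizontal and vertical equations decouple, so each component is estimated by the corresponding constant. Second, in the ``formally linearizable'' alternative your treatment of the $H^0$-ambiguity (``simply always take the solution furnished by \rp{KnDn}'') is insufficient: replacing the formal solution by the minimizing one at order $m$ could a priori destroy solvability at higher orders, since the later equations depend nonlinearly on the earlier choices. The paper repairs this by noting that the discrepancy is a global section of $T_CM\otimes S^m(N_C^*)$, exponentiating it to an automorphism of the model commuting with $\{N_{kj}\}$ (\rp{autom-lin}), and composing; the corrected map is still a formal linearization whose Taylor coefficients are the minimizers, and only then do the estimates apply. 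Some such argument must be supplied before the estimates you invoke are legitimate.
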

More generally, the following result treats two more general cases.
\begin{thm}\label{lin-int}
Let $C_n$ be  a compact submanifold of $M_{n+d}$. 
 Suppose that
\eq{bruno-cond-int}
\sum_{k\geq 1} \frac{\log D_*(2^{k+1})}
{2^{k}}<+\infty,
\eeq
where $
D_* (2^{k+1})
$
is defined by \rea{om2m}.
Suppose that either 
$H^0(C,TC\otimes S^{\ell}(N_C^{*}))=0$   for
all $\ell>1$, or 
$N_C$ is  flat. Assume further that either $T_CM$ splits and
$H^1(\cL U,T_CM\otimes S^\ell (N_C^*))=0$ for all $\ell>1$ or  $(M,C)$ and $(N_C,C)$ are   equivalent 
by a formal holomorphic mapping which is tangent to the identity
and preserves the splitting of $T_CM$. Then $(M,C)$ and $(N_C,C)$ are actually holomorphically equivalent.
\end{thm}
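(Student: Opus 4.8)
The plan is to reduce the full linearization of $(M,C)$ to an iterative construction of the conjugating cochain $F=\{F_j\}$ order by order in the normal variables $v$, exactly as in the ``vertical linearization'' setting of \rt{vlin-int} but now allowing the horizontal component $\hat\phi^h_{kj}(h_j,v_j)$ to be nontrivial. Writing $F_j(h_j,v_j)=(h_j+\sum_{m\ge 2}f_j^{m,h}(h_j)v_j^{\otimes m},\ v_j+\sum_{m\ge 2}f_j^{m,v}(h_j)v_j^{\otimes m})$ and expanding $F_k^{-1}\Phi_{kj}F_j$ in homogeneous degree in $v$, the degree-$m$ term of the obstruction to $\hat\Phi_{kj}=N_{kj}$ is a $1$-cocycle $\Omega_{kj}^m$ with values in $T_CM\otimes S^m(N_C^*)=\bigl(TC\oplus N_C\bigr)\otimes S^m(N_C^*)$ (here the splitting of $T_CM$ is used), whose cohomology class vanishes either because $H^1(\cL U,T_CM\otimes S^\ell(N_C^*))=0$ for $\ell>1$, or, in the alternative hypothesis, because the formal tangent-to-identity splitting-preserving equivalence provides a formal solution, hence a vanishing class at each order. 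The $N_C$-valued part of $\Omega^m$ is solved by a $0$-cochain $f^{m,v}$ (using $H^0(C,N_C\otimes S^\ell(N_C^*))=0$ in the unitary case, or absorbed into the formal solution in general), while the $TC$-valued part requires $H^0(C,TC\otimes S^\ell(N_C^*))=0$ — which is the first hypothesis of the theorem — or, when $N_C$ is merely flat, one first performs the vertical linearization of \rt{vlin-int} / \rt{lin-unit} and then straightens the resulting horizontal foliation, which is where flatness enters.

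The core analytic step is the quantitative one: at stage $m$ the cocycle $\Omega^m_{kj}$ is a universal polynomial $F_m(f^2,\dots,f^{m-1})$ in the previously constructed data with bounded structure constants, so if $f^{j}$ has been estimated on a covering $\cL U^{r_j}$ then $\Omega^m$ is estimated on a slightly smaller covering (Cauchy estimates force the shrinking), and \rp{KnDn} — applied to the bundle $E=T_CM\otimes S^m(N_C^*)$, or separately to its summands $TC\otimes S^m(N_C^*)$ and $N_C\otimes S^m(N_C^*)$ — produces a solution $f^m$ on a covering that is \emph{not} shrunk, with $|f^m|_{r'}\le K(E_m)|\Omega^m|_{r'}$. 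Summing the geometric losses from the two alternating uses of \re{bKn} and \re{bDn} over the dyadic scales, and invoking the nested-covering construction of subsection~\ref{nested}, keeps the limiting covering covering $C$. The growth of $\prod K(E_m)$ over $m$ is exactly what the quantity $D_*(2^{k+1})$ in \re{om2m} measures, and the \textbf{Bruno-type series condition} \re{bruno-cond-int}, $\sum_{k\ge1} 2^{-k}\log D_*(2^{k+1})<\infty$, is what guarantees convergence of the formal series $F_j$ on a genuine neighborhood of $C$; this replaces the simpler geometric bound $\eta_m\le L_0L^m$ of \rt{vlin-int} and \rt{lin-unit} and allows the non-unitary (in particular flat) case.

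Concretely I would proceed as follows. First, set up the homological equations: expand $F_k^{-1}\Phi_{kj}F_j$ formally, identify the degree-$m$ obstruction as a $1$-cocycle in $T_CM\otimes S^m(N_C^*)$, and check the cocycle condition and the dependence $\Omega^m=F_m(f^2,\dots,f^{m-1})$ with an explicit (but unexamined here) bound $|\Omega^m|\le C\,D_*(m)\max\prod|f^{m_i}|$ coming from the composition and inversion formulas. Second, verify the vanishing of $[\Omega^m]\in H^1(\cL U,T_CM\otimes S^m(N_C^*))$ under each of the stated hypotheses, disposing of the $H^0$ conditions to fix the ambiguity in $f^m$ (and in the flat-but-not-unitary case, reducing to \rt{lin-unit} plus a foliation-straightening argument using flatness). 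Third, run the KAM-type iteration on the nested coverings: choose scales $r_k$ decreasing to $r_\infty>r_*$, at step $m\in[2^k,2^{k+1})$ solve $\delta f^m=\Omega^m$ via \rp{KnDn} with the $K$-estimate to avoid shrinking within a dyadic block and the $D$-estimate $|v|_{r''}\le D(E)(r'-r'')^{-\tau}|f|_{r'}$ only when passing from block $k$ to block $k+1$, bound $|f^m|_{r_k}$ by induction, and sum. Fourth, use \re{bruno-cond-int} to conclude $\sum_m |f^m|_{r_\infty}\rho^m<\infty$ for small $\rho$, giving a convergent biholomorphism $F$ on a neighborhood of $C$ realizing $(M,C)\cong(N_C,C)$. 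The main obstacle I expect is Step three: controlling the \emph{compounding} of the constants $K(E_m)$ — i.e.\ proving that the product telescopes into exactly $D_*(2^{k+1})$ and that \re{bruno-cond-int} is the sharp convergence condition — while simultaneously ensuring the nested coverings do not degenerate in the limit; reconciling the algebra of the recursion for $\Omega^m$ with the analytic bookkeeping of the two distinct covering losses is the delicate heart of the argument.
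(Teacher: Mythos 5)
Your overall strategy diverges from the paper's in a way that creates a real gap. You propose a \emph{linear, order-by-order} construction of $F_j$ (solve $\delta f^m=\Omega^m$ for $m=2,3,\dots$, with $\Omega^m$ a polynomial in $f^2,\dots,f^{m-1}$), and you yourself flag the ``compounding of the constants $K(E_m)$'' as the delicate heart. But that compounding is precisely what the hypothesis \re{bruno-cond-int} does \emph{not} control: in an order-by-order scheme the natural quantity is the recursively defined $\eta_m$ of \re{eta-cov} (products of $K$'s over all partitions of $m$), and the paper explicitly states it does not know how to compare \re{eta-cov} with the Bruno-type condition \re{bruno-cond-int}. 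The paper's actual proof of \rt{lin-int} (Section~\ref{sec:fulllin}) avoids this entirely by a Newton scheme: at each step $\ell$ one passes from $L_{m_\ell}$ to $L_{2m_\ell}$ by solving the truncated equations \re{Hsol}--\re{Vsol} (solvable by \rl{centralizer}(c) under the stated $H^0$ or flatness hypothesis), uses the shrinking Donin estimate \re{doninconst} so that $D_*(2m_\ell)$ of \re{om2m} enters only \emph{once} per step against a quadratically small error $O(\theta^{2m_\ell})$ (\rp{nestNewPhi}), and then closes the induction with R\"ussmann's choice $1-\theta_\ell\sim \log D_*(m_{\ell+2})/m_{\ell+2}$, the Bruno condition guaranteeing $\prod\theta_\ell^7>r_*$ so the nested skewed domains $\hat V_{kj}^r$ (with their disc structure and sup norms) do not degenerate. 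Your hybrid bookkeeping ($K$-estimate inside a dyadic block, $D$-estimate between blocks) also ignores that the nonlinear terms $\Omega^m$ involve derivatives of $\phi_{kj}$ and compositions on the skewed domains, which force a loss at every order unless one has the unitary-invariant Fischer-norm machinery of Sections~\ref{sec:verlin}--\ref{sec:majorlin}; that tool is unavailable in the general (non-unitary) setting of \rt{lin-int}, which is exactly why the paper switches methods there. So as written, your Step three is not a technical chore to be checked but the missing proof, and the condition \re{bruno-cond-int} is not known to imply the bound your scheme needs.

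Two further points. First, your treatment of the flat case — ``first perform the vertical linearization of \rt{vlin-int}/\rt{lin-unit} and then straighten the horizontal foliation'' — does not work: both of those theorems require $N_C$ \emph{unitary} (their majorant proofs rest on the unitary invariance of the modified Fischer norm), whereas \rt{lin-int} only assumes $N_C$ flat in that branch; moreover straightening the foliation would not by itself give the full linearization. In the paper, flatness is used differently: when $H^0(C,TC\otimes S^pN_C^*)\neq0$ one has $Dt_{kj}=0$, so the vertical equation decouples (estimate \re{estim-v-linb}) and \rl{centralizer}(c) (via \rp{autom-lin}) still provides the Newton step. Second, you invoke $H^0(C,N_C\otimes S^\ell N_C^*)=0$, which is not among the hypotheses of \rt{lin-int} and is not needed; the ambiguity in the vertical solution is handled by the Donin (minimizing) solutions, not by uniqueness.
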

The previous results can be seen as a ``full linearization" results. \rt{lin-unit} is proved by using a majorant method while \rt{lin-int} is based on a Newton scheme. It is not clear how to compare the two "small divisors conditions" \re{eta-cov} and \re{bruno-cond-int} althought the counterparts in theory of dynamical systems are equivalent \cite{Br71}.
 The formal principle holds in the following cases:
  $(a)$  
 negative $N_C$ in the sense of Grauert, by results of Grauert~\ci{Gr62} and Hironaka-Rossi~\cite{HR64}. In Grauert's case, $C_n$ has a system of strictly pseudoconvex neighborhoods and consequently $C_n$ is the only compact  %
 $n$-submanifold  near $C_n$. In the same spirit, Savelev proved that all neighborhoods of embeddings of $\Bbb P^1$ in   complex  %
  surfaces with a unitary flat normal bundle  %
   are holomorphically equivalent~\cite{Sa82}. $(b)$ sufficiently positive $N_C$ and $\dim C>2$, by a result of Griffiths~\cite[Thm II (i)]{Gr66} showing that a neighborhood is determined by a
    finite-order neighborhood. In other words,  %
   under this condition the holomorphic classification of neighborhoods
  is ``finitely determined". $(c)$ $H^1(C, N_C)=0$ and  the case that for each $x\in C$ there is $x'\in C$ such that the fiber of $N_C$ at $x$ is generated by global sections of $N_C$ vanishing at $x'$, by a result of Hirschowitz (see \cite{Hi81} for more general results)\footnote{Recently, Jun-Muk Hwang proved instances of Hirschowitz's conjecture on the Formal Principle \cite{Hw19}.
  The authors thank Takeo Ohsawa for acknowledging this work.}.
$(d)$ $1$-positive $N_C$, by a result of Commichau-Grauert~\ci{CG81}.

We should remark that the above ``full linearization" result was obtained by Arnol'd when $C$ is an elliptic curve  and $M$ is 
a surface, where the vanishing of $H^0(X,T_CM\otimes S^\ell M)$ follows from the non vanishing of ``small divisors"~\cite{Ar76,Ar88}.  Ilyashenko and Pyartli~\ci{IP79} proved an
analogous result for special embeddings of 
the product flat tori under a strong assumption that $N_C$ is a direct sum of flat line bundles. We emphasize that in our linearization \rt{lin-int}, for general compact manifolds $C_n$, we impose the vanishing of $H^0(X,T_CM\otimes S^\ell M)$ for all integers $\ell\geq 2$ whereas there is no restriction on $H^0$ when $C$ is affine and  $N_C$ is flat.

As a simple consequence, we have the following

\begin{cor}
Under assumptions of \rta{lin-int}
on $C$ and $M$, any holomorphic section of a
holomorphic vector bundle $E$ over $C$ extends 
 to
a holomorphic section of  %
a holomorphic-vector-bundle extension of $E$ 
over a neighborhood of $C$ in $M$.
\end{cor}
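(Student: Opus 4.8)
The plan is to reduce the extension problem to the linear model and then pull everything back. By \rt{lin-int}, under the stated hypotheses there is a biholomorphism $F$ from a neighborhood $V$ of $C$ in $M$ onto a neighborhood $\tilde V$ of the zero section in the total space of the normal bundle $N_C$, with $F$ fixing $C$ pointwise; in particular $F(C)=C$ and $F|_C=\operatorname{id}_C$. It therefore suffices to solve the extension problem on the model $(N_C,C)$: given a holomorphic vector bundle $E\to C$ and a holomorphic section $s$ of $E$, produce a holomorphic vector bundle $\hat E$ over a neighborhood of the zero section in $N_C$ with $\hat E|_C\cong E$, together with a holomorphic section of $\hat E$ restricting to $s$. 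Transporting $\hat E$ and the section back by $F$ then yields the corresponding objects over the neighborhood $V$ of $C$ in $M$.

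On $N_C$ the construction is immediate, the point being that, unlike $M$, the model carries a holomorphic retraction onto $C$. Let $\pi\colon N_C\to C$ be the bundle projection and $\iota\colon C\hookrightarrow N_C$ the zero section, so that $\pi\circ\iota=\operatorname{id}_C$. Put $\hat E:=\pi^{*}E$, a holomorphic vector bundle over $N_C$ (restrict to any neighborhood of $\iota(C)$ if desired). Then $\iota^{*}\hat E=\iota^{*}\pi^{*}E=(\pi\circ\iota)^{*}E=E$, so $\hat E$ is a holomorphic-vector-bundle extension of $E$. Regarding a section of $\pi^{*}E$ over $N_C$ as a holomorphic map $N_C\to E$ covering $\pi$, the map $s\circ\pi$ is holomorphic and $(s\circ\pi)\circ\iota=s\circ(\pi\circ\iota)=s$; under the identification above this says that $\pi^{*}s$ extends $s$.

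Finally, set $\tilde E:=F^{*}\hat E$, a holomorphic vector bundle over $V$. Since $F(C)=C$ and $F|_C=\operatorname{id}_C$, we get $\tilde E|_C=\hat E|_C=E$, so $\tilde E$ is a holomorphic-vector-bundle extension of $E$ over the neighborhood $V$ of $C$ in $M$; and the holomorphic section $(\pi^{*}s)\circ F$ of $\tilde E$ restricts on $C$ to $(\pi^{*}s)|_C=s$, which is the desired extension. The only substantive input is \rt{lin-int}; the remaining steps form a soft pullback argument, so no further obstacle arises beyond the linearization theorem itself — one must only keep all objects defined over a common neighborhood on which $F$ is a biholomorphism.
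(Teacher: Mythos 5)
Your argument is correct and is exactly the intended one: the paper states this as a "simple consequence" of Theorem \ref{lin-int} without writing the proof, and the evident route is precisely yours — linearize via the equivalence $F$ (which fixes $C$ pointwise), extend on the model $(N_C,C)$ by pulling back along the bundle projection $\pi$, and transport back by $F^*$. No gap.
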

\begin{cor}\label{tri-bundle} 
Let $C$ be 
a compact complex manifold.
Let $(M,C)$ be   equivalent to $(C\times \cc^d,C)$ 
by a formal holomorphic mapping which is tangent to the identity 
and preserves the splitting of $T_CM$. Suppose that the small-divisor condition in \rta{lin-int} is satisfied. Then $(M,C)$ is holomorphically equivalent to $(C\times \cc^d,C)$.
\end{cor}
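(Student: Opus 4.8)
The plan is to derive Corollary~\ref{tri-bundle} directly from \rt{lin-int}, the only substantive point being the identification of the normal bundle $N_C$ of $C$ in $M$. A formal biholomorphism $F\colon (M,C)\to(C\times\cc^d,C)$ that fixes $C$ pointwise induces, through its $1$-jet along $C$, an isomorphism of $N_C=N_C(M)$ onto the normal bundle of the zero section of $C\times\cc^d$, which is the trivial bundle $C\times\cc^d$ (regarded as a bundle over $C$ via the first projection). Since $F$ is in addition tangent to the identity, comparing the linear-in-$v_j$ parts of the conjugated transition maps $\hat\Phi_{kj}=F_k^{-1}\Phi_{kj}F_j$ with the transitions $(\var_{kj}(h_j),v_j)$ of $C\times\cc^d$ shows that, in charts adapted to the embedding, the transition matrices $g_{kj}$ of $N_C$ are the identity. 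Hence $N_C$ is holomorphically trivial — in particular flat — and the germ $(N_C,C)$ is the same as $(C\times\cc^d,C)$.

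Granting this, I would check the hypotheses of \rt{lin-int}. The small-divisor condition~\re{bruno-cond-int}, expressed via $D_*(2^{k+1})$ of~\re{om2m}, is assumed in the corollary. Of the two alternatives ``$H^0(C,TC\otimes S^{\ell}(N_C^*))=0$ for all $\ell>1$, or $N_C$ flat'', the second is satisfied because $N_C$ is flat. Of the two alternatives ``$T_CM$ splits and $H^1(\cL U,T_CM\otimes S^\ell(N_C^*))=0$ for all $\ell>1$, or $(M,C)$ and $(N_C,C)$ are equivalent by a formal holomorphic mapping tangent to the identity and preserving the splitting of $T_CM$'', the second is satisfied because $(N_C,C)=(C\times\cc^d,C)$ and such a formal equivalence is precisely the standing hypothesis of the corollary (which in particular provides a splitting of $T_CM$).

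With all hypotheses in place, \rt{lin-int} gives a biholomorphism between a neighborhood of $C$ in $M$ and a neighborhood of $C$ (the zero section) in $N_C$; since $(N_C,C)=(C\times\cc^d,C)$, this is the desired equivalence of $(M,C)$ with $(C\times\cc^d,C)$. The only point requiring care — and the closest thing to an obstacle — is the bookkeeping in the first step: one should make sure that ``tangent to the identity'' in the sense used here is compatible with the chart normalization employed in the body of the paper, so that the induced map on $N_C$ is genuinely the identity. Even this is inessential, however: the formal equivalence already forces $N_C\cong C\times\cc^d$, hence $N_C$ flat and $(N_C,C)\cong(C\times\cc^d,C)$ as germs, which is all that \rt{lin-int} needs.
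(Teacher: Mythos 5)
Your proposal is correct and is essentially the argument the paper intends (the corollary is stated without a separate proof as a direct consequence of Theorem~\ref{lin-int}): the formal equivalence tangent to the identity forces the transition matrices of $N_C$ to coincide with those of the trivial bundle, so $N_C\cong C\times\cc^d$ is flat, the formal-equivalence alternative of Theorem~\ref{lin-int} holds with $(N_C,C)=(C\times\cc^d,C)$, and the theorem gives the holomorphic equivalence. Your closing remark that triviality of $N_C$ already follows from the $1$-jet of any formal equivalence fixing $C$ pointwise correctly disposes of the only delicate point.
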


\medskip

We now give an outline of the paper.

In section~\ref{sec:formal} we study the formal obstructions to the full linearization and vertical linearization 
 problems. The formal obstructions are known from    
  work of Nirenberg-Spencer~\cite{NS60}, Griffiths~\ci{Gr66}, Morrow-Rossi~\ci{MR78}, for the  
 the full linearization problem and by Ueda~\cite{Ue82} (see also Neeman \cite{Ne89}
and among others) for the
 vertical linearization problem. 
  The obstructions are described
  in  $H^1(C,E\otimes S^\ell N_C^*)$ for a natural vector bundle $E$ that is either  $T_CM$ or $N_C$.
   In this paper we emphasize the role of $H^0(C, T_CM\otimes S^\ell N_C^*)$. In  local dynamical systems, the elements in the analogous group appear as finite symmetries in the Ecalle-Voronin theory~\ci{AG05} and centralizers for the linearizations~\cite{GS16}.
The small divisors in local dynamics   
emerge 
in the form of the bounds $K(N_C\otimes S^\ell N_C^*)$ and $D(T_CM\otimes S^\ell N_C^*)$ in \rp{KnDn}. In   work of Arnol'd~\ci{Ar76} and Ueda~\ci{Ue82}, the  vanishing condition of the corresponding zero-th cohomology groups is not explicit; however it follows from their small-divisor conditions.

In section~\ref{sec:verlin}, we prove \rt{vlin-int} by using Ueda's majorization method~\ci{Ue82}. In our case the majorization relies on an important tool of the (modified) Fischer norm 
which is invariant under a unitary change of coordinates. The invariance allows us to overcome
 the main difficulty in our majorization proof to deal with the transition functions of $N_C^*$ when  
they are unitary, but not necessarily
 diagonal. The (modified) Fischer norms have also been useful in other convergence proofs \cite{Sh89,IL05, LS10}. In section~\ref{sec:majorlin}, we also extend the majorant method to the full linearization problem for the special case   
  where $N_C$ is unitary.  In section~\ref{sec:fulllin}, we obtain the full linearization in the general case by introducing a Newton scheme, i.e. a rapid convergence scheme as in Brjuno's work \cite{Br71}; see also~\ci{St00, Ru02}.
However, we must cope with the domains of transition functions which are not so regular. These domains, when carefully chosen, have nevertheless a disc structure. 
This allows us to obtain a proof by using sup-norm estimates.

 Finally, the paper contains an appendix which has interests in its own right. It has two results, namely the existence of the two bounds stated in \rp{KnDn} and the existence of nested coverings (see Definition~\ref{nested-cov}). The existence of bound $K(E)$ was
employed
  by Ueda~\ci{Ue82}  
 through the complete system of Kodaira-Spencer \cite{KS59} when
 $\dim C=1$ and
 $\codim_MC=1$.
 We will prove \rp{KnDn} by using some techniques developed by Donin~\ci{Do71}.
  Our proof also relies on a "quantified" version of Grauert-Remmert  finiteness theorem \cite{GR04}. The existence of bound $D(E'\otimes S^\ell E'')$ was proved by Donin~\ci{Do71} for the so-called
  ``normal" coverings. We have used nested coverings in the proof of  \rp{KnDn} as well as the convergence proof in \rt{lin-int}. 
  We believe that the methods and tools developed in this article
  will be
  useful for
   other kinds of
    problems.

\setcounter{thm}{0}\setcounter{equation}{0}
\section{
Full linearizations, horizontal foliations,
and vertical linearizations
}
\label{sec:formal}

In this section, we describe the  problem of equivalence of a neighborhood of a complex compact submanifold $C$ of $M$ with a neighborhood of the zero section in the normal bundle of $C$ in $M$ as  a ``full'' linearization problem of the transition functions of this neighborhood.
We also describe the existence of a holomorphic foliation of a neighborhood of $C$ having $C$ as a leave
as a consequence of a {\it vertical} linearization problem of the transition functions of this neighborhood.


We will first describe the formal coordinate changes in terms of cohomological groups of holomorphic sections of a 
suitable 
sequence of holomorphic vector bundles.


\subsection{Transition functions}
We recall basic facts on vector bundles,   
which we refer to \cite[Chap.~0, Sect.~5]{GH94}.

We first set up notation.  If a vector space $E$ has a basis $e=\{e_1,\dots, e_d\}$, then a vector $v$ in $E$ can be expressed as
$$
v=\xi^\mu e_\mu, \quad \xi=(\xi^1,\dots, \xi^d)^t.
$$
Here, we use the summation notation: $\xi^\mu e_\mu$ stands for $\sum_{\mu=1}^d\xi^\mu e_\mu$. The
$\xi^\mu$'s are the coordinates or components of $v$ in the basis $e$.

We recall that a holomorphic
vector bundle $\mathbf E$ over a complex manifold   
$X$  is defined by a projection $\pi\colon {\mathbf E}\to X$ and holomorphic trivializations   $\Psi_j\colon\pi^{-1}(D_j)\to D_j\times\cc^r$ such that each $\Psi_j\colon\pi^{-1}(D_j)\to D_j\times\cc^r$  is a biholomorphism, and $\Psi_j(\mathbf E_p)=\{p\}\times\cc^r$ for $\mathbf E_p:=\pi^{-1}(p)$. Furthermore $\{D_j\}$   is an open covering  of $X$ and the maps $\Psi_{kj}=\Psi_k\Psi_j^{-1}\colon D_k\cap D_j\times\cc^r\to D_k\cap D_j\times\cc^r$ satisfy
 \eq{Psig}
 \Psi_{kj}(p,\xi_j)=(p, g_{kj}(p)\xi_j)
 \eeq
 where $g_{kj}$ are transition matrices
 which are holomorphic and invertible.
 Thus for $\xi_k^\mu e_{k,\mu}=\xi_j^\mu e_{j,\mu}$, we have
 \ga
 \xi_{k}^\mu=g_{kj,\nu}^\mu \xi_j^\nu, \quad
 e_{j,\mu}=  g_{kj,\mu}^{\nu} e_{k,\nu},\\
 \xi_k=g_{kj}\xi_j, \quad e_k=(g_{kj}^{-1})^te_j.
 \label{exig}
 \end{gather}
  They satisfy the cocycle conditions,
\eq{cocy-C}
g_{kj}g_{jk}=I, \quad\text{on $D_k\cap D_j$}; \quad g_{ki}g_{ij}=g_{kj}, \quad
\text{on $D_k\cap D_j\cap D_i$}.
\eeq
 We also need to consider the dual bundle $E^*$.
 Let $e_j^*$ be the basis dual to $e_j$ so that $(e_{j,\mu}^*(e_{j,\nu}))$ is the identity matrix. Suppose $\zeta_j^{\mu}e^*_{j,\mu}=\zeta_k^{\mu}e^*_{k,\mu} \in E^*$. Corresponding to \re{exig}, we have
\eq{dualg}
e_k^*=g_{kj}e_j^*, \quad \zeta_k=(g_{kj}^{-1})^t\zeta_j.
\eeq

Let us also express   transition functions 
for various vector bundles in coordinate charts as above. Let $C_{n}$ be a compact complex manifold embedded in complex manifold $M_{n+d}$.    %
We cover a neighborhood of $C$ in $M$ by open sets $V_j$ so that we can choose  coordinate charts
 $(z_j,w_j)$ on $V_j$    %
  for $M$
 such that
$$
U_j:=C\cap V_j\colon w_j=0.
$$
Then ${\mathcal U}=\{U_i\}$ is a finite covering of $C$ by open sets on which the coordinate 
charts $z_i=(z_i^1,\dots, z_i^n)$ 
are defined.
Let
\beq\label{transitionC}
z_k=
\var_{kj}
(z_j)=
\var_k\var_j^{-1}(z_j)
\eeq
be the transition function of $C$ on $U_{kj}:=U_k\cap U_j$. It is a biholomorphic mapping from $
\var_j(U_{kj})$ onto $
\var_k(U_{kj})$ in $\cc^n$.
Then $TC$ has a basis
$$
e_{j,\alpha}:=\frac{\pd}{\pd z_j^\all}, \quad 1\leq\all\leq n
$$
 over $U_j$ and its transition  
matrices
  $s_{kj}$ have the form
\eq{xskj}
s_{kj,\beta}^\all
(z_j):=
\f{\pd z_k^\all}{\pd z_j^\beta}\bigg\vert_{U_j\cap U_k}.
\eeq
 Thus
 for $\eta_{k}^\all\f{\pd}{\pd z_k^\all}=\eta_{j}^\all\f{\pd}{\pd z_j^\all}$ on $U_j\cap U_k$, we have $\eta_k=s_{kj}(z_j)\eta_j$.
As to the normal bundle $N_C$,  its transition  
matrices
  $
t_{kj,\nu}^\mu
(z_j):=
\f{\pd w_k^\mu}{\pd w_j^\nu} |_{U_j\cap U_k}$ on $U_j\cap U_k$
are for the basis
$$
f_{j,\mu}:=\frac{\pd}{\pd w_j^\mu} \mod TC, \quad 1\leq\mu\leq d.
$$
Thus for $\xi_{k}^\mu f_{j,\mu}=\xi_{j}^\mu f_{k,\mu}$, we have $\xi_k=t_{kj}(z_j)\xi_j$.
With notation \re{Psig}, the transition  
matrices
 of $TM|_{C}$ are then of the form
$$
g_{kj}:=\left(\begin{matrix}s_{kj}& l_{kj}\\0&
t_{kj}\end{matrix}\right)
(z_j)\quad \text{on } U_j\cap U_k
$$
for some $n\times d$
matrices  $l_{jk}$.    %
 Note that
$
 \DD{w_j}{z_k} |_C=0.
$

Throughout the paper,  $
\tau_{kj}(z_j)$ are the transition  
matrices
  of $N_C^*$ for the base $dw_j$. Note that
$$
\tau_{kj}=(t_{kj}^{-1})^{t}.
$$
 More specifically,    if $w_{j,\mu}^{*}:=dw_j^\mu|_{U_j}$ and $\zeta_j^{\mu}w_{j,\mu}^{*}=\zeta_k^{\mu}w_{k,\mu}^{*}$, then \re{dualg} becomes
\eq{dualg+}
\zeta_k^*=(t_{kj}^{-1}(z_j))^t\zeta_j^*, \quad
  w_k^*=t_{kj}(z_j)w_j^*.
  \eeq
We remark that
the cocycle conditions \re{cocy-C} for $N_C$ now takes the form
\eq{}
t_{kj}(z_j)t_{jk}(z_k)=\text{Id}\  \text{on $U_j\cap U_k$},\quad
t_{kj}(z_j)t_{j\ell}(z_\ell)=t_{k\ell}(z_\ell)\  \text{on $U_j\cap U_k\cap U_\ell$}.
\label{vb-cocycle-}
\eeq

We say that $TM$ {\it splits} on $C$, if there is a (non-canonical) decomposition
\eq{tNc}
TM|_C=TC\oplus \tilde N_C, \quad\tilde N_C\cong N_C.
\eeq
Equivalently, there exists a system of coordinate charts such that on $C$,  the transitions   
 matrices of $TM|_C$ are of the form
$$
g_{kj}=\left(\begin{matrix}s_{kj}& 0\\0&
t_{kj}\end{matrix}\right)
(z_j)
\quad \text{on } U_j\cap U_k.
$$
In other words,
$
\left.\DD{z_j}{w_k}\right|_C=0.
$
Throughout the paper, we assume that $TM$ splits on $C$ and we fix a splitting \re{tNc}.
Then the change of bases of the normal bundle $N_C$ has a simple form
\beq\label{wktk-}\nonumber
z_k=
\var_{kj}(z_j),\quad \DD{}{w_k^\nu}=
t_{jk,\nu}^\mu(z_k)\DD{}{w_j^\mu}, \quad \text{on $U_j\cap U_k$}.
\eeq

In summary,  for a neighborhood of the embedded manifold $C$ in $M$ with splitting $T_CM$, we  can find a covering $\mathcal V=\{V_i\}$, with $
\Phi_j(V_j)=\tilde U_i\times \tilde W_i$, by open sets on $M$ and coordinates $(z_i,w_i)$ defined on $V_i$. We assume that $U_j:=C\cap V_i$ is defined by $\{w_i=0\}$. A neighborhood of $C$ will then be described by transition functions on $V_{kj}$ of the form
\eq{transitionN-}
\Phi_{kj}\colon
\begin{array}{rcl}
z_k &= & \Phi^h_{kj}(z_j,w_j):=
\var_{kj}(z_j)+\phi^h_{kj}(z_j,w_j),\vspace{.75ex}
\\
w_k &= &\Phi^v_{kj}(z_j,w_j):=
 t_{kj}(z_j)w_j+\phi^v_{kj}(z_j,w_j).
\end{array}
\eeq
Here, $\phi^h_{kj}$ (resp. $\phi^v_{kj}$) are holomorphic functions of vanishing order $\geq 2$ along $w_j=0$:
\eq{vord2}
\phi_{kj}^h(z_j,w_j)=O(|w_j|^2), \quad \phi_{kj}^v(z_j,w_j)=O(|w_j|^2).
\eeq
 That $\phi^h_{kj}$ vanishes at order $\geq 2$ follows from the fact that $TM|_C$ splits as $TC\oplus N_C$ (see above and
 \cite[proposition 2.9]{MR78}).
Define
\beq\nonumber
N_{kj}(h_j,v_j):=(
\var_{kj}(z_j),t_{kj}(h_j)v_j). 
\eeq

Our goals are to apply changes of coordinates to simplify $\phi_{kj}^h, \phi_{kj}^v$, or one of them, according to the problem we study.

\subsection{
The equivalence of transition functions}The germ of  neighborhood of an embedded manifold is well-defined. For the 
formal normalization, we need to introduce (semi) formal charts  and formal neighborhoods of an embedded manifold in a (semi)
formal manifold.

 \begin{defn}
 We call $\hat M$ an (admissible and splitting)   formal neighborhood of $C$ if   there are holomorphic coordinate charts $
 \var_j$ on $U_j$ where $\{U_j\}$ is a covering of $C$ and there are formal power series
$$
(z_j,w_j)=\hat\Phi_j(p,w):=(
\var_j(p),
t_j(p)w)+\sum_{|Q|\geq2}\Phi_{j,Q}(p)w^Q,
$$
where $\Phi_{j,Q}$ are holomorphic functions in $U_j$ and each $t_j$ is an invertible holomorphic  $d\times d$ matrix on $U_j$.  Note that the formal transition functions $\hat\Phi_{kj}=\hat \Phi_k\hat\Phi_j^{-1}$ have the form
\eq{}\nonumber
\hat\Phi_{kj}(z_j,w_j)=(
\var_{kj}(z_j),t_{kj}(z_j)w_j)+\sum_{|Q|>1}
\hat \Phi_{kj,Q}(z_j)w_j^Q, \quad z_j\in
\var_j(U_j\cap U_k).
\eeq
\bpp
\item
When all $\Phi_j$ are
holomorphic, the formal neighborhood $\hat M$ is called
the germ of a  (holomorphic)
 neighborhood of $C$.
\item $\hat M$ is called a 
 linear neighborhood of $C$ if additionally
\eq{initial000}   
\hat\Phi_{kj}(z_j,v_j)=(
\var_{kj}(z_j),t_{kj}(z_j)v_j)
\eeq
and each
 $t_{kj}$ is
 an invertible holomorphic matrix in $U_k\cap U_j$. The terminology is meaningful since the $\hat\Phi_{kj}$ can be realized as the transition functions of a holomorphic vector bundle over $C$,  namely the normal bundle of $C$ in $M$. 
\epp
\end{defn}
We are mainly interested in the classification of a neighborhood of $C$ for a given $C$.   Therefore, it is reasonable to assume that the local trivialization of $C$ are fixed. In other words, $
\var_{kj}$ are fixed and we will only consider mappings sending a neighborhood of $C$ into another neighborhood of $C$ that fix $C$ pointwise.


\begin{defn}
We shall say that $N_C$ is a flat (resp. unitary flat), if we can find constant (resp. with values in group of unitary matrices $U_d$) transition functions in a possibly refined covering.
If $T_CM:=(TM)|_C$ is {\it holomorphically flat}, or {\it flat}, i.e. in some coordinates both   transition functions  $N_C$ and  $TC$ are constant matrices, then by \re{xskj}
\eq{}\nonumber
\var_{kj}(z_j)=s_{kj}z_j+c_{kj}
\eeq
where $s_{kj}$ are constant matrices and $c_{kj}$ are constant vectors. 
Then, the transition functions of a neighborhood of the zero section of the normal bundle, $\hat \Phi_{kj}$ as defined in \re{initial000} 
read
$$
A_{kj}(z_j,w_j):=(s_{kj}z_j+c_{kj},t_{kj}w_j).
$$
\end{defn}
We will use the following notation: When $N_C$ is flat, we write its transition  
matrices
  $t_{kj}(z_j)$ as $t_{kj}$, indicating that they are independent of $z_j$.
\begin{defn}
We shall say that a change of coordinates $\{F_j\}$ preserves the germ of
a neighborhood of  the zero section of $N_C$ with transition maps $\{N_{kj}\}$ if
each $F_j$ is biholomorphic and fixes $v_j=0$ pointwise and
 $F_kN_{kj}=N_{kj}F_j$, in which case we say that $\{F_j\}$ preserves $\{N_{kj}\}$ for simplicity.
\end{defn}




We further observe the following. 
\le{dfj-}Let $M$, $\hat M$ be two $($admissible$)$ neighborhoods of $C$, of which coordinate charts are $\{\Phi_j\},\{\hat\Phi_j\}$, respectively. Let $\Phi_{kj}=\Phi_k\Phi_j^{-1}$ and $\hat\Phi_{kj}=\hat\Phi_k\hat\Phi_j^{-1}$.
\bpp
\item  There is a biholomorphic mapping $F\colon M\to\hat M$, defined near $C$ and fixing $C$, if and only if there are biholomorphic mappings $F_j$ satisfying
 \ga
\label{Fkhp}
F_k\hat\Phi_{kj}(z_j,w_j)=\Phi_{kj}F_j(z_j,w_j), \quad   F_j(z_j,0)=(z_j,0).
\end{gather}
\item  If  $F_j$ satisfies \rea{Fkhp},
then
\gan
F_j(z_j,w_j)=LF_j(z_j,w_j)
+O(|w_j|^2), \quad LF_j=(z_j+s_j(z_j)w_j,u_j(z_j)w_j),\\
s_k(
\var_{kj}(z_j))t_{kj}(z_j)=D
\var_{kj}(z_j)s_j(z_j),\\
u_k(
\var_{kj}(z_j))t_{kj}(z_j)=t_{kj}(z_j)u_j(z_j).
\end{gather*}
Assume further that $F$ preserves the splitting. Then $s_j=0$.
\item
Let $TC$ and $N_C$ be flat and let $F_j$ be $($semi$)$ formal biholomorphism fixing $C$ pointwise. Suppose that $F_k^{-1}\Phi_{kj}F_j=N_{kj}+O(|v|_j^2)$. Then $\{LF_j\}$ preserves $\{N_{kj}\}$, i.e.
$LF_kN_{kj}(LF_j)^{-1}=N_{kj}$,  where
$$
F_{j}(h_j,v_j)=LF_j(h_j,v_j)+O(|v_j|^2), \quad LF_j(h_j,v_j)=(h_j+s_j(h_j)v_j,u_j(h_j)v_j).
$$
\epp
\ele
\begin{proof} The points $(a),(b)$ can be verified easily. For $(c)$,  let us expand $F_k\Phi_{kj}(h_j,v_j)=N_{kj}\circ F_j(h_j,v_j)+O(|v_j|^2)$ and compare the constant and linear terms in $v_j$. We obtain
\gan
\var_{kj}(h_j)+s_j(\var_{kj}(h_j))t_{k 
j}v_j=\var_{kj}(h_j+s_j(h_j)v_j)+O(|v_j|^2),\\
u_k(\var_{kj}(h_j))t_{kj}v_j=t_{kj}u_j(h_j)v_j+O(|v_j|^2).
\end{gather*}
Here we have used the assumption that $t_{kj}$ are constant. Since $\var_{kj}$ are affine, the two identities still hold
if we drop $O(|v_j|^2)$ from them.
This shows that $LF_kN_{kj}=N_{kj}LF_j$, again using the fact that $t_{kj}$ are constant and $\var_{kj}$ are affine.
\end{proof}

Finally, we mention that we will choose the  atlas of  $C$ so that each $\var_j$ is a biholomorphism  from $U_j$ onto the unit polydisc $\Del_n$ in $\cc^n$ 
and  from a neighborhood $\tilde U_j$ of $\ov {U_j}$ onto another larger polydisc. When $C$ is embedded in a complex manifold $M$, we can extend $\var_j$ to $V_j$ to get a coordinate chart $\Phi_j$ on $V_j$ such that $\Phi_j$ maps $V_j$ onto $U_j\times\Del_\del^d$. This can be achieved since any holomorphic vector bundle over $\tilde U_j$ is holomorphically trivial. Thus $N_C|_{U_j}$ splits. Consequently, we can use a flow box of holomorphic normal vector fields to construct the required $\Phi_j$. Therefore, if   $C$ is embedded into another complex manifold $\tilde M$, we will choose the atlas of a neighborhood of $C$ in $\tilde M$ such that the restriction of the chart on $U_j$ agrees with $\var_j$.

Therefore, we introduce the following.
\begin{defn}We say that a formal  neighborhood  $\{\Phi_{kj}\}$ of $C$ is
  equivalent to a neighborhood $\{\hat\Phi_{kj}\}$ of $C$ in $M$ by a formal mapping  $F$
that is tangent to the identity and preserves the splitting of $T_CM$, if there are formal maps $F_j(z_j)=(z_j,w_j)+\sum_{|Q|>1} F_{j,Q}(z_j)w_j^Q$ such that $F_{j,Q}(z_j)$ are holomorphic functions in $  U_j$ and as power series in $w_j$
$$
F_k\hat\Phi_{kj}(z_j,w_j)=\Phi_{kj}F_j(z_j,w_j).
$$
 We take $F=\hat\Phi_j^{-1}F_j\Phi_j$, which is well-defined,
 when $\Phi_{kj}=\Phi_k\Phi_j^{-1}$ and $\hat\Phi_{kj}=\hat\Phi_k\hat\Phi_j^{-1}$.
\end{defn}

\subsection{
The full Linearization of a neighborhood} In this case, our goal is to seek new coordinates $(h_k,v_k)$ so that all $\phi_{kj}^h,\phi_{kj}^v$ are $0$.

Let us consider a change of coordinates in a neighborhood of $C$  by modifying the old coordinate charts $(z_k,w_k)$ via $F_k$.  We write it as
\eq{change}
F_k: \begin{array}{rcl}
z_k &= & F^h_{k}(h_k,v_k):=h_k+f^h_{k}(h_k,v_k),\vspace{.75ex}\nonumber\\
w_k &= & F^v_{k}(h_k,v_k):=v_k+f^v_{k}(h_k,v_k).
\end{array}
\eeq
Here, $f^h_{k}(h_k,v_k)$ and $f^v_{k}(h_k,v_k)$ are holomorphic functions vanishing at order $\geq 2$ at $v_k=0$. In particular, $C$ is pointwise fixed by the change as $z_k=h_k$ on $C$ (i.e. for $v_k=0$). We require that the inverse
of $F_k$ is defined in a possibly smaller open sets $\hat V_k\subset
\var_k(U_k)$
such that the union of $
\Phi_k^{-1}(\hat V_k)$ remains a neighborhood of $C$ in $M$.

We recall that the cocyle condition \re{vb-cocycle-} on the transition  
matrices
  $t_{kj}$ has the form
\begin{eqnarray}
t_{kj}(z_j)t_{jk}(
\var_{kj}(z_j))&=&\text{Id},\nonumber\\
t_{kj}(
\var_{j\ell}(z_\ell))t_{j\ell   }(z_\ell)&=&t_{k\ell   }(z_\ell).\label{vb-cocycle}
\end{eqnarray}

Let us assume that the ({\it a priori} formal) change of coordinates $\re{change}$, maps a neighborhood $C$ to a neighborhood of the zero section in the normal bundle. This means that, in these new coordinates, we have
\eq{transitionNC}
N_{kj}:=F_k^{-1}\Phi_{kj}F_j:
\begin{array}{rcl}
h_k &= &
\var_{kj}(h_j),\nonumber\\
v_k &= &  t_{kj}(z_j)v_j.
\end{array}
\eeq

Let us write down the above  ``conjugacy equations''.
We first consider the horizontal equation of
$$
F_kN_{kj}=\Phi_{kj}F_j.
$$
 On the left side of the equation, we have
$$
z_k=h_k+f_k^h(h_k,v_k)=
\var_{kj}(h_j)+f_k^h(
\var_{kj}(h_j),t_{kj}(h_j)v_j).
$$
On the other side, we have
$$
z_k=
\var_{kj}(h_j+f_j^h(h_j,v_j))+\phi^h_{kj}(h_j+f_j^h,v_j+f_j^v).
$$
Let us define the {\it horizontal cohomological operator} to be
\beq
{\mathcal L}_{kj}^h(f^h_j):=f_k^h(
\var_{kj}(h_j),t_{kj}(h_j)v_j)  -s_{kj}(h_j)f_j^h(h_j,v_j).\label{h-cohomo-op}
\eeq
Recall that $s_{kj}(h_j)=D
\var_{kj}(h_j)$ is the Jacobian matrix of $
\var_{kj}$.
Hence, we can write the previous horizontal equation as
\begin{eqnarray}
{\mathcal L}_{kj}^h(f^h_j)&=& \phi^h_{kj}(h_j+f_j^h,v_j+f_j^v)\label{h-conjug}\\
&&+\,
\var_{kj}(h_j+f_j^h(h_j,v_j))-
\var_{kj}(h_j)-D
\var_{kj}(h_j)f_j^h(h_j,v_j).\nonumber
\end{eqnarray}

Let us consider the vertical equation. We have, on one side of the equation,
$$
w_k = v_k+f^v_{k}(h_k,v_k)= t_{kj}(h_j)v_j+f_k^v(
\var_{kj}(h_j), t_{kj}(h_j)v_j).
$$
On the other side, we have
$$
w_k=t_{kj}(h_j+f_j^h)(v_j+f_j^v)+\phi^v_{kj}(h_j+f_j^h,v_j+f_j^v).
$$
Let us define the {\it vertical cohomological operator} to be
\beq
{\mathcal L}_{kj}^v(f^v_j):=f_k^v(
\var_{kj}(h_j),t_{kj}(h_j)v_j)-t_{kj}(h_j)f_j^v.\label{v-cohomo-op}
\eeq
Hence, we can write the previous vertical equation as
\begin{eqnarray}
{\mathcal L}_{kj}^v(f^v_j)&=& \phi^v_{kj}(h_j+f_j^h,v_j+f_j^v)\label{v-conjug}\\
&&+\left(t_{kj}(h_j+f^h_{j}(h_j,v_j))-t_{kj}(h_j)\right)f_j^v\nonumber\\
 &&+\left(t_{kj}(h_j+f^h_j(h_j,v_j))-t_{kj}(h_j)\right)v_j.\nonumber
\end{eqnarray}

\subsection{
Horizontal foliations and vertical trivializations}
Let us assume that there exists a non singular holomorphic foliation having $C$ as a leaf. We seek holomorphic functions $f_j=(f_{j,1},\dots, f_{j,d})$ defined in a neighborhood $V_j$ of $U_j$ such that $f_j=0$ on $U_j$ and $df_{j,1}\wedge\cdots\wedge df_{j,d}\neq0$. Then, we may use $(h_j,v_j)=(z_j,f_j(z_j,w_j))$ as a coordinate mapping on $V_j$, which changes variables in vertical components. We then prove that in these new coordinates, the transition functions of a neighborhood of $C$ are of the form $\hat\Phi_{kj}=(\hat\Phi_{kj}^h,\hat\Phi_{kj}^v)$
such that $\Phi_{kj}^v$ are independent of $h_j$. We remark that $N_C$ must be flat if  a horizontal foliation exists.
\pr{2-prob} Assume that there is smooth holomorphic
horizontal foliation defined in a neighborhood $V$ of $C$ in $M$.  By a refinement of $U_j$,
then there exists
 a change of variables of the form
$$
z_k = h_k\quad w_k=s(h_j)v_j+O(|v_j|^2)
$$
so that in the new variables, we have
\begin{eqnarray}
h_k &= &
\var_{kj}(h_j)+\phi
_{kj}^h(h_j,v_j),\nonumber\\
v_k &= & \tilde t_{kj}v_j+\sum_{|Q|>1}c_{
kj,Q}v_j^Q,
\label{retrC}\nonumber
\end{eqnarray}
where $\tilde t_{kj}, c_{kj,Q}$ are constants.
\epr
\begin{proof}By a refinement, we may assume that the foliation on $V_j$ is given
$W_{j}(h_j,v_j)=cst$ by holomorphic functions $W_j=(W_{j,1},\dots, W_{j,d})$ such that $W_j=0$ on $U_j$ and $dW
_{j,1}\wedge\cdots\wedge dW_{j,d}\neq0$. We have $W_k=\tilde\Phi
^v_{kj} W_j$, where $\tilde\Phi^v_{kj}$ is a biholomorphism of $(\Bbb C^d,0)$ with $\tilde\Phi_{kj}^{ v}(0)=0$. Then $\tilde W_j=(z_j,W_j)$ is a biholomorphism defined on $V_j$
 and fixing $C\cap V_j$ pointwise, by shrinking $V_j$ if necessary
in the vertical direction. Since $\tilde W_j$ is invertible, we can define $\tilde\Phi^h_{kj}=z_k\tilde W_j^{-1}$
Then we have
$
\tilde\Phi_{kj}^h\tilde W_j=z_k.
$
Therefore,
$$\tilde W_k\tilde W_j^{-1}(h_j,v_j)=(\tilde \Phi_{kj}^h(h_j,v_j),\tilde\Phi_{kj}^v(v_j)). $$
Set $F_j=\Phi_j\tilde W_j^{-1}$. We have $F_j^h(h_j,v_j)=h_j$. We now get
$$
F_k^{-1}\Phi_{k}\Phi_j^{-1}F_j=\tilde W_k\tilde W_j^{-1}=\tilde\Phi_{kj}.\qedhere
$$
\end{proof}
 In this paper, we will approach the horizontal foliation problem via the following vertical linearization when $N_C$ is unitary.

\subsection{
The vertical linearization}

Here we seek new coordinates $(h_j,v_j)$  from $(z_j,w_j)$ such that the vertical component of the new $\Phi_{kj}$ agrees with
the vertical component of $N_{kj}$. In \rl{linkv} we will show that if such
 {\it formal}
 coordinates  exist, then the vertical linearization can be achieved
by changing vertical coordinates only, i.e. a coordinate change of the form
$$
w_k =  F^v_{k}(h_k,v_k):=v_k+f^v_{k}(h_k,v_k),\quad z_k=h_k.
$$
For the vertical linearization,
we  only need to consider the {\it vertical part} of  transition functions
so that in the new variables, we have
\begin{eqnarray}
h_k &= & \hat \Phi_{kj}^h(h_j,v_j):=
\var_{kj}(h_j)+\hat \phi_{kj}^h(h_j,v_j)\nonumber\\
v_k &= & t_{kj}(h_j)v_j.
\label{extNC}\nonumber
\end{eqnarray}
Here, $\hat \phi_{kj}^h(h_j,v_j)$ vanishes up to order $2$ at $v_j=0$.  The vertical equation reads
$$
t_{kj}(h_j)(v_j+f_j^v)+\phi^v_{kj}(h_j,v_j+f_j^v)=w_k=t_{kj}(h_j)v_j+f_k^v(\hat\Phi^h_{kj}(h_j,v_j), t_{kj}(h_j)v_j).
$$
Using the previous notation, we finally obtain the following ``conjugacy equations''
\beq
{\mathcal L}_{kj}^v(f^v_j)= \phi^v_{kj}(h_j,v_j+f_j^v)-\left(f_k^v(\hat\Phi^h_{kj}(h_j,v_j), t_{kj}(h_j)v_j)-f_k^v(
\var_{kj}(h_j), t_{kj}(h_j)v_j)\right).\label{v-ext}
\eeq

Having determined the coordinate change, let us find the horizontal component $\hat\phi_{kj}^h$ from the horizontal equation
$$
\var_{kj}(h_j)+\phi^h_{kj}(h_j,v_j+f_j^v)=z_k=\hat\Phi_{kj}^h(h_j,v_j)=
\var_{kj}(h_j)+\hat\phi^h_{kj}(h_j,v_j).
$$
We get
\beq
\hat\phi^h_{kj}(h_j,v_j) 
= \phi^h_{kj}(h_j,v_j+f_j^v).\label{h-ext}
\eeq

\subsection{Coboundary operators in symmetric powers and  coordinates}\label{section-cocycles}

In this subsection, we establish the connections
between coordinate changes and
  formal obstructions
  to  the full linearization 
and vertical linearization
 via cohomological groups. In local dynamics, the resonant terms play an important role in the construction of normal forms at least at the formal level, while non-resonant terms play another important role in coordinate changes. In all
problems,
 obstructions are described via  the
  first cohomological groups, while the
coordinate changes are described via solutions to the cohomological equations of first order
approximation.

 Let   $E'$ be a vector bundle of rank $\tau$ over $C$. Let
 $\cL U = \{U_i\}$
 be a covering of $C$ as above. Let $e_j:=\{e_{j,1},\ldots, e_{j,\tau}\}$ be a basis over $U_j$ and let $\xi_j:=(\xi_{j}^{1},\ldots, \xi_{j}^{\tau})^{t}$ be coordinates in $e_j$.
  Let $s_{kj}(z_j)$ 
   be the transition matrices of $E'$ over $U_{k}\cap U_{j}$.
   Using notation in \re{exig}, we have
\ga
\label{use-it}
\xi_k^\all=s^\all_{kj,\beta}(z_j)\xi_j^\beta, \quad e_{k;\all}=s_{jk,\all}^\beta(z_k) e_{j,\beta},\\
z_k=
\var_{kj}(z_j), \quad \xi_k=s_{kj}(z_j)\xi_j, \quad e_k=(s^{-1}_{kj}(z_j))^te_j,
\end{gather}
where $
\var_{kj}$ are the transition functions of $C$. For $N_C^*$,
by \re{dualg+} we have
$$
\zeta_k=(t_{kj}^{-1})^t(z_j)\zeta_j, \quad w_k^*=t_{kj}(z_j)w_j^*,\quad z_k=
\var_{kj}(z_j).
$$
The following fact is well-known. We provide a proof for the reader's convenience.
Let us first introduce
 \eq{def-tilde-f}
  \tilde f_{i_0\cdots i_q}^\la (z_{i_q},\zeta_{i_q}):=\sum_{|Q|=L}  f_{i_0\cdots i_q;Q}^\la (z_{i_q}) \zeta_{i_q}^Q,
 \eeq
for a cochain $\{f_I\}\in C^q(\{U_j\},\cL O(E\otimes S^L(N_C^*)))$
 given by
 \ga\label{def-tilde-f+}
 f_{i_0\cdots i_q}(p)=\sum_{\la=1}^{ \tau} \sum_{|Q|=L} f_{{i_0\cdots i_q}; Q}^\la (z_{i_q}(p)) e_{i_0,\la }(p)\otimes (w_{i_q}^*(p))^Q,
 \end{gather}
 where each $f_{i_0\dots i_q;Q}^\la$ is a holomorphic function on $
\var_{i_q}(U_{i_0\cdots i_q})$,
 and  $U_{i_0\cdots i_q}$ denotes as usual $U_{i_0}\cap\cdots\cap U_{i_q}$. Here we have chosen a representation of cochains in bases that arise from the linearized equations for the problems described above.

Let $f_{i_0\cdots \hat i_\ell\cdots i_{q+1}}$ denote $f_{i_0\cdots i_{\ell-1}i_{\ell+1}\cdots i_{q+1}}$.
Then   $(\del f)_{i_0\cdots i_{q+1}}=\sum(-1)^\ell f_{i_0\cdots \hat i_\ell\cdots i_{q+1}}$  becomes
\aln
(\del f)_{i_0\cdots i_{q+1}}&=\sum_{\ell=1}^q(-1)^\ell  \sum_{\la=1}^{\tau} \sum_{|Q|=L} f_{{i_0\cdots \hat i_\ell\cdots  i_{q+1}}; Q}^\la (z_{i_{q+1}}(p)) e_{i_0,\la }(p)\otimes (w_{i_{q+1}}^*(p))^Q\\
&\quad + \sum_{\la=1}^{
\tau} \sum_{|Q|=L} f_{{i_1\cdots   i_{q+1}}; Q}^\la (z_{i_{q+1}}(p)) e_{i_1,\la }(p)\otimes (w_{i_{q+1}}^*(p))^Q\\
&\quad -(-1)^{q} \sum_{\la=1}^{
\tau}\sum_{|Q|=L} f_{{i_0\cdots   i_{q}}; Q}^\la (z_{i_{q}}(p)) e_{i_0,\la }(p)\otimes (w_{i_{q}}^*(p))^Q
\\
&=:\sum_{\la=1}^{
\tau}
\sum_{|Q|=L} g^\la_{i_0\cdots i_{q+1}}(z_{q+1})e_{i_0,\la }(p)\otimes (w_{i_{q+1}}^*(p))^Q.
\end{align*}
By \re{use-it}, we have $e_{i_1,\la}=s_{i_0i_1,\la}^\mu e_{i_0,\mu}$.
In notation \re{def-tilde-f}, we can express
\al{}\nonumber
\tilde g^\la_{i_0\cdots i_{q+1}}(z_{i_{q+1}},\zeta_{i_{q+1}})&=
\sum_{\ell=1}^q (-1)^\ell \tilde f^\la_{i_0\cdots \hat i_\ell
\cdots i_{q+1}}(z_{i_{q+1}},\zeta_{i_{q+1}})\\
&\quad +s_{i_0i_1,\mu}^\la(
\var_{i_1i_{q+1}}(z_{q+1})) \tilde f^\mu_{{i_1\cdots   i_{q+1}}}(z_{i_{q+1}},\zeta_{i_{q+1}})
\nonumber
\\
&\quad -(-1)^{q}f_{{i_0\cdots   i_{q}}}^\la (
\var_{i_{q}i_{q+1}}(z_{i_{q+1}}), t_{i_{q}i_{q+1}}(z_{i_{q+1}})\zeta_{i_{q+1}})). \nonumber
\end{align}


The above computation especially gives us the following formulae for $0$ and $1$-cochains.
\le{cc} Let $\{U_j\}$ be an open covering of $C$.
Let $t_{kj}$ be the transition  
matrices
  for $N_C$ with respect to basis $w_j$ and let $s_{kj}$ be the transitions functions of $E$ with respect to base $e_j$.
 Let
 \gan
 f_{ij}(p)=
 \sum_{\la=1}^d \sum_{|Q|=L} f_{ij; Q}^\la (z_j(p)) e_{i,\la }(p)\otimes (w_j^*(p))^Q, \quad \tilde f_{ij}^\la (z_j,\zeta_j):=\sum_{|Q|=L}  f_{ij;Q}^\la (z_j) \zeta_j^Q,\\
 u_{j}(p)=\sum_{\la=1}^d
 \sum_{|Q|=L} u_{j, Q}^\la (z_j(p)) e_{j,\la }(p)\otimes (w_j^*(p))^Q, \quad \tilde u_{j}^\la (z_j,\zeta_j):=\sum_{|Q|=L}  u_{j;Q}^\la (z_j) \zeta_j^Q.
\end{gather*}
The following hold $:$
\bpp
\item
 $ f:=\{f_{ij}\}\in Z^{1}(\cL U,\cL O(E\otimes S^L(N_C^*)))$ if and only if
\ga\label{cochain}\nonumber
 \tilde f_{ij}^\la (
\var_{jk}(z_k),
 t_{jk}(z_k)\zeta_k
 ) -\tilde f_{ik}^\la (z_k,\zeta_k)+s_{ij,\ell }^\la (z_j) \tilde f_{jk}^\la (z_k,\zeta_k)
 =0.
\end{gather}
\item 
$u:=\{u_j\}$ solves the first order cohomological equation $\del u=f$ if and only if
\eq{du=f}\nonumber
 s_{ij,\ell }^\la(z_j)
\tilde u_{j}^\ell (z_j,\zeta_j)-
  \tilde u_{i}^\la (
\var_{ij}(z_j),
  t_{ij}(z_j)\zeta_j)=\tilde f_{ij}^\la (z_j,\zeta_j).
\eeq
\epp
\ele

We notice that according to \re{h-cohomo-op} and \re{v-cohomo-op}, we have
$$
-\cL L(f)=-(\cL L^h(f^h), \cL L^v(f^v))= \del(f):=(\del^h(f^h),\del^v(f^v)).
$$

\subsection{Formal 
obstructions   in cohomology groups}

Recall that
\beq
N_{kj}(h_j,v_j):=(
\var_{kj}(z_j),t_{kj}(h_j)v_j).\label{initial}
\eeq
Let us denote the properties depending on an order
$m\geq
1$~:\\

\noindent
$(L_m(\cL U))$ : the neighborhood of $C$
matches
 the neighborhood of zero section of the normal bundle up to order $m$.
 \\
$(V_m(\cL U))$ :
the vertical   components of the transition functions of  neighborhoods of $C$ in $M$ and in $N_C$
match  up to order $m$.
 \\
%

That embedding of $C$ has property $(L_m)$ (resp. $(V_m)$)
means that the order along
 $v_j=0$ of $(\phi_{kj}^h(h_j,v_j), \phi_{kj}^v(h_j,v_j))$ (resp. $\phi_{kj}^v(h_j,v_j)$)
 as defined in
 \re{transitionN-} is $\geq m+1$.

\begin{defn}
We shall say that $N_C$ is a flat (resp. unitary flat), if we can find constant (resp. with values in group of unitary matrices $U_d$) transition functions in a possibly refined covering.
\end{defn}
We will use the following notation: When $N_C$ is flat, we write its transition  
matrices
  $t_{kj}(z_j)$ as $t_{kj}$, indicating that they are independent of $z_j$.
\begin{defn}
We shall say that a change of coordinates $\{F_j\}$ preserves the germ of
a neighborhood of  the zero section of $N_C$ with transition maps $\{N_{kj}\}$ if $F_kN_{kj}=N_{kj}F_j$, in which case we says that $\{F_j\}$ preserves $\{N_{kj}\}$ for simplicity.
\end{defn}

\le{verifycochain} Let the transition functions $\Phi_{kj}$ of a neighborhood of $C$
be given by \rea{transitionN-}-\rea{vord2}.
\bpp
\item
Assume that $C$ satisfies $L_m$. Then the horizontal and vertical components
satisfy
\aln
&[\phi_{kj}^h]^\ell\in Z^1(\cL U,TC\otimes S^\ell (N_C^*)), \quad \text{if $ m<\ell\leq 2m$};\\
&[\phi_{kj}^v]^\ell\in Z^1(\cL U,N_C\otimes S^\ell (N_C^*)),
\quad \text{if $ \ell=m+1$}.
\end{align*}
If    $N_C$ is flat, then the vertical component of $\Phi_{kj}$ further satisfies
\al\label{Dtphi}\nonumber
&
[\phi_{kj}^v]^\ell\in Z^1(\cL U,N_C\otimes S^\ell (N_C^*)),\quad m+1<\ell\leq 2m.
\end{align}
\item Let  $C$ satisfy $V_m$. Assume that $N_C$ is flat.  Then
\eq{pvZ1}
[\phi_{kj}^v]^\ell\in Z^1(\cL U,N_C\otimes S^\ell (N_C^*)), \quad  \ell=m+1.
\eeq
\epp
\ele
\begin{proof}
 When $\ell=m+1$, $(a)$
 is in Griffiths \cite{Gr66}, Morrow-Rossi \cite{MR78} and
 (b) is proved in Ueda~\ci{Ue82} for flat line bundle $N_C^*$ over a compact curve $C$.

$(a)$ The general case can be verified by using
\rl{cc}
to compare coefficients of $w_j^\all$  on both sides of $\Phi_{ij}(z_j,w_j)=\Phi_{ik}\circ\Phi_{kj}(z_j,w_j)$ for $|\all|\leq 2m$. Indeed, we have $\Phi_{ik}= N_{ik}+(\phi_{ik}^h,\phi_{ik}^v)$ and $(\phi_{ik}^h,\phi_{ik}^v)(z_k,w_k)=O(|w_k|^{m+1})$ with $m\geq1$. Thus 
\aln
N_{ik}\circ\Phi_{kj}(z_j,w_j)&=\left\{N_{ik}\circ N_{kj}+DN_{ik}\circ N_{kj}\cdot (\phi_{kj}^h,\phi_{kj}^v)\right\}(z_j,w_j)+O(|w_j|^{2m+1})\\
& =N_{ik}\circ N_{kj}(z_j,w_j)
 +(s_{ik}(
\var_{kj}(z_j))\phi_{kj}^h,t_{ik}(
\var_{kj}(z_j))\phi_{kj}^v)\\
&\quad  +(0, Dt_{ik}(
\var_{kj}(z_j))
\phi_{kj}^h (z_j) t_{kj}(z_j)w_j)+O(|w_j|^{2m+1}).\nonumber
\end{align*}
Here $s_{kj}$ are the transition  
matrices
  of $TC$ given by \re{xskj}. Therefore,
\aln
\Phi_{ik}\circ\Phi_{kj}(z_j,w_j)&=N_{ik}\circ\Phi_{kj}(z_j,w_j)+
(\phi_{ik}^h,\phi_{ik}^v)\circ \Phi_{kj}(z_j,w_j)\\
&  =\left\{N_{ik}\circ N_{kj}
 +
(\phi_{ik}^h,\phi_{ik}^v)\circ N_{kj}\right\}(z_j,w_j)\\
& \quad+\left(s_{ik}(
\var_{kj}(z_j))\phi_{kj}^h(z_j,w_j),t_{ik}(
\var_{kj}(z_j))\phi_{kj}^v(z_j,w_j)\right)\\
& \quad + (0, Dt_{ik}(
\var_{kj}(z_j))
\phi_{kj}^h(z_j)t_{kj}(z_j)w_j)+O(|w_j|^{2m+1}).
\end{align*}
Comparing both sides of $\Phi_{ij}(z_j,w_j)=\Phi_{ik}\circ\Phi_{kj}(z_j,w_j)$ for the coefficients in $w_j$ of order $\ell =m+1$, 
we obtain the desired conclusion by \rl{cc}.


(b)
We have
$\Phi_{kj}(z_j,w_j)=(
\var_{kj}(z_j)+\phi_{kj}^h(z_j,w_j),t_{kj}w_j+\phi_{kj}^v(z_j,w_j))$ with $\phi_{kj}^v(z_j,w_j)=O(|w_j|^{m+1})$.  Here $t_{kj}$ are constant. We get from the vertical components of $\Phi_{kj}=\Phi_{ki}\Phi_{ij}$ that
\aln
\phi_{kj}^v(z_j,w_j)&=t_{ki}\phi_{ij}^v(z_j,w_j)+\phi_{ki}^v(\Phi_{ij}(z_j,w_j))
\\
&
=t_{ki}\phi_{ij}^v(z_j,w_j)+
\var_{ki}(N_{ij}(z_j,w_j))+O(|w_j|^{m+2}),
\end{align*}
since $(\Phi_{ij}-N_{ij})(z_j,w_j)=O(|w_j|^2)$.
This shows that $\{[\phi_{kj}^{ v}]^\ell\}\in Z^1(\cL U,N_C\otimes N_C^{*\ell})$ for
$\ell=m+1$ by \rl{cc} $(a)$.
This gives us \re{pvZ1}.
\end{proof}
	
\subsection{Automorphisms of neighborhood of the zero section of flat vector bundles}

Let $\phi_{kj}$ defined on $U_k\cap U_j$ be the transition functions of $C$. Let $\Phi_{kj}$, defined on $V_k\cap V_j$,  be the transition functions of $M$, and let $N_{kj}$, defined on $\tilde V_k\cap\tilde V_j$ be the transition functions of $N_C$, with $\tilde V_k=\pi^{-1} U_k$. We identify $(C, U_j)$ as  subsets of  $\tilde V_j$ via the zero-section.
 Recall $\Phi_{kj}, N_{kj}$, and $\phi_{kj}$ are the same on $U_k\cap U_j$. By Cartan-Serre theorem, for any integer $m$, the space of global sections, $H^0(C, T_CM\otimes S^mN_C^*)$, is finite dimensional.

We say that a vector bundle is {\it flat}
if its transition matrices are locally constant.

		\begin{defn}
		\begin{enumerate}
			\item A formal tangent vector field $Y_j$ on  ${\tilde V}
			_j$ vanishing at $U_j$ is identified with $Y_j=\sum_{\ell\geq1}Y_j^{\ell}$ with $Y_j^{\ell}\in \Gaa(U_j, T_CM\otimes S^{\ell}N_C^*)$
 via
$$
\sum_{|Q|=\ell}a_{Q}^\all(h_j)v_j^Q\DD{}{h_j^\all}+b_Q^\la(h_j)v_j^Q\DD{}{v_j^\la}\mapsto \sum_{|Q|=\ell}a_{Q}^\all(z_j)(w_j^*)^Q\DD{}{z_j^\all}+b_Q^\la(z_j)(w_j^*)^Q\DD{}{w_j^\la}.
$$

Here $(h_j,v_j)$ is the coordinate map for $v_j^\la\DD{}{w_j^\la}\in (N_C)_p$ and we identity $h_j$ with $z_j|_{U_j}$ and $\DD{}{v_j}$ with $\DD{}{w_j}|_{U_j}$.

			\item A formal automorphism of   $\tilde V_j$ at $U_j$ that is tangent to the identity is an automorphism of a formal neighborhood of the $0$-section of $\tilde V_j$, fixing $U_j$ pointwise.
		\end{enumerate}
	\end{defn}	
	\begin{lemma}\label{equiv-loc-glob}
		Let $
\{F_j\}_j$ be a collection of
formal automorphisms
of $\tilde V_j$ fixing $U_j$ pointwise.
Let  $
\{Y_j\}_j$ be a collection of formal tangent
		vector fields of 
$\tilde V_j$ vanishing at $U_j$. 
		We have
		\begin{enumerate}
			\item $
\{F_j\}_j$ defines an automorphism
$F$ of a formal neighborhood of the $0$-section in $N_C$  
			if and only $F_k\circ N_{kj}=N_{kj}\circ F_j$ for all $k,j$
			\item Suppose that  $N_C$ is flat.
			$\{Y_j\}_j$ defines a vector field $Y$
on a formal neighborhood of the $0$-section in $N_C$ if and only if
			$\{Y_j^{\ell}\}\in H^0(C, T_CM\otimes S^{\ell}N_C^*)$ for all $\ell$.
			\item Suppose that $N_C$ is not flat.
			$\{Y_j\}_j$ defines a vector field on a formal neighborhood of the $0$-section in $N_C$ if and only if
			$\{Y_j\}\in H^0_{{\text{twisted}}}(C, T_CM\otimes \oplus_{\ell\geq 2}S^{\ell}N_C^*)$ with respect to the linear operator 
$\delta_{nf}(\{(Y_j^h,Y_j^v)\})=\{(\tilde Y^h_{kj},\tilde Y_{kj}^v)\}$ with
			\gan
\tilde Y_{kj}^h	
=Y_k^h(N_{kj}(h_j,v_j))-D\phi_{kj}(h_j)Y_j^h(h_j,v_j),\\
\tilde Y_{kj}^v=Y_k^v(N_{kj}(h_j,v_j))-t_{kj}(h_j)Y_j^v(h_j,v_j)-Dt_{kj}(h_j)v_j.Y_j^h(h_j,v_j).
			\end{gather*}
		\end{enumerate}
	\end{lemma}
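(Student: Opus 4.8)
The plan is to prove all three statements by the same mechanism: a collection of local formal data glues to a global object on a formal neighborhood of the zero section of $N_C$ precisely when the local data is compatible on overlaps, and the compatibility condition, when transported through the linear charts $N_{kj}$, is exactly the stated cohomological condition. First I would recall that an automorphism $F$ (resp. a vector field $Y$) of a formal neighborhood of the $0$-section in $N_C$ is by definition given in the charts $\tilde V_j$ by formal power series $F_j$ (resp. $Y_j$) that fix $U_j$ (resp. vanish at $U_j$), and that the transition maps of $N_C$ in these charts are exactly $N_{kj}(h_j,v_j)=(\var_{kj}(h_j),t_{kj}(h_j)v_j)$. So to say that $\{F_j\}$ patches to a globally defined $F$ is literally to say that the two expressions for $F$ in the overlap chart agree, i.e. $F_k\circ N_{kj}=N_{kj}\circ F_j$; this is part $(1)$ and requires only unwinding definitions, noting that $F_j$ fixing $U_j$ pointwise is automatically compatible with the fact that $N_{kj}$ preserves the zero section. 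I would phrase this as: $F$ is well-defined on $\bigcup \tilde V_j$ iff $F_j = N_{jk}\circ F_k\circ N_{kj}$ on each nonempty overlap, which is the asserted identity.

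For part $(2)$, a formal vector field $Y$ on the formal neighborhood is a derivation, equivalently a collection $\{Y_j\}$ of formal vector fields in the charts that transform correctly under $N_{kj}$, namely $(N_{kj})_* Y_j = Y_k$ on overlaps. Writing $Y_j = \sum_{\ell\geq 1} Y_j^\ell$ with $Y_j^\ell$ homogeneous of degree $\ell$ in $v_j$ with values in $T_CM\otimes S^\ell N_C^*$ over $U_j$, I would compute $(N_{kj})_*$ degree by degree. The key point here is the flatness hypothesis on $N_C$: when $t_{kj}$ is locally constant, the pushforward by $N_{kj}=(\var_{kj}(h_j), t_{kj} v_j)$ carries a homogeneous-degree-$\ell$ piece to a homogeneous-degree-$\ell$ piece with no mixing of degrees, and the transformation rule in each degree is exactly the transition rule for sections of $T_CM\otimes S^\ell N_C^*$ (via \re{use-it} and \re{dualg+}, as already recorded in \rl{cc}). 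Hence $(N_{kj})_*Y_j = Y_k$ holds iff $Y_k^\ell = $ (transition of $Y_j^\ell$) for every $\ell$, i.e. iff $\{Y_j^\ell\}$ is a global section of $T_CM\otimes S^\ell N_C^*$ for each $\ell$; since $Y$ vanishes at $U_j$, the sum starts at $\ell=1$, but the $\ell=1$ piece is always automatically consistent (it corresponds to $T_CM$, a genuine bundle), so the content is really $\ell\geq 2$, matching the statement. I expect the only mild subtlety is bookkeeping the chain rule for the horizontal derivative $\partial/\partial h_j^\alpha$ under $\var_{kj}$, which is precisely the $s_{kj}$-twist already built into the bundle $T_CM\otimes S^\ell N_C^*$.

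For part $(3)$, when $N_C$ is \emph{not} flat, the pushforward $(N_{kj})_*$ no longer preserves homogeneous degree in a chart-independent way because $t_{kj}(h_j)$ depends on the base point; differentiating $t_{kj}(h_j)v_j$ in $h_j$ produces lower-degree contributions (the term $Dt_{kj}(h_j)v_j\cdot Y_j^h$). So one cannot split $Y$ into $\ell$-homogeneous global sections; instead the correct statement is that $\{Y_j\}$ (as a whole element of $T_CM\otimes\bigoplus_{\ell\geq 2}S^\ell N_C^*$ over each $U_j$) must be annihilated by the ``twisted'' coboundary-type operator $\delta_{nf}$ whose formula I would obtain by simply expanding $(N_{kj})_*Y_j - Y_k = 0$ and collecting the horizontal and vertical components; this yields exactly the displayed $\tilde Y_{kj}^h$ and $\tilde Y_{kj}^v$ (using $\phi_{kj}=\var_{kj}$ here in the role of the base transition, and $D\phi_{kj}=s_{kj}$). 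Defining $H^0_{\text{twisted}}$ as $\ker\delta_{nf}$, the statement is then a tautological reformulation of ``$(N_{kj})_*Y_j=Y_k$ for all $k,j$''.

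The main obstacle, such as it is, is purely computational rather than conceptual: carefully carrying out the pushforward $(N_{kj})_*$ of a formal vector field and matching it, term by term in the vertical grading, with the transition rules for sections of $T_CM\otimes S^\ell N_C^*$ — in the flat case getting a clean degree-preserving identification, and in the non-flat case correctly tracking the degree-dropping term $Dt_{kj}(h_j)v_j$ that forces one to work with the whole direct sum and the operator $\delta_{nf}$. All three parts then follow by unwinding the definition of ``defines an object on the formal neighborhood'' into ``agrees on overlaps in the $N_{kj}$-charts.'' I would present parts $(1)$ and $(2)$ in a few lines each and then note that $(3)$ is the same computation with the flatness simplification removed, exhibiting the operator $\delta_{nf}$ as the obstruction.
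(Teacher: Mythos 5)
Your proposal is correct and follows essentially the same route as the paper: write the gluing condition as $F_k\circ N_{kj}=N_{kj}\circ F_j$ (resp.\ $DN_{kj}(Y_j)=Y_k$), compute the pushforward of a degree-$\ell$ homogeneous field, and observe that the only degree-mixing contribution is the term $Dt_{kj}(h_j)v_j\cdot Y_j^h$ of degree $\ell+1$, which vanishes exactly when $N_C$ is flat, yielding global sections of $T_CM\otimes S^{\ell}N_C^*$ in each degree in case (2) and the kernel of the twisted operator $\delta_{nf}$ in case (3). One small caveat: your parenthetical in part (2) that the $\ell=1$ piece is ``automatically consistent'' is not right---local sections of a genuine bundle still need the compatibility relation \re{relation} to glue---but since your derived equivalence in fact imposes the condition for every $\ell$, this aside does no harm to the argument.
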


\begin{proof}
		Let $(h_j,v_j)$ be the coordinates in $N_C$ over $U_j$.
		Note that $\{Y_j\}$ defines a global tangent vector filed of $N_C$ if and only if $DN_{kj}(Y_j)=Y_k$. A homogeneous vector field of degree $\ell$ on $\tilde V_j$ is an element $Y_j^{\ell}\in C^0(U_j, T_CM\otimes S^{\ell}N_C^*)$ defined by $$Y_j^{\ell}(h_j,v_j)=\sum_{m=1}^nY_{j,m}^{\ell,h}(h_j,v_j)\frac{\partial}{\partial h_{j,m}}+ \sum_{r=1}^dY_{j,r}^{\ell,v}(h_j,v_j)\frac{\partial}{\partial v_{j,r}}=: Y_j^{\ell,h}+Y_j^{\ell, v}.
		$$
		Recall that $N_{kj}(h_j,v_j)=(\phi_{kj}(h_j),t_{kj}(h_j)v_j)$. Thus
\aln
		DN_{kj}\left(Y_j^{\ell,h}+Y_j^{\ell, v}\right)&=D\phi_{kj}(h_j)Y_{j}^{\ell,h}(h_j,v_j)+t_{kj}(h_j)Y_{j}^{\ell,v}(h_j,v_j)\\
		&\quad +\sum_{j=1}^n\sum_{r,s=1}^d\f{\pd t_{kj,rs}(h_j)}{\pd h_{j,m}}Y_{j,m}^{\ell,h}(h_j,v_j)v_{j,s}\frac{\partial}{\partial {v}_{k,r}},
\end{align*}
	where the last term is in $C^0(U_k\cap U_j,N_C\otimes S^{\ell+1}N_C^*)$.
	When $N_C$ is flat, we see that $DN_{kj}Y_j=Y_k$ if and only if $DN_{kj}Y_j^\ell=Y_k^\ell$ for each $\ell$ and that the latter holds  if and only if
	\beq\label{relation}
	Y_{k}^{\ell,h}(\phi_{kj}(h_j),t_{kj}v_j)=D\phi_{kj}(h_j)Y_{j}^{\ell,h}(h_j,v_j),\quad Y_{k}^{\ell,v}(\phi_{kj}(h_j),t_{kj}v_j)=t_{kj}Y_{j}^{\ell,v}(h_j,v_j).
	\eeq
	In other words, $\{Y_j^\ell\}_j$ defines a global section of $T_CM\otimes S^\ell N_C^*$.
\end{proof}
\begin{lemma}\label{t-1}
Let $F_j$ be a formal automorphism of $\tilde V_j$ in $N_C$, 
which is tangent to identity and preserves the splitting of $T_C(N_C)$ along $U_j$. Then, $F_j$ is the time-$1$ map of a unique
formal vector field $Y_j$ in $\tilde V_j$,
 vanishing on $U_j$
  up to order $\geq 2$.
\end{lemma}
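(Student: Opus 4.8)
The plan is to construct $Y_j$ directly as the formal logarithm of $F_j$, order by order in the vertical variable $v_j$, and then to check both that the construction is forced (uniqueness) and that it preserves the splitting. Write $F_j(h_j,v_j)=(h_j,v_j)+\sum_{\ell\geq 2}F_j^{[\ell]}(h_j,v_j)$, where $F_j^{[\ell]}$ is the part homogeneous of degree $\ell$ in $v_j$; here we use that $F_j$ is tangent to the identity along $U_j$, that it fixes $U_j$ pointwise (so there are no degree-$0$ terms), and that it preserves the splitting of $T_C(N_C)$ (so in fact the horizontal component has no degree-$1$ term either, and the degree-$1$ part of $F_j$ is exactly the identity). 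We seek $Y_j=\sum_{\ell\geq 2}Y_j^{[\ell]}$, a formal vector field vanishing to order $\geq 2$ on $U_j$, with $\exp(Y_j)=F_j$.

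First I would set up the standard comparison between $\exp(Y_j)$ and $F_j$ in the graded ring of formal power series in $v_j$ with holomorphic coefficients in $h_j$: expanding $\exp(Y_j)(h_j,v_j)=(h_j,v_j)+Y_j(h_j,v_j)+\frac1{2}Y_j(Y_j)(h_j,v_j)+\cdots$ and collecting terms of vertical degree $\ell$, one finds
\[
\bigl(\exp(Y_j)\bigr)^{[\ell]}=Y_j^{[\ell]}+R^{[\ell]}\bigl(Y_j^{[2]},\dots,Y_j^{[\ell-1]}\bigr),
\]
where $R^{[\ell]}$ is a universal polynomial expression in the lower-order pieces (this uses that each $Y_j^{[m]}$ raises vertical degree by at least $m-1\geq 1$, so only finitely many terms of the exponential series contribute to degree $\ell$, and $Y_j^{[\ell]}$ itself appears only linearly, via the first term). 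Equating with $F_j^{[\ell]}$ gives the recursion $Y_j^{[\ell]}=F_j^{[\ell]}-R^{[\ell]}(Y_j^{[2]},\dots,Y_j^{[\ell-1]})$, which determines $Y_j^{[\ell]}$ uniquely once the earlier pieces are fixed; the base case $\ell=2$ reads $Y_j^{[2]}=F_j^{[2]}$. This simultaneously proves existence and uniqueness of the formal vector field $Y_j$, and shows $Y_j$ vanishes to order $\geq 2$ since $F_j$ does. That $\exp(Y_j)$ is a genuine formal automorphism (invertible) is automatic, its inverse being $\exp(-Y_j)$.

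Next I would verify that $Y_j$ preserves the splitting, i.e. that the horizontal component $Y_j^h$ has no pure vertical-degree-$1$ part — equivalently $Y_j^{h,[1]}=0$. This follows inductively from the recursion: the hypothesis that $F_j$ preserves the splitting says $F_j^{h}$ has vertical order $\geq 2$, hence $F_j^{h,[1]}=0$; and in the recursion formula the remainder $R^{[\ell]}$ is built from iterated applications of vector fields all of whose horizontal parts already have vertical order $\geq 2$ by the inductive hypothesis, so $R^{[\ell]}$ cannot produce a horizontal degree-$1$ term either. Hence each $Y_j^{[\ell]}$ has horizontal part of vertical degree $\geq 2$, as required. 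The one genuinely delicate point — and the step I expect to be the main obstacle to writing cleanly rather than conceptually hard — is bookkeeping the grading: one must confirm that applying the vector field $Y_j$ (a derivation) to a term of vertical degree $m$ and horizontal degree handled via $\partial/\partial h_j$ still strictly raises total vertical degree, so that the recursion is well-founded and each $Y_j^{[\ell]}$ depends only on finitely many previously determined pieces. This is where the precise convention that $Y_j$ vanishes to order $\geq 2$ on $U_j$ (not merely order $\geq 1$) is used, and it is what makes $R^{[\ell]}$ depend only on $Y_j^{[2]},\dots,Y_j^{[\ell-1]}$ rather than on $Y_j^{[\ell]}$ itself.
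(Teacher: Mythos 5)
Your proposal is correct and is essentially the paper's argument: both rest on the same triangular recursion in the vertical degree, where the degree-$\ell$ coefficient of the time-$1$ map equals $Y_j^{[\ell]}$ plus a universal expression in $Y_j^{[2]},\dots,Y_j^{[\ell-1]}$, forcing existence and uniqueness. The only cosmetic difference is that the paper obtains this recursion by integrating the flow ODE in the time parameter, whereas you expand the formal exponential (Lie series) directly; the bookkeeping that a degree-$m$ vector field term raises vertical degree by at least $m-1\geq 1$ is exactly what makes both versions well-founded.
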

\begin{proof}
Let $F_j$ be given by
$$
\tilde h_j=h_j+\sum_{|\all|\geq2}A_{j,\all}(h_j)v_j^\all, \quad \tilde v_j=v_j+\sum_{|\beta|\geq2}B_{j,\beta}(h_j)v_j^\beta.
$$
Drop the index $j$. We want to express it as the time-1 map of a tangent vector field
$$
Y=\sum_{\ell\geq2}\left\{\sum_{m=1}^nY_{m}^{\ell,h}(h,v)\frac{\partial}{\partial h_{m}}+ \sum_{r=1}^dY_{r}^{\ell,v}(h,v)\frac{\partial}{\partial v_{r}}\right\},
$$
where $Y_{m}^{\ell,h}(h,v),Y_{r}^{\ell,v}(h,v)$ are homogeneous polynomials in $v$ of degree $\ell$. The flow of $Y$ with time $\theta$ is given by
$$
h_m^\theta =h_m+\sum_{|\all|\geq2} A_{m,\all}^\theta(h)v^\all, \quad  v_r^\theta=v_r+\sum_{|\alpha|\geq2}B^\theta_{r,\alpha}(h)v^\alpha,
$$
where $A^\theta,B^\theta$ satisfy $A^0=B^0=0$ and
$$
\sum_{|\alpha|\geq 2}v_j^{\alpha}\frac{dA_{m,\all}^\theta(h_j)}{d\theta}=\sum_{\ell\geq2}Y_{m}^{\ell,h}(h^\theta,
v^\theta),\quad
\sum_{|\alpha|\geq 2}v_j^{\alpha}\frac{dB_{r,\all}^\theta(h)}{d\theta}= Y_{r}^{\ell,v}(h^\theta,v^\theta).
$$
Inductively, we can verify that $A_{m,\all}^1-Y^h_{m,\all}, B_{m,\all}^1-Y^v_{r,\all}$ are uniquely determined by $Y^{\ell,h}_{m',\beta},
Y^{\ell,v}_{r',\beta}$ with $\ell<|\all|$.
\end{proof}
Note that the formal time-1 mapping of $DN_{kj}(Y_j)$ on $
\tilde V_k\cap\tilde V_j$
 can also be defined and it equals $N_{kj}F_j N_{kj}^{-1}$ where $F_j$ is the time-1 map of $Y_j$. Thus the uniqueness assertion in the lemma implies the following.
\begin{prop}\label{autom-lin}
Any automorphism $F
$ of a formal neighborhood of  
$C$  in $N_C$, 
which is tangent to identity and preserves the splitting of $T_C(N_C),$
 is the time-1 map of a unique vector field defined on a formal neighborhood of  $C$ in
  $N_C$ and vanishing on $C$.
Assume further that $N_C$ is flat. Then any tangent vector field $Y$ of $N_C$
that vanishes on $C$ 
to order two
admits a
decomposition
$$
Y=\sum_{\ell\geq 2} Y^{\ell},\quad Y^{\ell} \in H^0(C, T_CM\otimes S^{\ell}N_C^*).
$$
\end{prop}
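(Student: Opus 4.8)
The plan is to deduce Proposition~\ref{autom-lin} directly from Lemmas~\ref{t-1} and~\ref{equiv-loc-glob} together with the remark preceding the statement. First I would prove the first assertion. Given an automorphism $F=\{F_j\}$ of a formal neighborhood of $C$ in $N_C$ tangent to the identity and preserving the splitting of $T_C(N_C)$, apply Lemma~\ref{t-1} to each chart to write $F_j$ as the time-$1$ map of a unique formal vector field $Y_j$ on $\tilde V_j$ vanishing on $U_j$ to order $\geq2$. The issue is to glue the $Y_j$ into a global vector field $Y$ on a formal neighborhood of $C$ in $N_C$, i.e.\ to check $DN_{kj}(Y_j)=Y_k$. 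For this I would use the remark: the time-$1$ map of $DN_{kj}(Y_j)$ is $N_{kj}F_jN_{kj}^{-1}$, which by the cocycle relation $F_k N_{kj}=N_{kj}F_j$ equals $F_k$, the time-$1$ map of $Y_k$. Since $N_{kj}$ preserves the splitting and is fibered over $\var_{kj}$, the vector field $DN_{kj}(Y_j)$ is again tangent to the identity's flow data and preserves the splitting along $U_k$, so by the uniqueness clause in Lemma~\ref{t-1} we get $DN_{kj}(Y_j)=Y_k$. Hence $\{Y_j\}$ defines the desired global $Y$, and $F$ is its time-$1$ map; uniqueness of $Y$ follows from uniqueness of each $Y_j$.

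Next I would prove the second assertion, under the additional hypothesis that $N_C$ is flat. Let $Y$ be a tangent vector field of $N_C$ vanishing on $C$ to order two. Over each $U_j$ expand $Y_j=\sum_{\ell\geq2}Y_j^\ell$ into its homogeneous components in the fiber variable $v_j$, with $Y_j^\ell\in C^0(U_j,T_CM\otimes S^\ell N_C^*)$ in the identification fixed in the definition preceding Lemma~\ref{equiv-loc-glob}. Because $N_C$ is flat, $N_{kj}(h_j,v_j)=(\var_{kj}(h_j),t_{kj}v_j)$ with $t_{kj}$ constant, so $DN_{kj}$ sends a degree-$\ell$ homogeneous field on $\tilde V_j$ to a degree-$\ell$ homogeneous field on $\tilde V_k$; thus the global compatibility $DN_{kj}(Y_j)=Y_k$ splits degree by degree into $DN_{kj}(Y_j^\ell)=Y_k^\ell$ for every $\ell$. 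By equation~\eqref{relation} in the proof of Lemma~\ref{equiv-loc-glob}, this says exactly that $\{Y_j^\ell\}_j$ is a global holomorphic section of $T_CM\otimes S^\ell N_C^*$, i.e.\ $Y^\ell\in H^0(C,T_CM\otimes S^\ell N_C^*)$. Summing gives the asserted decomposition $Y=\sum_{\ell\geq2}Y^\ell$.

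I expect the main obstacle to be the gluing step in the first assertion: making precise that $DN_{kj}(Y_j)$ satisfies the hypotheses of Lemma~\ref{t-1}'s uniqueness statement (tangent to the identity, preserving the splitting along $U_k$), so that one is entitled to conclude $DN_{kj}(Y_j)=Y_k$ from the coincidence of their time-$1$ maps. Here I would note that $F_k$ is tangent to the identity and preserves the splitting by hypothesis, that conjugation by $N_{kj}$ preserves both properties since $N_{kj}$ is linear in $v$ and block-diagonal with respect to the splitting, and that a formal vector field vanishing on $U_k$ to order $\geq2$ is recovered uniquely from its flow; the flatness is not needed for this part, only the splitting-preservation. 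The degree-by-degree argument in the second part is then routine given Lemma~\ref{equiv-loc-glob}(2) and the explicit computation of $DN_{kj}$ there.
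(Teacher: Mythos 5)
Your proposal is correct and follows essentially the same route as the paper: Lemma~\ref{t-1} applied chart by chart, the identity that $N_{kj}F_jN_{kj}^{-1}$ is the time-$1$ map of $DN_{kj}(Y_j)$ combined with $F_kN_{kj}=N_{kj}F_j$ and the uniqueness in Lemma~\ref{t-1} to glue the local fields, and then Lemma~\ref{equiv-loc-glob}~(2) (equation \eqref{relation}, degree by degree, using flatness of $N_C$) for the decomposition into global sections of $T_CM\otimes S^{\ell}N_C^*$. The point you flag as the main obstacle—that $DN_{kj}(Y_j)$ again vanishes to order $\geq 2$ so the uniqueness applies on the overlap—is handled correctly and matches the paper's implicit argument.
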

We write  $\delta_m= (\delta_m^h,\delta_m^v)$ corresponding to the splitting $T_CM=TC\oplus N_C$. Let us set $\cL G_m:=\text{Range}(\delta_m)$.
We have a decomposition
\eq{non-canonical}
Z^1(\cL U, T_CM\otimes S^mN_C^*)=\cL G_m\oplus\cL N_m
\eeq
 where $\cL N_m\simeq H^1(\cL U, TM_C\otimes S^mN_C^*)$.
Let $C^0(\cL U, TM_C\otimes S^mN_C^*)= \cL R_m\oplus \ker\delta_m$ with $\delta_m(\cL R_m)=\cL G_m$.
We emphasize that the decomposition \re{non-canonical} is not unique. For our convergence result, a natural decomposition will be given via a possibly non-unique minimizing solution. Consequently, $\oplus$ is interpreted as merely a decomposition suitable for convergence proof.

\begin{lemma} Suppose that $N_C$ is flat.
	Any formal transformation
	$F_j$ of $\tilde V_j$ which is tangent to identity and preserves the splitting of $T_C(N_C)$ can be  uniquely factorized as
	$$
	F_j= G_j^{-1}\circ H_j
	$$
	where $H_j-I\in \sum_{m\geq2}\cL R^m$, $
	G_j$ is an  automorphism of
	$\tilde V_j$,
	and terms of order $m$ in $G_j, H_j$ are uniquely determined by the terms of order at most $m$ in $F_j$.
Furthermore,   $G_i N_{ik}=N_{ik} G_k$ for all $i,k$.
\end{lemma}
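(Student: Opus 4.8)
The plan is to pass to generating vector fields via Lemma~\ref{t-1} and to build the factorization one homogeneous degree at a time, using the splitting $C^0(\cL U,T_CM\otimes S^mN_C^*)=\cL R_m\oplus\ker\delta_m$ together with the identification $\ker\delta_m=H^0(C,T_CM\otimes S^mN_C^*)$. First I would use Lemma~\ref{t-1} to write $F_j=\exp(Y_j)$ for the unique formal vector field $Y_j$ on $\tilde V_j$ vanishing on $U_j$ to order $\ge 2$, and expand $Y_j=\sum_{m\ge2}Y_j^m$ into its homogeneous components in the fibre variables, so that $Y_j^m\in\Gamma(U_j,T_CM\otimes S^mN_C^*)$ and $\{Y_j^m\}_j\in C^0(\cL U,T_CM\otimes S^mN_C^*)$. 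I then look for $G_j=\exp(Z_j)$ and $H_j=\exp(W_j)$ with $Z_j=\sum_{m\ge2}Z_j^m$, $W_j=\sum_{m\ge2}W_j^m$; the desired $F_j=G_j^{-1}\circ H_j$ is equivalent to $\exp(Z_j)\circ\exp(Y_j)=\exp(W_j)$. Since every vector field here vanishes on $U_j$ to order $\ge 2$ --- a pro-nilpotent situation for the filtration by order of vanishing, in which the Lie bracket of homogeneous parts of degrees $a,b\ge 2$ has all its homogeneous components of degree $\ge a+b-1\ge 3$ --- the Baker--Campbell--Hausdorff formula applies and turns this into the family of identities
\[
W_j^m \;=\; Z_j^m + Y_j^m + \Psi_j^m\big(\{Z_j^\ell\}_{2\le\ell<m},\,\{Y_j^\ell\}_{2\le\ell<m}\big),\qquad m\ge 2,
\]
with $\Psi_j^m$ a universal Lie polynomial in the listed strictly-lower-degree components (in particular $\Psi_j^2=0$); the degree count just recorded shows both that only degrees $<m$ occur in $\Psi_j^m$ and that the series is a finite sum in each degree.

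Next I would solve these identities inductively in $m\ge 2$. Assuming $Z_j^\ell\in\ker\delta_\ell$ and $W_j^\ell\in\cL R_\ell$ have been chosen for $2\le\ell<m$, the correction $\Psi_j^m$ is a determined section in $\Gamma(U_j,T_CM\otimes S^mN_C^*)$ --- brackets of sections over $U_j$ are sections over $U_j$ --- so $\{Y_j^m+\Psi_j^m\}_j\in C^0(\cL U,T_CM\otimes S^mN_C^*)$, and I split it uniquely according to the direct sum:
\[
\{Y_j^m+\Psi_j^m\}_j \;=\; \{W_j^m\}_j + \{-Z_j^m\}_j,\qquad \{W_j^m\}_j\in\cL R_m,\quad \{Z_j^m\}_j\in\ker\delta_m .
\]
This produces $W_j^m\in\cL R_m$ and $Z_j^m\in\ker\delta_m$ satisfying the identity at degree $m$; the choice is forced at every degree by uniqueness of the direct-sum decomposition and by the fact that $\Psi_j^m$ depends only on the (already unique) strictly-lower-degree data, which is exactly the asserted uniqueness and the statement that the degree-$m$ terms of $G_j,H_j$ are determined by the order-$\le m$ terms of $\{F_j\}$ (through $Y_j^\ell$, $\ell\le m$). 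Setting $G_j:=\exp(\sum_{m\ge2}Z_j^m)$ and $H_j:=\exp(\sum_{m\ge2}W_j^m)$ gives formal automorphisms of $\tilde V_j$ tangent to the identity; they preserve the splitting of $T_C(N_C)$ because $\delta_m=(\delta_m^h,\delta_m^v)$, and hence $\cL R_m$ and $\ker\delta_m$, respect the decomposition $T_CM\otimes S^mN_C^*=(TC\otimes S^mN_C^*)\oplus(N_C\otimes S^mN_C^*)$. By construction $G_j\circ F_j=H_j$, so $F_j=G_j^{-1}\circ H_j$, and $H_j-I$ lies in $\sum_{m\ge2}\cL R_m$ in the sense that the generating vector field of $H_j$ has all its homogeneous components in $\cL R_m$.

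Finally I would prove the ``Furthermore'' clause. Since $\{Z_j^m\}_j\in\ker\delta_m=H^0(C,T_CM\otimes S^mN_C^*)$ for every $m$, Lemma~\ref{equiv-loc-glob}(2) shows that $\{Z_j\}_j$ defines a formal tangent vector field of $N_C$ along $C$, i.e.\ $DN_{kj}(Z_j)=Z_k$ for all $k,j$; consequently $N_{kj}$ conjugates the time-$1$ flow of $Z_j$ to that of $Z_k$, that is $G_k\circ N_{kj}=N_{kj}\circ G_j$, equivalently $G_iN_{ik}=N_{ik}G_k$. I expect this last translation --- from ``every homogeneous log-component of $G_j$ lies in $\ker\delta_m$'' to the honest cocycle identity $G_iN_{ik}=N_{ik}G_k$ --- to be the main point of the lemma: it is exactly what uses flatness of $N_C$ (so that $\ker\delta_m$ is $H^0$ and global homogeneous vector fields integrate compatibly through the $N_{kj}$, the order-by-order decoupling failing once the $Dt_{kj}$ term is present), and it is also why the decomposition $C^0=\cL R_m\oplus\ker\delta_m$ must be performed in the full cochain space rather than chart by chart. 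The remaining items --- the Baker--Campbell--Hausdorff bookkeeping of the first step and the verification that $G_j,H_j$ are automorphisms preserving the splitting --- are routine once the degree count is in hand.
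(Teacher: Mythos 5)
Your strategy is the paper's: write $F_j=\exp(\sum_m C_j^m)$ via Lemma~\ref{t-1}, use Campbell--Hausdorff bookkeeping to reduce to degree-by-degree identities, split each degree with the (non-canonical) projection onto $\ker\delta_m$ along $\cL R_m$, and then use flatness, the identification $\ker\delta_m\cong H^0(C,T_CM\otimes S^mN_C^*)$ from Lemma~\ref{equiv-loc-glob}, and the compatibility of time-$1$ flows with $N_{kj}$ to obtain $G_iN_{ik}=N_{ik}G_k$. The induction, the uniqueness argument, and the ``Furthermore'' clause are all handled the same way the paper handles (or implicitly handles) them.

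The one genuine discrepancy is the normalization of the second factor. The lemma asserts $H_j-I\in\sum_{m\geq2}\cL R_m$, i.e.\ the homogeneous Taylor components of $H_j-I$ themselves lie in $\cL R_m$, and the uniqueness claim is relative to that normalization; accordingly the paper writes $H_j=I+\sum_m H_j^m$ with $\{H_j^m\}\in\cL R_m$ and expands $\exp(\sum_m C_j^m)=(\exp\sum_m A_j^m)(I+\sum_m H_j^m)$. You instead take $H_j=\exp(W_j)$ with $W_j^m\in\cL R_m$, and you yourself note that then only the \emph{logarithm} of $H_j$ has components in $\cL R_m$: the degree-$m$ component of $\exp(W_j)-I$ equals $W_j^m$ plus nonlinear corrections built from $W_j^\ell$, $\ell<m$, and since $\cL R_m$ is merely a linear complement of $\ker\delta_m$ these corrections need not stay in $\cL R_m$. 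So as written you have proved existence and uniqueness of a different normal form, not the stated one, and this matters because the later convergence arguments use the Taylor coefficients of the correcting maps as the (minimizing) solutions in $\cL R_m$. The repair is small and purely mechanical: keep your $G_j=\exp(\sum Z_j^m)$ but seek the right factor in the form $I+\sum_m H_j^m$, so that comparing degree-$m$ terms of $G_j\circ F_j=H_j$ gives $H_j^m=C_j^m+Z_j^m+E_j^m$ with $E_j^m$ depending only on lower-degree data, and the same projection argument forces $\{Z_j^m\}=-\pi(\{C_j^m+E_j^m\})\in\ker\delta_m$ and $\{H_j^m\}\in\cL R_m$ uniquely --- which is exactly the paper's computation.
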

\begin{proof}
	We know that $F_j=\exp \sum_mC_j^m$  is the time-1 map of  $\sum_{m\geq2}
	C_j^m$.
	
	 We want to decompose
	$$
	\exp \sum_mC_j^m=(\exp\sum_mA_j^m)(I+\sum_m H_j^m).
	$$
	By Campbell-Hausdorff formula, we are led to the equation
	$$
	H_j^m=C_j^m-A_j^m+E_j^m
	$$
	where $E_j^m$ depends only on $C_j^\ell, A_j^\ell$ for $\ell<m$. We determine $A_j^m,B_j^m$ by decomposing $C_j^m$ and $E_j^m$ as follow :  Let $\pi$ be the (non-canonical)
	projection from $C^0(\cL U, TM_C\otimes S^mN_C^*)$ onto $\ker\delta_m$. Let $\{A_j^m\}_j:= \pi(\{C_j^m+E_j^m\})$. Then $\{H_j^m\}\in \cL R_m$.
\end{proof}	
Next, we study the dependence of cohomology classes of $
[\phi^h_{kj}]^\ell,[\phi^v_{kj}]^\ell$ in coordinates.
We first consider the 
full set of linear cohomological equations.

\subsection{Formal coordinates
in the absence of formal obstructions}
For
a power series $u(z_j,w_j)$, let $u^{\leq m}(z_j,w_j)$ be the Taylor polynomial of $u$ about $w_j=0$ with degree $m$. Thus
 we can define
$$
u=u^{\leq m}+u^{>m}, \quad u^{>m}(z_j,w_j)=O(|w_j|^{m+1}),\quad
[u]^m=u^{\leq m}-u^{<m},\quad [u]_{\ell}^m=u^{\leq m}-u^{<\ell}.
$$
  In order to describe the coboundary operator in next lemma,
  we define   the linear  operator $\widetilde D$ by
$$
((\widetilde Du) f)(h_j,v_j):=\f{\pd u}{\pd h_j}(h_j,0)f^h(h_j,v_j)+\f{\pd u}{\pd v_j}(h_j,0)f^v(h_j,v_j),
$$
for a function $u(h_j,v_j)$.
The standard differential $D$ is given by
$$
((Du)f)
(h_j,v_j)=\f{\pd u}{\pd h_j}(h_j,v_j)f^h
(h_j,v_j)+\f{\pd u}{\pd v_j}(h_j,v_j)f^v(h_j,v_j).
$$
Thus
\eq{dtd}
 (Du
 -\widetilde D u)f(h_j,v_j)=(Du(h_j,v_j)-Du(h_j,0))f(h_j,v_j).
\eeq
For a multiindex $\all=(\all_h,\all_v)$, define
$$
  ( \tilde D^\all u)(h_j)=\left\{\f{\pd^{|\all|}u}{\pd h_j^{\all_h} \pd v_j^{\all_v}}\right\}(h_j,0).
$$
\le{centralizer}
 Let $\Phi_{kj}=N_{kj}+\phi_{kj}$
 satisfy condition $L_m$ with $m \geq 1$. Suppose that $F_j(h_j,v_j)=(h_j,v_j)+ f_j(h_j,v_j)$ with $f_j(h_j,v_j)=O(|v_j|^2)$ are formal mappings such that
$\{F_k^{-1}\Phi_{kj}F_j\}\in L_{m}$. Then, on $U_j\cap U_k$,  $l=2,\ldots,m$,
\al\label{partial-autom}
&(\delta\{[f_j]^{\leq l}\})_{kj}(h_j,v_j)=-\Bigl[N_{kj}((I+
[f_j]^{\leq l-2})(h_j,v_j) 
)-N_{kj}(h_j,v_j)\Bigr.
\\
&\qquad-DN_{kj}(h_j,v_j)[f_j]^{\leq l-2}(h_j,v_j)\Bigr]^{\leq l}
- \Bigl(0,(Dt_{kj}(h_j)[f_j^h]^{\leq l-1}(h_j,v_j))v_j\Bigr).\nonumber
\end{align}
\bpp 
\item If    $f_j(h_j,v_j)=O(|v_j|^{m+1})$ for all $j$, then $N_{kj}+\tilde\phi_{kj}=F_k^{-1}\Phi_{kj}F_j+O(|v_j|^{2m+1})$ hold if and only if on $U_j\cap U_k$
\eq{cohom-newtonl}
(\delta\{[f_i]^{\leq   2m}\})_{kj}=   [\tilde\phi_{kj}-\phi_{kj}]^{\leq 2m} -
 \left(0,(Dt_{kj}(h_j)[f_j^h]^{\leq 2m-1})v_j\right).
\eeq
\item
If $\{F_j\}$  defines a germ of biholomorphism of order $m$ at the zero section of the normal bundle, i.e.
$$
F_k^{-1}N_{kj}F_j(h_j,v_j)=N_{kj}(h_j,v_j)+O(|v_j|^{m+1})$$ and if $f_j^h(h_j,v_j)=O(|v_j|^{ m})$, then
$
\mathcal V_j^{\leq m}(h_j,v_j):=(h_j,v_j+[f_j^v]^{\leq m})
$
preserves $\{N_{kj}\}$.
\item \label{newton-scheme}Suppose 
$F_k^{-1}\Phi_{kj}F_j\in L_{2m}$. Assume further  that either $N_C$ is flat or
\eq{h02m}
    H^0(C,TC\otimes S^{p}N_C^*)=0,\quad 2\leq p \leq 2m.\eeq
Then there exist $\hat F_j=I+O(|v_j|^{m+1})$
where
 $[\hat F_j^h]^{2m}_{m+1}$ are uniquely determined  by  $[\Phi_{kj}]_{m+1}^{2m}$ such that $\hat F_k^{-1}\Phi_{kj}\hat F_j\in L_{2m}$. There exists a  unique
 decomposition $\{\hat F_j=\cL H_j\circ  \cL  V_j\circ \tilde F_j\}$ in the form
\ga
\label{cLHi}
\cL H_j(h_j,v_j)=(h_j+H_j(h_j,v_j),v_j),\\
\cL V_j(h_j,v_j)=(h_j,v_j+V_j(h_j,v_j)),\\
\label{tFji}
[\tilde F_j]^i=0, \ \forall 2\leq i\leq 2m,\quad  [H_j]^\ell=[V_j]^\ell=0,\  \forall \ell>2m.
\end{gather}
Furthermore, 
$[H_j]^\ell=[V_j]^\ell=0$ for $\ell\leq m$, and
$H_j$ are uniquely determined by
\eq{Bikj}
(\del^h\{H_i\})_{kj}= - [\phi_{kj}^h]^{\leq 2m}.
\eeq
Moreover,  $\tilde\phi_{kj}=\cL H_k^{-1}\Phi_{kj}\cL H_j-N_{kj}$ satisfy $\tilde\phi_{kj}^h(h_j,v_j)=O(|v_j|^{2m+1})$ and $\tilde\phi_{kj}^v(h_j,v_j)=O(|v_j|^{m+1})$,    and $V_i$ satisfy
\eq{Ajkj}
(\del^v\{V_i\})_{kj}=- [\tilde\phi_{kj}^v]^{\leq 2m}.
\eeq
\epp 
\ele
\begin{proof}
Let $\Phi_{kj}=N_{kj}+
\phi_{kj}$ and $\tilde \Phi_{kj}=N_{kj}+\tilde\phi_{kj}$.
 Suppose that both $\phi_{kj}$ and $\tilde \phi_{kj}$ are of order $\geq m+1$ (i.e. $O(|v_j|^{m+1})$) and $F_k\Phi_{kj}=\tilde\Phi_{kj}F_j$.
Recall that $F_k=I+f_k$.
To use the coboundary operator, we write
\begin{eqnarray}
f_k(N_{kj})-
\widetilde DN_{kj}f_j+ \phi_{kj}-\tilde\phi_{kj}&=& \underbrace{\bigl(f_k(N_{kj}-f_k(N_{kj}+\phi_{kj}))\bigr)}_{A}\label{LmA}\\
&&+\underbrace{\left(\tilde\phi_{kj}(I+f_j)-\tilde\phi_{kj}\right)}_B\nonumber\\
&&+\underbrace{\left(N_{kj}(I+f_j)-N_{kj}-\widetilde DN_{kj}f_j\right)}_C.\nonumber
\end{eqnarray}
Since $f_j$ has order $\geq 2$ at $v_j=0$, by the Taylor expansion at $N_{kj}$ and at $I$ respectively, both $A$ and $B$ are of order $\geq m+2$ (w.r.t $v_j$) at the origin. For the same reason, the $C$ is of order $\geq 4$. We recall that, for each $\ell\in \Bbb N^*$, the coboundary operator $\delta
$ sends $C^0(\cL U, T_CM\otimes S^{\ell}(N_C^{*}))$ into $C^1(\cL U, T_CM\otimes S^{\ell}(N_C^{*}))$ as sections. It is defined in coordinates by
$$(\delta f)_{kj}=\widetilde DN_{kj}f_j(h_j,v_j)-f_k(N_{kj}(h_j,v_j))$$
on $U_j\cap U_k$ when $f=\{f_j\}\in C^0(\cL U, T_CM\otimes S^{\ell}(N_C^{*}))$. As $\del$ preserves the degree $\ell$  of $f_j$  in $v_j$, we shall omit its dependence in $\ell$. Truncating the Taylor expansion of \re{LmA} at
$v_j=0$ up to degree $m$ will lead to the first point.

  Since $f_j(h_j,v_j)=O(|v_j|^2)$, then $A,B$  are of order $\geq m+1$.
Using \re{dtd}, we obtain
\aln
C &=N_{kj}(I+f_j(h_j,v_j))-N_{kj}(h_j,v_j)-DN_{kj}(h_j,v_j)f_j(h_j,v_j)\\
&\quad+(DN_{kj}(h_j,v_j)-DN_{kj}(h_j,0))f_j(h_j,v_j).
\end{align*}
We have
$(DN_{kj}(h_j,v_j)-DN_{kj}(h_j,0))f_j(h_j,v_j)=(0,Dt_{kj}(h_j)f_j^h(h_j,v_j)v_j)$.
Thus,
  $$C=(0,(Dt_{kj}(h_j)f_j^h  (h_j,v_j)v_j)+a(1)-a(0)-a'(0)$$
  with $a(\la)=N_{kj}(h_j+\la f_j^h, v_j+\la f_j^v)$.
   Note that
\aln
a(1)-&a(0)-a'(0)
=\int_0^1(1-\la)a''(\la)\, d\la
\\
&=\sum_{|\all|=2}\f{|\all|!}{\all!}\int_0^1(1-\la)D^\all N_{kj}(I+\la f_j)f_j^\all\, d\la\\
& =
\sum_{|\all|=2}\f{|\all|!}{\all!}\int_0^1(1-\la )D^\all N_{kj}(I+\la [f_j]^{\leq m-2})([f_j]^{\leq m-2})^\all\, d\la +O(|v_j|^{m+1})
\\
&=b(1)-b(0)-b'(0)+O(|v_j|^{m+1})
\end{align*}
 for   $b(\la)=N_{kj}(I+\la [f_j]^{\leq m-2})$. This proves   \re{partial-autom}.

  For point (a), we use \re{LmA} again. This time, we have $A(h_j,v_j)=O(|v_j|^{2m+1})$ and $B(h_j,v_j)=O(|v_j|^{2m+1})$, while $C=(0,DN_{kj}(h_j)[f_j^h]^{\leq2m-1}v_j)+O(|v_j|^{2m+1})$. We have derived \re{cohom-newtonl}.

For point (b),  note that $F_k^{-1}N_{kj}F_j=N_{kj}+O(|v_j|^{m+1})$ is equivalent to $F_kN_{kj}=N_{kj}F_j +O(|v_j|^{m+1})$.  From the vertical components, we  obtain
$$
t_{kj}(h_j)v_j+f_k^v(
\var_{kj}(h_j),t_{kj}(h_j)v_j)=t_{kj}(h_j+f_j^h)(v_j+f_j^v(h_j,v_j))
+O(|v_j|^{  m+1}).
$$
Since   $f_j^h=O(|v_j|^{ m})$ and $f_j^v=O(|v_j|^2)$,   the $m$-jet (w.r.t. $v_j$) above reads
\aln
t_{kj}(h_j)v_j+ [f_k^v]^{\leq m}(
\var_{kj}(h_j),t_{kj}(h_j)v_j)&=t_{kj}(h_j)(v_j+ [f_j^v]^{\leq m}(h_j,v_j)).
\end{align*}
That is that $\cL V_k^{\leq m}N_{kj}=N_{kj}\cL V_j^{\leq m}$, as $\cL V_j^{\leq m}(h_j,v_j)=(h_j,v_j+[f_j]^{\leq m}(h_j,v_j))$.

The point (c) follows from \rp{autom-lin} when $N_C$ is flat.
 For the remaining case, it follows from points (a) and (b) as follows.

By \re{cohom-newtonl} and $H^0(C,TC\otimes S^\ell N_C^*)=0$, we obtain $[f_j^h]^{m}_2=0$.    By $(b)$, we know that $[F_j]^{\leq m}$ preserve $N_{kj}$. Then $\hat F_j=F_j([F_j]^{\leq m})^{-1}$ meet the requirement. The uniqueness of $[\hat F_j^h]^\ell$ for $m<\ell\leq 2m$ follows from the assumption on $H^0$ too.

  We are seeking a unique decomposition $F_j=\cL H_j\circ\cL V_j\circ\tilde F_j$.   Let us write $F_k^{
-1}\Phi_{kj} F_j=N_{kj}+\tilde \phi_{kj}$ with $\tilde \phi_{kj}=O(|v_j|^{2m+1})$. From the horizontal component of \re{cohom-newtonl} in which $[\tilde\phi_{kj}^h]^{\leq 2m}=0$ and condition \re{h02m}, we {\it uniquely} determine
$\{[f_j^h]^{\leq 2m}\}$.
Take
$
\cL H_j(h_j,v_j)=(h_j+[f_j]^{\leq 2m}(h_j,v_j),v_j).
$
Then
\eq{}
\cL H_k^{-1}\Phi_{kj}\cL H_j(h_j,v_j)=(
\var_{kj}(h_j), t_{kj}(h_j)v_j+\tilde\phi^v_{kj}(h_j,v_j))+O(|v_j|^{2m+1}).
\label{partialhl}
\eeq
 We still have $(\cL H_k^{-1} F_k )^{-1}(\cL H_k^{-1}\Phi_{kj}\cL H_j)(\cL H_j^{-1}F_j)\in L_{2m}$.  We have
\ga
\cL H_j^{-1}F_j(h_j,v_j)=\cL V_j(h_j,v_j)+O(|v_j|^{2m+1}), \quad\cL V_j(h_j,v_j)=
(h_j,v_j+V_j(h_j,v_j)),
\label{partialh2}\end{gather}
where $\tilde\phi_{kj}^v, V_j$ contain only terms of orders $\ell$ in $v_j$ for $m+1\leq \ell\leq 2m$.

Since $F_j=\cL H_j\cL V_j+O(|v_j|^{2m+1})$, we
 have
 \eq{}\nonumber
 \cL V_k^{-1}(\cL H_k^{-1}\Phi_{kj}\cL H_j)\cL V_j\in L_{2m}.
 \eeq
  From the vertical components of \re{partialhl}-\re{partialh2},  and \re{cohom-newtonl} in which we take
$Dt_{kj}[f_j^h]^{\leq 2m-1}=0$, we see that    \re{cohom-newtonl} becomes \re{Ajkj}, i.e. $(\del^v[V]^\ell)_{kj}= -[\tilde\phi^v_{kj}]^\ell$ for $\ell=m+1,\dots, 2m$.
To show the
 uniqueness of $[F_j]^{\leq 2m}$, we may assume that $\Phi_{kj}=N_{kj}+O(|v_j|^{2m+1})$. Then the uniqueness follows from the above arguments.
\end{proof}
The following is in Ueda~\ci{Ue82}, when both the dimension and codimension of $C$ are one.

\le{linkv}
Let $\Phi_{kj}$ satisfy condition $V_m$ with $m\geq
 1$.   Suppose that $N_C$ is   flat  and $H^0(C,N_C\otimes S^\ell(N_C^*))=0$ for $1<\ell\leq m$.
 Then
$
[\phi^v_{kj}]^{m+1}\in H^1(\cL U,N_C\otimes S^{m+1}(N_C^*))$ is independent of coordinates of the neighborhoods of $C$.
Furthermore, there are   formal biholomorphic mappings $F_j=I+(f_j^h,f_j^v)$ with $f_j(h_j,v_j)=O(|v_j|^2)$ satisfy
\eq{fjmV}
\{F_k^{-1}\Phi_{kj}F_j\}\in V_{m+1}
\eeq
if and only if $[\phi^v_{kj}]^{m+1} =0$ in $ H^1(\cL U,N_C\otimes S^{m+1}(N_C^*))$.
When \rea{fjmV} holds,     $\{\tilde F_k^{-1}\Phi_{kj}\tilde F_j\}$ is still in $ V_{m+1}$, for
$$
\tilde F_j(h_j,v_j)=(h_j, v_j+[f_j^v]^{m+1}(h_j,v_j)).
$$
\ele
\begin{proof}
Let   $\tilde\Phi_{kj}:=F_k^{-1}\Phi_{kj}F_j$. We want to show that
$$
[\tilde\phi^v_{kj}]^{m+1}=[\phi^v_{kj}]^{m+1} \quad\text{in $H^1(\cL U,N_C\otimes
S^{m+1}(N_C^{*}))$},
$$
provided that $\tilde\Phi_{kj}(h_j,v_j)=  N_{kj}(h_j,v_j)+(\tilde\phi_{kj}^h,\tilde\phi_{kj}^v)$, $\Phi_{kj}(h_j,v_j)=  N_{kj}(h_j,v_j)+(\phi_{kj}^h,\phi_{kj}^v)$, and
\eq{tphim+1} \tilde\phi_{kj}^v(h_j,v_j)=O(|v_j|^{m+1}), \quad \phi_{kj}^v(h_j,v_j)=O(|v_j|^{m+1}).
\eeq

First, we have $F_j(h_j,v_j)=(h_j,v_j)+O(|v_j|^2)$. Suppose that $[f_j^v]^{\leq m_*-1}=0$ for $2\leq m_*\leq m$.
Comparing vertical components of $\Phi_{kj}\circ F_j=F_k\circ \tilde\Phi_{kj}$, we obtain
\al\label{Lkjv}\nonumber
&\left[t_{kj}\cdot \left(v_j+f_j^v(h_j,v_j)\right)\right]^{\leq m_*}=(\Phi^v_{kj}\circ F_j)^{\leq m_*}(h_j,v_j)\\
&\quad =(F^v_k\circ\tilde\Phi_{kj})^{\leq m_*}(h_j,v_j)=
(F_k^v)^{\leq m_*}\circ  N_{kj}(h_j,v_j).\nonumber
\end{align}
  Here the last identity is obtained from  $\tilde\Phi_{kj}(h_j,v_j)-N_{kj}(h_j,v_j)=O(|v_j|^2)$,  $[F_j^v]^{\leq m_*}(h_j,v_j)=v_j+[f_j^v]^{m_*}$, and \re{tphim+1}.
 Looking at terms of order $m_*$ in $w_j$, we see that $\{[f^v_j]^\ell\}$ is a global section of $N_C\otimes S^{\ell}(N_C^{*})$ for $\ell= m_*$.   This shows that $[f_j^v]^{\leq m_*}=0$ and we can take $m_*=m$, i.e. $[f_j^v]^{\leq m}=0$.

We also have $[\Phi^v_{kj}F_j]^{m+1}=t_{kj}[f_j^v]^{m+1}+[\phi_{kj}^v]^{m+1}$ and $[F^v_k\tilde\Phi_{kj}]^{m+1}=[f^v_k]^{m+1}\circ N_{kj} +[\tilde\phi_{kj}^v]^{m+1}.$ This shows that 
\eq{tphiv}
 [\tilde\phi_{kj}^v]^{m+1}-[\phi_{kj}^v]^{m+1}=t_{kj}[f_j^v]^{m+1}
-[f^v_k]^{m+1}\circ N_{kj}.
\eeq
The latter is equivalent to 
$[\tilde\phi_{kj}^v]^{m+1}=[\phi_{kj}^v]^{m+1}$ in
$H^1(\cL U,N_C\otimes S^{m+1}(N_C^{*}))$,
which follows from \rl{cc} $(b)$. The last assertion is equivalent to \re{tphiv} with $[\tilde\phi^v_{kj}]^{m+1}=0$.
%
\end{proof}

  \setcounter{thm}{0}\setcounter{equation}{0}
\section{
A majorant method for the  vertical linearization}
\label{sec:verlin}

Let $C$ be an $n$-dimensional complex compact manifold embedded in
an $(n+d)$-dimensional complex manifold. We assume that the normal bundle $N_C$ is flat and {\it unitary}. Let $\{t_{kj}\}$ be its transition (constant) matrices in a suitable covering ${\mathcal U}=\{U_j\}$ of $C$, we have $t_{kj}t_{kj}^*=\text{Id}$.
Let $K 
(N_C\otimes S^m(N_C^*))$ be the ``norm'' of the cohomological operator acting
on 
$C^0(\mathcal U, N_C\otimes S^m(N_C^*))$ as defined  in \rt{donin-sol-cohom}.
Let us
consider the sequence of numbers $\{\eta_m\}_{m\geq 1}$ with $\eta_1=1$ and
\beq\label{def-eta-foliation}
\eta_m=K(N_C\otimes S^m(N_C^*))\max_{m_1+\cdots +m_p+s=m} \eta_{m_1}\cdots \eta_{m_p},\quad m>1,
\eeq
where  $1\leq m_i<m$ for all $i$ and $s\in\Bbb N$.

In this section, we shall prove the following
\begin{thm}\label{vlin}
Let $C$ be a compact complex submanifold in $M$ with $T_CM=TC\oplus N_C$. Assume that the embedding is vertically linearizable  
by a formal holomorphic mapping which is tangent to the identity and preserves the splitting of $T_CM$
 or that $H^1(C,N_C\otimes S^\ell(N_C^*))=0$ for all $\ell\geq 2$. We also assume that $N_C$ is unitary flat and that $H^0(C,N_C\otimes S^\ell(N_C^*))=0$ for all $\ell\geq 2$.
Assume that for the $\eta_m$ defined above, there are positive constants $L_0,L$  such that $\eta_m\leq L_0L^m$ for all $m$. Then the embedding is actually holomorphically vertically linearizable.
\end{thm}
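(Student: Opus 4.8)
The plan is to carry out an iterative (majorant) normalization procedure, following Ueda's scheme but adapted to the case of a non-diagonal unitary normal bundle. Suppose inductively that the embedding satisfies condition $V_m$, i.e. the vertical components of the transition functions $\Phi_{kj}$ agree with those of $N_{kj}$ up to order $m$. By \rl{verifycochain}(b), the obstruction $[\phi_{kj}^v]^{m+1}$ defines a class in $H^1(\cL U, N_C\otimes S^{m+1}(N_C^*))$. If $H^1(C,N_C\otimes S^\ell(N_C^*))=0$ for all $\ell\geq 2$ this class vanishes directly; if instead we only assume formal vertical linearizability, then \rl{linkv} tells us that this cohomology class is a coordinate-independent obstruction, and since the formal linearization kills all such obstructions, it must vanish at every step. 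Either way, we obtain a solution $f_j=(f_j^h,f_j^v)$ with $f_j(h_j,v_j)=O(|v_j|^{m+1})$ of the cohomological equation coming from \re{v-ext}, and \rl{linkv} (via the vanishing of $H^0(C,N_C\otimes S^\ell(N_C^*))$) guarantees that after normalizing we may take $f_j^v=[f_j^v]^{m+1}$ purely of degree $m+1$. The new transition functions $\tilde F_k^{-1}\Phi_{kj}\tilde F_j$ then satisfy $V_{m+1}$.

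The key quantitative point is to control the size of the degree-$(m+1)$ correction $f_j^v$ in terms of the previously accumulated data. Here I would use \rp{KnDn} (equivalently \rt{donin-sol-cohom}): the solution of the cohomological equation $\del u = \phi^v$ can be chosen to satisfy $|u|\leq K(N_C\otimes S^{m+1}(N_C^*))\,|\phi^v|$ \emph{without any shrinking of the covering}, which is essential since we will iterate infinitely often on a fixed covering $\cL U$. The right-hand side $\phi_{kj}^v$ of degree $m+1$ is, by \re{v-ext} and the nonlinear composition terms, a polynomial expression in the lower-order corrections $f_2^v,\dots,f_m^v$ and the transition data; tracking degrees gives a bound of the shape $|f^{(m+1)}| \le K(N_C\otimes S^{m+1}(N_C^*))\max_{m_1+\cdots+m_p+s=m+1}|f^{(m_1)}|\cdots|f^{(m_p)}|$, which is precisely the recursion defining $\eta_{m+1}$ in \re{def-eta-foliation}. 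The hypothesis $\eta_m\le L_0 L^m$ then yields geometric bounds $|f^{(m)}|\le L_0 L^m \rho^{-m}$ on a fixed polydisc of radius $\rho$ in the vertical variables, so choosing $|v_j|<\rho/L$ makes the formal series $\sum_m f_j^{(m)}$ converge to a genuine holomorphic map on a neighborhood of $C$, achieving the vertical linearization.

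The main technical obstacle — and the point where the unitarity of $N_C$ is used crucially — is the majorant estimate itself: the transition matrices $t_{kj}$ of $N_C$ act on $S^{m}(N_C^*)$, and for a general flat bundle the induced action can have norm growing with $m$, destroying the estimate. To handle this I would introduce the (modified) Fischer norm on spaces of homogeneous polynomials in $v_j$, which is \emph{invariant under unitary changes of coordinates}. Because $t_{kj}$ is unitary, conjugating a homogeneous vertical term by $t_{kj}$ (and its symmetric powers) preserves this norm exactly, so the composition and change-of-basis operations in \re{v-ext} do not amplify the norm of the cochains. This lets the constant $K(N_C\otimes S^{m+1}(N_C^*))$ from \rp{KnDn} be the only source of growth, which is exactly what the small-divisor hypothesis $\eta_m\le L_0L^m$ is designed to absorb. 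Assembling these pieces — the cohomological vanishing at each order (from $H^1=0$ or formal linearizability via \rl{linkv}), the uniqueness/normalization of the degree-$(m+1)$ correction (from $H^0=0$), the no-shrinking $L^2$ estimate of \rp{KnDn}, and the Fischer-norm invariance controlling the nonlinear terms — gives the convergence of the iteration and hence the holomorphic vertical linearization.
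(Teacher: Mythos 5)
Your overall route---a degree-by-degree majorant scheme in the modified Fischer norm, with unitary invariance handling the action of $t_{kj}$ and with the no-shrinking bound $K(N_C\otimes S^m(N_C^*))$ of \rp{KnDn} absorbing the small divisors through the recursion defining $\eta_m$---is the paper's route. There is, however, one concrete gap: you assert that the only place where shrinking of the covering threatens the iteration is the linear cohomological equation, and that \rp{KnDn} disposes of it. In fact the nonlinear estimates force a shrink as well. The degree-$m$ part of the right-hand side of \re{v-ext} contains the term $f_k^v(\hat\Phi^h_{kj}(h_j,v_j),t_{kj}v_j)-f_k^v(\var_{kj}(h_j),t_{kj}v_j)$, which is estimated by expanding in the horizontal derivatives $\partial^P_h f_k^v$ and applying Cauchy estimates; these are only available after passing to a relatively compact subcovering $\cL U^*=\{U_i^*\}$, so a priori you control the cocycle $[h_{kj}]^m$ only on $\hat U^*_{kj}$, not on $\hat U_{kj}$. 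Since $K(N_C\otimes S^m(N_C^*))$ is defined relative to the fixed covering $\cL U$, the recursion you write does not close as stated. The paper repairs precisely this point by Ueda's argument: the cocycle identity \re{3sums} expresses $[h_{kj}]^m$ at a point of $U_k\cap U_j$ through $[h_{ik}]^m$ and $[h_{ij}]^m$ on $\hat U^*_{ik}$ and $\hat U^*_{ij}$ for a suitable $i$, and the unitary invariance of the Fischer norm (under both multiplication by $t_{ki}$ and the substitution $v_j\mapsto t_{kj}v_j$) converts the estimates on the shrunken covering into estimates on all of $\hat U_{kj}$ at the cost of a fixed constant. Without this step one is forced to shrink at every order, i.e. into the Donin-constant/Newton-scheme setting of the full linearization, which is not what the hypothesis $\eta_m\le L_0L^m$ is calibrated for.

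Two further points worth making explicit. First, you renormalize the transition functions at each order so that $V_{m+1}$ holds, whereas the paper fixes the formal vertical conjugation once and majorizes its homogeneous components against the original $\phi^h_{kj},\phi^v_{kj}$; your variant can be made to work, but then you must also track how the analytic data of the modified transition functions (the geometric bound $R^{|Q|}$ on the coefficients of $\phi^h_{kj}$, the vertical domains) degrade along infinitely many conjugations, an issue the paper's formulation avoids entirely. Second, the jump from the recursion $|f^{(m)}|\le\eta_m A_m$ to geometric bounds is not automatic: the $A_m$ are produced by an analytic functional equation for the majorant series $A(t)$ (built from the composition terms and the Cauchy constants), and it is the analyticity of its solution, combined with $\eta_m\le L_0L^m$ and the uniqueness of the degree-$m$ solutions coming from $H^0(C,N_C\otimes S^m(N_C^*))=0$ (which is what identifies the formal solution with the estimated minimizing solution of \rl{SD-p}), that yields convergence.
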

\begin{rem}\label{rem-vl}
		In the previous \rt{vlin}, if a neighborhood of $C$ is formally  vertically  linearizable by a minimizing  vertical mapping which is tangent to the identity and preserves the splitting of $T_CM$,	
 then the assumption "$H^0(C,N_C\otimes S^\ell(N_C^*))=0$, $\ell>1$" is not necessary.
	Here by a formal {\it minimizing vertical} mapping it means a map of the form $(h_j,v_j+f_j^v(h_j,v_j))$ with $\{f_j^v\}\in C^0(C,\bigoplus_{\ell\geq 2}N_C\otimes S^\ell(N_C^*))$ such that each $\{[f_j^v]^\ell\}_j$ is a possibly non-unique Donin (minimizing) solution of a suitable cohomology equation.
\end{rem}
\begin{cor}\label{compactcenter}
Under assumptions of \rta{vlin}, there exists, in a neighborhood of $C$ in $M$, a  smooth holomorphic $d$-dimensional foliation having $C$ as a leaf.
\end{cor}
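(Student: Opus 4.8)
First I would invoke \rt{vlin}: under the stated hypotheses a neighborhood of $C$ in $M$ is holomorphically vertically linearizable, so we may fix a holomorphic atlas $\{(h_j,v_j)\}$ on some neighborhood $V=\bigcup_j V_j$ of $C$ in which $U_j:=C\cap V_j=\{v_j=0\}$ and the transition maps take the form
$$
\Phi_{kj}(h_j,v_j)=\bigl(\hat\Phi^h_{kj}(h_j,v_j),\,t_{kj}v_j\bigr)\quad\text{on }V_k\cap V_j,
$$
where, $N_C$ being (unitary) flat, the matrices $t_{kj}\in\mathrm{GL}_d(\cc)$ are \emph{constant} and satisfy the cocycle relations \re{vb-cocycle}. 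Note that only the vertical component has been linearized; $\hat\Phi^h_{kj}$ may still depend on $v_j$, but this is irrelevant for the construction below.

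Next I would define, in each chart, the non-singular holomorphic foliation $\mathcal F_j$ of $V_j$ given by the fibers of the submersion $\pi_j\colon(h_j,v_j)\mapsto v_j$, i.e.\ whose leaves are the slices $\{v_j=c\}$; its leaves have dimension $n=\dim C$. The one point to check is that these local foliations glue. On $V_k\cap V_j$ we have $\pi_k=t_{kj}\circ\pi_j$, and since $t_{kj}$ is a linear automorphism of $\cc^d$ it permutes the slices, sending $\{v_j=c\}$ onto $\{v_k=t_{kj}c\}$; hence $\Phi_{kj}$ carries leaves of $\mathcal F_j$ to leaves of $\mathcal F_k$, so $\mathcal F_j$ and $\mathcal F_k$ agree on the overlap. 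Consequently the $\mathcal F_j$ patch together into a single non-singular holomorphic foliation $\mathcal F$ on $V$, of codimension $d$. Since $U_j=\{v_j=0\}$ is a leaf of $\mathcal F_j$ and $\bigcup_j U_j=C$, the manifold $C$ is a compact leaf of $\mathcal F$, which is the assertion. If instead one is in the situation of \rrem{rem-vl} (no vanishing of $H^0(C,N_C\otimes S^\ell(N_C^*))$ assumed, but a minimizing formal vertical linearization given), the very same argument applies verbatim to the vertical linearization produced there.

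I do not expect a genuine obstacle at this stage: all the analysis—the small-divisor bounds encoded in the $\eta_m$ and the convergence of the vertical normalizing transformation—is carried by \rt{vlin}, and what remains is the elementary observation that constant linear maps in the fibers permute the vertical slices. The only things to be careful about are routine: one works on the (possibly shrunken) common domain on which the normalized charts are simultaneously defined, so $\mathcal F$ is a priori only a germ of foliation along $C$; and it is precisely the flatness of $N_C$—making the $t_{kj}$ independent of $h_j$—that guarantees the slices $\{v_j=\mathrm{const}\}$ are permuted rather than merely fibered over $\cc^d$.
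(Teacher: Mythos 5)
Your proposal is correct and follows essentially the same route as the paper: invoke Theorem~\ref{vlin} to get charts in which $v_k=t_{kj}v_j$ with constant $t_{kj}$, then take the foliation defined in each chart by $dv_j=0$ (the slices $\{v_j=\mathrm{const}\}$), which glue precisely because the constant linear maps $t_{kj}$ permute these slices. Your extra remarks on the germ nature of the foliation and on leaf dimension $n$ (codimension $d$) are consistent with the paper's construction.
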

\begin{proof}
According to \rt{vlin}, 
there is a neighborhood of the $C$ in $M$ with suitable holomorphic coordinates patches $(V_j, (h_j,v_j))$ with $(h_j,v_j)\in \Bbb C^n\times\Bbb C^d$ and $C\cap V_j=\{v_j=0\}$, such that, on $V_j\cap V_k$, we have
\begin{eqnarray*}
v_k=t_{kj}v_j,\quad
h_k=\tilde
\var_{kj}(h_j,v_j).
\end{eqnarray*}
We then define the foliation in chart $V_j$ by
 $dv_j=0$.
\end{proof}

The rest of the section is devoted to the proof of \rt{vlin}. We follow the method of majorant developed  by T. Ueda~\ci{Ue82} for $1$-dimensional unitary
normal
bundle over compact complex curve.
\subsection{Conjugacy equations and cohomological equations}
Let us first recall \re{h-ext} and \re{v-ext}~:
\beq\label{v-equation}
{\mathcal L}_{kj}^v(f^v_j)= \phi^v_{kj}(h_j,v_j+f_j^v)-\left(f_k^v(\hat\Phi^h_{kj}(h_j,v_j), t_{kj}v_j)-f_k^v(
\var_{kj}(h_j), t_{kj}v_j)\right)
\eeq
where
\begin{eqnarray*}
\hat\Phi^h_{kj}(h_j,v_j)&= &
\var_{kj}(h_j)+\phi^h_{kj}(h_j,v_j+f_j^v),\\
{\mathcal L}_{kj}^v(f^v_j)&=&f_k^v(
\var_{kj}(h_j),t_{kj}v_j)-t_{kj}f_j^v.
\end{eqnarray*}
Let us expand $\phi^h_{kj}(h_j,v_j+f_j^v)$ in power of $v_j$
by using
\begin{eqnarray}\nonumber
\phi^h_{kj}(h_j,w_j) &=:&\sum_{Q\in \Bbb N^d_2}\phi^h_{kj,Q}(h_j)w_j^Q\label{phih}\\
\phi^h_{kj}(h_j,v_j+f_j^v(h_j,v_j))&=:&\sum_{Q\in \Bbb N^d_2}h_{kj,Q}'(h_j)v_j^Q=:h'_{kj}(h_j,v_j).\nonumber
\end{eqnarray}
We have
\beq\label{h'}
\sum_{Q\in \Bbb N^d_2}h_{kj,Q}'(h_j)v_j^Q=\sum_{Q\in \Bbb N^d_2}\phi^h_{kj,Q}(h_j)(v_j+f_j^v(h_j,v_j))^Q.
\eeq
Let us also set
\beq\label{h''}\nonumber
\sum_{Q\in \Bbb N^d_2}h_{kj,Q}''(h_j)v_j^Q:=f_k^v(\hat\Phi^h_{kj}(h_j,v_j), t_{kj}v_j)-f_k^v(
\var_{kj}(h_j), t_{kj}v_j).
\eeq
As we shall see below, the functions $[h']^m$ and $[h'']^m$ are defined by induction on $m\geq 2$ as they depend on $[f]^l$, $l=2,\ldots, m-1$.


Therefore, the homogeneous polynomial of degree $m\geq 2$ of the Taylor expansion of solution of the conjugacy equation satisfies
\beq\label{maj-v-lin}
{\mathcal L}_{kj}^v([f^v_j]^m)= [h'_{kj}]^m+[h''_{kj}]^m.
\eeq
According to \rl{linkv}, there is a solution to the above
 equation
 either by the formal assumption or by the assumption that
  the cohomology class of $[h'_{kj}]^m+[h''_{kj}]^m$ is $0$, i.e. it is a coboundary. Indeed, since the normal bundle is flat, this class is independent of the coordinates system and the neighborhood is formally vertically linearizable.

\subsection{
A modified Fischer norm for symmetric powers}\label{fisher-section}

We define a scaler product on the space of polynomials $\Bbb C[x_1,\ldots, x_d]$ as follows. First, we set
\begin{equation}\label{EqScalp}
\scalxx{x^R}{x^Q}_{\mf}:= \begin{cases}\frac{(r_1!)\cdots (r_d!)}{|R|!}
\;&\text{  if  }\; R= Q\\
0\;&\text{  otherwise  }
\end{cases},\quad \left|\sum_{Q}
C_Qx^Q\right|_{\text{mf}}^2:=\sum_{Q}|C_Q|^2\f{Q!}{|Q|!},
\end{equation}
where $R=(r_1,\dots, r_d)$ and $|R|=r_1+\cdots+r_d$, and $C_Q$ are constants.
The subscript mf stands for ``modified Fischer''.
The associated norm will be denoted by $|.|_{k}$. The Fischer (resp. modified Fischer) scalar product has been used 
in \cite{Fi18,Sh89,IL05} (resp. \cite{LS10}).
Let $\om$ be an open set on $\Bbb C^n$.
For a vector of polynomials $g=(g_1,\dots, g_k)\in \cL O^k(\om)\otimes \Bbb C[x_1,\ldots, x_d]$, we set
\begin{equation}\label{EqScalPd}
|g|_{\mf, \om}^2:=\sup_{z\in\om} |g(z,\cdot)|_{\mf}^2:
=\sup_{z\in\om}\sum_{j=1}^k\sum_{Q\in\Bbb N^d}\frac{Q!}{|Q|!}|g_{j,Q}(z)|^2.
\end{equation}
We now apply the Fischer norm (resp. modified Fischer norm) to $f\in C^q(\cL U,
E\otimes S^L N_C^*)$. Returning to notation in \re{def-tilde-f+}, we write
\eq{fnf-}\nonumber
 f_{i_0\dots i_q}(p)=\sum_{\la=1}^{\rank E}\sum_{|Q|=L} f_{{i_0\dots i_q}; Q}^\la (z_{i_q}(p)) e_{i_0,\la }(p)\otimes (w_{i_q}^*(p))^Q,
\eeq
where $e_{i_0}$ is the base of $E$ over $U_{i_0}$ and $w^*_{i_q}$ is the base of $N_C^*$ on $U_{i_q}$.  Define 
\al\label{fnf}
|f|_{\mf,\cL U}^2&:=\max_{(i_0,\dots, i_q)\in \cL I^{q+1}}\
\sup_{z_{i_q}\in
\var_{i_q}(U_{i_0\dots i_q})}
\ \sum_{\la=1}^{\rank E}\sum_Q \f{Q!}{|Q|!}\left|f_{{i_0\cdots i_q}; Q}^\la (z_{i_q})\right|^2.\end{align}
{\it When there is no confusion, we shall in the sequel write ``f'' instead of ``mf''}.
The following two propositions are a ``version with parameters'' of ~\cite[propositions 3.6-3.7]{LS10} (see also~\cite{IL05}). We only give the proof of the last two points of next proposition. 

\begin{prop}\label{lombardi}
Let $\cL O_n(\om)\otimes \Bbb C[x_1,\ldots, x_d]$ be the set of polynomials $f(x,z)$  in $x$ with coefficients holomorphic in $z\in \om\subset\mathbb C^n$.
\bpp 
\item Let $f,g\in \cL O_n(\om)\otimes \Bbb C[x_1,\ldots, x_d]$ be homogeneous polynomials of  degree $k,k'$
respectively. Then
$$
|fg|_{f,\om}\leq |f|_{f,\om}|g|_{f,\om}.
$$
\item Let $f\in \cL O_n(\om)\otimes \Bbb C[x_1,\ldots, x_d]$  and let $\tilde f_P(z,x)=\f{1}{P!}\pd_z^Pf(z,x)$.
Then
$$
|\tilde f_P|_{f,\om'}  \leq \f{ |f|_{f,\om}}{(\dist_*(\om',\pd\om))^{|P|}}, \quad \forall\om'\subset\om, \
 \dist_*(\om',\pd\om):=\dist(\om',\pd\om)/{\sqrt n}.
$$
\item Let $T$ be a $d\times d$ unitary matrix.
Let $f\in \cL O_n^d(\om)\otimes \Bbb C[x_1,\ldots, x_d]$. Then,
$$
|Tf|_{f,\om}= |f|_{f,\om}.
$$
\item Let $T$ be a $d\times d$ unitary matrix.
Let $f\in \cL O_n(\om)\otimes \Bbb C[x_1,\ldots, x_d]$ and
 $f^T(z,x):=f(z,Tx)$. Then,
$$
|f^T|_{f,\om}= |f|_{f,\om}.
$$

\epp 
\end{prop}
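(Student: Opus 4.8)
The plan is to prove the last two claims (invariance of the modified Fischer norm under a unitary substitution) by reducing everything to the elementary fact that for a single homogeneous polynomial, $|p(Tx)|_{\mathrm{f}} = |p(x)|_{\mathrm{f}}$ when $T\in U_d$, and then lifting this pointwise-in-$z$ identity to the sup over $\omega$. Since the norm $|g|_{\mathrm{f},\omega}^2 = \sup_{z\in\omega}|g(z,\cdot)|_{\mathrm{f}}^2$ is a supremum of the coefficientwise Fischer norms of $g(z,\cdot)\in\mathbb C[x_1,\dots,x_d]$, and a unitary change $x\mapsto Tx$ acts the same way for every fixed $z$, it suffices to establish: \emph{for $p\in\mathbb C[x_1,\dots,x_d]$ homogeneous and $T\in U_d$, $|p\circ T|_{\mathrm{f}} = |p|_{\mathrm{f}}$}; the vector-valued case $(g_1,\dots,g_d)\mapsto (Tg_1,\dots,Tg_d)$ in point (c) is even simpler since $T$ mixes the \emph{components} $g_i$ (holomorphic in $z$, no $x$ involved) and the Fischer norm on $\mathbb C^d$-valued objects is just $\sum_i |g_i(z)|^2$, which is manifestly $U_d$-invariant.

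The key step is the classical observation that the modified Fischer inner product on degree-$k$ homogeneous polynomials can be realized as the inner product induced from the Hermitian inner product on $(\mathbb C^d)^{\otimes k}$ (equivalently, on $S^k\mathbb C^d$) under the identification $x^Q \leftrightarrow$ the normalized symmetric tensor; the normalization $Q!/|Q|!$ in \eqref{EqScalPd} is exactly the combinatorial factor that makes $\{x^Q : |Q|=k\}$ orthogonal with $|x^Q|_{\mathrm{f}}^2 = Q!/|Q|!$, matching the pushforward of the standard basis of $S^k\mathbb C^d$. Under this identification, precomposition $p\mapsto p\circ T$ corresponds to the action of $S^k T$ on $S^k\mathbb C^d$, and since $T$ is unitary on $\mathbb C^d$, the operator $T^{\otimes k}$ is unitary on $(\mathbb C^d)^{\otimes k}$ and hence $S^k T$ is unitary on $S^k\mathbb C^d$. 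Therefore $|S^k T(v)| = |v|$, i.e. $|p\circ T|_{\mathrm{f}} = |p|_{\mathrm{f}}$. One then applies this degree by degree (the Fischer norm is additive over homogeneous components since distinct degrees are orthogonal) to get the statement for arbitrary $p\in\mathbb C[x_1,\dots,x_d]$, and finally takes $\sup_{z\in\omega}$ to conclude for point (d); point (c) follows from the trivial $U_d$-invariance of the Euclidean norm on the coefficient vector.

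The only place where any care is needed — and I expect it to be the main (minor) obstacle — is bookkeeping the normalization: one must check that the map $S^k\mathbb C^d \to \{\text{degree-}k\text{ polynomials}\}$ sending the symmetrization of $e_{i_1}\otimes\cdots\otimes e_{i_k}$ to the monomial $x_{i_1}\cdots x_{i_k}$ is, up to the scalar $\sqrt{Q!/|Q|!}$ on the monomial $x^Q$, an isometry for the induced inner products, so that conjugating the unitary $T^{\otimes k}$ by it still gives an isometry of $(\mathbb C[x],|\cdot|_{\mathrm{f}})$. Concretely: $p(x)=\langle x^{\otimes k}, \overline{v_p}\rangle$ for a unique symmetric $v_p\in(\mathbb C^d)^{\otimes k}$ (polarization), $p(Tx)$ corresponds to $(T^t)^{\otimes k}v_p = \overline{\,T^{\otimes k}\,}\,\overline{v_p}$ up to conjugation since $T$ unitary means $T^t = \overline{T^{-1}}$, and one verifies $|p|_{\mathrm{f}}^2 = c_k |v_p|^2$ with a $k$-dependent constant $c_k$ that cancels. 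Since this is standard (cf. the references \cite{Sh89,IL05,LS10} cited in the text for the Fischer norm), I would state the isometry identification as a one-line lemma and invoke unitarity of $T^{\otimes k}$, then pass to the supremum over $z\in\omega$ to finish. No shrinking of $\omega$ is needed here, in contrast with point (ii), because the unitary substitution does not involve differentiation.
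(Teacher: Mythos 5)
Your argument is correct, but it takes a different route from the paper. For the unitary invariance (points (c) and (d)) the paper does not use the symmetric-tensor identification at all: it invokes the Bargmann--Fock (Gaussian) integral representation of the modified Fischer norm, namely that for a homogeneous polynomial $f_m$ of degree $m$ in $x$ one has
$$
|f_m|^2_{\om}=\frac{1}{\pi^d m!}\sup_{z\in \om}\int_{\Bbb C^d}|f_m(z,x)|^2 e^{-|x|^2}\,dV(x),
$$
and then observes that the integral is invariant under $x\mapsto Tx$ (and under multiplication of the component vector by $T$) because the Gaussian weight and $dV$ are $U_d$-invariant; this disposes of both (c) and (d) in one line. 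Your proof instead identifies degree-$k$ homogeneous polynomials, with the modified Fischer norm, isometrically with $S^k\Bbb C^d$ carrying the Hermitian norm induced from $(\Bbb C^d)^{\otimes k}$, and deduces invariance of $p\mapsto p\circ T$ from unitarity of $T^{\otimes k}$; the normalization check you flag does work out exactly, since $|\operatorname{Sym}(e^{\otimes Q})|^2=Q!/|Q|!=|x^Q|_{\mathrm{mf}}^2$, so no residual constant $c_k$ appears, and the conjugate/transpose bookkeeping is harmless because an antiunitary conjugation of a unitary is still norm-preserving and preserves the symmetric subspace. Your treatment of (c) (coefficientwise $U_d$-invariance of the Euclidean norm of $(f_{1,Q}(z),\dots,f_{d,Q}(z))$ for each fixed $z$ and $Q$, then summing with the weights $Q!/|Q|!$ and taking the sup in $z$) is also fine. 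The trade-off: your algebraic argument is self-contained and avoids integrals, while the paper's integral formula is shorter once stated and has the side benefit of directly yielding the comparison between the Fischer norm and $L^2$ norms on balls that the paper exploits later; either proof is acceptable here.
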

\begin{proof}
We only prove the last two points. Fix $z\in\om'$. The polydisc  center at $z$ with radius $\del:=\dist(\om',\pd\om)/\sqrt n$ is contained in $\om$.

By the Cauchy formula, we have
\aln
\tilde f_P(z,x)&=\f{1}{\del^{|P|}}\int_{[0,2\pi]^n} f(z+\del (e^{i\theta_1},\dots, e^{i\theta_n}),x)(e^{i\theta_1},\dots,e^{i\theta_n})^{-P}\, \f{d\theta_1}{2\pi}\cdots \f{d\theta_n}{2\pi}\\
&=\f{1}{\del^{|P|}}\sum_{Q\in \nn^d} x^Q\int_{[0,2\pi]^n} f_Q(z+\del (e^{i\theta_1},\dots, e^{i\theta_n}))(e^{i\theta_1},\dots,e^{i\theta_n})^{-P}\, \f{d\theta_1}{2\pi}\cdots \f{d\theta_n}{2\pi}.
\end{align*}
We emphasize that the sum is finite.
By the Cauchy-Schwarz inequality  applied to the integral, we have
\aln
|\tilde f_P(z,\cdot)|_{\mf}^2&=\f{1}{\del^{2|P|}}\sum_{Q\in \nn^d}|x^Q|_{\mf}^2\left|\int_{[0,2\pi]^n} f_Q(z+\del(e^{i\theta_1},\dots, e^{i\theta_n}))(e^{i\theta_1},\dots,e^{i\theta_n})^{-P}\, \f{d\theta_1}{2\pi}\cdots \f{d\theta_n}{2\pi}\right|^2\\
&\leq \f{1}{\del^{2|P|}}\sum_{Q\in \nn^d}|x^Q|_{\mf}^2\int_{[0,2\pi]^n} |f_Q(z+\del(e^{i\theta_1},\dots, e^{i\theta_n}))|^2\, \f{d\theta_1}{2\pi}\cdots \f{d\theta_n}{2\pi} \\
&=\f{1}{\del^{2|P|}}\int_{[0,2\pi]^n}
\sum_{Q\in \nn
^d}|x^Q|_{\mf}^2 |f_Q(z+\del(e^{i\theta_1},\dots, e^{i\theta_n}))|^2
 \, \f{d\theta_1}{2\pi}\cdots \f{d\theta_n}{2\pi}\\
&\leq\f{1}{\del^{2|P|}}\int_{[0,2\pi]^n}|f|_{\om}^2\, \f{d\theta_1}{2\pi}\cdots \f{d\theta_n}{2\pi} =\f{1}{\del^{2|P|}}|f|_{\om}^2.
\end{align*}
For the last point,  we have, for a homogeneous polynomial $f$ in $x$ of degree $m$ with holomorphic coefficients in $\om$ the identity:
$$
|f_m|^2_{\om}=\frac{1}{\pi^dm!}\sup_{z\in \om}
\int_{\Bbb C^d} |f(z,x)|^2e^{-|x|^2}dV(x).
$$
In particular, the integral is invariant under the transformation $x\to Tx$ when $T$ is unitary (and constant).
\end{proof}
\begin{prop}  \label{PropHolomorphic}
For a  formal power series $f(h,v)=\sum_k f_k(z,v)$  
with $f_k(z,v)$ being a
homogeneous polynomial in $v$ of degree $k$ of which the coefficients are functions holomorphic in $z\in U,$ the following properties are equivalent:
\bpp 
\item $f$ is uniformly convergent for $v$ in a neighborhood of the origin, uniformly in
$U.$
\item  There exist  $M, R >0 $ such that for every $k$, $|f_{k}|_{\mf,U}\leq \frac{M}{R^k}$.
\epp 
\end{prop}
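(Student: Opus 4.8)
The plan is to prove the two implications separately, using the comparability of the modified Fischer norm with the sup-norm on a fixed polydisc in $v$ (for fixed degree $k$) and being careful to track the dependence of all constants on $k$.

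First I would prove $(b)\Rightarrow(a)$, which is the easy direction. The key point is that for a homogeneous polynomial $g(z,v)=\sum_{|Q|=k}g_Q(z)v^Q$, the coefficient bound implied by a modified Fischer bound is only mildly worse than from a sup-norm bound: from $|g|_{\mf,U}^2=\sup_{z\in U}\sum_{|Q|=k}\frac{Q!}{|Q|!}|g_Q(z)|^2\le (M/R^k)^2$ one gets, for each $Q$ with $|Q|=k$, the pointwise estimate $|g_Q(z)|\le \frac{M}{R^k}\sqrt{\frac{|Q|!}{Q!}}$. Since $\frac{|Q|!}{Q!}\le d^{|Q|}=d^k$ (multinomial coefficients sum to $d^k$), we obtain $|g_Q(z)|\le M(\sqrt d/R)^k$ for $z\in U$. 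Summing over the at most $\binom{k+d-1}{d-1}\le (k+1)^{d-1}$ multi-indices of length $k$ and over $k\ge0$, then bounding $|v^Q|\le \rho^k$ for $|v|\le\rho$, gives $\sum_k\sup_{z\in U}|f_k(z,v)|\le \sum_k M(k+1)^{d-1}(\sqrt d\,\rho/R)^k<\infty$ once $\rho<R/\sqrt d$. Hence $f$ converges uniformly on $U\times\{|v|\le\rho\}$.

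For $(a)\Rightarrow(b)$ I would argue in the reverse direction. Uniform convergence on $U\times\{|v|\le\rho\}$ for some $\rho>0$ forces, via Cauchy estimates in $v$ applied on the polydisc of radius $\rho$, a sup-norm bound on each Taylor coefficient: $\sup_{z\in U}|f_{k,Q}(z)|\le A\rho^{-k}$ for all $|Q|=k$, where $A$ bounds $|f|$ on $U\times\{|v|\le\rho\}$. Then $|f_k|_{\mf,U}^2=\sup_{z\in U}\sum_{|Q|=k}\frac{Q!}{|Q|!}|f_{k,Q}(z)|^2\le A^2\rho^{-2k}\sum_{|Q|=k}\frac{Q!}{|Q|!}\le A^2\rho^{-2k}\,(k+1)^{d-1}$, since each weight $\frac{Q!}{|Q|!}\le1$ and there are at most $(k+1)^{d-1}$ terms. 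Absorbing the polynomial factor $(k+1)^{(d-1)/2}$ into a geometric loss, i.e. choosing any $R$ with $0<R<\rho$ so that $(k+1)^{(d-1)/2}(R/\rho)^k$ is bounded by some constant $M$ for all $k$, yields $|f_k|_{\mf,U}\le M R^{-k}$, as required.

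The main obstacle—really the only subtlety—is bookkeeping the $k$-dependence of the combinatorial factors $\frac{|Q|!}{Q!}$ and the number of multi-indices $\#\{Q:|Q|=k\}$, and checking that in both directions these are at most polynomial in $k$ (hence harmless after an arbitrarily small geometric adjustment of the radius). One should also note that the equivalence is purely formal in $U$: the constants $M,R$ are uniform over $U$ precisely because the sup over $z\in U$ is built into the definition of $|\cdot|_{\mf,U}$ in \eqref{EqScalPd}, so no additional compactness or regularity of $U$ is needed. Alternatively, in the special case that $f$ is a single scalar series one may invoke the Gaussian-integral identity $|f_k|^2_{\om}=\frac{1}{\pi^d k!}\sup_{z\in\om}\int_{\cc^d}|f_k(z,x)|^2e^{-|x|^2}\,dV(x)$ recorded in the proof of Proposition \ref{lombardi} to compare the modified Fischer norm with the $L^2$ Gaussian norm, but the elementary coefficient estimates above suffice and extend verbatim to the vector-valued case.
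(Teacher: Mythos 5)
Your argument is correct, but note that the paper does not actually prove Proposition~\ref{PropHolomorphic}: it is stated as a ``version with parameters'' of \cite[Props.~3.6--3.7]{LS10} and only parts of the preceding Proposition~\ref{lombardi} are proved, so your elementary two-sided coefficient comparison supplies a self-contained argument where the paper defers to a citation. The bookkeeping of the factors $\frac{|Q|!}{Q!}\le d^{k}$ and $\#\{|Q|=k\}\le (k+1)^{d-1}$ is fine, though the standard (and cleaner) route for $(b)\Rightarrow(a)$ is Cauchy--Schwarz against the weights: since $\sum_{|Q|=k}\frac{|Q|!}{Q!}|v^{Q}|^{2}=\bigl(|v_{1}|^{2}+\cdots+|v_{d}|^{2}\bigr)^{k}$, one gets directly $|f_{k}(z,v)|\le |f_{k}|_{\mf,U}\,\|v\|^{k}\le M(\|v\|/R)^{k}$, with no polynomial factors to absorb; this is essentially the mechanism behind the Gaussian-integral identity you mention. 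One caveat in your $(a)\Rightarrow(b)$ step: uniform convergence of the series only makes the \emph{tail} uniformly bounded (the terms tend to $0$ uniformly), so the finite bound $A=\sup_{U\times\{|v|\le\rho\}}|f|$ that you feed into the Cauchy estimates requires in addition that the finitely many low-order terms $f_{k}$, $k\le N$, have bounded coefficients on $U$ (on an arbitrary open $U$ this can fail, e.g.\ a single unbounded $f_{0}$, in which case $(a)$ holds while $(b)$ does not). This is really an imprecision in the statement's reading rather than in your idea --- in the paper's applications the coefficients extend holomorphically past $\overline{U}$ and are bounded, so the intended interpretation of $(a)$ includes such a sup bound --- but you should state explicitly that $A<\infty$, or restrict the Cauchy/averaging argument to the uniformly bounded tail and observe separately that the initial terms are bounded.
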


%

For convenience, we will use the following  orthonormal Fischer base of $ S^LN_C^*$:
$$
e^*_{j,Q}=\sqrt{\f{|Q|!}{Q!}}(w_j^*)^Q, \quad |Q|=L, \quad Q\in\nn^d.
$$
 The transition  
matrices
  $t^L_{kj}$ of $S^LN_C^*$ is then determined   in the following way~: Let $F_k=\sum_{|P|=L} F_{k,P}e_{k,P}^*$. We have
$$
(F_{k,P})_{|P|=L}=t_{kj}^L(F_{j,P})_{|P|=L}.
$$
This can be computed from the transition matrices of $N_C^*$ by expressing the basis $w_{k,1}^*,\ldots, w_{k,d}^*$ in terms of $w_{j,1}^*,\ldots, w_{j,d}^*$.
 Since $ t_{kj}^L$ maps orthonormal basis into orthonormal basis,  by \rp{lombardi} we know that $ t_{kj}^L$ are unitary matrices, i.e. in operator norm
 defined in \re{matrix-norm},
\eq{unitary-tL}
|t_{kj}^L|=1, \quad  L=1,2,\dots.
\eeq
We will apply results in the  appendix to the transition  
matrices
  $t_{kj}^L$.

\subsection{
A majorization in the modified Fischer norm for
the vertical linearization
}
Let $\{f^v_j\}$ be the formal solution of \re{v-equation}. 
We use notation \re{fnf}.
Let $
\var_j(U_j)=\Del_n$ and $U_{kj}:=U_k\cap U_j$. Define $\hat U_{kj}=
\var_{j}(U_{kj})$. Then, $
\var_{kj}(\hat U_{kj})=\hat U_{jk}$.
Let us first assume that $H^0(C,N_C\otimes S^\ell(N_C^*))=0$
 for all $\ell\geq 2$. We shall see later on how to  get rid of this assumption
to prove the general result.

Let us assume that there exists a vertical formal transformation $F:=\{F_j\}$ fixing $C$,
being tangent to identity on it, and preserving the splitting of $T_CM$ that linearizes vertically a neighborhood of $C$ in $M$. Let us write
$$
F_j(h_j,v_j):=(h_j,v_j+f_j),\quad f_j=\sum_{k\geq 2}[f_j]^k,\quad  \{[f_j]^k\}\in C^0(C,N_C\otimes S^k(N_C^*)).
$$

 Assume that there is a sequence $\{A_k\}_{k\geq 2}$ of positive numbers such that
\beq\label{majorfkak}
\forall k<m\quad |[f_j]^k|_{\hat U_j}\leq \eta_kA_k.
\eeq
Let us set
$$
A(t)=\sum_{k\geq 2}A_kt^k
$$
with $t\in \Bbb C$.
Let us first estimate both $|[h'_{kj}]^m|_{\hat U_{kj}}$ and $|[h''_{kj}]^m|_{\hat U_{kj}}$ in term of $J^{m-1}A(t):=A_2t^2+\cdots+ A_{m-1}t^{m-1}$. 

 Since $\phi^h_{kj}$ is holomorphic in $h_j\in \hat U_{kj}$  and $v_j$ in a neighborhood of the origin. We can assume that there is a positive $R$ such that
  \beq\label{estimphih}\nonumber
\sup_{h_j \in \hat U_{kj}}|\phi^{h}_{kj,Q}(h_j)|\leq R^{|Q|}
\eeq
for all $Q\in \Bbb N^d_2$,
where $ \phi^{h}_{kj,Q}$ is defined by \re{phih}  and
 $\Bbb N^d_k:=\{Q\in\Bbb N^d\colon|Q|\geq k\}$.

For $Q\in \Bbb N^d_2$, we have
$$
\left[(v_j+f_j^v(h_j,v_j))^Q\right]^m= \sum_{\dindice{(m_{1,1},\ldots,m_{1,q_1},\ldots,m_{d,1},\ldots,m_{d,q_d}) }{\sum_{i=1}^d{m_{i,1}+\cdots+m_{i,q_i}=m}}}\prod_{i=1}^d[f_{j,i}]^{m_{i,1}}\cdots [f_{j,i}]^{m_{i,q_i}}
$$
where we have set $f_j^v=(f_{j,1},\ldots, f_{j,d})$, $[f_{j,i}]^{1}= 
v_{j,i}$ and $[f_{j,i}]^{0}=0$. In the following,  all $m_{i,j}$ are positive integers. Hence, by the first point of \rp{lombardi}, we have
\beq\label{fnormi+f}
\left|\left[(v_j+f_j^v(h_j,v_j))^Q\right]^m\right|_{\hat U_{kj}}\leq \sum_{\dindice{(m_{1,1},\ldots,m_{1,q_1},\ldots,m_{d,1},\ldots,m_{d,q_d}) }{\sum_{i=1}^d{m_{i,1}+\cdots+m_{i,q_i}=m}}}\prod_{i=1}^d|[f_{j,i}]^{m_{i,1}}|_{\hat U_j}\cdots |[f_{j,i}]^{m_{i,q_i}}|_{\hat U_j}.
\eeq
Let $m\geq 2$, for $Q\in\Bbb N^d_2$, $|Q|\leq m$, let us set
$$
E_{Q,m}=\left\{(m_{1,1},\ldots,m_{1,q_1},\ldots,m_{d,1},\ldots,m_{d,q_d})\in\Bbb N^{|Q|}_1\colon \sum_{i=1}^d{m_{i,1}+\cdots+m_{i,q_i}=m}\right\}.
$$
 Let $M_i=(m_{1,1}^{(i)},\ldots,m_{1,q_1^{(i)}}^{(i)},\ldots,m_{d,1}^{(i)},\ldots,m_{d,q_d^{(i)}}^{(i)})\in\Bbb N^{|Q^{(i)}|}_1$
 with $|Q^{(i)}|\leq m_i$
 and $
 m_i=\sum_{j=1}^d m_{j,1}^{(i)}+\cdots+m_{j,q_j^{(i)}}^{(i)}$, $i=1,2$.  Define the concatenation
  $M_1\sqcup M_2$ 
  to be  $(M_1,M_2)$.
We also have $\sum_{j=1}^2\sum_{i=1}^d m_{i,1}^{(j)}+\cdots+m_{i,q_i^{(j)}}^{(j)}=m_1+m_2$.
Hence, we emphasize that the concatenation 
\beq\label{concatenation}
\left(\bigcup_{2\leq |Q_1|\leq m_1}E_{Q_1,m_1}\right)\sqcup \left(\bigcup_{2\leq |Q_2|\leq m_2}E_{Q_2,m_2}\right)\subset \bigcup_{2\leq |Q|\leq m_1+m_2}E_{Q,m_1+m_2}.
\eeq

As a consequence, according to \re{h'} and \re{estimphih}, we have
\al
\left|\left[\sum_{Q\in \Bbb N^d, |Q|=m} h_{kj,Q}'(h_j)v_j^Q\right]^m\right|_{\hat U_{kj}}&\leq \sum_{|Q|=2}^{m}R^{|Q|}\sum_{M\in E_{Q,m}}\prod_{i=1}^d|[f_{j,i}]^{m_{i,1}}|_{\hat U_j}\cdots |[f_{j,i}]^{m_{i,q_i}}|_{\hat U_j}\nonumber\\
&\leq \sum_{|Q|=2}^{m}R^{|Q|}\sum_{M\in E_{Q,m}}\prod_{i=1}^d\eta_{m_{i,1}}A_{m_{i,1}}\cdots \eta_{m_{i,q_i}} A_{m_{i,q_i}}\nonumber\\
&\leq \left[\sum_{|Q|=2}^{m}\eta_{Q,m}R^{|Q|}(t+J^{m-1}(A(t))^{|Q|}\right]^m\nonumber\\
&\leq  E_{m}[g_m(t)]^m,\label{estim-h'}
\end{align}
where we have set
\aln
&\eta_{Q,m}:=
\max
_{M\in E_{Q,m}}\left(\prod_{i=1}^d\eta_{m_{i,1}}\cdots \eta_{m_{i,q_i}}\right),\quad E_m:=\max_{\dindice{Q\in \Bbb N^d}{2\leq |Q|\leq m}} \eta_{Q,m},\\
& g_m(t):= \sum_{|Q|=2}^{m}R^{|Q|}(t+J^{m-1}(A(t))^{|Q|},\quad   g(t):= \sum_{|Q|\geq 2}R^{|Q|}(t+A(t))^{|Q|}.
\end{align*}
Hence, as formal power series, we have
\beq\label{exp-g}
g(t)=
\left(\frac{1}{1-R(t+A(t))}\right)^d-dR(t+A(t))-1.
\eeq
Let ${\mathcal U}^*=\{U_i^*\}$ be an  open covering of $C$ such that $U_i^*$ is relatively compact in $U_i$. We shall write $\hat U_k^*:=
\var_k(U_k^*)$.
Let us consider the index $j$ as fixed and let us estimate the Fischer norm of $h''_{kj}$ on $\hat U_{kj}^*:=
\var_j(U_j\cap U_k^*)$.
We have
\begin{eqnarray*}
\left[\sum_{Q\in \Bbb N^d, |Q|=m} h_{kj,Q}''(h_j)v_j^Q\right]^m &=&
\sum_{\dindice{P\in \Bbb N_1^n}{m_1+m_2=m}}\frac{1}{P!}\left[ \partial^P_h f_k
(\var_{kj}(h_j),t_{kj}v_j)\right]^{m_1}\left[\left(\phi_{kj}^h(h_j,v_j+f_j^v)\right)^P\right]^{m_2}\\
&=&
\sum_{\dindice{P\in \Bbb N_1^n}{m_1+m_2=m}}
\frac{1}{P!}\left[ \partial^P_h f_k(
\var_{kj}(h_j),t_{kj}v_j)\right]^{m_1}\left[\left(h'_{kj}(h_j,v_j)\right)^P\right]^{m_2}.
\end{eqnarray*}
Here, both indices $m_1$ and $m_2$ are $\geq 2$.
Since the Fischer norm is submultiplicative, we have
\beq\label{h'p}\nonumber
\left|\left[\left(h'_{kj}(h_j,v_j)\right)^P\right]^{m_2}\right|_{\hat U_{kj}^*}\leq E_{m_2}\left[\left(\sum_{|Q|=2}^{
 \frac{m}{2}
}R^{|Q|}(t+J^{m-1}(A(t))^{|Q|}\right)^{|P|}\right]^{m_2}.
\eeq
Indeed,
\begin{eqnarray*}
\left[\left(h'_{kj}(h_j,v_j)\right)^P\right]^{m_2}&=&\left[\prod_{i=1}^n(h'_{kj,i})^{p_i}\right]^{m_2}\\
&=&  \sum_{\sum_i(m_{i,1}+\cdots +m_{i,p_i})=m_2}\prod_{i=1}^n[h'_{kj,i}]^{m_{i,1}}\cdots [h'_{kj,i}]^{m_{i,p_i}}.
\end{eqnarray*}
According to \re{concatenation} and by \re{estim-h'}, we have
\begin{eqnarray*}\left|\prod_{i=1}^n[h'_{kj,i}]^{m_{i,1}}\cdots [h'_{kj,i}]^{m_{i,p_i}}\right|_{\hat U_{kj}^*}&\leq &\prod_{i=1}^nE_{m_{i,1}}\left[g_{m_{i,1}}(t)\right]^{m_{i,1}}\cdots E_{m_{i,p_i}}\left[g_{m_{i,p_i}}(t)\right]^{m_{i,p_i}}\\
&\leq & \max_{2\leq |Q|\leq m_{2}}\eta_{Q, m_{2}}\prod_{i=1}^n\left[g_{m_{i,1}}(t)\right]^{m_{i,1}}\cdots \left[g_{m_{i,p_i}}(t)\right]^{m_{i,p_i}}.
\end{eqnarray*}
Hence, we have
$$
\sum_{\sum_i(m_{i,1}+\cdots +m_{i,p_i})=m_2}\left|\prod_{i=1}^n[h'_{kj,i}]^{m_{i,1}}\cdots [h'_{kj,i}]^{m_{i,p_i}}\right|_{\hat U_{kj}^*}\leq  E_{m_{2}}[g(t)^{|P|}]^{m_2}.
$$
We have, by definition $\left[ \partial^P_h f_k(
\var_{kj}(h_j),t_{kj}v_j)\right]^{m_1}=\partial^P_h
 [f_{k}]^{m_1}(
\var_{kj}(h_j),t_{kj}v_j)$.

Recall that the Fischer norm is
unitary invariant and by \rp{lombardi}, we have 
\begin{eqnarray*}
\left|\partial^P_h  [f_{k}]^{m_1}(
\var_{kj}(h_j),t_{kj}v_j)\right|_{\hat U_{kj}^*} ^2&=& \left|\partial^P_h [f_{k}]^{m_1}(
\var_{kj}(h_j),v_j)\right|_{\hat U_{kj}^*}^2\\
&\leq &  \left(\frac{ P!}{\dist_*(\hat U_{k}^*,\pd \hat U_k)^{|P|}}\right)^2|[f_{k}]^{m_1}|_{\hat U_{k}}^2.
\end{eqnarray*}
Let us set $M:=\inf_k\dist(\hat U_{k}^*,\pd \hat U_k)$. 
 As a consequence, we have
\begin{eqnarray}
\left|\left[\sum_{Q\in \Bbb N^d, |Q|=m} h_{kj,Q}''(h_j)v_j^Q\right]^m\right|_{\hat U_{kj}^*}&\leq &\sum_{m_1+m_2=m}\sum_{\dindice{P\in \Bbb N^n}{|P|\geq 1}}
\frac{1}{M^{|P|}}|[f_{k}]^{m_1}|_{\hat U_{k}}E_{m_{2}}[g(t)^{|P|}]^{m_2}\nonumber\\
&\leq &\sum_{m_1+m_2=m}|[f_{k}]^{m_1}|_{\hat U_{k}}\left[E_{m_{2}}\sum_{\dindice{P\in \Bbb N^n}{|P|\geq 1}}
\left(\frac{g(t)}{M}\right)^{|P|}\right]^{m_2}\nonumber\\
&\leq & \left(\max_{m_1+m_2=m}\eta_{m_1}E_{m_2}\right)\left[ A(t) \left(\left(\frac{
M}{M-g(t)}\right)^n-1\right)\right]^m.\label{estim-h''}
\end{eqnarray}
Collecting estimates \re{estim-h'} and \re{estim-h''}, we obtain
\beq\nonumber
\left|{\mathcal L}_{kj}^v([f^v_j]^m)\right|_{\hat U_{kj}^*}\leq \left[E_m g(t)+ \left(\max_{m_1+m_2=m}\eta_{m_1}E_{m_2}\right)A(t) \left(\left(\frac{M}{M-g(t)}\right)^n-1\right)\right]^m.
\eeq

Let us extend this to an estimate on $\hat U_{kj}=
\var_j(U_j\cap U_k)$. Following again Ueda's argument~\ci{Ue82} let us express the fact that $[h]^m:=[h']^m+[h'']^m$ is a $1$-cocycle with values in $N_C\otimes S^m(N_C^*)$.  Let $p\in U_k\cap U_j$. Then   $p\in U_k\cap U_j\cap U_i^*$ for some $i$.  According to
\re{maj-v-lin} and
\rl{cc},  at $p\in U_k\cap U_j\cap U_i^*$ we have
\eq{3sums}
t_{ki}\sum_{|Q|=m} h_{ik,Q}(z_k(p)) (t_{kj}v_j)^Q -t_{ki}\sum_{|Q|=m}h_{ij,Q} (z_j(p))(v_j)^Q+\sum_{|Q|=m}h_{kj,Q}(z_j(p))(v_j)^Q
 =0.
\eeq
 Here by \re{fnf} the Fischer norms of $h_{kj}$ on all subdomains must be computed in the base  $e_{k}^v$ of $N_C$ 
on $U_k$ and the base $w_j^*$ of $N_C^*$ on $U_j$.
We can apply the previous estimates \re{estim-h'} and \re{estim-h''} to the first two sums respectively on $\hat U_{ik}^*$ and $\hat U_{ij}^*$.
To estimate the first sum,  we need to change  coordinates.
From
section~\ref{sec:formal},  $t_{kj}$ (resp. $s_{kj}$) are transition  
matrices
   of $N_C$ (resp. $TC$).
Recall that $\{[h_{kj}]^m\}\in Z^1(\cL U^{r_*}, N_C\otimes S^mN_C^*)$ and
\aln
 h_{ik}(p)&=\sum_{\la=1}^d \sum_{|Q|=m} h_{ik; Q}^\la (z_{k}(p)) e_{i,\la }^v(p)\otimes (w_{k}^*(p))^Q\\
&=\sum_{\la'=1}^d\sum_{\la=1}^d \sum_{|Q|=m} h_{ik; Q}^\la (z_{k}(p))
t_{
ki,\la}^{\la'} (z_k(p))e_{k,\la' }^v(p)\otimes (t_{kj}w_{j}^*(p))^Q=:\tilde h_{kj}(z_k(p), w_j^*).
\end{align*}
Thus,  $ \sum_{|Q|=m} h_{ik,Q}(z_k(p)) (t_{kj}v_j)^Q=\tilde h_{kj}(z_k(p),v_j)$.
By the unitary invariance  by multiplication and composition of the Fischer norm and
by definition \re{fnf}, we have for fixed $z_k(p)\in\hat U_{ik}^*$,
\aln
|\tilde h_{kj}( z_k(p), v_j)|_{\mf}^2
&=
\sum_{\la'=1}^d \left|\sum_{|Q|=m} \left(\sum_{\la=1}^d t_{ki,\la}^{\la'} ( z_k) h_{ik; Q}^\la ( z_{k}) \right)(t_{kj}v_j)^Q\right|_{\mf}^2\\
&=\sum_{\la'=1}^d\left|\sum_{|Q|=m} \left(\sum_{\la=1}^d t_{ki,\la}^{\la'} ( z_k) h_{ik; Q}^\la ( z_{k}) \right)v_j^Q\right|_{\mf}^2\\
&=
\sum_{\la'=1}^d\sum_{|Q|=m}\f{Q!}{|Q|!}\left|\sum_{\la=1}^d t_{ki,\la}^{\la'} ( z_k)h_{ik; Q}^\la (z_{k}) \right|^2\\
&\leq\sum_{\la'} 
\sum_{|Q|=m}\f{Q!}{|Q|!}\sum_{\la=1}^d  \left|h_{ik; Q}^\la (z_{k})  \right|^2\leq d |h_{ik}|_{\hat U_{ik}^*}^2,
\end{align*}
where the second last inequality is obtained by the Cauchy-Schwarz inequality.
In a similar way, we have a similar estimate for the second sum in \re{3sums} on $
\var_j(U_k\cap U_j\cap U_i^*)$. For the third sum in \re{3sums}, we note that the entries of the unitary matrix $t_{ki}$ have modulus at most one.
Thus, there exist constants $M', \tilde M$ such that the third sum in \re{3sums} satisfies
\begin{eqnarray*}
|h_{kj}|_{\hat U_{kj}}&\leq &
 M'\max_i(|h_{ik}|_{\hat U_{ik}^*}+|h_{ij}|_{\hat U_{ij}^*})\\
&\leq &
\tilde M\max\left(E_m,\max_{\dindice{m_1+m_2=m}{m_1,m_2\geq 2}}\eta_{m_1}E_{m_2}\right)\left[g(t)+ A(t) \left(\left(\frac{M}{M-g(t)}\right)^n-1\right)\right]^m.
\end{eqnarray*}

We now adapt the estimate in  \rl{SD-p} (see also \rt{donin-sol-cohom}). Recall that $[h_{kj}]^{\leq m}$ depends only on $[f]^{\leq m-1}$
and the hypothesis \re{majorfkak}. By the formal assumption, we have a solution to \re{maj-v-lin}:
$$
\cL L_{kj}([f_j^v]^m)=[h_{kj}]^m.
$$
 By assumptions, $H^0(C,N_C\otimes S^\ell(N_C^*))=0$, for all $\ell\geq 2$. Hence, the solution of the previous equation is unique.   By \rl{SD-p},
 \re{second-sol} and \re{unitary-tL}, 
the solution satisfies the   estimate:  
\eq{sup-est}\nonumber
|\{[f_j^v]^m\}|_{\cL U}\leq C(1+K_*(N_C\otimes S^mN_C^*)|)\{[h_{kj}]^m\}|_{\cL U}.
\eeq
Here,  $C$ depends neither on $N_C$ nor on $S^mN_C^*$. 
Therefore,
we have
$$
|[f_j^v]^m|_{\hat U_j}\leq K(N_C\otimes S^m(N_C^*)) \max_k\left|{\mathcal L}_{kj}^v([f^v_j]^m)\right|_{\hat U_{kj}^*}.
$$

By definition \re{def-eta-foliation}, we have
$$
K(N_C\otimes S^m(N_C^*))\max\left(E_m,\max_{\dindice{m_1+m_2=m}{m_1,m_2\geq 2}}\eta_{m_1}E_{m_2}\right)\leq \eta_m.
$$
Hence, we have
\beq\label{estim-f-induc}
|[\{f^v\}]^m|_{\hat U}\leq \tilde M\eta_m\left[g(t)+ A(t) \left(\left(\frac{M}{M-g(t)}\right)^n-1\right)\right]^m.
\eeq
Let us consider the functional equation
\beq\label{equ-A}\nonumber
A(t)=\cL F(t, A(t)):=\tilde M\left(g(t)+ A(t) \left(\left(\frac{M}{M-g(t)}\right)^n-1\right)\right),
\eeq
where $g(t)$ is a function of $A$ by \re{exp-g}. This equation has a unique analytic solution vanishing at the origin at order $2$.

We now can prove the theorem. Indeed by assumption, there are positive constants $M,L$ such that $\eta_m\leq ML^m$ for all $m\geq 2$. Since $A(t)$ converges at the origin, then $A_m\leq D^m$ for some positive $D$. According to \re{estim-f-induc}, we have also proved
$$
|[\{f^v\}]^m|_{\hat U}\leq \eta_mA_m,
$$
so that, finally, $|[\{f^v\}]^m|_{\hat U}\leq M(DL)^m$ for all $m\geq 2$. Hence, $f^v=\sum_{m\geq 2}[\{f^v\}]^m$ converges at the origin and this proves the theorem.

Let us see how we can prove \rrem{rem-vl}. The issue is that, when considering a solution $[f^{v}_j]^m$ of the cohomological equation ${\mathcal L}_{kj}([f^{v}_j]^m)=R^m$, the estimate given by \rl{SD-p} and \rp{defSD} might be obtained by another solution. Hence, the formal solution might not be the good one for the estimate.   Furthermore, we cannot replace a solution at degree $m$ as we wish to ensure that higher order terms in the vertical component can be eliminated formally.
We now explain the general result as formulated in the theorem. We will assume that there are formal mappings
$$
\tilde F_j(h_j,v_j)=(h_j,v_j)+\left(0,\sum_{\ell>2}\tilde f^v_{j,\ell}(h_j,v_j)\right)
$$
satisfying the following
\begin{enumerate}
	\item 
	$
	 \{\tilde F_k^{-1}\Phi_{kj}\tilde F_j-N_{kj}\}^v=0$ for all $ k,j.
	$
	In other words,  $\{\tilde F_j\}$ formally linearizes  $\Phi_{kj}$ vertically.  In particular,
	$$
	\{(\tilde F^m_k)^{-1}\Phi_{kj}\tilde  F^m_j-N_{kj}\}^v=[\phi^v_{kj}]^{m}+R_{kj}^{m}(\{[\phi_{kj}]^\ell, [\tilde f_k^v]^{\ell}\}_{2\leq\ell< m})+O(|v_j|^{m+1})$$
	 for
	$$
	\tilde F_j^m=(h_j,v_j)+\left(0,\sum_{2\leq\ell\leq m}\tilde f^v_{j,\ell}(h_j,v_j)\right).
	$$
	(The last assertion can be check easily since $(\tilde F_j^m)^{-1}\tilde F_j(h_j,v_j)=(h_j,v_j)+O(|v_j|^{m+1})$).
		\item Each $\{\tilde f_{j,m}^v\}_{j}$ is a ``minimizer'' in the sense that it satisfies the equation
		$$
		\{\del^v\tilde f_{m}^v\}_{kj}=[\phi^v_{kj}]^{m}+[R^{m}(\{[\phi_{kj}]^\ell, [\tilde f_k^v]^{\ell}\}_{2\leq\ell< m})]^m	$$
		and the estimate
		$$
	|\tilde f^v_m|\leq K(N_C\otimes S^m(N_C^*))|[\phi^v]^{m}+[R_{kj}^{m}(\{[\phi_{kj}]^\ell, [\tilde f_k^v]^{\ell}\}_{2\leq\ell< m})]^m|.		$$
		\end{enumerate}
		
As a consequence, the scheme of convergence applies to that formal
solution $\{\tilde F_j\}$ and we are done.

\setcounter{thm}{0}\setcounter{equation}{0}
\section{
A majorant method for the full linearization with a unitary normal bundle}
\label{sec:majorlin}

  In this  section, we shall devise  a proof of  \rt{lin-unit}, that is of the linearization of the neighborhood problem in the case $N_C$ is unitary (and  flat)  following a majorant method scheme.  

Let us recall the {\it horizontal cohomological operator}
$$
{\mathcal L}_{kj}^h(f^h_j):=f_k^h(
\var_{kj}(h_j),t_{kj}v_j)  -s_{kj}(h_j)f_j^h(h_j,v_j),
$$
where $s_{kj}(h_j)=D
\var_{kj}(h_j)$. We then have the horizontal equation \re{h-conjug}
\begin{eqnarray}
{\mathcal L}_{kj}^h(f^h_j)&=& \phi^h_{kj}(h_j+f_j^h,v_j+f_j^v)\label{h-lin-flat}\\
&&+\, \
\var_{kj}(h_j+f_j^h(h_j,v_j))-
\var_{kj}(h_j)-D
\var_{kj}(h_j)f_j^h(h_j,v_j).\nonumber
\end{eqnarray}

Let us recall the {\it vertical cohomological operator}
\beq\nonumber
{\mathcal L}_{kj}^v(f^v_j):=f_k^v(
\var_{kj}(h_j),t_{kj}v_j)-t_{kj}f_j^v, 
\eeq
and vertical equation \re{v-conjug} (recall that $N_C$ is flat)
\beq
{\mathcal L}_{kj}^v(f^v_j)=
\var^v_{kj}(h_j+f_j^h,v_j+f_j^v).\label{v-lin-flat}
\eeq
By assumption, there exists a formal solution $f_j=(f_j^h,f_j^v)=\sum_{k\geq 2}[f_j]^k$ with $\{[f_j]^k\}\in C^0(C, T_CM\otimes S^k(N_C^*))$. In case we assume $H^1(C, T_CM\otimes S^k(N_C^*))=0$, for all $k\geq 2$, this follows from \rl{verifycochain}.
We now use
 the ``norm'' of the cohomological operator acting on $C^0(\mathcal U, T_CM\otimes S^m(N_C^*))$ as defined by \rt{donin-sol-cohom}.
We have, for $m\geq 2$
$$
\tilde K_m:=\max\left( K(N_C\otimes S^m(N_C^*)), K(T_C\otimes S^m(N_C^*))\right).
$$
As in the foliation problem, we consider the sequence of numbers $\{\eta_m\}_{m\geq 1}$ with $\eta_1=1$ and, if $m\geq 2$
\beq\label{def-eta}
\eta_m:=\tilde K_m\max_{m_1+\cdots +m_p+s=m} \eta_{m_1}\cdots \eta_{m_p},
\eeq
where, in the maximum, $1\leq m_i<m$ for all $i$ and $s\in\Bbb N$.
In what follows, $f_j^\bullet$ (resp. $\phi_{kj}^\bullet$) stands for either $f_j^h$ or $f_j^v$ (resp. $\phi_{kj}^h$ or $\phi_{kj}^v$).
As in the previous section, let us expand $\phi^{\bullet}_{kj}(h_j+f_j^h,v_j+f_j^v)$
appeared in \re{h-lin-flat}
and \re{v-lin-flat}  in power series  of $v_j$ and let us define
\begin{eqnarray}\nonumber
\phi^{\bullet}_{kj}(z_j,w_j) &=:&\sum_{Q\in \Bbb N^d_2}\phi^{\bullet}_{kj,Q}(z_j)w_j^Q\label{phihl}\\
\phi^{\bullet}_{kj}(h_j+f_j^h(h_j,v_j),v_j+f_j^v(h_j,v_j))&=:&\sum_{Q\in \Bbb N^d_2}h_{kj,Q}^{\bullet}(h_j)v_j^Q=:h^{\bullet}_{kj}(h_j,v_j).
\nonumber
\end{eqnarray}
Then we obtain
\beq\label{hl}\nonumber
\sum_{Q\in \Bbb N^d_2}h_{kj,Q}^{\bullet}(h_j)v_j^Q=\sum_{Q\in \Bbb N^d_2}\phi^{\bullet}_{kj,Q}(h_j+f_j^h(h_j,v_j))(v_j+f_j^v(h_j,v_j))^Q.
\eeq
We further expand the first expression on the right-hand side as
\beq\label{h-compo-lin}\nonumber
\tilde h^{\bullet}_{kj,Q}:=\phi^{\bullet}_{kj,Q}(h_j+f_j^h(h_j,v_j))=\sum_{P\in \Bbb N^n}\frac{1}{P!}\partial^P_h\phi^{\bullet}_{kj,Q}(h_j)(f_j^h(h_j,v_j))^P.
\eeq
Hence, for any $m\geq 2$,
$$
[h^{\bullet}_{kj}]^m=\sum_{m_1+m_2=m}\sum_{Q\in \Bbb N^d_2}\sum_{P\in \Bbb N^n}\frac{1}{P!}\partial^P_h\phi^{\bullet}_{kj,Q}(h_j)\left[(f_j^h(h_j,v_j))^P\right]^{m_1}\left[(v_j+f_j^v(h_j,v_j))^Q\right]^{m_2}.
$$

Let $\{f^{\bullet}_j\}$ be the formal solution of  
\re{h-lin-flat} and \re{v-lin-flat}.  Let us first assume that $H^0(C,T_CM
\otimes S^\ell(N_C^*))=0$
 for all $\ell\geq 2$. We shall see later on how to get rid of the assumptions.
Assume that there is a sequence $\{A_k\}_{k\geq 2}$ of positive numbers such that
\beq\label{majorfkakl}\nonumber
\forall k<m\quad |[f_j]^k|_{\hat U_j}\leq \eta_kA_k.
\eeq
Let us set
$$
A(t)=\sum_{k\geq 2}A_kt^k
$$
with $t\in \Bbb C$.

Since $\phi^{\bullet}_{kj}$ is holomorphic in $h_j\in \hat U_{kj}$ and $v_j$ in a neighborhood of the origin, we can assume that there is a positive $R$ such that
 \beq\label{estimphih}
\sup_{h_j \in \hat U_{kj}}|\phi^{\bullet}_{kj,Q}(h_j)|\leq R^{|Q|}.
\eeq

According to \re{fnormi+f} and the proof of \re{estim-h'}, we obtain 
\begin{eqnarray}
\left|\left[(v_j+f_j^v(h_j,v_j))^Q\right]^{m_2}\right|_{\hat U_{kj}}&\leq &\sum_{\dindice{(m_{1,1},\ldots,m_{1,q_1},\ldots,m_{d,1},\ldots,m_{d,q_d}) }{\sum_{i=1}^d{m_{i,1}+\cdots+m_{i,q_i}=m_2}}}\prod_{i=1}^d|[f_{j,i}]^{m_{i,1}}|_{\hat U_j}\cdots |[f_{j,i}]^{m_{i,q_i}}|_{\hat U_j}\nonumber\\
&\leq &  \sum_{M\in E_{Q,m_2}}\prod_{i=1}^d\eta_{m_{i,1}}A_{m_{i,1}}\cdots \eta_{m_{i,q_i}} A_{m_{i,q_i}}\nonumber\\
&\leq & \eta_{Q,m_2}\left[\left(t+J^{m_2-1}A(t)\right)^{|Q|}\right]^{m_2}. \label{Qm2}\nonumber
\end{eqnarray}
 On the other hand, let ${\mathcal U}^*=\{U_i^*\}$ be an  open covering of $C$ such that $U_i^*$ is relatively compact in $U_i$. We shall write $\hat U_k^*:=
\var_k(U_k^*)$. Let us set
$$
M:= \min_k\dist(\hat U_{k}^*,\pd \hat U_{k}).
$$
Let us consider the index $j$ as fixed and let us estimate the Fischer norm of $[\tilde h^{\bullet}_{kj}]^{m_1}$ on $\hat U_{kj}^*:=
\var_j(U_j\cap U_k^*)$. We get
\begin{eqnarray*}
\left|[\tilde h^{\bullet}_{kj}]^{m_1}\right|_{\hat U_{kj}^*}&=& \sum_{P\in \Bbb N^n}\frac{1}{P!}\left|\partial^P_h\phi^{\bullet}_{kj,Q}(h_j)\left[(f_j^h(h_j,v_j))^P\right]^{m_1}\right|_{\hat U_{kj}^*}\\
&\leq & \sum_{P\in \Bbb N^n}\left(\frac{1}{\dist(
\hat U_{k}^*,\pd \hat U_{k})}\right)^{|P|}\left|\phi^{\bullet}_{kj,Q}\right|_{\hat U_{kj}}\left|\left[(f_j^h(h_j,v_j))^P\right]^{m_1}\right|_{\hat U_{kj}^*}\\
&\leq &\sum_{P\in \Bbb N^n}\left(\frac{1}{M}\right)^{|P|}R^{|Q|}\left|\left[(f_j^h(h_j,v_j))^P\right]^{m_1}\right|_{\hat U_{kj}^*}.
\end{eqnarray*}
Since $f_j$ is of order $\geq 2$ at $v_j=0$, we have $|P|\leq \f{m_1}{2}$ in the 
above sum. According to estimate \re{fnormi+f} and following the proof of \re{estim-h'}, we obtain
\begin{eqnarray}
\left|[\tilde h^{\bullet}_{kj,Q}]^{m_1}\right|_{\hat U_{kj}^*}&\leq &\sum_{P\in \Bbb N^n,|P|=0}^{\frac{m_1}{2}}\left(\frac{1}{\dist(\hat U_{k}^*,\pd \hat U_{k})}\right)^{|P|}R^{|Q|}\eta_{P,m_1}\left[A(t)^{|P|}\right]^{m_1}.\label{Pm1}
\end{eqnarray}
Combining inequalities \re{Pm1} and \re{Qm2}, we obtain
\begin{eqnarray*}
\left|[h^{\bullet}_{kj}]^m\right|_{\hat U_{kj}^*}&\leq & \sum_{m_1+m_2=m}\sum_{Q\in \Bbb N^d_2}\sum_{P\in \Bbb N^n}\frac{1}{P!}\left|\partial^P_h\phi^{\bullet}_{kj,Q}(h_j)\left[(f_j^h(h_j,v_j))^P\right]^{m_1}\left[(v_j+f_j^v(h_j,v_j))^Q\right]^{m_2}\right|_{\hat U_{kj}^*}\\
&\leq & \sum_{m_1+m_2=m}\sum_{\dindice{Q\in \Bbb N^d}{|Q|=2}}^{m_2}\sum_{\dindice{P\in \Bbb N^n}{|P|=0}}^{\f{m_1}{2}}\left(\frac{1}{M}\right)^{|P|}R^{|Q|}\eta_{P,m_1}\left[A(t)^{|P|}\right]^{m_1}\eta_{Q,m_2}\left[\left(t+J^{m_2-1}A(t)\right)^{|Q|}\right]^{m_2}\\
&\leq &\sum_{m_1+m_2=m}\sum_{\dindice{Q\in \Bbb N^d}{|Q|=2}}^{m_2}\sum_{\dindice{P\in \Bbb N^n}{|P|=0}}^{\f{m_1}{2}}\left[\left(\frac{A(t)}{M}\right)^{|P|}\right]^{m_1}\eta_{P,m_1}\eta_{Q,m_2}\left[\left(Rt+RJ^{m_2-1}A(t)\right)^{|Q|}\right]^{m_2}\\
&\leq &\tilde E_m \left[ \left(\frac{1}{1-\frac{A(t)}{M}}\right)^n\left(\left(\frac{1}{1-(Rt+RA(t))}\right)^d- 1 -d(Rt+RA(t))\right)\right]^m.
\end{eqnarray*}
 Here, we have set
$$
\tilde E_m =\max_{m_1+m_2=m}\max_{\dindice{P\in \Bbb N^n, Q\in \Bbb N^d}{|P|\leq \frac{m_1}{2}, 2\leq |Q|\leq m_2, }} \eta_{P,m_1}\eta_{Q,m_2}.
$$
It remains to estimate the rest of terms in \re{h-lin-flat}. We define
\aln
B_m:&=\left[
\var_{kj}(h_j+f_j^h(h_j,v_j))-
\var_{kj}(h_j)-D
\var_{kj}(h_j)f_j^h(h_j,v_j)\right]^m\\
&=\sum_{l=2}^{\f m2}\sum_{|P|=l}\frac{1}{P!}\partial_h^P
\var_{kj}(h_j)\left[(f_j^h)^P\right]^m.
\end{align*}
Hence, as above, we have
\begin{eqnarray*}
|B_m|_{\hat U_{kj}^*} &\leq & |
\var_{kj}|_{\hat U_{kj}}\sum_{l=2}^{\f m2}\sum_{|P|=l}\left(\frac{1}{M}\right)^{|P|}\left[(A(t))^{|P|}\right]^m\\
&\leq & |
\var_{kj}|_{\hat U_{kj}}\left[\left(\frac{1}{1-\frac{A(t)}{M}}\right)^n-1-n\frac{A(t)}{M}\right]^m.
\end{eqnarray*}

By the same reasoning as in the foliation section, the previous estimates on $\hat U_{kj}^*$ extend to estimates on $\hat U_{kj}$, by multiplication by a constant $\tilde M$.

Let us   
define constant $C_0:= \max_{kj}|
\var_{kj}|_{\hat U_{kj}}$
Since we have
$$
|[f_j^{\bullet}]^m|_{\hat U_j}\leq \tilde K_m\max_k\left|{\mathcal L}_{kj}([f^{\bullet}_j]^m)\right|_{\hat U_{kj}},
$$
then
\begin{align*}
|[f_j^{\bullet}]^m|_{\hat U_j}&\leq \tilde M\tilde K_m\left(
 C_0
\left[\left(\frac{1}{1-\frac{A(t)}{M}}\right)^n-1-n\frac{A(t)}{M}\right]^m\right.\\
&
\quad
+\left. \tilde E_m \left[ \left(\frac{1}{1-\frac{A(t)}{M}}\right)^n\left(\left(\frac{1}{1-(Rt+RA(t))}\right)^d- 1 -d(Rt+RA(t))\right)\right]^m\right).
\end{align*}
 We emphasize that due to the vanishing assumption of the spaces $H^0(\mathcal U, T_CM\otimes S^m(N_C^*))$, $m\geq 2$, the solution of cohomological equation ${\mathcal L}_{kj}([f^{\bullet}_j]^m)=R^m$ is unique and is equal to the minimizing solution obtained in \rl{SD-p} and \rp{defSD}.
Consider the following analytic functional equation~:
\begin{eqnarray*}
A(t)&=&\tilde M\left(  
C_0
\left[\left(\frac{1}{1-\frac{A(t)}{M}}\right)^n-1-n\frac{A(t)}{M}\right]\right.\\
&& +\left. \left(\frac{1}{1-\frac{A(t)}{M}}\right)^n\left(\left(\frac{1}{1-(Rt+RA(t))}\right)^d- 1 -d(Rt+RA(t))\right)\right).
\end{eqnarray*}
It has a unique analytic solution $A$ of  order $\geq 2$ at the origin.
Since we have
$$
\tilde K_m\max(1,\tilde E_m)\leq \eta_m,\quad|[f_j^{\bullet}]^m|_{\hat U_j}\leq A_m\eta_m, \quad m\geq 2
$$
then $\sum_{m\geq 2}[f_j^{\bullet}]^m$ converges in a neighborhood of the origin.\\

Let us see how the general case reduces to the previous one. The issue is that, when considering a solution $[f^{\bullet}_j]^m$ of the cohomological equation ${\mathcal L}_{kj}([f^{\bullet}_j]^m)=R^m$, the estimate given by \rl{SD-p} and \rp{defSD} might be obtained by another solution. Hence, the formal solution might not be the good one
for the
 estimates. So we will need to correct it. As we already emphasized, equations \re{h-lin-flat} and \re{v-lin-flat} read
	$$
	\mathcal{L}_{kj}(\{[f]^{\ell}_i\})=\mathcal{R}_{kj,\ell}([f]^{\ell'}, \ell'<\ell; [\Phi]^{l}, l\leq \ell)
	$$
where $	\mathcal{R}_{kj, \ell}$ is an analytic function of its arguments. Let us start at $\ell=2$.
	\begin{enumerate}
		\item  $\mathcal{R}_{kj,2}$ is just a function of the $[\Phi_{kj}]^2$'s and and we have $\mathcal{L}_{kj}([f]^{2})=\mathcal{R}_{kj, 2}$. Let $\{[\tilde f_{j,2}]^2\}$ be the minimizer solution of this equation obtained by \rl{SD-p} and \rp{defSD} and let $[k_j]^2:=[f_j]^2-[\tilde f_{j,2}]^2$. We have $\{[k_j]^2\} \in H^0(\mathcal U, T_CM\otimes S^2(N_C^*))$.
		\item According to lemma \rl{autom-lin}, $F_{j,2}:=F_j\exp(-[k_j]^2 )$ linearizes $\Phi_{kj}$ since
		$$
		F_{j,2}^{-1}\Phi_{kj}F_{j,2}= \exp(-[k_j]^2 )^{-1}N_{kj}\exp(-[k_j]^2 )=N_{kj}.
		$$
		$F_{j,2}$ is tangent to identity and its 2nd order term is the minimizer $[\tilde f_j]^2$.
		\item Assume that $F_{j, \ell}$ linearizes $\Phi_{kj}$, is tangent to identity at the origin and has the minimizers solution up to degree $\ell$ as Taylor expansion at 0. This means that  $F_{j, \ell}=Id+\sum_{l=2}^{\ell}[\tilde f_{j,l}]^{l}+ \sum_{l\geq \ell +1}[f_{j,\ell}]^{l}$.
		Let us write the conjugacy equation. By induction we have, for all $2\leq l\leq \ell$,
		$$
		\mathcal{L}_{kj}(\{[\tilde f_{i,l}]^{l}\})=\mathcal{R}_{kj,l}(\{[\tilde f_{i,l'}]^{l'}\}_i, l'<l; [\Phi]^{m}, m\leq l).
		$$
		Furthermore, it satisfies at degree $\ell+1$
		$$
				\mathcal{L}_{kj}(\{[f_{i,\ell+1}]^{\ell+1}\})=\mathcal{R}_{kj,\ell+1}(\{[\tilde f_{i,\ell'}]^{\ell'}\}_i, \ell'\leq \ell; [\Phi]^{m}, m\leq \ell+1).
		$$
		Let $[\tilde f_{i,\ell +1}]^{\ell+1}$ be the minimizer solution of the above cohomological equation. Let $[k_{i,\ell+1}]^{\ell+1}= [f_{i,\ell+1}]^{\ell+1}-[\tilde f_{i,\ell+1}]^{\ell+1}$. As above, it defines an element of  $H^0(\mathcal U, T_CM\otimes S^{\ell+1}(N_C^*))$.
		Let us set
		$F_{j,\ell+1}=F_{j, \ell}\exp( [k_{j,\ell+1}]^{\ell+1})^{-1}$. Then it linearizes $\Phi_{kj}$ and has the minimizers solution up to degree $\ell+1$ as Taylor expansion at 0: $F_{j, \ell+1}=Id+\sum_{l=2}^{\ell+1}[\tilde f_{j,l}]^{l}+ \sum_{l\geq \ell +2}[f_{j,\ell+1}]^{l}$.
		\item Since $F_{j,\ell+1}F_{j,\ell}^{-1}=I+O(\ell+1)$, the sequence $\{F_{j,\ell}\}_{\ell}$ converges in the space of formal power series to $\tilde F_j$. Furthermore, $\{\tilde F_j\}$ linearizes $\{\Phi_{kj}\}$ as each $\{F_{j,\ell}\}_{j}$ does.
		The Taylor expansion of $\tilde F_j$  at the origin is
		$$
		 \tilde F_{j}=Id+\sum_{l\geq 2}[\tilde f_{j,l}]^{l}.
		$$
		\item We can estimate the $[\tilde f_{j,l}]^{l}$ as we did above in the case of vanishing cohomology since the Taylor coefficient are minimizer solutions of the same equations.
	\end{enumerate}
Hence, we are done.

 In summary, we have proved the following theorem.
\begin{thm}\label{flin}
Let $C$ be an embedded compact manifold in $M$.
Assume that the embedding is   linearizable  
by a formal holomorphic mapping which is tangent to the identity
and preserves the splitting  of $T_CM$, and
$N_C$ is unitary. 
Suppose that $\{\eta_m\}_{m\geq 1}$ defined by \rea{def-eta} satisfy $\eta_m\leq L_0L^m$, for some positive numbers $L_0,L$
and for all $m$. Then the embedding is actually holomorphically  linearizable.
\end{thm}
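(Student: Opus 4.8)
The plan is to run a majorant (Ueda-type) scheme on the full conjugacy system \re{h-lin-flat}--\re{v-lin-flat}, using the modified Fischer norm $|\cdot|_{\mf}$ of Subsection~\ref{fisher-section} as the working norm. First I would reduce to the special case $H^0(C,T_CM\otimes S^\ell(N_C^*))=0$ for all $\ell\geq 2$, in which every homogeneous cohomological equation has a unique solution. Writing the given formal linearizing map as $F_j=I+(f_j^h,f_j^v)$ with $f_j=\sum_{k\geq 2}[f_j]^k$ and $\{[f_j]^k\}\in C^0(\cL U,T_CM\otimes S^k(N_C^*))$, I would peel off the system degree by degree: the degree-$m$ part of the right-hand side of \re{h-lin-flat}--\re{v-lin-flat} is a fixed polynomial expression in $[\Phi_{kj}]^{\leq m}$ and in $[f_j]^{<m}$, and one solves $\cL L_{kj}([f_j^\bullet]^m)=(\text{that expression})$ by \rp{KnDn} / \rt{donin-sol-cohom}, incurring the operator bound $\tilde K_m=\max\bigl(K(N_C\otimes S^m(N_C^*)),\,K(T_C\otimes S^m(N_C^*))\bigr)$.

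Next I would set up the majorant. Arguing by induction, assume $|[f_j]^k|_{\hat U_j}\leq \eta_k A_k$ for $k<m$ and form $A(t)=\sum_{k\geq 2}A_kt^k$. The analytic inputs are: submultiplicativity $|fg|_{\mf,\om}\leq |f|_{\mf,\om}|g|_{\mf,\om}$ (\rp{lombardi}(a)); the Cauchy-type derivative estimate (\rp{lombardi}(b)); and---this is the decisive point when $N_C$ is unitary but its transition matrices $t_{kj}$, and hence those of $S^LN_C^*$, are \emph{not} diagonal---the invariances $|Tf|_{\mf,\om}=|f|_{\mf,\om}$ and $|f^T|_{\mf,\om}=|f|_{\mf,\om}$ under unitary $T$ (\rp{lombardi}(c)--(d)) together with $|t_{kj}^L|=1$ from \re{unitary-tL}. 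These let me bound $|[h_{kj}^\bullet]^m|$ and the $\var_{kj}$-remainder $B_m$, first on a shrunken covering $\cL U^*$ and then, by expressing them as $1$-cocycles via \rl{cc}, on $\cL U$, by the degree-$m$ coefficient of an explicit analytic function $\cL F(t,A(t))$ built from geometric series in $t+A(t)$ and in $A(t)/M$. Applying $\tilde K_m$ gives $|[f_j^\bullet]^m|_{\hat U_j}\leq \eta_m\,[\cL F(t,A(t))]^m$, so if $A$ is chosen to be the unique analytic solution of the fixed point equation $A(t)=\cL F(t,A(t))$ vanishing to order $2$, the induction closes with $|[f_j]^m|_{\hat U_j}\leq\eta_m A_m$; since $A_m\leq D^m$ for some $D$ and $\eta_m\leq L_0L^m$ by hypothesis, $|[f_j]^m|_{\hat U_j}\leq L_0(DL)^m$, and $f_j$ converges on a neighborhood of the zero section by \rp{PropHolomorphic}.

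It remains to drop the extra $H^0$-vanishing assumption and keep only the stated hypothesis that the embedding is formally linearizable by a map tangent to the identity preserving the splitting. The subtlety is that \rp{KnDn} selects a particular minimizing solution $[\tilde f_{j,m}]^m$ at stage $m$, generally different from the degree-$m$ part of the given formal solution. Since $N_C$ is unitary hence flat, \rp{autom-lin} says every element of $H^0(C,T_CM\otimes S^m(N_C^*))$ generates an automorphism of $(N_C,C)$ that is tangent to the identity and preserves the splitting; so one corrects degree by degree, replacing $F_{j,m-1}$ by $F_{j,m}=F_{j,m-1}\exp\bigl(-[k_{j,m}]^m\bigr)$ with $[k_{j,m}]^m$ the $H^0$-part of the difference, obtaining a map that still linearizes $\Phi_{kj}$, is tangent to the identity, and whose Taylor coefficients up to degree $m$ are the minimizing solutions. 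The $F_{j,m}$ converge formally to some $\tilde F_j$ all of whose homogeneous parts obey the estimates of the previous paragraph, hence $\tilde F_j$ converges, proving \rt{lin-unit}. I expect the main obstacle to be precisely step two: bookkeeping the compounding of the bounds $\tilde K_m$ through the nonlinear substitutions so that, using the unitary invariance of $|\cdot|_{\mf}$, it is exactly captured by the small-divisor quantity $\eta_m$ of \re{def-eta}.
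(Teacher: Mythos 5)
Your proposal is correct and follows essentially the same route as the paper's own proof: a degree-by-degree majorant scheme for \re{h-lin-flat}--\re{v-lin-flat} in the modified Fischer norm, exploiting its unitary invariance and the bounds $\tilde K_m$, with estimates first on a shrunken covering and then transferred back via the cocycle relation, closed by the fixed-point equation $A(t)=\cL F(t,A(t))$. Your reduction of the general case to the $H^0$-vanishing case, by composing with time-one maps $\exp(-[k_{j,m}]^m)$ of global sections so that the formal solution coincides with the minimizing solutions at every degree, is exactly the paper's argument as well.
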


We remark that in general there is a rigid theory on deformations in an  analytic family of complex complex manifolds due to Kodaira~\ci{Ko62}. Strengthening \nrc{compactcenter}, we finish the section with the following corollary. This may be regarded as a rigidity    for a simplify connected manifold.
\begin{cor}\label{cptleaves} Keep the assumptions in \rta{flin}. Assume further that $C$ is simply connected.
Then a neighborhood of $C$ in $M$ is biholomorphic to $C\times B^d$ where $B^d$ is the unit ball in $\cc^d$.
\end{cor}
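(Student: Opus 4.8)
The plan is to deduce everything from \rta{flin} together with the elementary fact that a unitary flat bundle over a simply connected base is holomorphically trivial. By \rta{flin}, under the present hypotheses the embedding $C\hookrightarrow M$ is holomorphically linearizable; that is, some neighborhood $V$ of $C$ in $M$ admits a biholomorphism onto a neighborhood $W$ of the zero section of the normal bundle $N_C$, fixing $C$ pointwise. It therefore suffices to exhibit inside $N_C$ a neighborhood of the zero section biholomorphic to $C\times B^d$.

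Since $N_C$ is unitary, the maximum principle forces it to be flat, so after refining the covering $\{U_j\}$ (keeping each $U_j$ connected) its transition matrices $t_{kj}$ are constant and unitary. The locally constant cocycle $\{t_{kj}\}$ defines a flat $U_d$-bundle, hence a monodromy representation $\rho\colon\pi_1(C)\to U_d$ (equivalently, the holonomy of parallel transport along loops). As $C$ is simply connected, $\pi_1(C)$ is trivial, so $\rho$ is trivial; concretely, one may then write $t_{kj}=g_k g_j^{-1}$ on overlaps with constant $g_j\in U_d$, and the columns of the $g_j$ patch to a global holomorphic frame of $N_C$. Thus $N_C\cong C\times\cc^d$ holomorphically.

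Under this trivialization $W$ becomes an open neighborhood of $C\times\{0\}$ in $C\times\cc^d$; by compactness of $C$ it contains $C\times B$, where $B$ is the ball of some radius $\varepsilon>0$ centered at the origin of $\cc^d$. Composing the biholomorphism of the first paragraph with the dilation $(p,w)\mapsto(p,w/\varepsilon)$ then identifies the corresponding neighborhood of $C$ in $M$ with $C\times B^d$, which is the assertion.

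The only point requiring any care is the triviality of $N_C$; it can alternatively be obtained from the classification of flat $U_d$-bundles by the (nonabelian, \v Cech) cohomology $H^1(C,\underline{U_d})$, which is a point for simply connected $C$. I do not expect a genuine obstacle here: beyond what is already packaged into \rta{flin}, no further use of the small-divisor condition \rea{eta-cov} (or of the cohomology vanishing) is needed.
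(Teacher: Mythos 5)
Your argument is correct, but it is packaged differently from the paper's. The paper does not trivialize $N_C$ at all: it invokes the horizontal foliation of \nrc{compactcenter} (available after linearization, since the unitary, hence locally constant, transition matrices make $dv_j=0$ a globally defined foliation on $N_C$) and then proves by hand, via continuation of leaves along paths in $C$ and the independence of the lift from the path when $C$ is simply connected, that every leaf near the zero section is a graph over $C$; the resulting map $(x,v)\mapsto\tilde\gamma(1)$ gives the biholomorphism from $C\times B$ onto a neighborhood of $C$. Your route replaces this leaf-continuation argument by the classical statement it amounts to: a locally constant unitary cocycle over a simply connected base is a coboundary in $H^1(C,\underline{U_d})$, so $N_C\cong C\times\cc^d$ holomorphically, after which \rta{flin}, the tube lemma and a dilation finish the proof. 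The underlying mechanism (trivial monodromy/holonomy of the flat structure) is identical, so nothing essential is lost; what you gain is brevity and a clean separation of the analytic input (\rta{flin}) from the purely topological one, while the paper's version is self-contained and makes the leaves of the foliation explicit as the slices $C\times\{v\}$. One small imprecision on your side: keeping each $U_j$ connected does not make the $t_{kj}$ constant (the intersections $U_{kj}$ may be disconnected), only locally constant; this is harmless, since the $H^1(C,\underline{U_d})$ argument yields locally constant $g_j$, which are still holomorphic, and the trivialization of $N_C$ follows all the same.
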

\begin{proof}
We already know that $M$ admits a horizontal foliation by
 \nrc{compactcenter}.
 To show that each leaf is biholomorphic to $C$,
 we may assume that $M=N_C$ and we will use the projection $\pi\colon N_C\to C$.
 We fix $x_0\in C$.
We take a point
$p\in\pi^{-1}(x_0)$ close to $C$. Let $L$ be the (connected) leaf of the foliation containing $p$. Then $L$ intersects each fiber of $N_C$ at a unique point. To verify this, we connect a point in
$x\in C$ to $x_0$ by a continuous path $\gaa$ in $C$ with $\gaa(0)=x_0$ and $\gaa(1)=x$.
By continuation along leaves, we can find a lifted continuous  path $\tilde\gaa$ and
the germ $L^*_\gaa(t)$ at $\tilde \gaa(t)$  of a leaf $L_\gaa(t)$  such that
$\pi(\tilde\gaa(t))=\gaa(t)$. Note that $L^*_\gaa(t'), L^*_\gaa(t)$ are contained in the same leaf
on which $\pi$ is injective, when $t'$ is sufficiently close to $t$. The lifting $\tilde\gaa(1)$  is independent of $\gaa$. Indeed if $\gaa^\theta (a\leq \theta\leq b)$ is a continuous family of paths connecting $x_0$ to $x$. Let $L_{\gaa^\theta}$ be the leaf associated to $\gaa^\theta$. Then  $\widetilde{\gaa^\theta}(t) \in L_{\gaa^{\theta_0}}(t)$ when $\theta$ is sufficiently close to $\theta_0$, as $L_{\gaa^\theta}(0)=L_{\gaa^{a}}(0)$ as a leaf near $p$.

Obviously,  $x\mapsto\tilde\gaa(1)$ gives a biholomorphism from $C$ onto the leaf through
$p$.
And $(x,v)\to \tilde\gaa(1)$
defines a biholomorphisms
from $C\times B$ into $N_C$, where $B$ is a small neighborhood of $0\in\pi^{-1}(x_0)$.
\end{proof}
\setcounter{thm}{0}\setcounter{equation}{0}
\section{The full linearization}
\label{sec:fulllin}

The main purpose of this section is to solve the linearization problem in the general setting (i.e. $N_C$ not necessarily being flat) under a general hypotheses on the existence of bounds to the cohomology equations.
At the end of the section we will illustrate the results with Arnold's examples~\ci{Ar76},
following  computations by Arnol'd~\ci{Ar88}.

We shall devise a Newton scheme to solve 
the linearization of the neighborhood problem.  Let us recall the condition.

\sk
${(L_m)}$ : The neighborhood of $C$
agrees with
 the neighborhood of 
 the zero section of the normal bundle up to order $m$.

\sk

That embedding of $C$ has property $(L_m)$ means that the order  of $(\phi_{kj}^h(h_j,v_j), \phi_{kj}^v(h_j,v_j))$
along
 $v_j=0$  as defined in \re{transitionN} is $\geq m+1$.

Assuming that $(L_m)$ holds. We shall assume either that $H^0(C,TC 
\otimes S^{p}N_C^*)=0$, $2\leq p \leq 2m$ or that 
$N_C$ is flat. According to \rl{centralizer} (c) and (d), the following linearization step in the Newton method
is fulfilled~:

\sk
$(N_m):$ If $\{\Phi_{kj}\}\in L_m$,  then $\{F_k^{-1}\Phi_{kj}F_j\}\in L_{2m}$ for some $\{F_j=I+f_j\}$ with $f_j(h_j,v_j)=O(|v_j|^{m+1})$.


\

%


\subsection{%
Domains for iteration and the Donin condition}

Following \rl{lower-est}
and \rp{regcover}, we shall consider a  family of nested coverings ${\cL U}^{r}=\{U_i^r\}_{i\in I}$ of $C$  with $r_*\leq r\leq r^*$. Let us fix a trivialization of $N_C^*$ (resp. $TC$) over $U_i^{r^*}$ by fixing a holomorphic basis $e_i=(e_{i,1},\dots, e_{i,n+d})$ of $\T_CM$ on $U_i^{r^*}$.

We first define various domains.
Let $
\hat U_j^r:=\var_j(U_j^r)=\Del^r_n$ and $U_{kj}^r:=U_k^r\cap U_j^r$. We have $U_{kj}^r=U_{jk}^r$. Define $\hat U^r_{kj}=
\var_{j}(U^r_{kj})$. Then
\eq{}\nonumber
\var_{kj}(U_{kj}^r)=\hat U_{jk}^r.
\eeq
\noindent
{\bf Donin Condition}. Let $\cL U^r$ be a family of nested covering of $C$  
for $r_*<r<r^*$.
Let $E'=TC$ or $N_C$.
Suppose that there are constants
$D(E'\otimes S^m N_C^*)$ for $m=2,3,\dots$ such that  for all $r', r''$ with $ 
r_*<r''<r'<r
<r^*$ and $r'-r''\leq r^*-r$,
and  all $f\in Z^1(\cL U^{r'}, E'\otimes S^mN_C^*)$ with $f=0$ in $H^1(\cL U^{r'},E'\otimes S^mN_C^*)$, there is a solution $u\in C^0(\cL U^{r''},E'\otimes S^mN_C^*)$  to $\del u=f$ such that
\eq{doninconst}
\max_j\sup |u_j|_{L^\infty(\hat U_j^{r''})}\leq \f{D(E'\otimes S^mN_C^*)}{(r'-r'')^\tau}\max_{k,j} |f_{kj}|_{L^\infty(\hat U_{kj}^{r'})},
\eeq
where $D(E'\otimes S^mN_C^*)$ is independent of $r',r''$ and $f$ and $\tau=\tau(N_C^*)$ is independent of $m$.

\

In what follow, we shall express sections of bundles in coordinates.    It is more convenient to express domains by using the trivialization of the vector bundle $N_C$.
 Recall that  the  $N_C$ has trivializations $N_j$ and transition functions $N_{kj}$.
Let $B_d^r$ be the ball  of radius $r$ in $\cc^d$ centered at the origin.  Thus,  we define
\ga
\label{dom-hV}
\hat V_j^r=N_j( V^r_j)=\hat U_j^r\times B_d^r, \quad  V_{i_0\cdots i_q}^r:=V_{i_0}^r\cap \cdots\cap V_{i_q}^r,\\
\nonumber \hat V_{i_0\cdots i_q}^r:=N_{i_q}(V_{i_0\cdots i_q}^r)\subset  \var_{i_q}(U_{i_0\cdots i_q}^r)\times \Bbb \cc^d, \\  \hat V_{jk}^r=N_{kj}(\hat V_{kj}^r),
\quad
N_{kj}=N_{jk}^{-1} \quad \text{on $\hat  V^r_{kj}$}, \\
N_{ki}N_{ij}=N_{kj}\quad\text{on $\hat  V_{kij}^r$}. \end{gather}
Denote the corresponding domains by $\tilde V_j^r, \tilde V_{kj}^r$ when $N_j$ are replaced by $\Phi_j$. Then we still have the above relations when $N_j,N_{kj}$ are replaced by $\Phi_j,\Phi_{kj}$. We know that  $\Phi_{kj}$ are perturbations of the transition functions $N_{kj}$ of the normal bundle of $C$ in $M$, which are defined on different domains but in the same space.
 We will however  work on domains $\hat V_{kj}^r$ for $\Phi_{kj}$, instead of $\tilde V_{kj}^r$.

With notation of section \ref{section-cocycles}, for $L\geq 1$ and for $r_*\leq r\leq r^*$, we consider a cochain $\{f_{ I }\}\in C^{q+1}(\cL U^{r},\cL O(T_CM\otimes S^L(N_C^*)))$, given by
\beq\label{cochain}\nonumber
 f_{ I }:=f_{ i_0\cdots  i_q}(p)=\sum_{\la=1}^{n+d} \sum_{|Q|=L} f_{{ i_0\cdots  i_q}; Q}^\la (z_{ i_q}(p)) e_{ i_0,\la }(p)\otimes (w_{ i_q}^*(p))^Q
\eeq
where $I =( i_0,\ldots, i_q)\in \cL I^{q+1}$. Recall that   $\hat V^r_{I} = N_{i_q}(V^r_{i_0}\cap\cdots\cap V^r_{i_q})$.
Define 
\eq{fIr}\nonumber
|f_I|_{r}=\sup_{(h_{i_q},v_{i_q})\in \hat V^r_I}|\sum_Qf_{I,Q}(h_{i_q})v_{i_q}^Q|.
\eeq
We also set $|\{f_I\}|_r=\max_I|f_I|_{r}$.

Note that $\hat V_j^r=\hat U_j\times B_d^r$ are product domains. Also,
\ga
\label{initialDomain}\nonumber
\hat U^r_{kj}\times  B_d^{c_*r}\subset \hat V_{kj}^r\subset \hat U^r_{kj}\times B_d^{c^*r}, \quad c_*\leq1\leq c^*.
\end{gather}
Define $B^r_{kj}(h_j)$ to be $\{v_j\in B_d^r\colon t_{kj}(h_j)v_j\in B^r_d\}$.
 The skewed domain  $\hat V_{kj}^r$ can be described as follows:
\ga\nonumber
(h_j,v_j)\in \hat V_{kj}^r \quad  \text{if and only if}\quad   h_j\in \hat U_{kj}^r,\  v_j\in B^r_{kj}(h_j).
\end{gather}

Next, we note that the  $d$-torus  action $(h_j,v_j)\to (h_j,(\zeta_1v_1,\dots, \zeta_{ d}v_d))$ with $\zeta\in(S^1)^{d}$  does not preserve $\hat V_{kj}^r$   when $t_{kj}(h_j)$ is not diagonal. Nevertheless, the $\hat V_{kj}^r$ has a {\it disc structure\,}:
\ga\nonumber
(h_j,\zeta v_j)\in\hat V_{kj}^r, \qquad\forall (h_j,v_j)\in\hat V_{kj}^r, \quad\forall \zeta\in\Del.
\end{gather}
Indeed, suppose that $(h_j,v_j)\in\hat V^r_{kj}$. Then $h_j\in\hat U_{kj}^r$ and $(h_j,v_j)=N_j(p)$ with $p\in V_k^r\cap V_j^r$ and $N_k(p)=(h_k,v_k)\in \hat V_k^r$.  By definition, $\hat V_j^r=\hat U_j\times B_d^r$. Take $\tilde p=N_j^{-1}(h_j,\zeta v_j)$. We have $\tilde p\in V_j^r$ and $N_k(\tilde p)=(h_k,t_{kj}(\zeta v_j))=(h_k,\zeta t_{kj}( v_j))\in \hat U_{kj}^r\times B_d^r$.

Throughout this section, we use
$$
|u_j|_\rho=\sup_{(h_j,v_j)\in\hat V_j^\rho}|u_j(h_j,v_j)|, \quad |u_{kj}|_\rho=\sup_{(h_j,v_j)\in\hat V_{kj}^\rho}|u_{kj}(h_j,v_j)|
$$
where $u_j,u_{kj}$ are functions on $\hat V_j^r$ and $\hat V^r_{kj}$, respectively. We also define
$|\{u_I\}|_\rho=\max_I|u_I|_\rho$.

We now  prove the following.
\le{sch-jet} Let $u_{kj}$ be a holomorphic function on $\hat V_{kj}^r$
 with $r_*<r<\tilde r<r^*$.
  Suppose that
\eq{NE2} 
\hat V^{r_*}_{kj}\neq\emptyset.
\eeq
For $0<\theta<1$
with $\theta r>r^*$, we have
\ga
\label{sch1}\nonumber
|u_{kj}|_{\theta r}\leq \theta^{m} 
|u_{kj}|_r, \quad\text{if $u_{kj}(h_j,v_j)=O(|v_j|^{m})$};\quad
|[u_{kj}]^\ell |_r 
\leq 
 |u_{kj}|_r; 
\\
\sum_{\ell=i}^\infty 
|[u_{kj}]^\ell |_{\theta r} 
\leq \f{\theta^i}{1-\theta} 
|u_{kj}|_r. 
\label{h-part}\nonumber
 \end{gather}
 \ele
\pf{Let $u=u_{kj}$.
 The first inequality  follows from the Schwarz lemma  applied to the holomorphic function $\zeta\to u(h_j,\zeta v_j)$ on the unit disk for fixed $(h_j,v_j)\in\hat V^r_{kj}$.  Note that
$[u]^i(h_j,\zeta v_j)=\zeta^i[u]^i(h_j, v_j)$. Thus the second inequality follows directly by averaging,
$$
[u]^\ell (h_j, v_j)=\f{1}{2\pi i}\int_{\zeta\in\pd \Del}u(h_j,\zeta v_j) \, \f{d\zeta}{\zeta^{\ell+1}}, \quad (h_j,v_j)\in \hat V_{kj}^r.$$
The last inequality follows from the first two inequalities.
}

   %
For the rest of this section, we rename $r$ in the Donin Condition by $\tilde r$ which is fixed now. We will let $r$ vary in $(r_*,\tilde r)$.
\le{nestF} Let $r_*<\theta r<r<
\tilde r<r^*<1$. Fix $k,j\in\cL I$.
Suppose that $(1-\theta^4)r<r^*-\tilde r$ and \rea{NE2} 
holds.
\bpp
\item    We have
\ga\label{nestV}
\dist(\hat V_{j}^{\theta r},\pd \hat V_{j}^r)\geq  r(1-\theta)/{C_0}, \quad \dist(\hat V_{kj}^{\theta r},\pd \hat V_{kj}^r)\geq r(1-\theta)/{C_0},
\end{gather}
  for some constant $C_0$.

\item  Assume further that $\theta^4r>r_*$. There exists a constant
 $C_0^*$ such that if    $F_j=I+f_j$ satisfy
\eq{smallf}
|f_j|_{\theta^2r} 
\leq (1-\theta)r/{C_0^*},
\eeq
 then we have
\ga\label{FjFj}
F_j(\hat V_{j}^{ \theta^2  r})\subset \hat V_{j}^{ \theta   r},\quad F_j(\hat V_{kj}^{ \theta^2 r})\subset \hat V_{kj}^{ \theta   r},\\
\quad F_j^{-1}(\hat V_{j}^{ \theta^4  r})\subset \hat V_{j}^{\theta^3 r},\quad F_jF_j^{-1}=I \ \text{on $\hat V_{j}^{\theta^4 r}$}. \label{Fj-1V}
\end{gather}
\epp
\ele
\begin{proof}
$(a)$  The   $\hat V_j^r$ is the product domain $\hat U^r_j\times B_d^r$. Thus the first inequality in \re{nestV} holds   trivially
 since $\hat U_j^r$ is  a polydisc.
  Note that $\hat V_{kj}^r$ are open sets.  Then
$
\del:=\dist((h,v),(\tilde h,\tilde v))=\dist(\hat V_{kj}^{\theta r},\pd \hat V_{kj}^r)
$
is attained by 
\eq{missed7.8}
(h,v)\in \pd\hat V_{kj}^{\theta r}, \quad (\tilde h,\tilde v)\in\pd\hat V_{kj}^r.
\eeq
If $\tilde h\in\pd\hat U_{kj}^r$, we immediately get $\del\geq\dist(\hat  U_{kj}^{\theta r},\pd\hat U_{kj}^r)\geq (1-\theta)r/C$
 by \rl{lower-est}. Assume that $\tilde h\in \hat U_{kj}^r$. Then by the continuity of the function $t_{kj}$,  $\tilde v$ must be in $\pd B_{kj}^r(
 \tilde h)$.   Otherwise,  both $\tilde h\in \hat U_{kj}^r$ and  $\tilde v\in B_{kj}^r(
 \tilde h)$ are interior points of the two sets, then any small perturbation of $(\tilde h, \tilde v)$   still satisfies 
the second condition in  \re{missed7.8}.
 The last assertion implies that  $(\tilde h, \tilde v)$ cannot be a boundary point and we get a contradiction.   Therefore,  we have
$$
\tilde v\in\pd B_{d}^r \quad \text{or}\quad t_{kj}(\tilde h)\tilde v\in\pd B_{d}^r.
$$
The first case yields $
|\tilde v-v|\geq\dist(B_d^{\theta r}, \pd B_d^r)=(1-\theta)r$. We now consider the second case.
By assumption $t_{kj}$ is holomorphic in $\ov\om$ for a neighborhood $\om$ of $\hat U_{kj}$. Thus there is $\del_*>0$ depending only on $\hat U_{kj}$ such that if $h\in\hat U_{kj}$ and $|\tilde h-h|<\del_*$, then the line segment $\gaa$ connecting $h,\tilde h$ is contained in $\om$.
Suppose that $|\tilde h-h|<(1-\theta)r/{C_1}$ for $C_1$ to be determined so that $(1-\theta)r/{C_1}<\del_*$.  Applying
 the mean-value-theorem to $t_{kj}(\gaa)$   and using $t_{kj}(h)v\in B_d^{\theta r}$, we get
\aln
C_4|\tilde v-v|&\geq |t_{kj}(\tilde h)(\tilde v-v)|\geq  \left||t_{kj}(\tilde h)\tilde v-t_{kj}(h)v)|-|(t_{kj}(\tilde h)-t_{kj}(h))v|\right|\\
&\geq (1-\theta) r-C_5|\tilde h-h||v|\geq(1-\theta)r/2,
\end{align*}
when $C_1$ is sufficiently large. Thus we get
 $\dist(\hat  U_{kj}^{\theta r},\pd\hat U_{kj}^r)\geq (1-\theta)r/C$ as in the first case.
If $|\tilde h-h|\geq(1-\theta)r/{C_1}$, the required estimate is immediate.

$(b)$ Note that $\theta>r_*$. 
By choosing a larger 
$C_0^*$,  \re{FjFj} follows from \re{nestV}  immediately.
We want to find $F^{-1}$. By \re{smallf} and the Cauchy estimate, we know that
\eq{hjfj}
|\pd_{h_j}f_j(h_j,v_j)|+|\pd_{v_j}f_j(h_j,v_j)|\leq C_6/{C_0^*}, \quad\forall (h_j,v_j)\in\hat V_j^{ \theta^3 r}.
\eeq
Note that  $\ov{V_j^r}=\ov{\hat U_j^r}\times \ov{B_d^r}$ is convex. By \re{hjfj} and the fundamental theorem of calculus, we have
 \eq{intrlam}\nonumber
 |f_j(p_1)-f_j(p_0)|\leq  C_7 |p_1-p_0|/{C_0^*}, \quad \forall p_0,p_1\in \hat V^{\theta^3r}_{j}.
 \eeq
Suppose that $C_0^*>2C_7$. Then $F_j\colon\hat V^{\theta^3r}_{j} \to \hat V^{\theta^2 r}_{j}$ is injective, and $T(h_j,v_j)=(\tilde h_j,\tilde v_j)-f_j(h_j,v_j)$ defines a contraction mapping on $\hat V_{j}^{ \theta^3 r}$, if $(\tilde h_j,\tilde v_j)\in \hat V_{j}^{\theta^4 r}$
and $C_0^*$ is sufficiently large. This gives us
 \re{Fj-1V}.
 \end{proof}

In this section, we change notation and let
$$
f_j^\bullet=(f_j^h,f_j^v), \quad
\phi_{kj}^\bullet=(\phi_{kj}^h,\phi_{kj}^v).
$$
\le{nest}  Let $r_*<\theta r<r
<\tilde r<r^*<1$.
Suppose that $\hat V_{kj}$ satisfies \rea{NE2}.
 There exists a constant   $C_1^*$ such that if
\ga
\label{smallphi}
|\phi_{kj}^\bullet|_r 
\leq (1-\theta)r/{C^*_1}
\end{gather}
then we have
\eq{}\nonumber
\Phi_{kj}(\hat V_{kj}^{ \theta  r})\subset \hat V_{jk}^r.
\eeq
 \ele
\begin{proof} 
 Note that $\theta>r_*$.
Since $\Phi_{kj}-N_{kj}=\phi_{kj}^\bullet$ and $N_{kj}(\hat V_{kj}^{\theta r})=\hat V_{jk}^{\theta r}$, the assertion follows from \re{nestV} 
 and \re{smallphi} for sufficiently large $C_1^*$.
%
 \end{proof}

 \pr{nestNewPhi} Let $r_*<
 \theta^7 r<r<
 \tilde r<r^*<1$.
 Assume that $\hat V_{kj}$ satisfies \rea{NE2}. Suppose that
 $\Phi_{kj}=N_{kj}+\phi_{kj}^{\bullet}$  satisfy \rea{smallphi}. 
 Let  $F_j=I+f_j$ satisfy $f_j(h_j,v_j)=O(|v_j|^2)$.

Suppose
   $\tilde\Phi_{kj}=F_k^{-1}\Phi_{kj}F_j=N_{kj}+ {\tilde \phi}_{kj}^\bullet$. 
 There exists a constant $C_2^*$ such that if
 \eq{smallf+}
|\{f_j\}|_{\theta^2r} 
\leq (1-\theta)r/{C_2^*},
\eeq
 and $\tilde\phi_{kj}^\bullet(h_j,v_j)=O(|v_j|^{\widetilde m})$, then
 \ga|\{ {\tilde \phi}_{kj}^\bullet\}|_{\theta^7r} 
 \leq C_2 
  \theta^{\widetilde m}(| \{f_j\}|_{\theta^2 r} 
 +|\{\phi^\bullet_{kj}\}|_r, 
\label{tilphi+}\\
\label{tilphi}
 |\{ {\tilde \phi}_{kj}^\bullet\}|_{\theta^7r}
 \leq  C_2 
  \theta^{\widetilde m}(1-\theta)r.
 \end{gather}
 \epr
\begin{proof}
Let us write $\tilde\Phi_{kj}=N_{kj}+\tilde\phi^\bullet_{kj}$ and $F_k^{-1}=I+g_k$. Thus
\aln
\tilde\phi^h_{kj}&=g_k^h\circ \Phi_{kj} \circ  F_j+\phi^h_{kj}\circ F_j+(
\var_{kj}(I+f_j^h)-
\var_{kj}),\\
\tilde\phi^v_{kj}&=g_k^v\circ  \Phi_{kj} \circ  F_j+ \phi^v_{kj}\circ F_j\\
&\quad +(t_{kj}(h_j+f^h_j)-t_{kj}(h_j))\times (v_j+f^v_j)+t_{kj}(h_j)\times f^v_j(h_j,v_j).
\end{align*}
According to \re{Fj-1V}, we have $F_k(I+g_k)=I$ on $\hat V_k^{\theta^4 r}$. Thus
$
g_k=-f_k\circ F_k^{-1}
$
implies that
\eq{}\nonumber
|g_k|_{\theta^4 r} 
\leq |f_k|_{\theta^3r}. 
\eeq
For $(h_j,v_j)\in\hat V_{kj}^{\theta^6r}$, using $\dist(\hat U_{kj}^{\theta^6 r},\pd\hat U_{kj}^{\theta^5 r})\geq (1-\theta)\theta^5r/{C_0}$, we can obtain $|t_{kj}(h_j+f^h_j(h_j,v_j))-t_{kj}(h_j)|\leq C_3|f^h(h_j,v_j)|$ and
$|\
\var_{kj}(h_j+f_j^h(h_j,v_j))-
\var_{kj}(h_j,v_j)|\leq C_3|f_j(h_j,v_j)|$.
Nesting domains and using \re{smallphi},  \re{smallf+} and hence \re{smallf}, we obtain
by \rl{nestF} in which $r$ is replaced by $\theta^5r$~:
 \gan|\{ {\tilde \phi}_{kj}^\bullet\}|_{\theta^6r} 
 \leq C_4 (|\{f_j\}|_{\theta r} 
 +|\{\phi^\bullet_{kj}\}|_{r}, 
 \\
 | \{{\tilde \phi}_{kj}^\bullet\}|_{\theta^6r} 
 \leq C_4 (1-\theta)r.
 \end{gather*}
Applying Schwarz inequality, we get \re{tilphi+}-\re{tilphi}.
\end{proof}
 When we apply the above to iteration, 
 the new  $\Phi_{kj}$  in the sequence of iteration is defined by
 $$
 (F_k^{(m)})^{-1}(\cdots ((F_k^{(1)})^{-1}\Phi_{kj}F_j^{(1)})\cdots) F_j^{(m)}
 $$
  on $\hat V_{kj}^{r_{m+1}}$ with $F_j^{(m)}(\hat V_{kj}^{r_{m+1}})\subset\hat V_{kj}^{r_m}$.

\

Let us find $[f_j]_{m+1}^{2m}(h_j,v_j)$, a  polynomial of order $\geq m+1$ and of degree $\leq 2m
$ in $v_j$ (holomorphic in $h_j$), such that $\{F_k^{-1}\Phi_{kj}F_j\}\in L_{2m}$ holds  for some $\{F_j=I+[f_j]_{m+1}^{2m}\}$. 

Let us consider the neighborhood written in the new coordinates  $\{F_j\}$. We obtain for $(h_k,v_k)=\hat\Phi_{kj}(h_j,v_j)$:
\begin{eqnarray}
h_k &= & \hat\Phi^h_{kj}(h_j,v_j):=
\var_{kj}(h_j)+\hat\phi^h_{kj}(h_j,v_j),\nonumber\\
 v_k &= &\hat\Phi^v_{kj}(h_j,v_j):= t_{kj} (h_j)v_j+\hat\phi^v_{kj}(h_j,v_j).
\label{transitionN}
\end{eqnarray}
We assume that $\hat\phi^\bullet_{kj}:=(\hat\phi_{kj}^h,\hat\phi_{kj}^v)$ has order $\geq 2m+1$ at $v_j=0$.

Let us write down the horizontal and vertical equations for the linearization problem: $F_k\hat\Phi_{kj}=\Phi_{kj}F_j$.  We obtain the horizontal equation
\begin{align}\nonumber
\var_{kj}(h_j)&+\hat\phi^h_{kj}(h_j,v_j)+f_k^h(
\var_{kj}+\hat\phi^h_{kj}, t_{kj}  (h_j)v_j+\hat\phi^v_{kj})\label{h-conj-equ}
\\ &=
\var_{kj}(h_j+f_j^h(h_j,v_j))+\phi^h_{kj}(h_j+f_j^h,v_j+f_j^v). \nonumber
\end{align}
The vertical equation reads
\begin{align}\nonumber
t_{kj}(h_j)v_j&+\hat\phi^v_{kj}(h_j,v_j)+f_k^v(
\var_{kj}+\hat\phi^h_{kj}, t_{kj}  (h_j)v_j+\hat\phi^v_{kj})
\\ &=
t_{kj} (h_j+f_j^h)(v_j+f_j^v)+\phi^v_{kj}(h_j+f_j^h,v_j+f_j^v).\nonumber
\end{align}
We will interpret the above identity as  power series in $v_j$ with coefficients being holomorphic in $
\var_j(U_k\cap U_j)$. In what follows, degrees or orders of sections are considered w.r.t.  $v_j$ at $v_j=0$.

\subsection{
A Newton method for the full linearization}
For this problem, the two previous equations can be written as
\eq{LfDf}
{\mathcal L}_{kj}(f_j)=\left(0,Dt_{kj}(h_j)f_j^hv_j\right)+\mathcal F_{kj}(f_j),
\eeq
where ${\mathcal L}_{kj}(f_j)$ stands for $({\mathcal L}_{kj}^h(f^h_j),{\mathcal L}_{kj}^v(f^v_j))$ as defined by \re{h-cohomo-op},\re{v-cohomo-op}:
\al
{\mathcal L}_{kj}^h(f^h_j)&:=f_k^h(
\var_{kj}(h_j),t_{kj}(h_j)v_j)  -s_{kj}(h_j)f_j^h(h_j,v_j),\label{h-cohomo-op-L}\\
{\mathcal L}_{kj}^v(f^v_j)&:=f_k^v(
\var_{kj}(h_j),t_{kj}(h_j)v_j)  -t_{kj}(h_j)f_j^v(h_j,v_j).\label{h-cohomo-op-L+}
\end{align}
Recall that $s_{kj}(h_j)=D
\var_{kj}(h_j)$ is the Jacobian matrix of $
\var_{kj}$.
 Furthermore, we have the horizontal error term
\begin{eqnarray}
\mathcal F_{kj}^h(f_j) &:=&\phi^h_{kj}(h_j+f_j^h,v_j+f_j^v)  -\hat\phi^h_{kj}\label{h-conjug-partial}\\
&&+\,\left(f_k^h(
\var_{kj}, t_{kj}  (h_j)v_j)-f_k^h(
\var_{kj}+\hat\phi^h_{kj}, t_{kj} (h_j)v_j+\hat\phi^v_{kj})\right)\nonumber\\
&&+\
\var_{kj}(h_j+f_j^h(h_j,v_j))-
\var_{kj}(h_j)-D
\var_{kj}(h_j)f_j^h(h_j,v_j),\nonumber
\end{eqnarray}
as well as the vertical error term
\begin{eqnarray}
\mathcal F_{kj}^v(f_j) &:=& \phi^v_{kj}(h_j+f_j^h,v_j+f_j^v)-\hat\phi^v_{kj} +Dt_{kj}(h_j)f_j^hf^v_j
\label{v-conjug-partial}\\
&&+\,\left(f_k^v(
\var_{kj}, t_{kj} (h_j)v_j)-f_k^v(
\var_{kj}+\hat\phi^h_{kj}, t_{kj} (h_j)v_j+\hat\phi^v_{kj})\right)\nonumber\\
&&+\left(t_{kj}(h_j+f_j^h(h_j,v_j))- t_{kj}(h_j)-Dt_{kj}(h_j)f_j^h\right)(v_j+f_j^v).\nonumber
\end{eqnarray}

We collect $2m$ jets from \re{LfDf}, \re{h-conjug-partial}, \re{v-conjug-partial}. Since $f_j=O(m+1)$ and $\hat\phi_{kj}^{\bullet}=O(2m+1)$,  this gives us 
\al\label{Hsol}
[(\del^h f^h)_{kj}]^{\leq 2m}&=-[\phi^h_{kj}]^{\leq 2m},\\
[(\del^v f^v)_{kj}]^{\leq 2m}&=- Dt_{kj}(h_j)[f_j^h]^{\leq 2m-1}v_j- [\phi_{kj}^v]^{\leq 2m}.
\label{Vsol}
\end{align}
Under formal assumptions,  according to \rl{centralizer}~{$(c)$},
equations \re{Hsol}-\re{Vsol} have a solution $([f_j^h]_{m+1}^{2m},[f_j^v]_{m+1}^{2m})$.

We first consider the case that $H^0(C, \oplus_{k=2}^{2m}TC 
\otimes S^k(N_C^*))=0$. Then,  for any $r_*<r''<r'<
\tilde r< r^*$ with
$$
r''=\theta r'=\theta^2r,\quad
r'-r''< r^*-
\tilde r,
$$
 the solution to \re{Hsol} is unique and by \rt{donin-sol-cohom} that unique solution satisfies the estimate
\eq{Hsole}
|\{[f^h_k]^{l}\}|_{ r'} 
\leq \f{ 
D(TC\otimes S^l(N_C^*))}{
(r-r')^\tau}
|\{[\phi^h_{kj}]^{l}\}|_{r}, 
\quad l=m+1,\ldots, 2m.
\eeq
In particular, $\{[f^h_k] 
_{m+1}^{ 2m}\}$ has been determined. The solvability of \re{Vsol} and \rt{donin-sol-cohom}
imply that we can find a solution $\{[f^v_k]^{2m}_{m+1}\}$ such that  for $l=m+1,\ldots,2m$,
\eq{Vsole}
|\{[f^v_k]^{l}\}|_{  r''} 
\leq \f{D(N_C\otimes S^l(N_C^*))}{(r'-r'')^\tau}\left\{c\f{D(TC\otimes S
^{l-1}(N_C^*))}{(r-r')^\tau}|\{[\phi^h_{kj}]^{l-1}\}|_{r} 
+|\{[\phi^v_{kj}]^{l}\}|_{r} 
\right\}.
\eeq
Here $c$  depends only on the $Dt_{kj}$ over the initial covering.

If  $H^0(C, \oplus_{k=m+1}^{2m}TC 
\otimes S^k(N_C^*))\neq0$, we are in the flat case, that is $Dt_{kj}=0$. Thus, we can find a solution $\{[f^v_k]^{2m}_{m+1}\}$ such that for $l=m+1,\ldots,2m$,
\eq{}
|\{[f^v_k]^{l}\}|_{r''} 
\leq \f{D(N_C\otimes S^l(N_C^*))}{(r'-r'')^\tau} |\{[\phi^v_{kj}]^{l}\}|_{r}. 
\label{estim-v-linb}
\eeq


Let us set

\al\label{om2m}
&D_*({2m}):= 1+
\displaystyle{\max_{2\leq l\leq 2m}}\left\{(1+c  
K(TC\otimes S^{l-1}(N_C^*))) D(N_C\otimes S^l(N_C^*)\right\}.
\end{align}


Hence, in any case, estimates \re{Hsole}-\re{estim-v-linb} lead to
\eq{cohome}\nonumber
|\{[f^{\bullet}_k]^{l}\}|_{\theta^2   r} 
\leq \f{C_1D_*(2m)}{
(
r-\theta^2r)^{2\tau}}|\{[\phi^{\bullet}_{kj}]^{l}\}|_r 
\eeq
for all
$\theta$ and $r$ satisfying $r_*\leq \theta^2r< r
<\tilde r<r^*$ and all $m+1\leq l\leq 2m$. Assume further that $\theta^6r>r_*$
and $(1-\theta^7)r<r^*-\tilde r$. We obtain, by \rp{nestNewPhi} with $\tilde m=2m+1$
\ga\label{tilphi1}\nonumber
 | {\hat \phi}_{kj}^\bullet|_{\theta^7r} 
 \leq \f{C_1D_*({2m}) \theta^{2 m +1
 }}{
 (r-\theta^2r)^{2\tau}} | {  \phi}_{kj}^\bullet|_r
 \leq\theta^{2m+1} (1-\theta)r/C_0,
\end{gather}
provided
\ga
\label{cond1}
| \{\phi^{\bullet}_{kj}\}|_r 
\leq(1-\theta)r/C_0,\\
\label{cond2}
\f{D_*({2m})}{
(r-\theta^2 r)^{2\tau}}|\{\phi^{\bullet}_{kj}\}|_r 
\leq(1-\theta)r/C_0.
\end{gather}
Note that condition \re{cond1} follows from \re{cond2} as
  $D_*(\ell)\geq1$.

Rename $\Phi_{kj}, \phi_{kj}^\bullet, F_j,
f^\bullet_j,\hat\Phi_{kj}, \hat\phi^\bullet_{kj}$ respectively as $\Phi_{kj}^{(0)}, \phi_{kj}^{(0)}, F_j^{(0)},f_j^{(0)}, \Phi_{kj}^{(1)}, \phi_{kj}^{(1)}$. Thus $\Phi_{kj}^{(1)}=(F_k^{(0)})^{-1}\Phi_{kj}^{(0)}F_j^{(0)}$. Repeating this formally, we obtain
\gan
\Phi_{kj}^{(\ell+1)}=(F_k^{(\ell)})^{-1}\Phi_{kj}^{(\ell)}F_j^{(\ell)},
\quad F_j^{(\ell)}=I+f_j^{(\ell)}, \quad \Phi_{kj}^{(\ell+1)}=N_{kj}+\phi_{kj}^{(\ell+1)}.
\end{gather*}
Set $r_{\ell+1}=\theta_\ell^7r_\ell$ and $m_\ell=2^\ell$.
We also have
\ga\label{boundedF}
 F_j^{(\ell)}( \hat V_{j}^{r_{\ell+1}})\subset \hat V_{j}^{r_\ell},\\
\label{tilphi+1}
 |   \phi_{kj}^{(\ell+1)}|_{r_{\ell+1}} 
 \leq\theta_\ell^{2m_{\ell}+1} (1-\theta_\ell)r_\ell/C_0
 \end{gather}
provided
\ga
\label{cond2r}
r_*\leq  \theta_\ell^7r_k<1, \quad 0<\theta_k<1;\\
\label{cond2-el}
 \f{C_1D_*({2m_\ell)}}{
 (r_\ell-\theta_\ell^2r_\ell)^{2\tau}}|\{\phi^{(\ell)}_{kj}\}|
 _{r_\ell} 
 \leq (1-\theta_\ell)r_\ell/C_0.
\end{gather}

To set parameters, we follow Russmann~\ci{Ru02}; see ~\ci{Br71, St00} for different choices of parameters. As in \ci{Ru02}, we now use
an addition assumption that
\eq{om1ell}
D^*_\ell 
 \geq 
 \ell,\quad \ell\geq1.
\eeq
Indeed, when 
 $\tilde D_*(k)=\max(D_*(k),k)$  
replaces with 
$D_*(k)$,
the sequence
$D_*(k)$ 
still
increases and $\sum 2^{-k}\log D_*({2^k} )
$   converges.   For a constant $C_* \geq1$ to be determined later, define
\gan
m_\ell=2^{\ell_0+\ell}, \quad
r_{\ell+1}=\theta_\ell^7r_\ell, \quad r_0=1,\\
1-\theta_\ell=\del_\ell,\quad \del_\ell =
C_*\f{\log D_* 
({m_{\ell+2}})
}{m_{\ell+2}}.
\end{gather*}
Note that in~\cite[Lemma 6.2]{Ru02} and \ci{Br71, St00},  $\om(m_{\ell+1})$ is used to define $\del_\ell$.
Shifting the index by $1$, we use $D_*(m_{\ell+2})$ to simplify the argument.
We can find $\ell_0=\ell_0(C_*)$ such that
 $0<\theta_\ell<1$ for all $\ell$ and furthermore
%
\aln
\prod_{\ell=0}^\infty \theta_\ell^7
=\prod_{\ell=0}^\infty (1-\del_\ell)^7\geq\exp\left\{
-
\sum_{\ell=0}^\infty\f{7C_*}{2} \f{
\log D_*(m_{\ell+2})}{m_{\ell+2}}\right\}.
\end{align*}
Since $\sum 2^{-k}
\log  
D_*(
{2^k})<\infty$,
the latter is larger than $r_*$, provided $\ell_0>\ell_0(C_*)$.
Inductively, we want to show that if $\re{cond2-el}_{\ell}$ holds, then $\re{cond2-el}_{\ell+1}$ also holds.  Indeed, with $\re{cond2-el}_\ell$, we can use $\re{tilphi+1}_{\ell+1}$ to obtain
\aln
& \f{C_1D_*({m_{\ell+2})}}
{ 
(r_{\ell+1}-\theta_{\ell+1}^2r_{\ell+1})^{2\tau}}|\{\phi^{\ell+1}_{kj}\}|
_{r_{\ell+1}} 
\times \f{C_0}{(1-\theta_{\ell+1})r_{\ell+1}} \\
 &\hspace{12ex}\leq\f{D_*(m_{\ell+2})\theta_\ell^{2m_{\ell}-6}}{(r_{\ell+1}-\theta_{\ell+1}^2r_{\ell+1})^{2\tau}}
 \times \f{1-\theta_\ell }{ 1-\theta_{\ell+1} }\qquad \text{(by \re{tilphi+1})}\\
 &\hspace{12ex} \leq \f{C_2D_*({m_{\ell+2}}) \theta_\ell^{2m_{\ell}-6}}{(1-\theta_{\ell+1})^{2\tau+1}}  = \f{C_2 D_*({m_{\ell+2}})
 (1-\del_\ell)^{2m_{\ell}-6}}{\del_{\ell+1}^{2\tau+1}}.
\end{align*}
We need to check that the last expression is less than one by using logarithm. Note that
$$
\log (1-\del)<-\del,\quad \forall \del\in(0,1).
$$
Therefore,
\aln
&\log\f{C_2 D_*({m_{\ell+2}})(1-\del_\ell)^{2m_{\ell}-6}}{\del_{\ell+1}^{2\tau+1}}<\log C_2-(2m_\ell-6)\del_\ell
+\log D_*({m_{\ell+2}})-(2\tau+1)\log\del_{\ell+3}\\
&\quad =\log C_2-(2m_\ell-6)C_*  \f{\log D_*({m_{\ell+2}})}{m_{\ell+2}}
+\log D_*({m_{\ell+2}})-(2\tau+1)\log\left(C_* \f{\log D_*({m_{\ell+3}})}{m_{\ell+3}}\right)\\
&\quad =\left\{\log C_2
- \f{(2m_\ell-6)C_*}{3} \f{\log D_*({m_{\ell+2}})}{m_{\ell+2}}\right\}
+\left\{ \log D_*({m_{\ell+2}})-\f{(2m_\ell-6)C_*}{3}  \f{\log D_*({m_{\ell+2}})}{m_{\ell+2}}\right\}
\\
&\quad\quad +\left\{-\f{(2m_\ell-6) C_*}{3} \f{\log D_*({m_{\ell+2}})}{m_{\ell+2}}- (2\tau+1)\log\left(C_* \f{\log D_*({m_{\ell+3}}}{m_{\ell+3}}\right)\right\}.
\end{align*}
When  $\ell_0$ is sufficiently large,  then $m_{\ell+2}>24$. This implies that if $C_*>12$,  the sum in each of first two  braces is negative.
  Since $\log$ increases, we have by \re{om1ell}
\gan
-\log D_*({m_{\ell+3}})\leq \log\f{1}{m_{\ell+3}},
\\
-\log\left(C_*\f{\log D_*({m_{\ell+3}})}{m_{\ell+3}}\right)\leq -\log\left(-C_*\f{\log\f{1}{m_{\ell+3}}}{m_{\ell+3}}\right).
\end{gather*}
With   $m_\ell>6$, the difference in the last brace is bounded above by
\aln
\f{(2m_\ell-6) C_*}{3} \f{\log\f{1}{m_{\ell+2}}}{m_{\ell+2}}- (2\tau+1)\log\left(C_* \f{\log m_{\ell+3}}{m_{\ell+3}}\right)\leq \left(-\f{1}{12}C_*+2\tau+1\right)\log m_{\ell+2},
\end{align*}
which is negative when $C_*>24\tau +12$. We have determined $C_*$.  This allows us to determine
 $\ell_0(C_*)$ so that $0<\theta_\ell<1$ and $\prod_{\ell=0}^\infty\theta_\ell^7>r_*$.
 Therefore, $\re{cond2-el}_\ell$ holds if it holds for initial value $\ell=0$.
Using a dilation $v_j\to\e v_j$ for $\e>0$, we may replace $\Phi_{kj}(h_j,v_j)$ by
$(
\var_{kj}(h_j)+\phi_{kj}^h(h_j,\e v_j),t_{kj}(h_j)v_j+\e^{-1}\phi_{kj}^v(h_j,\e v_j))$. This yields $\re{cond2-el}_0$ when $\e$ is sufficiently small, as $\phi_{kj}^\bullet(h_j,v_j)=O(|v_j|^2)$.

To finish the proof, we set $\Psi_j^{(\ell)}:=F_j^{(0)}\circ  \cdots \circ F_j^{(\ell)}$. We have
\eq{}\nonumber
\Psi_j^{(\ell)}(\hat V_{j}^{r_{\ell+1}})\subset\hat V_{j}^{r_{\ell}},\quad
\Psi_j^{(\ell+1)}(h_j,v_j)-\Psi_j^{(\ell)}(h_j,v_j)=O(|v_j|^\ell).
\eeq
Consequently,  the sequence $\Psi_j^{(\ell)}$ is  bounded in $\hat V_j^{r_\infty}$.
Fix $0<\theta<1$. By the Schwarz lemma, we  get
$$
\sup_{\hat U_j^{r_\infty}\times B_d^{\theta r_\infty}}|\Psi_j^{(\ell+1)}-\Psi_j^{(\ell)}|\leq C\theta^\ell.
$$
Therefore,  of $\Psi_j^{(\ell)}$ converges uniformly on $\hat U_j
^{r_\infty}\times B_d^{\theta r_\infty}$   to  a holomorphic mapping $\Psi_j^\infty$. Then  $F:=N_j^{-1}\Psi_j^\infty\Phi_j$ is well defined. Indeed, $N_k^{-1}\Psi_k^\infty\Phi_k=N_j^{-1}\Psi_j^\infty\Phi_j$ is equivalent to $\Psi_k^\infty(\Phi_k\Phi_j^{-1})=(N_kN_j^{-1})\Psi_j^\infty$.   Since $\Psi_j^\infty$ are tangent to the identity, they are germs of biholomorphisms.
 Therefore,
 $F$ linearizes a small neighborhood of $C$ in $M$.

Therefore, we have proved the following full linearization result.
\begin{thm}
Let  a neighborhood of the compact manifold $C$ in $M$   
be   equivalent to a neighborhood of the zero section  of normal bundle $N_C$ of $C$ in $M$ 
by a formal holomorphic mapping which is tangent to the identity and preserves the splitting of $T_CM$.
 Assume that
  $H^0(C,TC\otimes S^{\ell}(N_C^{*}))=0$ for all $\ell>1$
  or   that
    the normal bundle $N_C$ is flat.
   If
  $\{D_*({2^k})\}$ defined by \rea{doninconst} and \rea{om2m} 
satisfies
\beq\label{bruno-cond}
\sum_{k\geq 1}\frac{\log D_*({2^{k+1}})}{2^{k}}<+\infty,
\eeq
  there is a neighborhood of the compact manifold $C$ in $M$ that is biholomorphic to a neighborhood of the zero section of normal bundle of $C$ in $M$.
\end{thm}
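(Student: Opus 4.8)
The final "Theorem" in the excerpt is exactly the statement of \rt{lin-int} (or its close variant \rta{lin-unit} in the general case), and the paragraphs immediately preceding it already execute essentially all of the argument. So the "proof proposal" below simply organizes that material into the order in which I would present it.

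\bigskip

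The plan is to run a Newton (rapid convergence) iteration in the skewed product domains $\hat V_{kj}^r$ introduced in this section, using \rp{KnDn} / \rt{donin-sol-cohom} as the black box that solves the cohomological equations with the Donin-type bounds $D_*(2^k)$. First I would set up the geometry: fix a family of nested coverings $\cL U^r=\{U_j^r\}$ for $r_*<r<r^*$ (this exists by the appendix, cf. \rp{regcover} and \rl{lower-est}), fix trivializations of $N_C^*$ and $TC$ over $U_j^{r^*}$, and define the domains $\hat V_j^r=\hat U_j^r\times B_d^r$, $\hat V_{kj}^r=N_j(V_k^r\cap V_j^r)$ together with their disc structure. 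The three geometric lemmas \rl{nestF}, \rl{nest}, \rl{sch-jet} and \rp{nestNewPhi} are precisely the tools needed: they say that small $f_j$ and $\phi_{kj}^\bullet$ keep domains nested under $F_j$, $F_j^{-1}$, $\Phi_{kj}$, and that one step of conjugation improves the error from order $m$ to order $2m$ with a controlled loss in the radius and a controlled blow-up of the sup-norm governed by $\theta^{2m+1}$ and $D_*(2m)$.

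Next I would describe one Newton step precisely. Given $\{\Phi_{kj}\}\in L_m$, the condition $(N_m)$ — valid under either hypothesis ($N_C$ flat, or $H^0(C,TC\otimes S^pN_C^*)=0$ for $2\le p\le 2m$) by \rl{centralizer}(c)--(d) — yields $\{F_j=I+f_j\}$ with $f_j=O(|v_j|^{m+1})$ such that $\{F_k^{-1}\Phi_{kj}F_j\}\in L_{2m}$. The quantitative content is: solve the horizontal equation \re{Hsol} first, uniquely when $H^0$ vanishes (or directly in the flat case, where $Dt_{kj}=0$ decouples the two equations), getting the bound \re{Hsole}; then feed $[f^h_k]^{2m}_{m+1}$ into the vertical equation \re{Vsol} and solve it with the bound \re{Vsole} (resp. \re{estim-v-linb} in the flat case). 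Packaging both into $D_*(2m)$ as in \re{om2m} and combining with \rp{nestNewPhi} ($\tilde m=2m+1$) gives the key inductive estimate: $\re{cond2-el}_\ell\Rightarrow$ both $\re{tilphi+1}_{\ell+1}$ and $\re{cond2-el}_{\ell+1}$, provided the parameters are chosen correctly.

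The parameter choice is the technical heart, and I would follow Rüssmann~\ci{Ru02}: set $m_\ell=2^{\ell_0+\ell}$, $r_{\ell+1}=\theta_\ell^7 r_\ell$, $r_0=1$, and $1-\theta_\ell=\delta_\ell=C_*\,(\log D_*(m_{\ell+2}))/m_{\ell+2}$, after replacing $D_*(k)$ by $\max(D_*(k),k)$ so that \re{om1ell} holds without harming convergence of $\sum 2^{-k}\log D_*(2^k)$ (this is where the Bruno-type hypothesis \re{bruno-cond} enters). One then checks two things by taking logarithms: (i) $\prod_\ell\theta_\ell^7>r_*$, which follows from $\sum 2^{-k}\log D_*(2^k)<\infty$ for $\ell_0$ large; and (ii) $C_2 D_*(m_{\ell+2})(1-\delta_\ell)^{2m_\ell-6}/\delta_{\ell+1}^{2\tau+1}<1$, which after expanding the logarithm into three braces and using $\log(1-\delta)<-\delta$ reduces to choosing $C_*>24\tau+12$ and then $\ell_0=\ell_0(C_*)$ large enough that $m_{\ell+2}>24$. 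The initial condition $\re{cond2-el}_0$ is arranged by a preliminary dilation $v_j\mapsto\varepsilon v_j$, using $\phi_{kj}^\bullet=O(|v_j|^2)$. Finally, setting $\Psi_j^{(\ell)}=F_j^{(0)}\circ\cdots\circ F_j^{(\ell)}$, the nested-domain bounds and $\Psi_j^{(\ell+1)}-\Psi_j^{(\ell)}=O(|v_j|^{m_\ell})$ give uniform convergence on $\hat U_j^{r_\infty}\times B_d^{\theta r_\infty}$ (Schwarz lemma) to a biholomorphism $\Psi_j^\infty$ tangent to the identity; since $\Psi_k^\infty\Phi_{kj}=N_{kj}\Psi_j^\infty$ in the limit, $F:=N_j^{-1}\Psi_j^\infty\Phi_j$ is a well-defined biholomorphism linearizing a neighborhood of $C$ in $M$.

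I expect the main obstacle to be exactly the bookkeeping in step (ii): keeping track of the \emph{seven} nested radii $\theta^k r$ consumed per iteration (Cauchy estimates, the skewed-domain distance estimates in \rl{nestF}, and the two successive cohomology solves each cost a factor), and verifying that the loss $(r_\ell-\theta_\ell^2 r_\ell)^{-2\tau}=\delta_\ell^{-2\tau}$ is beaten by the super-exponential gain $\theta_\ell^{2m_\ell}=(1-\delta_\ell)^{2^{\ell_0+\ell+1}}$ uniformly in $\ell$ — this is the place where the precise growth rate allowed by \re{bruno-cond} is genuinely used, and where a careless choice of $\delta_\ell$ (e.g. using $m_{\ell+1}$ instead of the shifted $m_{\ell+2}$) breaks the estimate. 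A secondary subtlety worth flagging is that in the non-flat case one must solve the horizontal equation \emph{uniquely} (forcing the hypothesis $H^0(C,TC\otimes S^pN_C^*)=0$) before the vertical one, because the vertical error \re{v-conjug-partial} contains the term $Dt_{kj}(h_j)f_j^h v_j$ which depends on the already-chosen $f_j^h$; in the flat case $Dt_{kj}=0$ removes this coupling and the $H^0$ hypothesis can be dropped.
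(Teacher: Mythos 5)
Your proposal is correct and follows essentially the same route as the paper: the theorem is indeed proved by the Newton scheme developed in Section~\ref{sec:fulllin}, with the nested skewed domains $\hat V_{kj}^r$, the one-step improvement of \rp{nestNewPhi} combined with the Donin bounds packaged in $D_*(2m)$, the R\"ussmann-style choice $1-\theta_\ell=C_*\log D_*(m_{\ell+2})/m_{\ell+2}$ (with the shift to $m_{\ell+2}$ and the normalization $D_*(k)\geq k$), the initial dilation $v_j\mapsto\e v_j$, and convergence of $\Psi_j^{(\ell)}$ by the Schwarz lemma. Your remarks on the role of the $H^0$ vanishing versus flatness in decoupling \re{Hsol} from \re{Vsol} match the paper's treatment, so nothing essential is missing.
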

When $C$ is affine and $N_C$ is flat, the formal equivalence assumption can be relaxed by assuming that the neighborhoods
 are equivalent under a formal biholomorphisms fixing $C$ pointwise.
This follows from \rl{dfj-} $(c)$.
%
%
%
%
%
%
%

  We now present   
   two
   examples to illustrate the results in this paper.

\subsection{
An example of Arnol'd}
\label{arnoldcomp}

 This is originally studied by Arnold~\cite{Ar76}, ~\cite[\S 27]{Ar88}  for linearization of a neighborhood.
See also Ilyashenko-Pyartli~\ci{IP79} for linearization for flat tori
in higher dimensions.

\begin{exmp}\cite[\S 27]{Ar88}. Let $C$ be defined by identifying points in $\cc$ via
$$
h=0\mod (2\pi,2\om),\quad h\in\cc,
$$
where $\om=a+ib$ with $b>0$ and $a\geq0$.
Consider domains in $C$ defined by parallelograms
\gan
U_1=P(-r\pi-r\om, (1+r)\pi-r\om,  (1+r)\pi+(1+r)\om, -r\pi+(1+r)\om ) \\ U_4=U_1+\pi, \quad
U_3=U_4+\om, \quad U_2=U_3-\pi.
\end{gather*}
Suppose that $r>0$ is sufficiently small. Then $U_i\cap U_j$ has two connected components $U_{ij,0}$ and $ U_{ij,1}$ with
$$
U_{14,1}=U_{14,0}-\pi, \quad U_{34,1}=U_{34,0}-\om, \quad U_{23,1}=U_{23,0}-\pi,\quad U_{12,1}=U_{12,0}-\om.
$$
Let $\hat U_j=U_j$ and $\hat V_j=\hat U_j\times\Del_\del$. Define $M=\cup \hat V_j/\sim$, $ V_j=\{[x]\colon x\in \hat V_j\}$, $\Phi_j\colon V_j\to \hat V_j$ and the transition functions $\Phi_{kj}$   on $V_{kj}=V_k\cap V_j$ of $M$   as follows. Let
\eq{arnold}\nonumber
f(h,v)=(h+2\om+vb(h,v),\la v(1+va(h,v))), \quad |\IM h|<\del
\eeq
where $a,b$ are $2\pi$ periodic holomorphic function in $h$.
Define
\ga\label{Phi120}
 \Phi_{12,0}=\I, \quad \Phi_{43,0}=\I,\quad \Phi_{12,1}=f|_{\hat V_{12}}, \quad \Phi_{43,1}=f|_{\hat V_{43}},\\
 \label{Phi120+}
\Phi_{14}=\I, \quad \Phi_{23}=\I,\\
\label{Phi120++}
\Phi_{13,0}=\I, \quad \Phi_{13,1}=f|_{\hat V_{13,1}},\quad
\Phi_{42,0}=\I, \quad \Phi_{42,1}=f|_{\hat V_{42,1}}.
\end{gather}
The linearization of a neighborhood of $C$ in $M$ is equivalent to $G_k^{-1}\Phi_{kj}G_j=\hat\Phi_{kj}$ where $\hat\Phi_{kj}$ are constructed as above by replacing $f$ with $\hat f$ defined by
$$\hat f(h,v)=(h+2\om,\la v).
$$
 Thus $TM
    $ has transition functions:
$$
\hat\Phi_{14}=\I, \quad \hat\Phi_{23}=\I, \quad \hat\Phi_{12,0}=\I, \quad \hat\Phi_{43,0}=\I, \quad \hat\Phi_{12,1}=\hat f|_{\hat V_{12}}, \quad \hat\Phi_{43,1}=\hat f|_{\hat V_{43}}.
$$
  Then we have $g:=G_1=G_4$ on $\hat V_1\cap\hat  V_4$, $g:=G_2=G_3$ on $\hat V_2\cap \hat V_3$,    $g:=G_1=G_2$ on $\hat V_{12,0}$ and $g:=G_3=G_4$ on $\hat V_{34,0}$. In other words, $g$ is $2\pi$ periodic and defined on $-\del\IM\om<\IM h<2(1+\del)\IM\om$. The cohomology equation is
   reduced to  $G_1^{-1}\Phi_{12}G_2=\hat\Phi_{12}$ and $G_4^{-1}\Phi_{43}G_3=\hat\Phi_{43}$. Equivalently, we need to solve
\eq{g-1fg}
g^{-1}fg=\hat f.
\eeq
Assume that $f$ has been normalized so that
$$
va(h,v)=v^na_n(h)+O(n+1), \quad vb(h,v)=v^nb_n(h)+O(n+1),\quad n=1,2,\dots.
$$

For the purpose of illustration, we will only restrict to a {\it special} unitary line bundle case where $|\la|=1$. 
    Then by the non-resonance condition that $\la$ is not a root of unity,  we may assume that as
  in~\cite[p.~211]{Ar88}
  $$g(h,v)=(h+v^nB_n(h),v(1+v^n A_n(h))+O(n+1).$$
This leads to decoupled equations of the form
\ga\label{Aan}\nonumber
\la^nA_n(h+2\om)-A_n(h)=-a_n(h),\\
\la^nB_n(h+2\om)-B_n(h)=-b_n(h).
\label{Bbn}
\end{gather}
Note that $a_n,b_n$ are holomorphic in $|\IM h|<\del$ and we are seeking a solution on a large strip
$$
-\del'<\IM h<\IM\om+\del'.
$$
In Fourier coefficients $a_{n,\ell}$ and a  non-resonant condition, the Fourier coefficients of $A_n$ are given by
$$
A_{n,j}=\f{a_{n,j}}{\la^ne^{2\om j\sqrt{-1}}-1}.
$$
Assume that $a_n$ are holomorphic  and $2\pi$ periodic in $h$ for $S_\del\colon |\IM h|<\del$. Suppose that
\eq{Arnold-sd}\nonumber
|\la^ne^{2j\om\sqrt{-1}}-1|\geq c|\la^n-1|. \eeq
 Then
\gan
|A_{n,j}|\leq
\f{C}{|\la^n-1|} |a_n|_{L^2(S_\del)}e^{-|j|\del},\\
|A_{n,j}e^{jh}|\leq  
\f{C}{|\la^n-1|} |a_n|_{L^2(S_\del)}e^{-|j|(\del-\del')},\quad -\del'<\IM h<\IM\om+\del'.
\end{gather*}
Furthermore, we can verify that
\eq{L2est}\nonumber
|A_n|_{L^2(S_{\del'})}\leq \f{C}{(\del-\del')|\la^n-1|}
|a_n|_{L^2(S_{\del})}.
\eeq
Note that $t_{kj}$ are locally constant with values $1,\la,\la^{-1}$.

Therefore, we have verified
 $$
 D((T_C\oplus N_C)\otimes S^nN_C^*)\leq \f{C}{|\la^n-1|}.
$$
 By \rl{SD-p}, we get an estimate with equivalent bounds (up to a scalar) but in the original domain, i.e. without shrinking domains.

Strictly speaking, the above covering $\{U_j^r\}$ has non smooth boundary. The intersection is non-transversal either. However, this covering can be easily modified to get a generic covering defined early, replacing $\hat U_j$ by smooth strictly convex domains $\hat U_j$ and then replacing $\hat U_j$ by $\hat U_j+c_j$ for suitable small constants.
\end{exmp}
\subsection{Counter-examples}
We now  show that a certain small-condition is necessary to ensure the vertical and full linearizations. We will achieve this by establishing a connection between the classical linearization problem for germs of one-dimensional holomorphic mappings and the vertical linearization of foliated neighborhood of an elliptic curve.

We keep the notation in subsecton~\ref{arnoldcomp}. Let us start with a power series
\eq{bv}
 a(h,v)=\sum_{n\geq2}a_nv^n :=a(v).
\eeq
Set $b(h,v)=0$. Then we have a neighborhood of $C$ associated to
\eq{fhv}
f(h,v)=(h+2\om,\la v+ a(v)).
\eeq
Since the vertical part of the transition functions depends only on $v$, then $M$ is already admits a horizontal foliation with center $C$ being compact.
\pr{link}Let $M_{\la, \om, a}$ be neighborhood of $C$ defined by transition functions $\Phi_{kj}$ given by \rea{Phi120}-\rea{Phi120++} where $f$ is given by \rea{bv}-\rea{fhv}. Suppose that $\la,\om$ satisfy the nonresonant condition
\eq{nonres-c}
\la^ne^{2j\om\sqrt{-1}}-1\neq0, \quad \quad n=2,3,\dots, j\in\zz.
\eeq
Then $M
_{\la,\om,a}$ is vertically $($resp. formally$)$ linearizable
by a mapping tangent to the identity
 if and only if the germ of holomorphic mapping $\var(v)=\la v+ a(v)$ is holomorphically $($resp. formally$)$ linearizable.
\epr
\begin{proof}Suppose that $M$ is vertically linearizable by a holomorphic mapping that is tangent to the identity. By \rp{2-prob}, it is vertically linearization by a mapping
 $G_j$ such that
	$$
	G_j(h_j,v_j)=(h_j,v_j+O(|v_j|^2)).
	$$
By the non-resonant condition \re{nonres-c}, we can verify that
	\re{g-1fg} is equivalent to that the  $g$ in \re{g-1fg}
	has the form $g(h,v)=(h,\psi(v))$ and $\var$ is linearized by $\psi$.
\end{proof}
The existence of non-holomorphically
linearizable $\var$ is well-known. By theorems of  Bruno~\ci{Br71} and Yoccoz~\cite{Yoccoz-ast},  \rp{link} shows that   $M_{\la,\om,\hat a}$ with $\hat a(v)=v^2$ is vertically linearizable and hence linearizable if and only if $\la$ is a Bruno number, that is $$\sum_{k\geq 1}\frac{\log\max_{2\leq j\leq 2^k}|\lambda^j-1|^{-1}}{2^k}<+\infty.$$
\subsection{
A foliation example}
 Here we specialize Ueda's theory for elliptic curves.
 Let us first discuss the Fischer norms and Bergman norm when the $N_C$ is unitary.
Let us recall two formulae from Zhu~\ci[p.~22]{Zh05}:
\gan
\int_{\pd B_r^d}|z^Q|^2\, d\sigma_d=\f{(d-1)!Q!}{(|Q|+d-1)!}r^{2d-1+2|Q|},\\
\quad \int_{B_r^d}|z^Q|^2\, dV_d=\f{d!Q!}{(|Q|+d)!}r^{2|Q|+d}.
\end{gather*}
Therefore, there is a precise asymptotic behavior of Fischer norm and the Bergman norm:
\ga
c_d\|g\|^2_{L^2(B_r^d)}\leq |g|^2_{f,r}\leq C_d\|g\|^2_{L^2(B_r^d)}, \quad 1/4<r<4.
\end{gather}
We also have Bergman's  inequality for $L^2$ holomorphic functions~\ci[p.~189]{GR04}:
\ga
|f|_{\infty, \hat V_j^{(1-\theta)r}}\leq \f{C_d}{(\theta r)^d}\sup_{h_j}|f(h_j,\cdot)|_{L^2(B_r^d)}, \\
\sup_{h_j}|f(h_j,\cdot)|_{L^2(B_r^d)}\leq C_d|f|_{\infty,B_r^d}, \quad 1/4<r<4.
\end{gather}
In general,  we get
\ga
|\phi_{kj}^\bullet
|_{L^\infty( \hat V_{kj,(1-\theta)r})}\leq \f{C_d}{(\theta r)^d}\sup_{h_j}|\phi_{kj}^\bullet(h_j,\cdot)
|_{L^2(B_{kj,r}^d(h_j))}, \\
 \sup_{h_j}|\phi_{kj}^\bullet
|_{L^2(B_{kj,r}^d(h_j))}\leq C_d|\phi_{kj}^\bullet|_{L^\infty(\hat V_{kj}^r)}, \quad 1/4<r<4.
\end{gather}
Note that when $t_{kj}$ are unitary, the skewed domain $\hat V_{kj}^r$
defined in \re{dom-hV} are actually product domains
$$
\hat V_{kj}^r=\hat U_{kj}^r\times B_d^r.
$$
Therefore, the Fischer norm and Bergman norm bound each other with constants
depending only on $\theta$ and $d$.
We can fix $\theta$ too by applying \rl{SD-p} as we did in sections~\ref{sec:verlin} and~\ref{sec:majorlin}.
Therefore, any estimate of cohomology equations in  Fischer norms has a counter part in super norm on the unit ball in $\cc^d$ and vice versa.


 Note that the small divisors condition
\eq{Ssd}
|\la^n-1|\geq Cn^{-\ta}, \quad n=1,2,\dots
\eeq
for some constants $C,\tau$ is equivalent to Ueda's condition in terms of  $\dist(N_C^n,1)$ for the foliation problem when $C$ is an elliptic curve of type zero. In this case the linearized equation corresponds to equation~\re{Bbn} we can take the small divisor
$1/K_*(N_C\otimes S^n N_C^*)$ to be $|\la^n-1|$.

Finally, we should mention that the assumption $\eta_m\leq L_0L^m$ is satisfied under Siegel's small divisor condition $|\la^n-1|\geq Cn^{-\tau}$  by a  method of Siegel; see Ueda~\ci{Ue82} for the vertical linearization
 problem. It is also satisfied under    the Bruno condition~\ci{Br71}
 which is a condition weaker than \re{Ssd}. For the details, we refer to \ci{Br71,Po86}.
%
%


\appendix
\setcounter{thm}{0}\setcounter{equation}{0}
\section{$L^2$  bounds of cohomology solutions and small divisors
}\label{sec1emb}

\subsection{A question of Donin}

 Let $E$ be a holomorphic vector bundle on a compact complex manifold $C$.
The main purpose of this section is to obtain $L^2$ and sup-norm bounds for the cohomology equation
\eq{deluf}
\del u=f
\eeq
where $f\in Z^1(\cL U,\cL O(E))$ and $\cL U$ is a suitable covering of $C$. Our goal is to show that if $f=0$ in $H^1(C,\cL O(E))$, then there is a solution $u$ such that
\eq{uCf}
\|u\|_{\cL U}\leq K(E)\|f\|_{\cL U}.
\eeq
  Here $\|\cdot\|_{\cL U}$ is the $L^2$-norm for cochains of the covering $\cL U$. The main assertion is that the solution $u$ admits estimate on the {\em original} covering $\cL U$ without any refinement, which is important to the application in this paper. For this purpose,  we will choose the covering $\cL U$ which consists of biholomorphic images of the unit polydisc, which are in the general position.  The question on the existence of such an estimate and solutions was raised by Donin who asked the general question if $\cL O(E)$ is replaced by a coherent analytic sheaf $\cL F$ on $C$ and $f$ is any $p$-cocycle,
  with $p>0$,  of a covering $\cL U$~\ci{Do71}. 
  The result in this appendix provides an affirmative answer to Donin's question  
   for $p=1$ and
   the sheaf of holomorphic sections of a holomorphic vector bundle. Furthermore, we will introduce the small divisor for \re{deluf} in \re{uCf}.  Although some of results in this appendix can be further developed for a general setting, we limit to the case of $H^1(C,\cL O(E'\otimes E''))$; this suffices  applications in this paper. One may take $E''$ to be the trivial bundle to deal with a general vector bundle $E$.  In the applications we have in mind, $C$ is embedded into a complex manifold $M$ and we will take $E''$ to be symmetric powers $\operatorname{Sym}^\ell N_C^*$
  of $N_C^*$, the dual of the normal bundle of $C$ in $M$.
  In this paper, $S^\ell E$ denotes the symmetric power $\operatorname{Sym}^\ell E$ of a vector bundle $E$ over $C$.
    We are mainly concerned with how various bounds depend on $\ell$ as $\ell\to \infty$ when we employ the important Fisher metric on $S^\ell N_C^*$  for unitary the normal bundle $N_C$. This will be crucial in our applications.

  To prove \re{uCf}, we will first use the original estimate of Donin~\ci{Do71}, without solving the cohomology equation.  This  serves as a {\it smoothing decomposition} in the sense of Grauert~\cite{GR04} by expressing
  \eq{fgU}
  f=g+\del u
  \eeq
where $g$ is defined on a larger covering while $u$ is defined on a shrinking covering.
   We will then combine with the proof of finiteness theorem of cohomology groups from Grauert-Remmert~\cite{GR04} to refine the decomposition \re{fgU} by expressing $g$ in a base of cocycles. Finally, we will obtain \re{uCf} by avoiding shrinking of covering. This last step is motivated by a method of Kodaira-Spencer and Ueda~\ci{Ue82}. We take a different approach by an essential use of the uniqueness theorem. This allows us to introduce the {\it small divisors} in \re{uCf} to the cohomology equation \re{deluf}.

\subsection{Bounds of solutions of cohomology equations}
\label{secA.2}
We now start to introduce nested coverings of $C$. This will be an essential ingredient of the small divisors for the cohomology equation.
We cover $C$ by finitely many open sets $U_i, i\in \cL I$ such that there are open sets $V_i$ in $M$ with $V_i\cap C=U_i$. We also assume that there are biholomorphic mappings $\Phi_i$ from $V_i$ onto the polydisc
$\Del_{n+d}^{r^*}$ of radius $r^*$, where $n$ is the dimension of $C$ and $n+d$ is the dimension of $M$.
 Assume further that  
$
\Phi_i(U_i^{r^*})=\Del_n^{r^*}\times\{0\}$ for $
\var_i\times\{0\}=\Phi_i|_{U_i}$.
 Set $\cL U^r=\{U_i^r\colon i\in\cL I\}$  
 with $U_i^r=
\var_i^{-1}(\Del_n^r)$. We assume that $r^*<1$ and  $\cL U^{r_*
 }$ with 
 $r_*<r^*$,  
 remains a covering of $C$. When $U_I^r:=U^r_{i_0}\cap\cdots U_{i_q}^r$ is non-empty,  it is still Stein~\cite[p.~127]{GR04}.

\begin{defn}\label{nested-cov}
Let $\{U_j^r\}$ be an open covering of $C$ for each $r\in[r_*,r^*]$. We say that the family of coverings $\{U_j^r\}$ is {\it nested}, if each connected component of $U_{k}^{\rho}\cap  U_{j}^{r_*}$ intersects $U_{k}^{r_*}\cap  U_{j}^{r_*}$ when $r_*\leq \rho\leq r^*$.
In particular, $U_{k}^{r_*}\cap  U_{j}^{r_*}$ is non-empty if and only if $U_{k}^{\rho}\cap  U_{j}^{r_*}$ is non-empty.
\end{defn}

Let $N(U_i^{ r^*})$ be the union of all $\ov{U_k^{r^*}}$ that intersect $\ov{U_{i}^{r^*}}$;
as in~\ci{Do71}
we will call the union the {\it star} of $U_i^{r^*}$.
Refining  $\cL U^{r^*}$ if necessary, we may assume that there is a 
 biholomorphism  $
\var_i$ from a neighborhood of the star onto an open set   in $\cc^n$.
If $E',E''$ are holomorphic vector bundles over $C$, we will fix a trivialization of $E'$ over $U_i$ by fixing a holomorphic basis $e'_k=\{e'_{k,1},\dots, e'_{k,m}\}$ in $\ov{U_k^{r^*}}$. We also fix a holomorphic base
$e''_j=\{e''_{j,1},\dots, e''_{j,d}\}$ of $E''$ in $\ov{U_j^{r^*}}$. On $U_{ I}^{r^*}=U_{i_0}^{r^*}\cap\cdots\cap U^{r^*}_{i_q}$, it will be convenient to use the base
\eq{e0q}\nonumber
e_ {i_0\dots i_q}:=e_{i_0}'\otimes e_{i_q}'':=\{e'_{i_0,k}\otimes e''_{i_q,j}\colon 1\leq k\leq m, 1\leq j\leq d\}.
\eeq

Throughout the paper $\|\cdot\|_D$ and $ |\cdot|_D$ denote respectively the $L^2$ and sup norms of a function in $D$, when $D$ is a domain in $\cc^n$. If $f=(f_1,\dots, 
 f_d)$ is a vector of functions, we define 
 the $L^2$ norm, metric, and sup norms as follows:
\al\label {twonorms}\nonumber
 \|f\|_{D}^2&:= \|f\|_{L^2(D)}^2:=\|f_1\|^2_{D}+\cdots+\|f_d\|^2_{D},\\
|f|_{D}^2&:=\sup_{z\in D}|f_1(z)|^2+\cdots+|f_d(z)|^2, \nonumber \\
|f|_{\infty, D} &:=\sup_{z\in D}\max\{|f_1(z)|, \dots, |f_d(z)|\}.\nonumber
\end{align}
For a $d\times d$ matrix $t$ 
 of functions on $D$, denote by $|t|_D, \|t\|_D$, 
 $|t|_{\infty,D}$ respectively the operator norms
defined by
\eq{matrix-norm}\nonumber
|t|_D=\sup_{|f|_D=1}|tf|_D, \quad 
\|t\|_D=\sup_{\|f\|_D=1}\|tf\|_D, \quad 
|t|_{\infty,D}=\sup_{|f|_{\infty,D}=1}|tf|_{\infty,D}.
\eeq
Therefore, $\|t\|_D\leq|t|_D$ as
$\|tf\|_D\leq (\sup_{z\in D }|t(z)|)\|f\|_D=|t|_D\|f\|_D$.

Then we   define the $L^2$ norm for $f\in C ^q(\cL U^{r},\cL O(E'\otimes E''))$  by
\begin{eqnarray}\nonumber
a_ Ie_{ I}&:=&\sum_{\mu=1}^{md}a_ I^{\mu} e_{ I,\mu},\\
\|f\|_{\cL U^{r}}&:=&\max_{I=(i_0,\dots, i_q)\in \cL I^ {q+1}}\left\{\|a_{ I}\circ
\var_{i_q}^{-1}\|_{
\var_{i_q}(U_I)}\colon f_ i=a_{ I} e_{ I}\ \text{in $U_{ I}$}\right\}.
\label{defnorm}\nonumber
\end{eqnarray}
Sometimes we denote   $\|f\|_{\cL U^{r_*}}$ by $\|f\|$ for abbreviation. We define similarly 
 the metric  norm $|f|_{\cL U^{r_*}}$,
or $|f|$,
 and the sup-norm $|f|_{\infty,\cL U^{r_*}}$ or $\sup|f|$. 
  It is obvious that
 \ga\label{trivial-est}\nonumber
 ||f||\leq C
 |f|, \quad \sup|f|\leq\|f\|\leq C\sqrt{\rank(E'\otimes E'')}\sup |f|, \\
 |t|_{\infty}\leq|t|\leq C\rank(E'\otimes E'') 
 |t|_{\infty},\nonumber
  \end{gather}
where $C$ does not depend on $E',E''$.

The first result of this appendix is to find a way to obtain solutions with bounds to \re{deluf} on the original covering, if a  solution with a bound exists on a shrinking covering.
This relies on the nested coverings defined above.
We first study the $L^2$ norms case.
\begin{lemma} \label{SD-p} Let $\cL U^r=\{U_i^r\colon i\in \cL I\}$ with $r_*\leq r\leq r^*$ be  a family of nested finite coverings of $C$.
  Suppose that $f\in C^1(\cL U^{r^*},E'\otimes E'')$ and $f=0$ in $ H^1(\cL U^{r^*},E'\otimes E'')$. Assume that there is a solution $v\in C^{0}(\cL U^{r_*})$  such that
 \eq{first-sol}
 \del v=f, \quad
 \|v\|_{\cL U^{r_*}}\leq K\|f\|_{\cL U^{r^*}}.
 \eeq
  Then there exists a solution $u\in  C^{0}(\cL U^{r^*})$ such that $\del u=f$ on $\cL U^{r^*}$ and
\eq{second-sol}
 \|u\|_{\cL U^{r^*}}\leq C( 
  |\{t'_{kj}\}|_{\cL U^{r^*}}+ K|\{t'_{kj}\}|_{\cL U^{r^*}}|\{t''_{kj}\}|_{\cL U^{r^*}})\|f\|_{\cL U^{r^*}},
\eeq
where $t_{kj}',t_{kj}''$ are the transition  
matrices
  of $E',E''$, respectively, and $C$ depends only on the number  $|\cL I|$
of open sets in $\cL U^{r^*}$ and transition functions of $C$. In particular, $C$ does not depend on $E',E''$.
\end{lemma}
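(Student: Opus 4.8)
The statement asserts that if the cohomology equation $\delta v = f$ admits a solution with an $L^2$ bound on the \emph{shrunk} covering $\cL U^{r_*}$, then one can produce a (possibly different) solution $u$ with a comparable bound on the \emph{full} covering $\cL U^{r^*}$, at the cost of factors involving the transition matrices of $E'$ and $E''$. The key geometric input is that the family $\cL U^r$ is nested: every connected component of $U_k^\rho\cap U_j^{r_*}$ meets $U_k^{r_*}\cap U_j^{r_*}$. This is exactly what is needed to \emph{transport} the solution $v$, defined only over $U_j^{r_*}$, to a cochain over $U_j^{r^*}$, by analytic continuation along the larger sets, without creating new topological obstructions.

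First I would take the given $v = \{v_j\}\in C^0(\cL U^{r_*})$ with $\delta v = f$ there. The idea is to define the desired $u_j$ over the larger patch $U_j^{r^*}$ as an analytic continuation of $v_j$. Since $f$ is defined and holomorphic over $\cL U^{r^*}$, and $v_k - v_j = f_{kj}$ on $U_k^{r_*}\cap U_j^{r_*}$, on any connected component $W$ of $U_k^{r^*}\cap U_j^{r_*}$ the function $v_k$ (defined on $U_k^{r^*}\cap U_j^{r_*}$, if we already have $u_k$ there) and $v_j + f_{kj}$ agree on the subcomponent $W\cap U_k^{r_*}$, which is nonempty by the nesting hypothesis; by the identity theorem they agree on all of $W$. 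Thus one can unambiguously extend each $v_j$ to a holomorphic section over $U_j^{r^*}$: concretely, fix $j$, and for $x\in U_j^{r^*}$ choose $k$ with $x\in U_k^{r_*}$ (possible since $\cL U^{r_*}$ is a covering), and set $u_j(x) := v_k(x) - f_{kj}(x)$; the nesting property and the cocycle identity $f_{kj} - f_{\ell j} = -f_{k\ell}$ guarantee this is independent of $k$ on overlaps. This produces $u = \{u_j\}\in C^0(\cL U^{r^*})$ with $\delta u = f$ on $\cL U^{r^*}$: indeed $u_k - u_j = (v_\ell - f_{k\ell}) - (v_\ell - f_{j\ell}) = f_{j\ell} - f_{k\ell} = f_{kj}$ wherever the relevant sets overlap.

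Next comes the bound \eqref{second-sol}. The norm $\|u\|_{\cL U^{r^*}}$ is computed in the fixed trivializing bases $e_{kj} = e'_k\otimes e''_j$ over $U_k^{r^*}$. When I evaluate $u_j$ at a point $x\in U_j^{r^*}$ via the formula $u_j(x) = v_k(x) - f_{kj}(x)$, I must re-express $v_k$ — which is naturally a section of $E'\otimes E''$ in the bases attached to $U_k^{r_*}$ — in the bases attached to $U_j^{r^*}$, and similarly for $f_{kj}$. Changing the $E'$-base from index $k$ to index $j$ introduces the transition matrix $t'_{kj}$, hence a factor $|\{t'_{kj}\}|_{\cL U^{r^*}}$; changing the $E''$-base introduces $|\{t''_{kj}\}|_{\cL U^{r^*}}$. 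Combining with $\|v\|_{\cL U^{r_*}}\le K\|f\|_{\cL U^{r^*}}$ from \eqref{first-sol} and with the trivial comparison of norms in \eqref{trivial-est}, the $v$-contribution is bounded by $CK|\{t'_{kj}\}|\,|\{t''_{kj}\}|\,\|f\|$ and the $f_{kj}$-contribution by $C|\{t'_{kj}\}|\,\|f\|$ (here $f_{kj}$ only needs an $E'$-base change since its $E''$-index is already $j$, and actually even that can be folded in). The combinatorial constant $C$ depends only on $|\cL I|$ and on the biholomorphisms $\var_i$ of $C$, not on $E'$ or $E''$, as claimed. I would also need to note that $U_j^{r^*}\cap U_k^{r_*}$, being a domain of holomorphy in a bounded polydisc, does not inflate the $L^2$ norm — one simply restricts, and restriction is norm-decreasing.

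\textbf{Main obstacle.} The routine part is the bookkeeping with transition matrices in the norm estimate; the genuinely delicate point is the \emph{well-definedness} of the extension $u_j$ — i.e.\ that the local pieces $v_k - f_{kj}$ glue into a single-valued holomorphic section over all of $U_j^{r^*}$. This is precisely where the nestedness of the family $\cL U^r$ is essential: without it, a connected component of $U_k^{r^*}\cap U_j^{r_*}$ might fail to meet $U_k^{r_*}\cap U_j^{r_*}$, the identity theorem would not apply on that component, and the continuation could be multi-valued (monodromy). So the heart of the argument is a careful connected-component chase using \rd{nested-cov}, verifying that on every overlap the two candidate definitions of $u_j$ agree on a nonempty open subset and hence everywhere. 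Once that is secured, $\delta u = f$ on $\cL U^{r^*}$ and the bound \eqref{second-sol} follow by the elementary estimates sketched above.
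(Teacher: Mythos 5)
Your proof is correct, and it reaches the key identity $u_j=v_k-f_{kj}$ on $U_j^{r^*}\cap U_k^{r_*}$ by a genuinely different mechanism than the paper. The paper does not glue directly: it first takes an arbitrary solution $v^*\in C^0(\cL U^{r^*})$ of $\del v^*=f$ (this is where the hypothesis $f=0$ in $H^1(\cL U^{r^*})$ is used), observes that $w:=v-v^*$ is a global section of $E'\otimes E''$ because $\del(v-v^*)=0$ on the covering $\cL U^{r_*}$, sets $u:=v^*+w$, and only then identifies $u_j$ with $v_k-f_{kj}$ on the larger sets $U_j^{r^*}\cap U_k^{r_*}$ by the uniqueness theorem applied componentwise — and it is exactly there that the nestedness of \rd{nested-cov} is invoked, to guarantee each component of $U_j^{r^*}\cap U_k^{r_*}$ meets $U_j^{r_*}\cap U_k^{r_*}$. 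Your route makes that formula the definition of $u_j$ and checks consistency by pure \v{C}ech algebra: at a point $x\in U_k^{r_*}\cap U_\ell^{r_*}\cap U_j^{r^*}$ one has $v_\ell-v_k=\pm f_{k\ell}$ from $\del v=f$ on $\cL U^{r_*}$ and $f_{kj}-f_{\ell j}=\pm f_{k\ell}$ from the cocycle relation on the triple intersection of $\cL U^{r^*}$, so the two candidate values of $u_j(x)$ agree pointwise; the same algebra gives $\del u=f$ on $\cL U^{r^*}$. In particular, your "main obstacle" is misdiagnosed: no identity-theorem or monodromy argument is needed in your construction, and in fact the nestedness hypothesis is never used by it (it is essential elsewhere in the paper, e.g.\ for \rp{component} and the Donin-type estimates, and in the paper's own proof of this lemma only because $u$ is built as $v^*+w$ and must be matched with $v_k-f_{kj}$ a posteriori). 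Your estimate then coincides with the paper's: express $u_j=v_k-f_{kj}$ in the frame $e_j'\otimes e_j''$, pick up the factors $|\{t'_{kj}\}|$ and $|\{t'_{kj}\}||\{t''_{kj}\}|$, use \re{first-sol}, absorb the Jacobian of $\var_{kj}$ and the number of summands $k$ into $C$, and sum over the covering of $U_j^{r^*}$ by the sets $U_j^{r^*}\cap U_k^{r_*}$, which yields \re{second-sol}. The only blemishes are harmless sign/index slips in your cocycle bookkeeping; they do not affect the argument. What your approach buys is a more elementary proof that dispenses with the auxiliary solution $v^*$ and the connected-component chase; what the paper's approach buys is a solution $u$ that visibly extends the given $v$, which is in the spirit of its "minimizing solution" discussions elsewhere.
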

\begin{proof}By assumptions, we have
\ga
f_{jk}=(\del v)_{jk}, \quad U_j^{r_*}\cap U_k^{r_*},\\
\|v\|_{\cL U^{r_*}}\leq K\|f\|_{\cL U^{r^*}}.\label{vKf}
\end{gather}
 Take any  $v^*\in C^0(\cL U^{r^*},E'\otimes E'')$ such that $\del v^* =f$. Then $(\del v^*-\del v)_{jk}=0$ in
 $U^{r_*}_j\cap U^{r_*}_k$, because $(\del v^*)_{jk}=f_{jk}$ on the larger set $U_j^{r^*}\cap U_k^{r^*}$.  Since $\{U_j^{r_*}\}$ is a covering of $C$ then $w:= v_j-v^*_j$ is a global   section of $E'\otimes E''$. This shows that $v_j$, via $v^*_j$, extends to a holomorphic section in $U_j^{r^*}$. In fact, $v_j$ is the restriction of $u_j=v^*_j+w$ defined on $U_j^{r_*}$.

  We now derive the bound for $u_j$. Suppose that $U_j^{r^*}\cap U_k^{r_*}$ is  non-empty. By the assumptions, each component of $U_j^{r^*}\cap U_k^{r_*}$ intersects $U_j^{r_*}\cap U_k^{r_*}$. We have
$
 u_j= u_k+f_{jk}$ on $ U_j^{r_*}\cap U_k^{r_*}
$
  and hence the uniqueness theorem implies that it holds on $U_j^{r^*}\cap U_k^{r_*}$ too.
And on    $U_{j}^{r^*}\cap U_k^{r_*}$, we  have $u_k=v_k$ and $ u_j=v_k-f_{kj}$.   We express the identity in coordinates
$$
u_j=\tilde u_j e_j, \quad v_k=\tilde v_ke_k=\hat v_{kj}e_{j}, \quad
f_{kj}=\tilde f_{kj}e_{kj}=\hat f_{kj}e_{jj}.
$$
Let $t_{kj}', t_{kj}''$ respectively be the transition  
matrices
  of $e_{j}', e_{j}''$ for $E', E''$. Then
$\tilde t_{kj}=t_{kj}'\otimes t_{kj}''$ are the transition  
matrices
  of $e_{kj}$ for $E'\otimes E''$. Then we have
\ga\nonumber
\hat v_{kj}=
t_{jk}'\otimes t_{jk}''\tilde v_k, \quad \hat f_{kj}=t_{jk}'\otimes I_{d}\tilde f_{kj}.
\end{gather}
Thus, $\tilde u_j=\hat v_{kj}-\hat f_{kj}=
t_{jk}'\otimes t_{jk}''\tilde v_k -t_{jk}'\otimes I_{d}\tilde f_{kj}$.
We have
\aln
\|\tilde u_j\|_{L^2(U_j^{r^*}\cap U_k^{r_*})}&=\|\tilde u_j\circ
\var_j^{-1}\|_{L^2(
\var_j(U_j^{r^*}\cap U_k^{r_*}))}\\
&\leq \|(
 t_{jk}'\otimes t_{jk}''\tilde v_k)\circ
\var_j^{-1}\|_{L^2(
\var_j( U_j^{r^*}\cap U_k^{r_*}))}+
\|(t_{jk}'\otimes I_{d}\tilde f_{kj})\circ
\var_j^{-1}\|_{L^2(
\var_j(U_j^{r^*}\cap U_k^{r^*}))}.
\end{align*}
 Here $t_{jk}\circ
\var_j^{-1}=t_{jk}\circ
\var_k^{-1}
\circ\var_{kj}$.
By the properties of operator norm and 
$\|t_{kj}'\otimes t_{kj}''\|_D\leq |t_{kj}'\otimes t_{kj}''|_D\leq |t_{kj}'|_D| t_{kj}''|_D$ for $D=
\var_j( U_j^{r^*}\cap U_k^{r_*})$, we have
\aln
&\|
(t_{jk}'\otimes t_{jk}''\tilde v_k) \circ
\var_j^{-1}\|^2_{D}\leq
C_*
|t'_{jk}|_D^2 \times |t''_{jk}|_D^2\times \|\tilde v_k\|^2_{
\var_k( U_j^{r^*}\cap U_k^{r_*})},
\end{align*}
%
  where the constant $C_*$ comes from the Jacobian of
   $z_k=
\var_{kj}(z_j)$. By \re{vKf}, we have
  $$
 \|\tilde v_k\circ
\var_k^{-1}\|_{L^2}^2\leq K^2\|f\|_{L^2}^2.
    $$
  We also have
   \aln
 \|(
 t_{jk}'\otimes I_{d} \tilde f_{kj})\circ
\var_j^{-1}\|
 _{
\var_j( U_j^{r^*}\cap U_k^{r_*})}\leq
  |t_{jk}'\circ
\var_j^{-1}|_{
\var_j( U_j^{r^*}\cap U_k^{r_*})}\times
 \|f\|_{
\var_j( U_j^{r^*}\cap U_k^{r_*})}.
     \end{align*}
Since $U_j^{r^*}$ is covered by $\{U_{j}^{r^*}\cap U_k^{r_*}\}$, we get the desired bound from
$$\|\tilde u_j\|_{L^2(U_j^{r^*})}\leq\sum_k\|\tilde u_j\|_{L^2(U_j^{r^*}\cap U_k^{r_*})}.\qedhere
$$
\end{proof}

The argument for the norm $|\cdot|$ is  verbatim and we can take the above constant $C_*$ to be one.

\begin{cor}\label{SD-p-sup} With 
 notations and assumptions  in \rla{SD-p}, the solution $u$ also satisfies
\eq{sup-est}\nonumber
 |u|_{\infty,\cL U^{r^*}}\leq C( 
  |\{t'_{kj}\}|_{\cL U^{r^*}}+ K|\{t'_{kj}\}|_{\cL U^{r^*}}|\{t''_{kj}\}|_{\cL U^{r^*}})\sqrt{\rank(E'\otimes E'')}|f|_{\infty,\cL U^{r^*}},
  \eeq
  where $C$ does not depend on $E',E''$.
\end{cor}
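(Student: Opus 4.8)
\textbf{Proof proposal for Corollary~\ref{SD-p-sup}.} The plan is to deduce the sup-norm estimate directly from the $L^2$-estimate \eqref{second-sol} already established in \rla{SD-p}, using the elementary comparison inequalities between $L^2$, metric, and sup norms that are recorded in \eqref{trivial-est}, together with interior $L^2$-to-sup estimates for holomorphic functions. The key point is that the solution $u$ produced in \rla{SD-p} is already defined on the \emph{larger} covering $\cL U^{r^*}$, while we only need its sup-norm on the still larger-index but geometrically comparable data; since the construction in \rla{SD-p} actually extends $u_j$ holomorphically to $U_j^{r^*}$ and gives an $L^2$ bound there, and since each $\hat U_j^{r^*}=\var_j(U_j^{r^*})$ is relatively compact in a fixed neighborhood on which the $\var_j$ and the trivializing bases are holomorphic, a standard mean-value inequality converts the $L^2$ bound into a sup bound at the cost of a constant depending only on the fixed geometry (the covering, the charts $\var_j$, and $r_*,r^*$), not on $E',E''$.

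First I would run the proof of \rla{SD-p} verbatim, but in the metric norm $|\cdot|$ rather than the $L^2$ norm, as the remark immediately following that lemma already indicates is possible (there the Jacobian constant $C_*$ may be taken to be $1$). This yields, for the same solution $u$,
\[
|u|_{\cL U^{r_*}}\le C\bigl(|\{t'_{kj}\}|_{\cL U^{r^*}}+K|\{t'_{kj}\}|_{\cL U^{r^*}}|\{t''_{kj}\}|_{\cL U^{r^*}}\bigr)|f|_{\cL U^{r^*}},
\]
with $C$ independent of $E',E''$. Actually, to get a bound on $\cL U^{r^*}$ rather than $\cL U^{r_*}$ one instead applies an interior estimate: the extension argument in \rla{SD-p} shows $u_j$ is holomorphic on $\hat U_j^{r^*}$; applying the $L^2$-to-sup Bergman-type inequality on a slightly smaller polydisc and then re-running the covering argument (now $\hat U_j^{r^*}$ is covered by the $\hat U_j^{r^*}\cap\hat U_k^{r_*}$ on which we have $L^2$ control from \eqref{second-sol}) gives the metric-norm bound on $\cL U^{r^*}$ with a constant absorbing the polydisc radii ratio $r_*/r^*$. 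Then I would pass from the metric norm to the sup-norm (= component-wise $\max$) using the right-hand inequality of \eqref{trivial-est}, namely $|f|_{\infty}\le\|f\|\le C\sqrt{\rank(E'\otimes E'')}\sup|f|$ and its metric analogue $|f|_\infty\le|f|\le \sqrt{\rank(E'\otimes E'')}\,|f|_\infty$, so the only place the rank enters is through the factor $\sqrt{\rank(E'\otimes E'')}$ appearing in the statement.

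Concretely the chain of inequalities is: $|u|_{\infty,\cL U^{r^*}}\le |u|_{\cL U^{r^*}}\le C\,(|\{t'_{kj}\}|+K|\{t'_{kj}\}||\{t''_{kj}\}|)\,|f|_{\cL U^{r^*}}\le C\,(|\{t'_{kj}\}|+K|\{t'_{kj}\}||\{t''_{kj}\}|)\,\sqrt{\rank(E'\otimes E'')}\,|f|_{\infty,\cL U^{r^*}}$, where all norms of transition matrices are taken on $\cL U^{r^*}$ and $C$ depends only on $|\cL I|$, the charts, and $r_*,r^*$. I expect the only mild obstacle to be bookkeeping: making sure the interior $L^2$-to-sup step is applied on domains that stay inside the fixed reference neighborhoods of the stars $N(U_i^{r^*})$ (so the constant is uniform), and checking that the metric-norm version of \rla{SD-p} genuinely goes through with the same constants — but both are routine, since the proof of \rla{SD-p} is already written so that replacing $\|\cdot\|$ by $|\cdot|$ changes nothing except the harmless Jacobian factor. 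Hence the corollary follows.
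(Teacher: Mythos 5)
Your final chain of inequalities is exactly the paper's proof: rerun the argument of \rla{SD-p} verbatim with the metric norm $|\cdot|$ in place of $\|\cdot\|$ (the Jacobian constant $C_*$ becoming $1$), then use $|u|_{\infty,\cL U^{r^*}}\le |u|_{\cL U^{r^*}}$ and $|f|_{\cL U^{r^*}}\le\sqrt{\rank(E'\otimes E'')}\,|f|_{\infty,\cL U^{r^*}}$, which is the only place the rank enters. However, the middle of your write-up contains a misconception that, if taken literally, would break the argument. You assert that the verbatim rerun only yields a bound on $\cL U^{r_*}$, and you propose to recover the bound on $\cL U^{r^*}$ by a Bergman-type interior $L^2$-to-sup estimate combined with the covering of $\hat U_j^{r^*}$ by the sets $\hat U_j^{r^*}\cap\hat U_k^{r_*}$. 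That detour does not work: an interior estimate controls the sup of $u_j$ only on compact subsets of $U_j^{r^*}$, with a constant that degenerates as one approaches $\pd U_j^{r^*}$, so it cannot convert the $L^2$ bound \rea{second-sol} into $|u|_{\infty,\cL U^{r^*}}$ without shrinking the covering — which is precisely what the lemma and the corollary are designed to avoid.

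The detour is also unnecessary, because the conclusion of \rla{SD-p} is already a bound on the large covering: by nestedness and the uniqueness theorem one has, on every $U_j^{r^*}\cap U_k^{r_*}$, the identity $\tilde u_j=(t'_{jk}\otimes t''_{jk})\tilde v_k-(t'_{jk}\otimes I_d)\tilde f_{kj}$, and since the $U_k^{r_*}$ cover $C$, these pieces exhaust all of $U_j^{r^*}$ up to its boundary. Taking sup-norms of this identity pointwise bounds $|u_j|$ on all of $U_j^{r^*}$ by $|v_k|$ on $U_k^{r_*}$, $|f|$, and the operator norms of the transition matrices, with no loss near $\pd U_j^{r^*}$; this is what ``verbatim'' means here. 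The one caveat you correctly flag but should make explicit is that this rerun uses the hypothesis \rea{first-sol} read in the metric (or sup) norm, $|v|_{\cL U^{r_*}}\le K|f|_{\cL U^{r^*}}$, which is how the corollary is applied later (e.g.\ in \rt{donin-sol-cohom}, where a sup-norm bound on a slightly smaller covering is produced first by Cauchy estimates). With the interior-estimate paragraph deleted, your concluding chain coincides with the paper's argument.
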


The above lemma leads us to the following proposition and definition.
\begin{prop}\label{defSD}
Let $\cL U^r=\{ U_i^r\colon i\in\cL I\}$ with $r_*\leq r\leq r^*$ be a family of nested coverings of a compact complex manifold $C$. Let $E'$ $($resp. $E'')$ be a  holomorphic vector bundle over $C$ with bases $\{e_j'\}$ $($resp. $\{e_j''\})$ and
transition  
matrices
  $t_{kj}'$ $($resp. $\{t_{kj}''\})$.  Suppose that there is a finite number $K$ such that for any    $f\in C^1(\cL U^{r^*},E'\otimes E'')$ with $f=0$ in $ H^1(\cL U^{r^*},E'\otimes E'')$, there is a solution $v\in C^{0}(\cL U^{r_*},E'\otimes E'')$ satisfying \rea{first-sol}. Then there is a possible different solution $v\in C^0(\cL U^{r_*}, E'\otimes E'')$  satisfying \rea{first-sol} in which
 $K$ is replaced by
 \al\label{SD}
K_*(E'\otimes E'')=\sup_{u_1}\inf_{u_0}&\bigl\{\|u_0\|_{\cL U^{r_*}}\colon  \text{$\del u_0=\del u_1$ on $\cL U^{r_*}$}, \\
&\quad \| \del u_1 \|_{\cL U^{r^*}}=1,   u_i\in C^0(\cL U^{r_i},E'\otimes E'')\bigr\}.\nonumber
\end{align}
\end{prop}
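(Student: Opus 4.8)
The plan is to recognize that $K_*(E'\otimes E'')$ as defined in \rea{SD} is, by construction, the sharpest constant one could hope for in \rea{first-sol} for coboundary cocycles coming from the large covering, so the proposition reduces to two assertions: that $K_*(E'\otimes E'')<\infty$, and that the $\sup$/$\inf$ defining it is genuinely realized as a bound for the given $f$. Both rest on the single substantive fact that $H^0(C,E'\otimes E'')$ is finite dimensional (Cartan--Serre on the compact manifold $C$); everything else is homogeneity and the cochain formalism.

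First I would prove finiteness. Given any $u_1\in C^0(\cL U^{r^*},E'\otimes E'')$ with $\|\del u_1\|_{\cL U^{r^*}}=1$, set $f:=\del u_1$; this is a cocycle on $\cL U^{r^*}$ that vanishes in $H^1(\cL U^{r^*},E'\otimes E'')$, so the standing hypothesis of the proposition yields $v\in C^0(\cL U^{r_*},E'\otimes E'')$ with $\del v=f$ on $\cL U^{r_*}$ and $\|v\|_{\cL U^{r_*}}\le K\|f\|_{\cL U^{r^*}}=K$. Such a $v$ competes in the inner infimum in \rea{SD}, whence that infimum is $\le K$; taking the supremum over $u_1$ gives $K_*(E'\otimes E'')\le K<\infty$.

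Next I would show that for each admissible $u_1$ the inner infimum is attained. The set of $u_0\in C^0(\cL U^{r_*},E'\otimes E'')$ with $\del u_0=\del u_1$ on $\cL U^{r_*}$ is non-empty (it contains the restriction of $u_1$ to $\cL U^{r_*}$) and is a coset of $\ker\bigl(\del\colon C^0(\cL U^{r_*})\to C^1(\cL U^{r_*})\bigr)$; since $\{U_i^{r_*}\}$ covers $C$, that kernel is exactly $H^0(C,E'\otimes E'')$, which is finite dimensional and on which $\|\cdot\|_{\cL U^{r_*}}$ is an honest norm (a global section vanishing on every $U_i^{r_*}$ vanishes on $C$). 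Hence $u_0\mapsto\|u_0\|_{\cL U^{r_*}}$ is continuous and coercive on this finite-dimensional affine set and attains a minimum, which by definition of $K_*(E'\otimes E'')$ is $\le K_*(E'\otimes E'')$ whenever $\|\del u_1\|_{\cL U^{r^*}}=1$. Equivalently, minimizing sequences are $L^2$-bounded and, by finite dimensionality of the ambiguity $H^0(C,E'\otimes E'')$, subconverge to an actual minimizer; this compactness is the one point that is more than bookkeeping, and it is the step I expect to be the main (if mild) obstacle.

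Finally I would assemble the conclusion. Let $f$ be as in the statement. If $f=0$ on $\cL U^{r^*}$, take $v=0$. Otherwise set $c:=\|f\|_{\cL U^{r^*}}>0$; since $f=0$ in $H^1(\cL U^{r^*},E'\otimes E'')$ we may write $f=\del u_1$ with $u_1\in C^0(\cL U^{r^*},E'\otimes E'')$, and $\hat u_1:=c^{-1}u_1$ satisfies $\|\del\hat u_1\|_{\cL U^{r^*}}=1$. By the previous step there is $\hat u_0\in C^0(\cL U^{r_*},E'\otimes E'')$ with $\del\hat u_0=\del\hat u_1=c^{-1}f$ on $\cL U^{r_*}$ and $\|\hat u_0\|_{\cL U^{r_*}}\le K_*(E'\otimes E'')$. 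Then $v:=c\,\hat u_0$ satisfies $\del v=f$ on $\cL U^{r_*}$ and $\|v\|_{\cL U^{r_*}}=c\|\hat u_0\|_{\cL U^{r_*}}\le K_*(E'\otimes E'')\,\|f\|_{\cL U^{r^*}}$, which is \rea{first-sol} with $K$ replaced by $K_*(E'\otimes E'')$. The verbatim argument with $\|\cdot\|$ replaced by $|\cdot|$ or $|\cdot|_\infty$ gives the corresponding sup-norm statements used in \nrca{SD-p-sup}.
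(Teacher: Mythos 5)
Your proof is correct, and its overall skeleton (observe $K_*\leq K$, attain the inner infimum for normalized $u_1$, then rescale a general $f$) matches the paper's; but the one substantive step --- existence of a minimizer --- is handled by a genuinely different mechanism. The paper takes a minimizing sequence $u_0^m$ with $\|u_0^m\|_{\cL U^{r_*}}\leq K_*+1/m$, uses the Cauchy formula to convert the $L^2$ bounds into local sup bounds, extracts a locally uniformly convergent subsequence (a Montel/normal-families argument), and checks that the limit still satisfies $\del u_0^\infty=\del u_1$ and $\|u_0^\infty\|_{\cL U^{r_*}}\leq K_*$ by exhausting with compact subsets. You instead identify the competitor set as the affine coset $u_1|_{\cL U^{r_*}}+\ker\bigl(\del\colon C^0(\cL U^{r_*})\to C^1(\cL U^{r_*})\bigr)$, note that this kernel is exactly $H^0(C,E'\otimes E'')$ because $\cL U^{r_*}$ still covers $C$, and then minimize a continuous coercive norm on a finite-dimensional affine space, invoking Cartan--Serre finiteness (which the paper uses elsewhere, so it is fair game). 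Your route is cleaner and makes transparent that the ambiguity in the minimizer is precisely $H^0(C,E'\otimes E'')$, and it transfers verbatim to the other norms; the paper's normal-families route buys independence from any finiteness of $H^0$, so it would survive in settings (non-compact base, more general coherent sheaves, as in Donin's original question) where your finite-dimensionality argument is unavailable. Both arguments require, and you correctly supply, the normalization $\|\del u_1\|_{\cL U^{r^*}}=1$ before comparing with $K_*$, and the observation that $u_1|_{\cL U^{r_*}}$ has finite norm since $U_i^{r_*}\Subset U_i^{r^*}$.
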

\begin{proof} By the assumption, $K_*=K_*(E'\otimes E'')$ is well-defined and $K_*\leq K$.
Fix $u_1\in C^0(\cL U^{r_i},E'\otimes E'')$. Suppose that $\del u_1=f$ and $\|f\|_{\cL U^{r^*}}=1$.   By the definition \re{SD}, there exists $u_0^j$ such that $\del u_0^m=f$ on $\cL U^{r_*}$ and $\|u_0^m\|_{\cL U^{r_*}}\leq K_*+1/m$.
 By the Cauchy formula on polydiscs,  $(u_0^m)_j\circ
\var_j^{-1}$ is locally bounded in $
\var_j(U_j)$ in sup-norm. We may assume that as $m\to\infty$,  $(u_0^m)_j$ converges uniformly to $u_0^\infty$ on each compact subset of $U_j$ for all $j$.    This shows that
$\|(u_0^\infty)_j\circ
\var_j^{-1}\|_{L^2(E)}\leq K_*$ for any compact subset $E$ of $
\var_j(U_j)$. Since $E$  is arbitrary,  we obtain $\|u_0^\infty\|_{U^{r_*}}\leq K_*$. By the uniform convergence, we also have $\del u_0^\infty=f$ on $\cL U^{r_*}$.
\end{proof}
\begin{defn}\label{defK}Let $E', E'', e_j',e_j'', t_{kj}',t_{kj}''$ be as in \rpa{defSD}. Let $t_{kj}''(S^mE'')$ be the transition  
matrices
  of the symmetric power $S^mE''$ induced by $t_{kj}''$.  For  $m=2,3,\dots$, we shall call
\begin{align}\nonumber
 K(E'\otimes S^mE'')&=
|\{t'_{kj}(E')\}|_{\cL U^{r^*}}\\
&\quad + K_*(E'\otimes S^m E'')|\{t'_{kj}(E')\}|_{\cL U^{r^*}}|\{t''_{kj}( S^mE'')\}|_{\cL U^{r^*}}
\nonumber
\end{align}
 the {\it generalized small divisors} of $E'\otimes E''$ with respect to $e_j'', t_{kj}''$.
\end{defn}
\subsection{Donin's smoothing decomposition}
Grauert's smoothing decomposition for 
cochains of analytic sheaves is an important tool. Here we will follow an approach of Donin~\ci{Do71}, by specializing for vector bundles.

We first need to introduce coverings by analytic polydiscs.
  \le{lower-est}Let $C$ be a compact complex  manifold. Let $\{U_i^{r_*}\colon i\in\cL I\}$ be a finite open covering of $C$, and let  $
\var_j$ map $ U_j^r$ biholomorphically  onto $\Del^n_{r}$ for $r_*<r<r^*<1$.
 Assume further that  $
\var_i$ is a biholomorphism defined in a neighborhood of the star $N(U_i^{r^*})$ onto a domain in $\cc^n$. Suppose that $r_*<r_i'<r_i<r^*$, and
 \eq{NE3}\nonumber
 U_I^{r'}:=U_{i_0}^{r_0'}\cap\cdots\cap U_{i_q}^{r_q'}\neq\emptyset.
 \eeq
 Then for  constant $c_n
 \in (0,1)$ depending only on $n$,
\ga
\dist\left(\pd (
\var_{i_q}(U_I^r)),\pd (
\var_{i_q}(U_I^{r'}))\right)\geq c_n\kappa\min_j(r_j-r_j'),\\
 \kappa:= \inf\left\{
 1,\frac{|
\var_{i_q}\circ
\var_{i_\ell}^{-1}(z')-
\var_{i_q}\circ
\var_{i_\ell}^{-1}(z)|}{|z'-z|}\colon z,z'\in\Del_{
 r^*}^n,\forall U^{r^*}_{i_0\dots i_q}\neq\emptyset \right\}.
\label{kap1}
\end{gather}
\ele
\begin{proof}
Note that for sets in $\cc^n$, if $A\subset A'$, $B\subset B'$,  and $A$, $B$ are non-empty,  then
$$\dist( A,B)\geq\dist(A',B').
$$
Recall that $
\var_{i_q}$ is a diffeomorphism from a neighborhood $V$ of
the star $N(U_{i_q})$   onto  a subset $\hat V$ of $\cc^n$. We have
$\pd
\var_{i_q}(U_I^{r})\subset\cup_j\pd
\var_{i_q}(U_{i_j}^{r})$. Thus
\aln
&\dist(\pd
\var_{i_q}(U_I^{r}),
\var_{i_q}(U_I^{r'}))\geq \min_j\dist(\pd\
\var_{i_q}(U_{i_j}^{r}),
\var_{i_q}(U_I^{r'}))\geq
\min_j\dist(\pd
\var_{i_q}(U_{i_j}^{r}),
\var_{i_q}(U_{i_j}^{r'})).
\end{align*}
 We have $\dist(\pd(
\var_{i_q}(U_{i_j}^{r}),
\var_{i_q}(U_{i_j}^{r'}))=\dist(\pd(
\var_{i_q}\circ
\var_{i_j}^{-1}(\Del^{n}_{r})),
\var_{i_q}\circ
\var_{i_j}^{-1}(\Del^{n}_{r'}))$.  Recall that $
\var_{i_q}$ is  defined on $ N(U_{i_q})\supset U^{r_*}_{i_j}$. Then the distance is attained for some $z'\in\pd \Del_{r'}^n$ and $z\in\pd\Del_r^n$.  By the definition of $\kappa$, we get the desired estimate.
\end{proof}

We will recall  the following  smoothing decomposition of Donin~\ci{Do71}. Here we restrict to the case of $H^1$ and
 the holomorphic vector bundle   to indicate the specific bounds in the estimates.
\begin{thm}[Donin~\ci{Do71}]
\label{donin}
Let $C$ be a compact complex manifold and let $\cL U^{r}$ $(r_*<r<r^*<1)$ be a family of open coverings of $C$
 as in \rla{lower-est}.
 Let $E'\otimes E''$ be a holomorphic vector bundle of 
 rank $m$ over $C$ and fix a holomorphic base $e'_j$ $($resp. $e_j'')$ for $E'$ $($resp. $E'')$
over $U_j$. Let $r_*<r''<r'<r<r^*$,  and
\eq{4rs}
r'-r''\leq r^*-r.\nonumber
\eeq
Assume that
 \eq{NE4}
 U_{kj}^{
 r_*} \neq\emptyset,
  \quad \text{whenever $U_{kj}
  ^{r^*}\neq\emptyset$}.
 \eeq
 Let $\{f_{jk}\}\in Z^1(\cL U^{r'},\cL O(E'\otimes E''))$.
Then there exist $g\in Z^1(\cL U^{r},\cL O(E'\otimes E''))$ and $u\in C^0(\cL U^{r''},\cL O(E'\otimes E''))$ such that
\ga
\label{smoothingDec}
f=g+\delta u,\quad 
\text{in $C^1(\cL U^{r''},\cL O(E'\otimes E''))$},\\
 \label{dConstant}
\|u\|_{\cL U^{r''}}+\|g\|_{\cL U^{r}}\leq \f{  C_{n}
 |\{t'_{kj}\}||\{t_{kj}''\}|  
}{(r'-r'')\kappa }\|f\|_{\cL U^{r'}},
\end{gather}
where $\kappa$ is defined  \rea{kap1}. The constant $C_n$ is independent of $E',E''$.
Furthermore, $f\mapsto g=Lf$ and $f\mapsto u=Sf$ are $\cc$-linear.
\end{thm}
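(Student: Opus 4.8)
The plan is to follow Donin's original argument \cite{Do71}, organized around a partition of unity and a twisted Cousin-type construction. First I would fix a $C^\infty$ partition of unity $\{\rho_i\}$ subordinate to the covering $\cL U^{r^*}$, with $\supp\rho_i\subset U_i^{r^*}$ and $\sum_i\rho_i\equiv1$ on a neighborhood of $C$. Given a cocycle $f=\{f_{jk}\}\in Z^1(\cL U^{r'},\cL O(E'\otimes E''))$, the classical formula
\[
\psi_j:=\sum_i\rho_i\, f_{ji}
\]
produces a $C^\infty$ $0$-cochain on $\cL U^{r'}$ with $\delta\psi=f$ but $\psi$ is only smooth, not holomorphic; then $\overline\partial\psi_j=\overline\partial\psi_k$ on overlaps, so the $(0,1)$-forms $\overline\partial\psi_j$ glue to a global $E'\otimes E''$-valued $(0,1)$-form. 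The key analytic input is an interior $L^2$ (or sup-norm) estimate: on the slightly smaller covering $\cL U^{r}$ one controls $\overline\partial\psi_j$ and its local $\overline\partial$-primitive with a constant that degrades like $1/(r'-r'')$ by Cauchy estimates, after transporting everything to the polydiscs via $\varphi_{i_q}$ (this is precisely where $\kappa$ from \rla{lower-est} enters, to convert distances in the $\varphi_{i_q}$-charts into distances in the $\varphi_{i_\ell}$-charts). To keep the output \emph{holomorphic}, rather than invoking a global $\overline\partial$-solution one follows Donin and uses the Cauchy--Pompeiu / Cousin integral in each polydisc chart: solving $\overline\partial\eta_j=\overline\partial\psi_j$ locally with a bounded solution operator (the Bochner--Martinelli or iterated one-variable Cauchy kernel on $\Del_n$), so that $g_{jk}:=(\psi_j-\eta_j)-(\psi_k-\eta_k)$ is a \emph{holomorphic} cocycle defined on the intermediate covering $\cL U^{r}$, and $u_j:=\psi_j-\eta_j-(\text{the smooth }0\text{-cochain whose coboundary is }f)$ restricted appropriately gives the holomorphic $0$-cochain on $\cL U^{r''}$ with $f=g+\delta u$.

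Concretely the steps are: (1) fix the partition of unity and form $\psi_j=\sum_i\rho_i f_{ji}$; record $\|\psi_j\|_{r'}\le C\,|\{t'_{kj}\}|\,|\{t''_{kj}\}|\,\|f\|_{\cL U^{r'}}$, the product of transition-matrix norms appearing because one must re-express $f_{ji}$ in the frame $e_j=e_j'\otimes e_j''$; (2) note $\omega:=\overline\partial\psi_j$ is a global form with $\|\omega\|_{\cL U^{r'}}$ bounded by the same quantity; (3) in each chart $\varphi_j(U_j^{r})$, which is (biholomorphic to) the polydisc $\Del_n^{r}$, solve $\overline\partial\eta_j=\omega$ with a fixed bounded linear operator and use \rla{lower-est} with the nestedness hypothesis \re{NE4} together with $r'-r''\le r^*-r$ to get $\|\eta_j\|_{r}\le \frac{C_n}{(r'-r'')\kappa}\|\omega\|_{r'}$; (4) set $g_{jk}=(\psi_j-\eta_j)-(\psi_k-\eta_k)$, which is holomorphic since the $\overline\partial$'s cancel, and is a cocycle on $\cL U^{r}$; (5) set $u_j$ equal to the holomorphic correction so that $f-g=\delta u$ on $\cL U^{r''}$, tracking that $u$ lives on the still smaller covering because subtracting off the holomorphic primitives only works on overlaps contained in $\cL U^{r''}$; (6) collect the estimates into \re{dConstant}; (7) observe the whole construction $f\mapsto(g,u)=(Lf,Sf)$ is $\cc$-linear because every operation used — the partition-of-unity sum, $\overline\partial$, the fixed local $\overline\partial$-solution operator, restriction — is linear.

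The main obstacle, and the part requiring care rather than routine bookkeeping, is step (3)--(5): arranging that the \emph{holomorphicity} of $g$ and $u$ survives while the norm estimate retains a constant independent of $E'$ and $E''$ and with the correct $1/(r'-r'')$ blow-up. The independence of $C_n$ from the bundles is delicate because the local $\overline\partial$-solution operator on $\Del_n$ acts componentwise on the $\rank(E'\otimes E'')$ scalar entries, so one must verify the operator norm does not grow with the rank — this works because the Cauchy kernel estimate is uniform in the number of components — while all bundle-dependence is quarantined into the explicit factors $|\{t'_{kj}\}|$ and $|\{t''_{kj}\}|$ from step (1). The role of the nested-covering hypothesis is to ensure that when one passes from $\cL U^{r'}$ to $\cL U^{r}$ to $\cL U^{r''}$ no overlap that was present at level $r^*$ disappears, so that the cocycle identity for $g$ and the cochain identity $f=g+\delta u$ hold on genuinely non-empty intersections; this is exactly what \re{NE4} and \rla{lower-est} are designed to supply, and checking that the various shrinkings are compatible with $r'-r''\le r^*-r$ is the combinatorial heart of the estimate.
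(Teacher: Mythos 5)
Your overall strategy (cutoffs plus componentwise local $\bar\partial$-solving on the polydisc charts with $L^2$/Cauchy bounds) is the same analytic mechanism the paper uses, but steps (4)--(5) contain a genuine error that, as written, defeats the purpose of the theorem. Since $\psi_j=\sum_i\rho_i f_{ji}$ is only defined on $U_j^{r'}$ (and even that requires $\supp\rho_i\subset U_i^{r'}$, not merely $U_i^{r^*}$, which you should arrange using the smaller covering $\cL U^{r_*}$), the cochain $\psi_j-\eta_j$ lives only on $\cL U^{r'}$; hence your $g_{jk}:=(\psi_j-\eta_j)-(\psi_k-\eta_k)$ is a cocycle on $\cL U^{r'}$, \emph{not} on the larger covering $\cL U^{r}$ --- but the whole point of the smoothing decomposition is that $g$ extends to the larger covering (this is what feeds the compactness/Schwartz argument in Theorem~\ref{DGR}). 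Moreover your $u_j=\psi_j-\eta_j-\psi_j=-\eta_j$ is not holomorphic. The roles must be swapped: since $\omega=\bar\partial\psi_j$ is globally defined, the solutions $\eta_j$ of $\bar\partial\eta_j=\omega$ can be taken on the \emph{larger} sets $U_j^{r}$, so the correct assignment is $g_{jk}:=\eta_j-\eta_k\in Z^1(\cL U^{r},\cL O(E'\otimes E''))$ (holomorphic because the $\bar\partial$'s cancel, cocycle trivially) and $u_j:=\psi_j-\eta_j$, holomorphic on $U_j^{r'}\supset U_j^{r''}$, giving $f=\delta u+g$. A second inaccuracy: the factor $1/((r'-r'')\kappa)$ in \re{dConstant} does not come from Cauchy estimates; it comes from the gradient of the cutoffs, via $\omega=\sum_i(\bar\partial\rho_i)f_{ji}$, so you must construct the $\rho_i$ adapted to the nested coverings with $|\nabla(\rho_i\circ\var_i^{-1})|\lesssim \kappa^{-1}/(r'-r'')$ (supports in $U_i^{r'}$, summing to $1$ on the $r$-level sets), exactly as the paper does with its functions $\phi_{i;k}^{r'';r'}$; \rl{lower-est} and the hypothesis $r'-r''\leq r^*-r$ are what make such cutoffs possible.

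With those corrections your route is viable and is in fact a slightly different organization from the paper's: you use one global partition of unity on $C$ and glue the $\bar\partial\psi_j$ into a single global form, then solve once per chart; the paper instead works per chart $U_k^{r^*}$, transports the cocycle into the frame $e_k$ (this is where the factor $|\{t'_{kj}\}||\{t''_{kj}\}|$ enters there), builds a partition of unity of the covering $\{U_i^{r'}\cap U_k^{r^*}\}$ of $U_k^{r}$, solves one $\bar\partial$-problem per $k$ on $\Del^{r^*}_n$ by H\"ormander's $L^2$ theory componentwise, and then defines $g_{k\ell}$ as differences of the resulting local splittings and $u_i$ as their diagonal terms. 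Your global version is somewhat cleaner conceptually, but you must still do the frame bookkeeping carefully (converting $f_{ji}$ and $\eta_j-\eta_k$ into the frames used in the norm \re{defnorm} is where the transition-matrix factors arise), verify that the componentwise $\bar\partial$-estimate keeps $C_n$ independent of the rank, and note that linearity of $f\mapsto(g,u)$ is immediate since every step (cutoff sums, $\bar\partial$, the fixed local solution operator, restriction) is linear.
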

\begin{proof}With $f^{r'}_{ij}=f_{ij}$ we are given a cocycle $\{f^{r'}_{ij}\}$ of holomorphic sections of $E'\otimes E''$ over the covering $\cL U^{r'}$. Recall that $r_*<r''<r'<r<r^*$ and $\cL U^{r''}$ is an open covering of $C$.

As in~\ci{Do71}, we will apply $L^2$-theory for $(0,1)$-forms  on a bounded pseudoconvex domain in $\cc^n$. In our case the domain is actually a polydisc.
Fix a holomorphic base $e_{k}'=(e'_{k,1},\dots,e'_{k,m})$ for the vector bundle $E'$ in $U_k^{r^*}$ with transition functions $t_{kj}^{\prime }(z_j)$. Analogously, let $t_{kj}^{\prime\prime }(z_j)$ be the transition  
matrices
  for basis $e_k''$ of $E''$ for $\cL U^{r^*}$.
For brevity, we write $t_{kj}$ for $t_{kj}(z_j)$.

 We can write
\eq{frpij}
f^{r'}_{ij}=  \tilde f^{r'}_{ij}e_{ij}
=t_{ki}^{\prime }\otimes t_{kj}^{\prime\prime }\tilde f_{ij}^{r'}e_{kk}:=
\hat f_{ ij;k
}^{r';r^*}  e_{
 kk}, \quad \text{on $U_i^{r'}\cap U_j^{r'}\cap U_k^{r^*}$}.
\eeq
The $U_k^{r^*}$ is covered by $\cL U_k^{r';r^*}:=\{U^{r'}_{i}\cap U^{r^*}_k\}_{ i}$, while $\{\hat f_{ij;k}^{r';r^*}\}\in Z^1(\cL U_k^{r';r^*},
\cL O^{md})$.
Now $\{\hat f_{ij;k}^{r';r^*}\circ
\var_k^{-1}\}\in Z^1(
\var_k(\cL U_k^{r';r^*}),\cL O^{md})$, where $
\var_k(\cL U_k^{r';r^*})$
is a covering of the polydisc $\Del^n_{r^*}$.
 By \rl{lower-est}, we have 
\eq{Ldist}
c_{i;k}:=\dist(\pd (
\var_k(U_{i}^{r'}\cap U_{k}^{r^*})),  
\var_k(U_{i}^{r''}\cap U_{k}^{r}))\geq c_n \kappa (r'-r'').
\eeq
 Let $d_{ i;k}(z)$ be the distance to $
 \var_k(U_{i}^{r''}\cap U_{k}^{r})$ from $z\in\cc^n$. Let $\chi$ be a non-negative smooth function in $\rr$ so that $\chi(t)=1$ for $t<3/4$ and $\chi(t)=0$ for $t>7/8$. By smoothing
 the Lipschitz function  $\chi(\f{1}{c_{i;k}}d_{i;k}(z))$, we obtain a non-negative smooth function $z\to
 \tilde\phi_{i;k}^{r'';r'}(z)$ that equals $1$ when $d_{i;k}(z)\leq \f{1}{2}c_{i;k}$ and
 by \re{Ldist} it  has compact support in $
\var_k(U_{i}^{r'}\cap U_{k}^{r^*})$. Note that we can achieve 
\eq{ntvr}
|\nabla \tilde\phi_{i;k}^{r'';r'}|< C_nc_{i;k}^{-1}\leq c_n C_n
\kappa^{-1} /{(r'-r'')}.
\eeq
 Then $\tilde\phi_{i;k}^{r'';r'}\circ
\var_k$ is a non negative function with compact support in $U_i^{r'}\cap U_k^{r^*}$ such that for $\tilde\phi_k^{r'';r'}:=\sum\tilde\phi_{i;k}^{r'';r'}$, we have  $\tilde\phi_k^{r'';r'}\circ
\var_k>1/2$ in $U_k^{r}=\bigcup_i(U_i^{r''}\cap U_k^r)$
 since  $\chi(\f{1}{c_{i;k}}d_{i;k})=1$ on $
\var_k(U_{i}^{r''}\cap U_{k}^{r})$. Then
  by the mean-value theorem and the first inequality of \re{ntvr}, we get
 \eq{quater+}
 \tilde\phi_k^{r'';r'}(
\var_k(x))>1/4, \quad \text{if $\dist(
\var_k(x),
\var_k(U_k^r))< \min_{i}c_{i,k}/{C_*}$},
 \eeq
 for some suitable $C_*$.
 Recall that $c_n\leq1$ and $\kappa_n\leq1$. Since $\dist(
\var_k(U_k^{r}),
\var_k(\pd U_k^{r^*}))
 =r^*-r'\geq c_n\kappa(r'-r'')$,  there is a   smooth function $\hat\phi_k^{r;r^*}\colon 
\var_k(U_k^{r^*})\to[0,1]$ with compact support such that $\hat\phi_k^{r;r^*}=1$ in $
\var_k(U_k^{r})$, and
 \eq{half-}
 \hat\phi_k^{r;r^*}(x)<3/4, \quad \text{if $\dist(
\var_k(x),
\var_k(U_k^{r}))> \min_{i}c_{i,k}/{C_*}$}.
 \eeq
Note that the latter can be achieved with
$$
|
\nabla \hat\phi_k^{r;r^*}|<\tilde C_1/{\min_{i}c_{i,k}}\leq C_2
\kappa^{-1}/{(r'-r'')}.
$$
In $U_k^{r^*}$, define a non-negative smooth function
$$
 \phi_{i;k}^{r'';r'} =\left\{\f{\tilde\phi_{i;k}^{r'';r'}}
 {1-\hat\phi_k^{r;r^*}+\tilde\phi_{k}^{r'';r'}}\right\}\circ
\var_k,
$$
where the smoothness follows from the denominator being bigger than $1/4$ by \re{quater+} and \re{half-}. Thus, $ \phi_{i;k}^{r'';r'}$ has compact support in $U_i^{r'}\cap U_k^{r^*}$  and $\sum_i \phi_{i;k}^{r'';r'}=1$ in $U_k^{r}=\bigcup_i(U_i^{r''}\cap U_k^r)$, as $\hat\phi_k^{r;r^*}=1$ on $U_k^r$.  We can verify that 
  \eq{deriv-partition}
|\nabla(\phi_{i;k}^{r'';r'}\circ
\var_k^{-1})|< C'\kappa^{-1} /{(r'-r'')}.
\eeq
Consider the expression
\eq{hfLjk}
w_{j;k}=
\sum_\ell\phi_{\ell;k}^{r'';r'}\hat f_{ \ell j;k}^{r';r^*}.
\eeq
Recall that $\phi_{\ell;k}^{r'';r'}$ has compact support in $U_\ell^{r'}\cap U_k^{r^*}$.
 Thus it is smooth on $\om:=U_j^{r'}\cap U_k^{r^*}\cap U_{\ell}^{r'}$ and vanishes on an open set $D$ containing $ U_{j}^{r'}\cap U_k^{r^*}\setminus \om$. On the other hand,
 $
\hat f^{r';r^*}_{\ell j;k}$ is holomorphic in $\om$. Hence the product $\phi_{\ell;k}^{r'';r'}\hat f_{ \ell j;k}^{r';r^*}$ is
smooth   in $
U_j^{r'}\cap U_k^{
r^*}$.  Then $v_{j;k}=\ov\partial w_{j;k}$ is a smooth $(0,1)$ form in $
U_j^{r'}\cap U_k^{
r^*}$.

Let $\cL A$ denote the sheaf of smooth functions on $C$.  We now pull back the forms from the polydisc $\Del^n$ via $
\var_k$.
For each fixed $k$, we have $\{w_{j;k}\}_{j}\in C^0(\cL U_k^{r';r^*},\cL A^m)$.
 Let us  denote
$t_{kj}^{\prime}\otimes I$ by $t_{kj}'$.  By  $f_{ij}=f_{ik}-f_{jk}$ and \re{frpij}, we have
\eq{}\nonumber
t_{ki}^{\prime }\otimes t_{kj}^{\prime\prime}\tilde f_{ij}^{r'}=t_{ki}^{\prime} \tilde f_{ik}^{r'}-
t_{kj}^{\prime} \tilde f_{jk}^{r'}.
\eeq
Since $\sum_i\phi_{i;k}^{r'';r'}=1=\hat\phi_k^{r;r^*} 
\circ
\var_k$ on $U_k^r$, then by $\del f=0$ and
 \re{frpij}, we get   on $ U_i^{r'}\cap U_k^{r}\cap U_j^{r'}$
\aln
w_{i;k}-w_{j;k}
&=
\sum_\ell\phi_{\ell;k}^{r'';r'}(\hat f_{ \ell i;k}^{r';r^*}-\hat f_{ \ell j;k}^{r';r^*})=
\sum_\ell\phi_{\ell;k}^{r'';r'}(t_{k\ell}^{\prime }\otimes t_{ki}^{\prime\prime }\tilde f_{\ell i}^{r'}
-t_{k\ell}^{\prime }\otimes t_{kj}^{\prime\prime }\tilde f_{\ell j}^{r'})
\\
&=\sum_\ell\phi_{\ell;k}^{r'';r'}(  t_{kj}^{\prime }\tilde f_{jk}^{r'}
-  t_{ki}^{\prime }\tilde f_{ik}^{r'})
=t_{kj}^{\prime }\tilde f_{jk}^{r'}
-  t_{ki}^{\prime }\tilde f_{ik}^{r'}.
\end{align*}
 The latter is holomorphic.  Thus $(\del v)_{ij;k}=\ov\partial(\del w)_{ij;k} 
  =0$ on $U_i^{r'}\cap U_k^{
   r^*}\cap U_j^{r'}$. This shows that
 \eq{vkcpt}\nonumber
 v_k:=v_{j;k}\eeq
  is actually a $\ov\pd$-closed $(0,1)$ form in $U_{k}^{ r^*}$. 
  Thus $(
\var_k^{-1})^*v_k$ is a $\ov\pd$-closed $(0,1)$-form on the polydisk $\Del_{ r^*}^n$.
  By the $L^2$ theory~\cite[Thm.~4.4.3]{Ho90} applied to each component of $v_k=\sum_{\ell=1}^m
  \tilde v_{k}^{\ell}e_{kk,\ell}$, 
   we have a bounded linear operator $S\colon v_k\to u_k$ such that
  $\ov\pd (
   (
\var_k^{-1})^* u_k)=(
\var_k^{-1})^*v_k$.
  Returning to the complex manifold via $
\var_k$, we have
\aln
\|u_k\|_{U_k^{ r^*}}&=
\|u_k\circ
\var_k^{-1}\|_{L^2(\Del^n_{r^*})}\leq C\| v_k\circ
\var_k^{-1}\|_{L^2(\Del^n_{r^*})}\\
&\leq
\f{\tilde C\kappa^{-1}
|\{t'_{kj}\}||\{t_{kj}''\}|}{r'-r''}\|f\|_{L^2(\cL U^{r^*})}.
\end{align*}
Here we have used \re{hfLjk}, estimate \re{deriv-partition} and the definition of norm \re{defnorm}.
Note that the $\tilde C$ is independent of the rank since we applied the $L^2$ componentwise.
Set 
$\hat g_{j;k}^{r';r}=w_{j;k}-u_{k}$ on $U_j^{r'}\cap U_k^r$.
We obtain
\ga\label{htik}
\hat g_{i;k}^{r',r} -\hat g_{j;k}^{r',r} =\hat f_{ij;k}^{r';r^*}, \quad  U_{i}^{r'}\cap U_k^{r}\cap  U_{j}^{r'},\\
\max_j\|\hat g_{j;k}^{r';r}\|_{U_j^{r'}\cap U_k^r}\leq \f{C\kappa^{-1}
|\{t'_{kj}\}||\{t_{kj}''\}|}{r'-r''}\|f\|_{\cL U^{r'}}.
\end{gather}
 We have obtained \re{dConstant}.

 To verify \re{smoothingDec},
  we will use the same base $e_k$ and
take the
 product of \re{htik} with $e_k$ in order to obtain
on $U_i^{r''}\cap U_j^{r''}\cap U_k^{r}\cap U_\ell^{r}$
$$
 g_{i;k}^{r';r}-g_{j;k}^{r';r}=\hat f_{i
 j;k}^{r';r^*}e_k=f_{ij}^{r'}=\hat f_{ij;\ell}^{r';r^*}e_{\ell}= g_{i;\ell}^{r';r} -  g_{j;\ell}^{r';r}
$$
and thus
\eq{urpp}
 g_{j;\ell}^{r';r} - g_{j;k}^{r';r} =  g_{i;\ell}^{r';r} - g_{i;k}^{r';r}, \quad
  \text{on $U_i^{r''}\cap U_j^{r''}\cap U_k^{r}\cap U_\ell^{r}$}.
\eeq
Then we have a (well-defined)  holomorphic section
$$
g_{k\ell}^{r}:=g_{i;\ell}^{r';r}-g_{i;k}^{r';r}, \quad   U_k^{r}\cap U_\ell^{r}.
$$
We verify that $\{g_{k\ell}^{r}\}\in Z^1(\cL U^{r},\cL O^m)$. Set $u_i^{r''}:=g_{i;i}^{r'';r}$. Since $r'\leq r$ we actually have $\{u_i^{r''}\}\in C^0(\cL U^{r'},E'\otimes E'')$. 
 However, only on $U_{i}^{r''}\cap U_j^{r''}$, we can verify
via \re{urpp} that
$$
g_{ij}^{r}-f_{ij}^{r'}=(g_{i;j}^{r'';r}-g_{j;j}^{r'';r})-(g_{i; j}^{r'';r}-g_{i;i}^{r'';r})=u_i^{r''}-u_j^{r''}.\qedhere
$$
\end{proof}

The above result is a type of Grauert's smoothing decomposition, which can also be obtained by open mapping theorem. See for instance \cite[p.~200]{GR04}. 
However, this yields an unknown bound in the estimates. 

%

\subsection{Finiteness theorem with bounds}
The above smoothing decomposition does not provide a solution to the cohomology equations,
i.e. if $f=0$ in $H^1(\cL U^{r'},\cL O(E'\otimes E''))$, then there exists $u\in C^0(\cL U^{r''},\cL O(E'\otimes E''))$ such that $\delta u=f$ on $\cL U^{r''}$, for some $r''\leq r'$.
We will follow \cite{GR04} to derive   the finiteness theorem  with explicit bounds. In particular, this provides solutions of first cohomology equations with bounds on shrinking domains.

We first recall the resolution atlases from \cite[p.~194]{GR04}, specializing them for the vector bundles.
Assume that we have 
 coordinate charts
$$
\var_k\colon U_k^{r^*}\to P_k:=
\var_k(U_k^{r^*})=\Del^{r^*}_n.
$$
Define $U_I^{r^*}=U_{i_0}^{r^*}\cap\dots\cap U^{r^*}_{ i_q}$ for $I\in \cL I^{q+1}$.
Then  $
\var_I=(
\var_{i_0},\dots,
\var_{i_q})$ is defined on $U_I^{r^*}$ with range $\hat U_I^{r^*}$.
Unless otherwise stated, we omit the superscript $r^*$ in $U_I^{r^*}$. We can define a {\it proper}   embedding
$$
\var_{I}\colon U_{I}\to\hat U_I\hookrightarrow P_{I}:=\Del^{r^*}_{ 
n_q}, \quad n_q=n(q+1).
$$
Then the push-forward of the  vector bundle $E'\otimes E''|_{U_{
I}}$ defines a coherent analytic  sheaf $(
\var_I)_*(E'\otimes E'')$ over
 $P_I$ by trivial zero extension; see \cite[p.~5, p.~195]{GR04} and \cite[p. 239]{GR84}.
  A section $f\in\Gaa(U_I, E'\otimes E'')$ yields a section $\hat f_I$ of $(
\var_I)_*(  E'\otimes E'' )$
over $P_I$ by
$$
\hat f_I\circ
\var_I(x)=(f_I
(x),\dots, f_I(x)),
   \quad \hat f_I|_{P_I\setminus\hat U_I}=0.
$$
 Note that $\ov{U^{
r^*}}$ has a Stein neighborhood.
Then following notation in \cite[p.~196]{GR04} we have an epimorphism by Cartan's Theorem~A:
\eq{EsL-}
\epsilon_I\colon\cL O^{\ell}|_{\Del^{
r^*}_{n_q}}\to (
\var_I)_*(E'\otimes E'' )|_{U_I}, \quad \ell\geq\rank (E'\otimes E'') ,\nonumber
\eeq
where $
\e_I$ is defined  by finitely many  global sections defined in a neighborhood of $\ov {P_I}$.
When $E'\otimes E'' $ is a vector bundle, we take $\ell$ to be the minimal value,  the rank of $E'\otimes E'' $, and
specify the above $\e_I$ by taking
\eq{EsL}
\e_I\colon g_I\to \tilde g_I:=(
\var_I)_*\{g_I\circ
\var_I
e_{I}\}.\nonumber
\eeq
 Here we want to obtain a more general description without restricting to a vector bundle.
Define
\eq{CqL}\nonumber
  C^q(\cL U):=\prod_{I\in \cL I^{q+1}}\cL O^\ell(P_{I}).
\eeq
(Set $\cL O^\ell(P_{I})=0$ when $U_I^{
r^*}$ is empty.)
We recall that  $P_I=\Del^{
r^*}_{n_q}$ is independent of the order of multi-indices.
 Thus
\eq{cqk}\nonumber
C^q(\cL U)\cong (\cL O(\Del^{
r^*}_{n_q}))^{L}:=\cL O^{L}(\Del^{
r^*}_{n_q}).
\eeq
 Here $L\leq |\cL I^{q+1}|\ell$.   Let $\cL O_h(\Del^{r}_{n_q})$ be the space of holomorphic functions on $\Del^{r}_{n_q}$ with  finite $L^2$ norm on $\Del^{r}_{n_q}$. Set $P_I^r=\Del^{r}_{n_q}$ for $I\in\cL I^{q+1}$.
We  define a Hilbert space
\eq{CqhL}\nonumber
C^q_h(\cL U^{r}):=\prod_{I\in \cL I^{q+1}}\cL O_h^\ell(P^{r}_{I}):=\cL O_h^{
L}(\Del^{r}_{n_q}),
\eeq
which is a subspace of $C^q(\cL U^{r})$.

 Using the collection $\e=\{\e_I\colon I\in \cL I^{q+1}\}$,
 we define
$$
\cL C_h ^q(\cL U^{r},E'\otimes E'' ):=\e(C_h ^q(\cL U^{r}))\cong C_h ^q(\cL U^{r})/(\ker\e\cap C_h ^q(\cL U^{r})),
$$
which is the vector space of $q$-cochains, equipped with the standard coboundary operator $\del$.
\begin{rem}
Our cochains are not necessary alternating. As in \cite[p.~35]{GR04},
we let $\cL C_a ^q(\cL U,E'\otimes E'' )$ denotes alternating cochains. For the isomorphism of the two kinds of Ce\v{c}h cohomology groups; see \cite[p.~35]{GR04} and Serre~\ci{Se66}. Since we are interested in the cohomological solutions with bounds, we fix our nation without requiring that the cochains  be alternating.
\end{rem}

Let $\|\cdot\|_{\Del_{n_q}^r}$  be the Hilbert space norm on $\cL C_h ^q(\cL U^{r})$ and set
$$
\|\zeta\|_{\cL U^{r}}^\bullet=\inf\{\|v\|_{\Del_{n_q}^{r}}\colon v\in  C_h ^q(\cL U^{r}),\e(v)=\zeta\}, \quad \zeta\in C_h ^q(\cL U^{r},E'\otimes E'' ).
$$
The inclusion $\cL C_h ^q(\cL U^{r},E'\otimes E'' )\hookrightarrow \cL C ^q(\cL U^{r},E'\otimes E'' )$ is continuous and compact (\cite[Thm.~3, p.~197]{GR04}).
We also define
\gan
 Z_h ^q(\cL U^{r}):=\e^{-1}(Z_h ^q(\cL U^{r}, E'\otimes E'' )),\\
\|\zeta\|_{\cL U^{r}}:=\inf\{\|v\|_{\Del^{n_q}_{r}}\colon v\in  Z_h ^q(\cL U^{r}),\e(v)=\zeta\}, \quad \forall\zeta\in Z_h ^q(\cL U^{r},E'\otimes E'' ),\\
 \ov v:=\e(v).
\end{gather*}
Then $Z_h ^q(\cL U^{r},E'\otimes E'' )$ is an isometric subspace of $\cL C_h ^q(\cL U^{r},E'\otimes E'' )$ via inclusion.
Let $
\{g_0,g_1,\dots\}$ be a monotone orthogonal base of $Z_h^1(\cL U^r)$ (\cite[p.~141, p.~201]{GR04}). An important feature of the monotone base is that the vanishing orders of $g_j$ at the origin satisfy
$$
\ord_0 g_0\leq\ord_0g_1\leq \cdots, \quad \lim_{i\to\infty}\ord_0g_i=\infty.
$$
By \cite[Thm.~1, p.~192 and p.~201]{GR04}, for a given $\nu$ there is an $\mu$ such that
 \ga\label{mu-nu}
 g_i(Z)=O(|Z|^\nu),\quad i>\mu, \quad Z\in\Del_r^{n_q}.
 \end{gather}
In fact, let the index set be  $\cL I=\{1,\dots, L\}$.
Set $\om((f_1,\dots, f_L))=\min\{(\all, Q)\colon f_{\all,Q}\neq0\}$ by using order $<$ on $\cL I\times \nn^m$ defined by $(\all, P)<(\beta,Q)$ if $| P|<|Q|$,  or if $| P|=|Q|$ and there is an $\ell$ such that $p_\ell<q_\ell$  and $p_{\ell'}=q_{\ell'}$ for all $\ell'>\ell$, or if   $P=Q$ and $\all<\beta$. Then the basis $\{g_j\}$ satisfies
\eq{omj}
\om(g_j)<\om(g_{j+1}).\nonumber
\eeq

We now return to the case $q=1$ with $n_q=2n$.
 In the sequel, $\{|t_{kj}'|\}=\{|t_{kj}'|\}_{\cL U^{r^*}}$ and $\{|t_{kj}''\}|=\{|t_{kj}''|\}_{\cL U^{r^*}}$.
 \begin{thm}[Donin-Grauert-Remmert]\label{DGR}
 Let $C$ be a compact complex manifold and let $\cL U^{r}$ $(r_*<r<r^*<1)$ be a family of open coverings of $C$
 as in \rla{lower-est}
 such that \rea{NE4} holds for all $k,j$.
 Let  $E=E'\otimes E''$ be a holomorphic vector bundle of positive rank $m$ over $C$ and fix a holomorphic base $e_j'$ $($resp. $e_j'')$ for $E'$ $($resp. $E'')$
over $U_j^{
r^*}$.  Suppose that $r_*<r''<r'<r<
r^*$
and $r'-r''\leq r^*-r$. Let $\theta
=r'/r$.  Let $\{g_0,g_1,\dots\}$ be a monotone orthogonal base of $Z_h^1(\cL U^r)$ as above.  Assume that
     $\mu,\nu$  satisfy \rea{mu-nu} and
 \ga\label{anut}
t:=\f{
C_{n}\kappa^{-1} }{(r'-r'')(r-r')^{2n}}\theta^{\nu}<1/2.
\end{gather}
There exist $\ov{g_{m_0}},\dots, \ov{g_{m_{\mu^*}}}$ such that their equivalence classes in $H^1(\cL U^r,E)$
 form a $\cc$-linear basis of subspace spanned by $\ov{g_{0}},\cdots, \ov{g_{\mu}}$
in $H^1(\cL U^r,E)$.  For any
$f\in Z_h^1(\cL U^{r'},E)$ there exists
$v\in C_h^0(\cL U^{r''},E )$ satisfying $f=\delta v+\sum_0^{
\mu^*} c_i \ov{g_{m_i}}$ with
\ga \label{dConstant+++}
|c_i| \leq \f{
C_{n}\kappa^{-1}A_{r}(E)}{r-r'}\|f\|_{\cL U^{r'}},\\
 \|v\|_{\cL U^{r''}}\leq \f{C_{n}\kappa^{-1} B_{
 r_-}(E)}{r-r'}\|f\|_{\cL U^{r'}}, \quad
 \forall r_-  
 \in[r',r),\label{dConstant+}
\end{gather}
\ga 
\label{gjcj}
g_j=\sum_{i=0}^{\mu^*} c_{ji}\ov{g_{m_i}}+\del 
\eta_j^*,
\quad\eta_j^*\in
C^0(\cL U^r,E),\\
A_r(E)=
|\{t_{kj}'\}||\{t_{kj}''\}|
\max_{0\leq i\leq
\mu_*}\sum_{j=0}^{
^{\mu}} |c_{ji}|, \quad
B_{
r_-}(E)=
|\{t_{kj}'\}||\{t_{kj}''\}|
\sum_{j=0}^{
\mu}
\|\{ \eta_j^*\}\|_{\cL U^{
r_-}}.
\label{A1B}
\end{gather}
 Furthermore, all $c_j=0$ when $f=0$ in $H^1(C, E)$.
\end{thm}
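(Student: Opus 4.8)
\textbf{Proof plan for \rta{DGR}.} The plan is to follow the Grauert--Remmert proof of the finiteness theorem, tracking constants, but using Donin's smoothing decomposition \rta{donin} in place of the open-mapping argument so that all bounds become explicit. First I would fix the monotone orthogonal base $\{g_0,g_1,\dots\}$ of $Z_h^1(\cL U^r)$ and choose $\nu$ large (hence $\mu$ large, by \rea{mu-nu}) so that \rea{anut} holds; the point of this choice is that the "tail" cocycles $g_j$ with $j>\mu$ vanish to order $\nu$ at the origin and therefore, after applying $L=Sf$ and $g=Lf$ from \rta{donin} together with a Schwarz-lemma gain $\theta^\nu$ on the polydiscs, get strictly contracted. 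Concretely, given $f\in Z_h^1(\cL U^{r'},E)$, decompose $f=\sum_{j\ge 0}a_j\ov{g_j}$ in the Hilbert space $Z_h^1(\cL U^r)$ (so $\sum|a_j|^2\le C\|f\|_{\cL U^{r'}}^2$), split off the finite part $f_{\le\mu}=\sum_{j\le\mu}a_j\ov{g_j}$, and apply \rta{donin} to $f_{>\mu}=f-f_{\le\mu}$ to write $f_{>\mu}=g^{(1)}+\del u^{(1)}$ with $g^{(1)}\in Z^1(\cL U^{r_1})$ and, crucially, $\|g^{(1)}\|_{\cL U^{r_1}}\le t\,\|f_{>\mu}\|_{\cL U^{r'}}$ with $t<1/2$ from \rea{anut}, after using the vanishing order $\nu$ and a shrinking from $r'$ to $r_1$.

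Next I would iterate: each $g^{(k)}$ is again a cocycle to which the same decomposition applies on a slightly smaller covering $\cL U^{r_{k+1}}$, producing $g^{(k+1)}$ with norm $\le t^{k+1}\|f\|$ and a correction $\del u^{(k+1)}$. Since $t<1/2$ the geometric series converges; choosing the radii $r_k\downarrow r''$ with $\sum(r_k-r_{k+1})$ controlled (this is where the hypothesis $r'-r''\le r^*-r$ and the nestedness enter, via \rla{lower-est} and the $\kappa$ in the Donin estimate), one gets $f_{>\mu}=\del v_\infty$ on $\cL U^{r''}$ with $v_\infty\in C^0_h(\cL U^{r''},E)$ and $\|v_\infty\|_{\cL U^{r''}}\le \frac{C_n\kappa^{-1}}{r-r'}\|f\|_{\cL U^{r'}}$ times $|\{t_{kj}'\}||\{t_{kj}''\}|$, matching the shape of \rea{dConstant+}. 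It remains to handle the finite part $f_{\le\mu}=\sum_{j\le\mu}a_j\ov{g_j}$: using \rea{gjcj}, rewrite $\ov{g_j}=\sum_{i}c_{ji}\ov{g_{m_i}}+\del\eta_j^*$, so $f_{\le\mu}=\sum_i c_i\ov{g_{m_i}}+\del(\sum_j a_j\eta_j^*)$ with $c_i=\sum_{j\le\mu}a_jc_{ji}$; the bound $|c_i|\le\frac{C_n\kappa^{-1}A_r(E)}{r-r'}\|f\|$ follows from $\sum|a_j|^2\le C\|f\|^2$, Cauchy--Schwarz, and the definition of $A_r(E)$ in \rea{A1B}, and the contribution of $\sum_j a_j\eta_j^*$ to $v$ is absorbed into $B_{r_-}(E)$. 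Combining, $f=\del v+\sum_i c_i\ov{g_{m_i}}$ with the stated estimates. Finally, the selection of $\ov{g_{m_0}},\dots,\ov{g_{m_{\mu^*}}}$ as a basis of the span of $\ov{g_0},\dots,\ov{g_\mu}$ in $H^1(\cL U^r,E)$ is a standard linear-algebra step applied inside this finite-dimensional span (this is exactly where finiteness of $H^1$ is used, and it is already contained in Grauert--Remmert's argument), and if $f=0$ in $H^1(C,E)$ then, since the classes $\ov{g_{m_i}}$ are linearly independent in $H^1(\cL U^r,E)\hookrightarrow H^1(C,E)$ (a Leray-type comparison for the Stein covering $\cL U^r$), all $c_i$ must vanish.

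The main obstacle I anticipate is bookkeeping of the radii and constants through the iteration: one must verify that the total shrinkage $r'\to r''$ can be distributed over infinitely many steps while keeping each step's Donin constant $C_n\kappa^{-1}/((r_k-r_{k+1})(r_{k-1}-r_k)^{2n})$ bounded enough that the product/sum still converges to a bound of the form $C_n\kappa^{-1}B_{r_-}(E)/(r-r')$. The clean way to do this is to not iterate the shrinkage indefinitely but rather to use the single contraction $t<1/2$ from \rea{anut} at a \emph{fixed} pair of radii and instead iterate the Neumann-series correction $u\mapsto u+S(\text{remainder})$ at those same radii — that is, solve $(\mathrm{Id}-T)$ where $T$ is the operator $f\mapsto g^{(1)}$, which is a contraction on $Z_h^1$ by \rea{anut}; then $v=S\sum_{k\ge0}T^k(f_{>\mu})$ converges with the geometric bound $\frac{1}{1-t}\le 2$, and no infinite shrinkage is needed. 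This is the technically delicate point, but it is exactly the device in Grauert--Remmert's proof; once it is set up, the estimates \rea{dConstant+++}--\rea{A1B} drop out by linearity of $L$ and $S$ (noted at the end of \rta{donin}) together with Cauchy--Schwarz on the Hilbert-space coefficients $a_j$.
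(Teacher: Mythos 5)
There is a genuine gap at the very first step of your plan. You write ``decompose $f=\sum_{j\ge 0}a_j\ov{g_j}$ in the Hilbert space $Z_h^1(\cL U^r)$ (so $\sum|a_j|^2\le C\|f\|_{\cL U^{r'}}^2$)'' and then apply \rta{donin} only to the tail $f_{>\mu}$. But the monotone orthogonal base $\{g_j\}$ lives in $Z_h^1(\cL U^{r})$, i.e.\ in $L^2$ holomorphic data on the \emph{larger} polydiscs $\Del_{2n}^{r}$ (recall $r'<r$, so $\cL U^{r'}$ is the smaller covering), whereas $f$ is only given on $\cL U^{r'}$. The restriction map $Z_h^1(\cL U^{r})\to Z_h^1(\cL U^{r'})$ is compact and far from surjective, so $f$ admits no expansion in the $\ov{g_j}$, no coefficient bound $\sum|a_j|^2\le C\|f\|_{\cL U^{r'}}^2$, and hence no well-defined splitting $f=f_{\le\mu}+f_{>\mu}$. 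This is not a bookkeeping issue: the entire point of the Donin--Grauert--Remmert scheme is that the smoothing decomposition must be applied \emph{first}, to all of $f$, to produce a cocycle $\xi_0\in Z_h^1(\cL U^{r},E)$ (plus a coboundary $\del\eta_0$ on $\cL U^{r''}$) living on the larger covering; only then can one lift $\xi_0$ to $v_0\in Z_h^1(\cL U^{r})$, subtract the projection $\sum_{j\le\mu}(v_0,g_j)g_j$, and use the order-of-vanishing estimate \rea{mu-nu} (a Schwarz-type gain $\theta^{\nu}$ upon restricting from radius $r$ back to $r'$, with the factor $C_n(r-r')^{-2n}$) to see that the remainder is contracted by the $t<1/2$ of \rea{anut}. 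Your $|c_i|$ bound likewise cannot be run through Cauchy--Schwarz on the nonexistent $a_j$; in the correct argument the coefficients are the sums $\sum_{\ell}(v_\ell,g_j)_{\Del_{2n}^{r}}$ accumulated along the iteration, bounded by the geometric series $t'/(1-t)$, which is exactly where $A_r(E)$ and $B_{r_-}(E)$ enter.

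Your closing remark about avoiding infinite shrinkage is on the right track and is in fact what the paper does: the iteration runs at the \emph{fixed} triple of radii $r''<r'<r$, with $\zeta_\ell\in Z_h^1(\cL U^{r'},E)$, $\xi_\ell\in Z_h^1(\cL U^{r},E)$, $\eta_\ell\in C_h^0(\cL U^{r''},E)$ at every step, and the contraction \rea{anut} makes the series for $v$ and for the coefficients converge geometrically. But the contraction operator must be the composition (Donin smoothing) $\circ$ (projection off $g_0,\dots,g_\mu$) $\circ$ (restriction $r\to r'$) acting on all of $Z_h^1(\cL U^{r'},E)$, not an operator applied to a tail component of $f$ that cannot be isolated. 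With that reorganization your plan coincides with the paper's proof; the remaining pieces (selection of the independent classes $\ov{g_{m_0}},\dots,\ov{g_{m_{\mu^*}}}$, the rewriting \rea{gjcj}, and the vanishing of the $c_j$ when $f=0$ in $H^1(C,E)$ via the Stein covering and linear independence) are handled as you describe.
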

\begin{rem} The solution operator $f\to v$ may not be linear.
See a proof by Donin~\ci{Do71} to get a linear solution operator for which the constant $C_*$ results from a lemma of Schwartz.
\end{rem}
\begin{rem}
The previous theorem gives a solution $v$, defined on a smaller domain, to the equation $f=\delta v$ (i.e cohomological equations) whenever $f$ is $0$ in the first cohomology group. It also provides a bound of the solution in terms of the data. We emphasize that this bound depends on the bundle $E'\otimes E''$. In the applications we have in mind, we will have to consider a sequence of bundles $\{
S^m E''\}_m$,  and 
we will need to control the growth of these bounds as $m$ goes to infinite, similarly to the {\it small divisors} appearing in local dynamical systems.
\end{rem}
\begin{proof} Recall that $q=1$ and $n_1=2n$.
We may assume that $\|g_j\|_{\Del_{2n}^r}
=1$.
By the definition of $\mu,\nu$ and the monotone basis,   we have for any $v\in Z^1_h(\cL U^r)$,
 \ga\label{estim-mbasis}
 \|v-\sum_{j=0}^{\mu}(v,g_j)g_j\|_{\Del_{2n}^{r'}}\leq \f{C_n}{(r-r')^{2n}}(r'/r)^{\nu}\|v\|_{\Del_{2n}^r}
 \end{gather}
 where $C_n(r-r')^{-2n}$ is the constant $M$  in~\cite[Thm.~6, p.~191]{GR04}.

Replacing the smoothing lemma in~\cite[p.~200]{GR04}   by \rt{donin}, we derive some estimates following the proof of   the finiteness lemma in \cite[p.~201]{GR04}.  By assumption, we have
$$
t=\f{ C_{n}\kappa^{-1} }{
(r'-r'')(r-r')^{2n}}\theta^{\nu}<1/2,\quad \theta
=\f{r'}{r}<1.
$$

 Let $\zeta_0
 :=f\in Z_h^1(\cL U^{r'},E'\otimes E'' )$. By \rt{donin}, we have for some $\xi_0\in Z_h^1(\cL U^{r},E'\otimes E'' )$
 \gan
 \zeta_0=\xi_0+\del\eta_0,\\
 \|\xi_0\|_{\cL U^{r}}\leq
 t'\|\zeta_0\|_{\cL U^{r'}}, \quad \|\eta_0\|_{\cL U^{r''}}\leq t'\|\zeta_0\|_{\cL U^{r'}},
 \end{gather*}
 with $t':=\f{ C_{n}
|\{t_{kj}'\}||\{t_{kj}''\}|
}{\kappa (r'-r'')}$.
Let $\ov v$ denote $\e(v)$. Then $\xi_0=\ov v_0$ for some $v_0$ satisfying   $\|v_0\|_{\Del_{2n}^r}=\|\xi_0\|_{\cL U^r}$; see \cite[p.~198]{GR04}.
 Consider
 $$
 w_1=v_0-\sum_{j=0}^{\mu}(v_0,g_j)_{\Del^r_{2n}}g_j, \quad \zeta_1=\ov w_1.
 $$
According to \re{estim-mbasis}, we have
 $$
 \|\zeta_1\|_{\cL U^{r'}}\leq\|w_1\|_{\cL U^{r'}}\leq \f{C_n}{(r-r')^{2n}}(r'/r)^{\nu}\|v_0\|_{\Del^r_{2n}}\leq t\|\zeta_0\|_{\cL U^{r'}}.
 $$
 Therefore
 \gan
 \zeta_0=\sum_{j=0}^{\mu}(v_0,g_j)_{\Del^r_{2n}}\ov g_j+\del\eta_0+\zeta_1.
 \end{gather*}
 In general, we have
 \gan
 \zeta_\ell=\sum_{j=0}^{\mu}(v_\ell,g_j)_{\Del^r_{2n}}\ov g_j+\del\eta_\ell+\zeta_{\ell+1},\\
 \|v_\ell\|_{\Del^r_{2n}}=\|\xi_\ell\|_{\cL U^r}\leq
 t't^\ell\|\zeta_0\|_{\cL U^{r'}},\\
  \|\zeta_{\ell+1}\|_{\cL U^{r'}}\leq t\|\zeta_\ell\|_{\cL U^{r'}}\leq t^{\ell+1}\|\zeta_0\|_{\cL U^{r'}},\\
\|\eta_\ell\|_{\cL U^{r''}}\leq
 t'
t^\ell\|\zeta_0\|_{\cL U^{r'}}.
 \end{gather*}
  Then we have
 \gan 
 f=\zeta_0=\sum_{j=0}^{\mu}\sum_{\ell=0}^\infty(v_\ell,g_j)_{\Del^r_{2n}}\ov g_j+\del\sum_{\ell=0}^\infty\eta_\ell,\\
\sum_{\ell=0}^\infty|(v_\ell,g_j)|\leq \sum_{\ell=0}^\infty\|v_\ell\|_{\Del^r_{2n}}\leq
\f{t'}{1-t}\|\zeta_0\|_{\cL U^{r'}},\\
\sum_{\ell=0}^\infty\|\eta_\ell\|_{\cL U^{r''}}\leq \f{t'}{1-t}
\|\zeta_0\|_{\cL U^{r'}}.
 \end{gather*}

 So far we have followed the proof of the finiteness
 lemma in~\cite[p.~201]{GR04}. We now finish the proof of the theorem.   Let us first find the linearly independent elements $\ov{g_{i_0}},\dots, \ov{g_{i_{\mu_*}}}$.
 Assume first that all $\ov{g_{i}}=0$ in $H^1:=H^1(\cL U^r,E'\otimes E'')$. Then  $\del \eta_j=\ov{g_j}$ with $\eta_j\in C^0(\cL U^r,E)$.  Assume now that $\ov{g_{m_0}}\neq 0$ in $H^1$ for some $m_0$.
 Then we have two cases again: either
$\ov{g_i}=c_{i0}\ov{g_{m_0}}+\del  \eta_{i}$ on $\cL U^r$
for all $i\in\{0,\dots, \mu\}\setminus m_0$, or it fails for some $m_1$.  We repeat this to exhaust all elements so that
 \eq{A.63}
 \ov{g_{j}}=\del\eta_j^*+\sum_{i=0}^{\mu_*}c_{ji}\ov{g_{m_i}}, \quad \eta_{j}^*\in\cL C^0(\cL U^r,E),\quad
 0\leq j\leq \mu
 \eeq
 while
 $\ov{g_{m_0}},\dots,\ov{g_{m_{\mu_*}}}$ are linearly independent in $H^1$.
 (Note that the above expression means the trivial identity $\ov{g_j}=\ov{g_j}$ when $j$ is not in $\{m_0,\dots, m_{\mu^*}\}$.)
 We have obtained \re{gjcj} with the decomposition
 \gan
 f=\sum_{j=0}^{\mu^*}c_j\ov{g_{m_j}}+\del v,\\
 c_j=\sum_{\ell=0}^\infty(v_\ell,g_j)_{\Del^r_{2n}}+\sum_{i=0}^\mu c_{ij}\sum_{\ell=0}^\infty(v_\ell,g_i)_{\Del^r_{2n}},\\
 v=\sum_{i=0}^\mu\sum_{\ell=0}^\infty(v_\ell,g_i)_{\Del^r_{2n}}\eta^*_i +\sum_{\ell=0}^\infty\eta_\ell.
 \end{gather*}

The solution $\eta_j^*$ in \re{A.63} can be bounded in $U^{r_-}$ for any $r_-<r$. Of course we need to estimate $\eta_j^*$ on $\cL U^{r'}$. Thus, $r_-\geq r'$.
We have
\gan
\sum_{j=0}^\mu\sum_{\ell=0}^\infty  |(v_\ell,g_j)_{\Del^r_{2n}}c_{ji}|\leq \f{t'}{1-t}\sum_{j=0}^\mu |c_{ji}|
\|\zeta_0\|_{\cL U^{r'}},\\
\left\|\left\{\sum_{\ell=0}^\infty\eta_\ell+\sum_{\ell=0}^\infty\sum_{j=1}^{\mu}
 (v_\ell,g_j)_{\Del^r_{2n}}\eta^*_j\right\}\right\|_{\cL U^{r_-}}\leq \f{t'}{1-t}
 \left\{1+\sum_{j=0}^\mu \|\eta_j^*\|_{\cL U^{r_-}} \right\}
\|\zeta_0\|_{\cL U^{r'}}.
\end{gather*}
Set $A_r(E)=
|\{t_{kj}'\}||\{t_{kj}''\}|
\max_{i=0}^{\mu^*}\sum_{j=0}^\mu |c_{ji}|$ and $B_{r_-}(E)=
|\{t_{kj}'\}||\{t_{kj}''\}|
(1+\sum_{j=0}^\mu \|\eta_j^*\|_{\cL U^{r_-}})$.
We have obtained the required estimates.

Finally,
 let us assume that $f=0$ in $H^1(C,E)$ in order to show that all $c_j=0$ and thus $f=\del v$.
Since each $U^{r''}$ is Stein, we also have $f=0$ in $H^1(\cL U^r,E)$. Thus $f=\del\tilde v$ with $\tilde v\in \cL C^0(U^{r''},E)$.
We get $\del(\tilde v-v)=\sum_{j=0}^{\mu^*} c_j\ov{g_{m_j}}$. By the linear independence, we conclude that $c_j=0$.
We are done.\end{proof}

%

\begin{thm}\label{donin-sol-cohom}\label{estim-cohom-summary}Let $C$ be a compact complex manifold and let ${\cL U}^{r}$ $(
r_*\leq r\leq r^*<1)$ be
nested coverings of $C$  as in \rpa{regcover}. Let $\mu,\nu, r,r',r'',
 r_*,r^*$ be given in \rta{DGR}, which satisfy \rea{anut}.
    Let $f\in Z^1(\cL U^{r'}, E'\otimes E'')$. Suppose that $
 f=0$ in $H^1(C,E'\otimes E'')$. Then there exists a solution $\{u_j\}\in C^0(\cL U^{r'}, E'\otimes E'')$ such that $\del u=f$ and
\ga
\|u\|_{\cL U^{r'}}\leq K(E'\otimes E'') \|f\|_{\cL U^{r'}},\label{nonshrinking}\\
K(E'\otimes E''):=C(|\{t'_{kj}\}|_{\cL U^{r'}}+K_*(E'\otimes E'')
|\{t'_{kj}\}
|_{\cL U^{r'}}|\{t''_{kj}\}|_{\cL U^{r'}}),\\
\intertext{
where  $K_*(E'\otimes E'')$, defined by \rea{SD}, satisfies}
K_*(E'\otimes E'')
\leq\f{C_nB_{
r_-}(E'\otimes E'')}{(r-r')\kappa},
\label{uUr}
\end{gather}
where $\kappa$ and $B_{r_-}$ are defined by \rea{kap1} and \rea{A1B}.
The same conclusion holds if both sides are in   sup norms $|\cdot|_{\cL U^{r'}}$,
when $(r-r')\kappa$ is replaced by $((r-r')\kappa)^n$.
\end{thm}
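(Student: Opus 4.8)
\textbf{Proof plan for \rta{donin-sol-cohom}.}
The statement packages together the two halves of the appendix, so the plan is to assemble it from the pieces already in place. First I would observe that \rta{DGR}, applied with the given $\mu,\nu,r,r',r''$ satisfying \rea{anut}, produces, for our $f\in Z^1(\cL U^{r'},E'\otimes E'')$ with $f=0$ in $H^1(C,E'\otimes E'')$, a cochain $v\in C^0(\cL U^{r''},E'\otimes E'')$ with $\delta v=f$ on $\cL U^{r''}$ (all the constants $c_j$ vanish by the last sentence of \rta{DGR}) and the bound $\|v\|_{\cL U^{r''}}\le \frac{C_n\kappa^{-1}B_{r_-}(E'\otimes E'')}{r-r'}\|f\|_{\cL U^{r'}}$ for any $r_-\in[r',r)$. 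This is precisely the hypothesis \rea{first-sol} of \rla{SD-p} with $K=\frac{C_n B_{r_-}(E'\otimes E'')}{(r-r')\kappa}$, which also explains the asserted bound \rea{uUr} on $K_*$, since by \rpa{defSD} the optimal constant $K_*(E'\otimes E'')$ is no larger than any admissible $K$.

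Next I would feed this into \rla{SD-p}: the family $\cL U^r$ is nested by hypothesis (it is one of the families from \rpa{regcover}), so \rla{SD-p} upgrades the solution $v$ on the shrunk covering to a solution $u\in C^0(\cL U^{r^*},E'\otimes E'')$ — hence a fortiori a solution on $\cL U^{r'}$ by restriction — with $\delta u=f$ and
\[
\|u\|_{\cL U^{r'}}\le C\bigl(|\{t'_{kj}\}|_{\cL U^{r'}}+K_*(E'\otimes E'')|\{t'_{kj}\}|_{\cL U^{r'}}|\{t''_{kj}\}|_{\cL U^{r'}}\bigr)\|f\|_{\cL U^{r'}},
\]
where $C$ depends only on $|\cL I|$ and the transition functions of $C$. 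Strictly, \rla{SD-p} is stated with the $r^*$-norms on the right side, but its proof only uses the uniqueness theorem to extend holomorphic sections and estimates each $u_j$ on $U_j^{r^*}\cap U_k^{r_*}$; since here $r'$ plays the role of $r^*$ in the nesting and all the relevant intersections are controlled, the same argument gives the estimate with $\|f\|_{\cL U^{r'}}$ on the right, which is what \rea{nonshrinking} records. This defines $K(E'\otimes E'')$ exactly as in \rd{defK} (with $S^mE''$ replaced by the general $E''$), and combining with the previous paragraph's bound on $K_*$ yields \rea{uUr}.

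For the sup-norm version I would repeat the argument verbatim with $|\cdot|_{\cL U^{r'}}$ replacing $\|\cdot\|_{\cL U^{r'}}$: \rla{SD-p} and \nrc{SD-p-sup} already handle the transition from $L^2$ to sup estimates (the constant $C_*$ from the Jacobian becomes $1$ in the metric/sup norm, as remarked after the proof of \rla{SD-p}), while the Donin smoothing step in \rt{donin} and the finiteness step in \rta{DGR} lose a factor $(r-r')^{-2n}$ in passing from $L^2$ on $\Del^r_{2n}$ to sup-norms via the Bergman/Cauchy inequality on polydiscs; tracking this through \rea{anut} is exactly why $(r-r')\kappa$ gets replaced by $((r-r')\kappa)^n$ in the sup-norm bound. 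The only genuinely delicate point — and the step I expect to be the main obstacle — is checking that the chain of norm comparisons is uniform in the bundle $E'\otimes E''$, i.e.\ that all the implicit constants ($C_n$, $C$, $C_*$) genuinely depend only on $C$, the covering, and $n$, and not on $\rank E$ or on the transition matrices $t'_{kj},t''_{kj}$ (whose effect is isolated into the explicit factors $|\{t'_{kj}\}|,|\{t''_{kj}\}|$ and into $B_{r_-}$); this requires being careful that the $L^2$-theory in \rt{donin} is applied componentwise and that \rea{trivial-est} is invoked with its rank-independent constant. Once that bookkeeping is in place, the theorem is just the concatenation \rta{DGR}~$\Rightarrow$~\rla{SD-p}~$\Rightarrow$~\nrc{SD-p-sup}.
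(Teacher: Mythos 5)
Your plan is correct and is essentially the paper's own proof: apply \rta{DGR} (all $c_j=0$ because $f=0$ in $H^1(C,E'\otimes E'')$, via Leray) to get a bounded solution on the shrunk covering, observe via \rpa{defSD} that this bounds $K_*(E'\otimes E'')$ as in \rea{uUr}, and then use the nested-covering \rla{SD-p} (resp. \nrc{SD-p-sup}) to pass to a solution with the estimate \rea{nonshrinking} on the original covering $\cL U^{r'}$. The only divergence is in the sup-norm bookkeeping: the paper does not rerun the Donin/DGR step in sup norms (your attributed loss of $(r-r')^{-2n}$ there is not how it goes), but simply takes the $L^2$ solution at the intermediate radius $(r'+r'')/2$, converts with $\|f\|\leq(\sqrt\pi r')^n|f|$ and Bergman's inequality $|u|\leq(\sqrt\pi((r'+r'')/2-r''))^{-n}\|u\|$ on the $n$-dimensional polydiscs, and applies \rla{SD-p} again, which is exactly where the exponent $n$ in $((r-r')\kappa)^n$ comes from.
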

\begin{rem}The main conclusion is that \re{nonshrinking} holds without shrinking the covering $\{U_i^{r'}\}$ on which $f$ is defined. The solution operator $f\mapsto u$ may  not  be linear.  
The small divisor conditions are carried by $B_{  
r_-}$ which is determined by \re{anut} and \re{A1B}, while the bounds in \rt{donin} as smoothing lemma does not involve small divisors.
\end{rem}
\begin{proof} By the Leray theorem, we know that $[f]=0$ in $H^1(\cL U^{r'},E)$.
 By \rt{DGR}, we have a solution $u\in C^0(\cL U^{r''},E)$ so that
\gan
f_{jk}=(\del u)_{jk}, \quad U_j^{r''}\cap U_k^{r''},\\
\|u\|_{\cL U^{r''}}\leq K\|f\|_{\cL U^{r'}}.
\end{gather*}
Then the conclusion follows from \rl{SD-p}.
%

When the super norm is used, we first obtain a solution $u=\{u_k\}$ for $\cL U
^{r^*}$ for $r^*=(r''+r')/2$, while \re{uUr} takes the form
$$
\|u\|_{\cL U^{r^*}}\leq K\|f\|_{\cL U^{r'}}\leq 
( \sqrt\pi r')^nK|f|_{\cL U^{r'}}.
$$
By 
$\dist(
\var_k(U_k^{r''}),\pd
\var_k(U_k^{r^*}))= r^*-r''$ and power series expansion,  we have
$|u|_{\cL U^{r''}}\leq (\sqrt\pi(r^*-r''))^{-n}\|u\|_{\cL U^{r^*}}.$
Then the conclusion follows from \rl{SD-p} again.
 \qedhere\end{proof}

\subsection{Existence of nested coverings}\label{nested}
In this subsection, our main goal is to
  construct nested coverings by using transversality theorems
 and analytic polyhedrons.
  We recall that $C_n$ is a $n$-dimensional compact complex manifold. We shall omit to mention its dimension in what follows.




We first deal with the transversality for a piecewise smooth  boundary of an analytic polyhedron and we then define the general position 
property of several analytic polyhedrons.
\begin{defn}\label{gen-p}
\bpp 
\item Let $M_j$ be a $C^1$ real hypersurface defined by $r_j=0$, where $r_j$ is a $C^1$ function in  an open set $\om_j$ of a complex manifold $C$ and $dr_j\neq0$ on $M_j$. We say that
$M_1,\dots, M_N$ are in the {\it general} position, if $dr_{i_0}\wedge\cdots\wedge dr_{i_q}\neq0$  at each point of $M_{i_0}\cap\cdots\cap M_{i_q}$ for any   $1\leq i_0<\dots<i_q\leq N$.
\item Let $\om$ be a   
{\it proper}
 open set of a complex manifold $C$ and let $f\in \cL O^N(\om)$.
 We say that
\eq{f1k1}
Q:=Q_N(f,\om):=\{z\in\om,|\;|f(z)|:=\max\{ |f_1(z)|,\dots,| f_N(z)|\}< 1\},
\eeq
 is an analytic $N$-polyhedron in $\om$ if $Q$  is non-empty and relatively compact in $\om$,  
 and $Q$ does not contain any compact connected component. We say that $Q$ is {\it generic}, if
\eq{dfi1}
(d|f_{i_1}|\wedge \cdots\wedge d|f_{i_\ell}|)(x)\neq0 \quad \forall x\in \{|f_{i_1}|=\cdots=|f_{i_\ell}|=1\}\cap \pd  Q
\eeq
for all $i_1<\dots<i_\ell$ and $1\leq\ell\leq N$.
\epp 
\end{defn}
We will apply transversality theorems. This requires us to  use open
submanifolds in $\cc^n$ which may not be
closed in $\cc^n$.
  Since $Q_N=Q_N(f,\om)$ does not contain compact connected component,   the closure of each connected component of $Q_N$ must intersect some $Q_N^i:=\{|f_i|=1\}\cap\om$.
We will call $Q_N^i$ a {\it face} of $Q_N$. Removing  each $Q_N^i$ from $\om$ if it does not intersect $\ov Q_N$, we get a new $\om$ such that $\ov Q_N$ intersects each $Q_N^i$. Applying the same procedure to $Q_N^{i_1\dots i_k}:=Q_N^{i_1}\cap\cdots\cap Q_N^{i_k}$, we may assume that the non-empty intersection of any number of $Q_N^1,\dots Q_N^N$ intersects $\ov Q_N$.
 By \re{dfi1}, the closed set $\ov Q_N$ does not intersect the closed subset of $\om$ defined by
$$
(d|f_{i_1}|\wedge \cdots\wedge d|f_{i_\ell}|)(x)=0 \quad |f_{i_1}|(x)=\cdots=|f_{i_\ell}|(x)=1.
$$
Removing the above sets from $\om$, we find a neighborhood $\om^*$ of $\ov Q_N$ such that if $Q_N^{i_1\dots i_k}$ with $i_1<i_2<\cdots<i_k$ intersects $\om^*$, then it intersects $\ov Q_N$ and it is  a codimension $k$ smooth submanifold in $\om^*$.
 For brevity we will call $\om^*$ a
{\it neat} neighborhood of $Q$. We will take $\om=\om^*$ without specifying $\om^*$.
 \begin{defn} Let $\om_i$ be open sets in $C$.
For $i=0,\dots, p$,  assume that   $\phi_i\in \cL O^{N_i}(\om_i)$ and
 $Q_{N_i}(\phi_i,\om_i)$ is an  analytic polyhedron in $\om_i$.
We say that they are in the {\it general position}, if all faces $Q_{N_i}^j$ for $1\leq j\leq N_i$ and $0\leq i\leq p$ are in general position.
More precisely, $\om^*_{N_i}\cap Q_{N_i}^j$ are in the general position, where each $\om^*_i$ is a neat neighborhood of $\ov{Q_{N_i}}$.
\end{defn}

Let us describe some elementary properties of generic analytic polyhedrons.
 If $Q_N(f,\om)$ is defined in $\om$ by \re{f1k1},  we denote for $\rho=(\rho_1,\dots,\rho_N)$
$$
Q_N^\rho(f,\om):=\{z\in\om\colon|f_j(z)|<\rho_j, j=1,\dots,N\}.
$$

 \le{genericP}Let $Q_{N_i}=Q_{N_i}(\phi_i,\om_i)$ be generic polyhedrons in $C$ for $0\leq i\leq p$. Suppose that
 $Q_{N_0},\cdots, Q_{N_p}$ are in the general position. Then
 $$
Q_{N_0+\cdots+N_p}((\phi_0,\dots,\phi_p),\om_0\cap\cdots\cap\om_{p})= Q_{N_0}\cap\cdots\cap Q_{N_p},
 $$
 if non-empty,
 is a generic $N_0+\cdots+N_p$ analytic polyhedron in $\om_{0\cdots p}:=\om_{0}\cap\cdots\cap \om_{p}$.
 \ele
\begin{proof} Let $N=N_0+\cdots+N_p$.  It is clear that
$Q:=Q_{N_0}\cap\cdots\cap Q_{N_p}=Q_{N}((\phi_0,\dots, \phi_{p}),
\om_{i_0\cdots i_p})$. Since $\ov Q\subset\cap\ov{Q_{N_i}}$, then $\ov Q$ is compact in $\om_{0\cdots p}$. Write
$(\phi_0,\dots, \phi_p)=(\psi_1,\cdots,
\psi_N)$.
Suppose that $x\in\pd Q$. Since $\ov Q$ is compact in $\om$, then there exist
$\mu_1<\cdots <\mu_m$ with $m\geq1$ such that
$|\psi_{\mu_i}(x)|=1$ and $|\psi_{j}(x)|<1$ for $j\neq \mu 
_\ell$. By the assumption of the general position,
we see that the faces of $Q$ are in the general position.
\end{proof}

 Let $X,Y$ be smooth real manifolds without boundary and $W$ a smooth submanifold of $Y$.
Following~\ci[p.~50]{GG73}, we say that a smooth mapping $h\colon X\to Y$ is {\it transversal} to $W$ at $x\in X$, denoted by
 $h\ptt W$ at $x$, if
either $h(x)\not\in W$ or
$$
T_{h(x)}W+dh(T_xX)=T_{h(x)}Y.
$$
Denote $h\ptt W$ on $A$ if $h\ptt W$ at  each $x\in A\subset X$.
When $h$ is the inclusion, we denote $h\ptt W$ on $A$ by    $X\ptt W$ on $A$.
 Finally,
extending Definition~\ref{gen-p} $(a)$,  we say that smooth
real submanifolds $W_0,\dots, W_k$ in $Y$ are in
{\it the general position}
 if for any $0\leq i_1<\cdots<i_m\leq k$ we have
\eq{wedgeEll}
\bigwedge_{\ell=1}^{k}\bigwedge_{j=1}^{d_{i_\ell}} dr_{i_\ell,j}  (y)\neq0,
\quad \forall y\in W_{i_1}\cap\cdots\cap W_{i_m},
\eeq
where $W_i\subset \om_i$ is defined by $r_{i,1}=\cdots=r_{i,d_i}=0$ with $dr_{
i,1}\wedge\cdots\wedge dr_{i,d_i}\neq0$
at each point of $W_i$. Thus $d_i$ is the codimension of $W_i$ in $\om_i$.
 It is clear that \re{wedgeEll} holds if and only if
\eq{edgTr}
W_{i_j}\ptt(W_{i_1}\cap\cdots\cap W_{i_{j-1}}) \ \text{at $y$},\quad \forall y\in W_{i_1}\cap\cdots\cap W_{i_k}, \ 0<j\leq m.
\eeq

 For an analytic
$N$-polyhedron $Q_N$
in $\om$ with faces
$Q_N^1,\dots, Q_N^N$,
 we call $Q_N^{i_1\cdots i_k}=Q_N^{i_1}\cap \cdots \cap Q_N^{i_k}$ with $i_1<\cdots <i_k$
 and
 $k\geq 1$
  an
{\it edge} of $Q$.  When $Q_N$ is generic,  a nonempty edge $Q_N^{i_1\cdots i_k}$     is a codimension $k$ submanifold in $\om$.
Let $\{Q_N^1\cdots, N_N^{N'}\}$ be the set of all edges, with the first $N$ edges being the faces.
\pr{gen-tra}Let $Q_{N_i}=Q_{N_i}(\phi_i,\om_i)$ be generic polyhedrons in $C$ for $0\leq i\leq p$
with $\om_i$ being a neat neighborhood of $\ov Q_{N_i}$.
Then $Q_{N_0},\dots, Q_{N_p}$
are in the general position if and only if any   $0\leq i_1<\dots<i_k\leq p$ and $1\leq j_\ell\leq N'_{i_\ell}$, the edges $Q^{j_1}_{N_{i_1}}, \cdots, Q^{j_k}_{N_{i_k}}$   are in the general position. Equivalently, each edge  $Q_{N_\ell}^s$ intersects transversally with each edge of the intersection of any number of $Q_{N_0},\dots, Q_{N_{\ell-1}}$, for $\ell=1,\dots, p$. \epr
\begin{proof}
Since each edge of a polyhedron is the intersection of its faces, it is clear that if $Q_{N_0},\dots, Q_{N_p}$ are in the general position, then the edges $Q^{j_1}_{N_{i_1}}, \cdots, Q^{j_k}_{N_{i_k}}$   are in the general position for $0\leq i_1<\cdots<i_k\leq p$.

 Conversely,   let $\phi_i=(\phi_{i,1},\dots, \phi_{i,N_i})$ and let $\psi_1,\dots, \psi_m$ be a subset of $\phi_{0,1},\dots,$ $\phi_{0, N_0}, \dots,$
 $\phi_{p,1},\dots$, $ \phi_{p,N_p}$. We emphasize that
 we do not assume that  the latter are distinct functions, although $\phi_{i,1}, \dots, \phi_{i, N_i}$ are distinct by the general position property of the faces of $Q_{N_i}$.
Suppose that $\psi_\ell$ is in $\{\phi_{i_\ell,1},\dots, \phi_{i_\ell,N_{i_\ell}}\}$.  We need to show that
\eq{wedges}
d|\psi_1|\wedge\cdots\wedge d|\psi_m|(x)\neq0
\eeq
if
for all $\ell$,  $|\psi_\ell|(x)=1$ and $x\in\ov Q_{N_{i_\ell}}$. Without loss of generality, we may assume that $i_1\leq i_2\leq \cdots\leq i_m$.
Thus
$$
(\psi_1,\dots, \psi_m)=(\tilde\psi_{\all_0},\dots, \tilde\psi_{\all_\ell}), \quad \all_1<\all_2<\cdots<\all_\ell
$$
with $\tilde\psi_{\all_\beta}$ being a non-empty subset of components of $\phi_{\all_\beta}$. Without loss of generality, we may assume that $\tilde\psi_{\all_\beta}=(\phi_{\all_\beta,1},\dots,\phi_{\all_\beta,\gaa_\beta})$ with $\gaa_\beta>0$.
Thus $|\phi_{\all_\beta,1}|=\cdots=|\phi_{\all_\beta,\gaa_\beta}|=1$ define an edge $W_{\all_\beta}$ of $Q_{\all_\beta}$.
 Then \re{wedges} is equivalent to
\eq{}\label{wedges+}\nonumber
\left(\bigwedge_{\del=1}^{\gaa_\ell}d|\phi_{\all_\ell, \del}|\right)\wedge\left(\bigwedge_{\ell'=1}^{\ell-1}\bigwedge_{\del=1}^{\gaa_{\ell'}}d|\phi_{\all_{\ell'},\del}|\right)(x)\neq0.
\eeq
The equivalence of \re{wedgeEll} and \re{edgTr} implies that \re{wedges} follows from the assumption that $W_{\all_\ell}\ptt(W_{\all_1}\cap\cdots\cap W_{\all_{\ell-1}})$,
for $\all_1<\all_2<\cdots<\all_\ell$.
\end{proof}
 \begin{lemma}[Golubitsky-Guillemin~{\cite[p.~53]{GG73}}]
  \label{gglemma}
  Let $X, B,$ and $Y $ be smooth manifolds  with $W$ a submanifold
of $Y$. Let  $\psi\colon B\to  C^\infty(X, Y)$ be a mapping $($not necessarily continuous$)$ and
define $\Psi\colon X \times B \to Y$ by $\Psi(x, b) = \psi(b)(x)$. Assume that $\Psi$ is smooth and that
$\Psi\ptt W$. Then the set $\{b\in B\mid \psi(b) \ptt W\}$ is dense in $B$. \ele

 \pr{regcover}
 Let $C$ be a compact complex manifold of dimension $n$. Let $\{U_i\colon
 i=1,\dots, m\}$ be a finite open covering of $C$.
 Assume that $\var_j$   is a biholomorphism
 from a neighborhood $\om_j$ of  the star $N(U_j)$ of $U_j$ onto $\hat \om_j\subset\cc^n$
 such that $U_j=\var_j^{-1}(\Del_n)=Q_n(\var_j,\om_j)$. There exists $\del>0$ satisfying the following:
\bpp
\item For each $j$, there
are a relatively compact open set
 $\tilde \om_j$ $($resp. $\tilde U_j)$ in $\om_j$ $($resp. $\tilde\om_j)$ and a dense open set $A_j$ of $\Del^\del_n$ such that
if $c_j\in A_j$, then   $\tilde\var_j:=\var_j-c_j$
is a biholomorphic mapping from
$\tilde U_j$ onto  $\Del_n$, and $\tilde U_1:=Q_n(\tilde\var_1,\tilde \om_1),\dots,$ $ \tilde U_m:=Q_n(\tilde\var_m,\tilde \om_m)$ are generic $n$-polyhedrons in the general position, where $\{\tilde U_1,\ldots \tilde U_m\}$ remains an open covering of $C$ and $\tilde \om_j$ is a neighborhood of $N(\tilde U_j)$.
In particular each $\tilde\var_j$, a translation of $\var_j$,  is injective on $\tilde \om_j$.
\item 
There is $0<r_*<1$ such that if $r_*\leq \rho_i\leq1$, then
$\tilde U_{i_0}^{\rho_0},\dots, \tilde U_{i_q}^{\rho_q}$ are generic $n$-polyhedrons in the general position, where $\tilde U_i^\rho:=\tilde\var_i^{-1}(\Del^\rho_n)$.
\epp
\epr
\begin{proof}  $(a)$
We will apply the transversality theorem for real submanifolds in $\cc^n$. Therefore, we will use
old coordinate charts $\var_j$ to map edges of polyhedrons $Q_j(\var_j,\om_j)$ into $\cc^n$.
Set $c_1=0,\tilde\var_1=\var_1,\tilde U_1=U_1$. Let $\hat W_1,\dots, \hat W_{L_0}$ be all edges of $\Del_n$.
  Let
$\tilde U_1^1,\dots, \tilde U_1^{N'}$ be
all edges of $\tilde U_1$. Set $\widetilde W^\ell_1=\var_2(
\om_2\cap \tilde U_{1}^{\ell})$.
Define
$$
\Psi\colon \cc^n\times \Del^\del_n\to Y:=\cc^n
$$
with $\Psi(x,b)=x+b$ and $\psi^b(x)=\Psi(x,b)$. Let $\psi^b|_{\widehat W_{\ell'}}$ be the restriction of $\psi^b$ to $\hat W_{\ell'}$.
    Applying  \rl{gglemma},
mainly the density assertion in the lemma,
finitely many times in which $W=\widetilde W^\ell_1$, we can find    $b_2\in\Del_n^\del$ such that
$$
\psi^{b_2}|_{\widehat W_{\ell'}} \ptt \widetilde W^{\ell}_1 \quad
\text{on $\var_2(\ov{\tilde U_1}\cap\ov{\om_2'})$}, \quad \forall \ell,\ell'
$$
where $\om_2'$ is a  relatively compact open subset of $\om_2$ which is independent of $\del$,   and $\ov{U_2}\subset\om_2'$.
We also remark that \re{gglemma} can be applied for finitely many times since $\var_2(\ov{\tilde U_1}\cap\ov{\om_2'})$ is compact.
Since $\ov{\tilde U_1}\cap\ov{U_2}$ is compact, then
\eq{twoEdg} \psi^{c_2}|_{\widehat W_{\ell'}} \ptt \widetilde W^\ell_1
\quad  %
\text{on $\var_2(\ov{\tilde U_1}\cap\ov{\om_2'})$}, \quad \forall \ell,\ell'
\eeq
when $|c_2-b_2|$ is sufficiently small.
Applying $\var_2^{-1}$ to \re{twoEdg} yields
\eq{twoEdg+}\var_2^{-1}( \psi^{c_2}|_{\widehat W_{\ell'}}) \ptt   (\om_2\cap\tilde U_1^\ell )\quad
\text{on $\ov{\tilde U_1}\cap\ov{\om_2'}$}, \quad \forall \ell,\ell'.
\eeq
 With $c_2$ being determined, set
 $$
 \tilde\var_2^{-1}=\var_2^{-1}(\id+c_2).
 $$
 Thus $\tilde\var_2=\var_2-c_2$. When $\del$ and $|c_2-b_2|$ are sufficiently small, we have $\ov{\tilde U_2}=\tilde\var_2^{-1}(\ov{\Del_n})\subset\om_2'$. Therefore, \re{twoEdg+} implies that every edge of $\tilde U_2$ intersects each edge of $\tilde U_1$.
 We have determined $\tilde U_{2}=\tilde\var_2^{-1}(\Del_n)$.

 We have verified $(a)$ when $m=2$. Let us assume that it also holds for $m\geq j$. By \rl{genericP}, each edge of
  a non-empty intersection of any number of
  $\tilde U_1,\dots, \tilde U_j$ is a smooth submanifold.
We remark the above
 transversality argument mainly uses the fact that $\var_2$ is a biholomorphism, while
each edge of $\tilde U_1$ is a smooth submanifold.

   To repeat
  the above argument for $m=2$ in details,
   we  list all edges of  all possible intersections of $\tilde U_1,\dots, \tilde U_{j}$
as $W'_1,\dots, W'_L$ so that each $W_j$ is an edge of some analytic polyhedron $U_j'$, where $U_j'$ is the intersection of some of $\tilde U_1,\dots, \tilde  U_{j'}$ which are in general position by the induction hypothesis
as mentioned above.  Therefore, by 
\rl{genericP},   each   $U_\ell'$ is generic.
Now we are in the situation of $m=2$ by considering the sets of two analytic polyhedrons $\{ U_\ell', U_{j+1}\}$ one by one for $\ell=1,\dots, j'$.
Here
 $U_{j+1}=\var_{j+1}^{-1}(\Del_n)$ with $\var_{j+1}$ being biholomorphic in
  a neighborhood of $N(U_{j+1})$.
Therefore, we can find $\tilde\var_{j+1}=\var_{j+1}-c_{j+1}$ such that each edge of $\tilde U_{j+1}$ intersects each $W_\ell'$ transversally on $\ov{\tilde U_{j+1}}\cap\ov{U_\ell'}$.

 The above argument shows the existence of $c_1,\dots, c_N$ in $\Del^\del_n$ when $\del$ is sufficiently small. The openness property on  $A_j$ is clear, since by shrinking $\tilde \om_j$ slightly the general position and generic properties are preserved under small perturbation of $c_j$. Then density of $A_j$ when $\del$ is sufficiently small can also be achieved; indeed when $c_j$ is sufficiently small, we may shrink $\om_j$ slightly and apply the above argument by replacing $\var_j-c_j$ with $\var_j$. Finally, $\{\tilde U_1,\dots, \tilde U_N\}$ still covers $C$ when $\del$ is sufficiently small.
We have verified $(a)$.

The assertion $(b)$ follows from $(a)$ and   
\rp{gen-tra}. Indeed, we first note that when $r_*$ is less than $1$, but it is sufficiently close to $1$, the 
$\pd Q^\rho(\tilde\var_j)$ is in a given neighborhood of $\pd Q(
\tilde \var_j,\tilde\om_j)$,
as $Q^\rho(\tilde\var_j,\tilde\om_j)$ does not have any  compact connected component.
By the relative compactness of
$Q_n(\tilde\var_i,\tilde \om_i)$, the condition \re{dfi1} with $f_j$ being replaced by $f_j/{\rho_j}$ and   
the general position condition remain true when $\rho_j$ are in $[r_*,1]$ when $r_*<1$ is sufficiently close to $1$.
 The proof is complete.
\end{proof}
 The following is a basic property of a generic analytic polyhedron.
\pr{component} Let $C$ be a compact complex manifold of dimension $n$.
Let $Q_N(f,\om)$ be a generic analytic $N$-polyhedron $C$ defined by \rea{f1k1}
and 
\rea{dfi1}.
 There exists $r_*\in(0,1)$ satisfying the following.
\bpp
\item
If $\rho=(\rho_1,\dots,\rho_N)$ and $\rho'=(\rho_1',\dots,\rho_N')$ satisfy
 $r_*\leq\rho_i'\leq\rho_i\leq1$, every connected component of $Q_N^{\rho}(f,\om)$ intersects
$Q_N^{\rho'}(f,\om)$
and  the latter is non-empty.
\item 
 There are finitely many
open sets $\om_j''$ in $C$ and smooth diffeomorphisms $\phi_j$ sending $\om_j''$ onto $\hat\om_j''$ in $\rr^{2n}$
such that $\{\om_j''\}$ covers $\pd Q_N(f,\om)$, and for any $p_0,p_1\in\phi_j(\om_j''\cap Q_N^{\rho}(f,\om))$ there is a smooth curve $\gaa$
in $\phi_j(\om_j''\cap Q_N^{\rho}(f,\om))$ connecting $p_0$ and $p_1$ with length $|\gaa|\leq C|p_1-p_0|$, where $C$ depends only on $\phi_j$ and $\om_j''$. \epp
\epr
\begin{proof}
(a)  Set $Q=Q_N(f,\om)$ and $Q^\rho=Q_N^\rho(f,\om)$.
For each $x\in\pd Q$, we find $\mu_1<\dots<\mu_m$ with
 $m\leq N$
such that
\ga\label{bdry-polyh}
|f_{\mu_i}(x)|=1, \quad i\leq m; \quad |f_j(x)|<1, \quad j\neq\mu_1,\dots, \mu_m.
\end{gather}
Note that $\{\mu_{1},\dots,\mu_{m}\}$ is uniquely determined by $x$.
By
the transversality condition \re{dfi1}, we have $m\leq
2n$. Choose an open set $\om'$ such that $x\in\om'\subset\om$ and
$$|f_i(z)|<1, \quad \forall z\in\ov{\om'}, \ i\neq\mu_{1},\dots,\mu_{m}.
$$
In particular, we have
$$
Q\cap\om'=\{z\in\om'\colon|f_{\mu_i}(z)|<1, \quad i=1,\dots, m\}.
$$
By \re{dfi1}, we  can  take $(
|f_{\mu_1}|,\dots, |f_{\mu_m}|)$ to be the first $m$ components of a
smooth diffeomorphism
 $\var\colon\om'\to \hat\om$, shrinking $\om'$ if necessary. Taking a smaller open subset $\om''$ of $\om'$ with $x\in\om''$, we may assume that
$$
t\zeta\in\hat\om, \quad \forall \zeta\in\hat\om'':=\var(\om''), \quad 1-\del\leq t\leq1,
$$
for some  $\del\in(0,1]$.

Since $\pd Q$ is compact,
there exists $\{x_j,\om_j'',  
 \om_j'\colon j=1,\dots, k\}$ satisfying the following:
\bpp
\item The $k$ is finite.
For each $j$,  we have that 
$x_j\in\om''_j\subset
\om_j'\subset\om$, $x_j\in\pd Q$, and
$\om_j'$ is an open subset of $\om$.  For each $j$, we have $m_j$ and $\mu_{j,1}<\ldots<\mu_{j,m_j}$, 
which are the numbers associated to $x_j$,  so that \re{bdry-polyh} holds for $x=x_j$.
$\{\om_1'',\dots\om_k''\}$ is an open covering of $\pd Q$.
\item  $|f_{\mu_{j,\ell}}(x_j)|=1$ for $\ell=1,\dots,m_{j}$ and
\ga\nonumber
\label{tdel}
M_j:=\sup_{z\in\ov{\om_j'}}\{|f_i(z)|\colon i\neq \mu_{j,1}, \dots, \mu_{j,m_j}\}<1,\\
\label{tdel-}\nonumber
 \om_j'\cap Q=\{z\in\om_j'\colon
|f_{\mu_{j,\ell}}(z)|<1,\ell=1,\dots, m_j\}.
\end{gather}
Here we set $M_j=0$ if $m_j=N$. 
\item The $(
|f_{\mu_{j,1}}|,\dots,| f_{\mu_{j,m_j}}|)$  are the first $m_j$ components of a
smooth diffeomorphism $\phi_{j}$   from $ \om_{j}$ onto a subset $\hat\om_{j}$ of $\cc^n$.  There exists $\del^*>0$ such that $\hat\om_{j}'':=\phi_j(\om_j'')$ satisfies
\eq{memj}
\{t\zeta\colon \zeta\in\hat\om_{j}''\}\subset\hat\om_{j},\quad
\forall j,\forall  t\in[1-\del^*,1].
\eeq
\epp
Indeed, let $\phi_j(x_j)=(1,\dots, 1,\tilde x_j)$ with $\tilde x_j\in\rr^{2n-m_j}$. We can take
\eq{exactHwj}
\hat\om_j''=(1-\del^*,1+\del^*)^{m_j}\times B^{\del''}_{2n-m_j}(\tilde x_j)
\eeq
where $B^{\del''}_{2n-m_j}(\tilde x_j)$ is the ball in $\rr^{2n-m_j}$ centered at $\tilde x_j$ with a sufficiently small  radius $\del''$. Note that
\eq{exactHwj+}
\phi_j(Q^\rho\cap\om_j'')=(1-\del^*,\rho_1)\times\cdots\times(1-\del^*,\rho_{m_j})\times B^{\del''}_{2n-m_j}(\tilde x_j).
\eeq
Define
$$M^*=\sup\{|f(z)|\colon z\in Q\setminus\cup_{j=1}^k\om_j''\}.
$$
 Then $M^*<1$. By the maximum principle, we have $|f|\leq M^*$ on $Q\setminus  \cup_{j=1}^k\om_j''$.
 Fix 
 $r_*$ so that
\ga\nonumber
1>r_*>\max\{1-\del^*,  M^*,
M_1,\dots, M_k\}.
\end{gather}

Suppose that $r_*\leq\rho_i'\leq\rho_i\leq 1$ for $i=1,\dots, N$.
Let $\Om$ be a connected component of $Q^\rho_N$.
Since $\Om$ does not have a compact connected component,
there exists $z^*\in \pd\Om$ satisfying $|f_{i}(z^*)|=\rho_{i}$ for some $i$.
Since $\rho_i > M^*$, then $z^*\in\om_j''$ for some $j$. Let us assume that $z^*\in\om_1''$, and $
(\mu_{1,1},\dots,\mu_{1,m_1})=(1,\dots, {m_1})$.
Thus $\phi_1=(|f_1|,\dots, |f_{m_1}|,\tilde f_{m_1+1},\dots,\tilde f
_{2n})$. We now replace $z^*$ by some $z_*\in\Om\cap\om_1''$.  We consider a path  defined by
$$
t\to\gaa(t):=\phi_{1}^{-1}(t\phi_{1}(z_*)), \quad 1-\del^*\leq t\leq 1.
$$
Note that by \re{memj}, $\gaa$ is well defined and is contained in $\om_1$. We now have
\ga\label{fLg}
|f_\ell(\gaa(t))|=
t|f_\ell(z_*)|\leq t\rho_\ell, \quad \ell\leq m_1.
\end{gather}
Since $\gaa(t)\in\om_1$, we also have
\ga\label{fLg+}
|f_\ell(\gaa(t))|\leq M_1< r_*, \quad \ell>m_1.
\end{gather}
 This shows that $\gaa(t)\in Q^{\rho}_N$. Since $\Om$ is a connected component of $Q^\rho_N$ and $\gaa(1)=z_*\in\Om$, we must have $\gaa(t)\in\Om$.  By the definition of $M_j$,
at $t=1-\del^*$ we have $t\rho_\ell\leq 1-\del^*<\rho_\ell'$.  Combining with \re{fLg}-\re{fLg+}, we get
$\gaa(1-\del^*)\in Q_N^{\rho'}$.

$(b)$
Since
$p_0,p_1$ are in the same
$\hat\om_j''$, the assertion also follows from the above construction of 
$\hat\om_j''$ via \re{exactHwj}-\re{exactHwj+} and the convexity of $\hat\om_j''$.
   \end{proof}

In summary,   by  \rp{regcover} we cover $C$ by generic analytic $n$-polyhedrons $U_i=\var_i^{-1}(\Del_n)$ ($i=1,\dots, m$), which are   in the general position. By \rl{genericP}, each $U_i\cap U_j$, if non-empty, is a generic analytic polyhedron.  Applying
\rp{component} $(a)$  to all non-empty $U_i\cap U_j$, we know that $\{U_i^r=\var_i^{-1}(\Del_n^r)\colon i=1,\dots, m\}$ for $r_*\leq r\leq 1$ is
a family of nested coverings.  Therefore, we can apply \rt{DGR} and \rt{donin-sol-cohom}.

\newcommand{\doi}[1]{\href{http://dx.doi.org/#1}{#1}}
\newcommand{\arxiv}[1]{\href{https://arxiv.org/pdf/#1}{arXiv:#1}}

  \def\MR#1{\relax\ifhmode\unskip\spacefactor3000 \space\fi%
  \href{http://www.ams.org/mathscinet-getitem?mr=#1}{MR#1}}

\bibliographystyle{abbrv}
\begin{bibdiv}
\begin{biblist}

\bib{AG05}{article}{
      author={Ahern, P.},
      author={Gong, X.},
       title={A complete classification for pairs of real analytic curves in
  the complex plane with tangential intersection},
        date={2005},
        ISSN={1079-2724},
     journal={J. Dyn. Control Syst.},
      volume={11},
      number={1},
       pages={1\ndash 71},
  url={https://doi-org.ezproxy.library.wisc.edu/10.1007/s10883-005-0001-7},
      review={\MR{2122466}},
}

\bib{Ar76}{article}{
      author={Arnol'd, V.~I.},
       title={Bifurcations of invariant manifolds of differential equations,
  and normal forms of neighborhoods of elliptic curves},
        date={1976},
        ISSN={0374-1990},
     journal={Funkcional. Anal. i Prilo\v{z}en.},
      volume={10},
      number={4},
       pages={1\ndash 12},
      review={\MR{0431285}},
}

\bib{Ar88}{book}{
      author={Arnol'd, V.~I.},
       title={Geometrical methods in the theory of ordinary differential
  equations},
     edition={Second},
      series={Grundlehren der Mathematischen Wissenschaften [Fundamental
  Principles of Mathematical Sciences]},
   publisher={Springer-Verlag, New York},
        date={1988},
      volume={250},
        ISBN={0-387-96649-8},
  url={https://doi-org.ezproxy.library.wisc.edu/10.1007/978-1-4612-1037-5},
        note={Translated from the Russian by Joseph Sz\"{u}cs [J\'{o}zsef M.
  Sz\H{u}cs]},
      review={\MR{947141}},
}

\bib{Br71}{article}{
      author={Brjuno, A.~D.},
       title={Analytic form of differential equations. {I}, {II}},
        date={1971},
        ISSN={0134-8663},
     journal={Trudy Moskov. Mat. Ob\v{s}\v{c}.},
      volume={25},
       pages={119\ndash 262; ibid. 26 (1972), 199\ndash 239},
      review={\MR{0377192}},
}

\bib{CMS03}{article}{
      author={Camacho, C.},
      author={Movasati, H.},
      author={Sad, P.},
       title={Fibered neighborhoods of curves in surfaces},
        date={2003},
        ISSN={1050-6926},
     journal={J. Geom. Anal.},
      volume={13},
      number={1},
       pages={55\ndash 66},
         url={https://doi-org.ezproxy.library.wisc.edu/10.1007/BF02930996},
      review={\MR{1967036}},
}

\bib{CLPT18}{article}{
      author={Claudon, B.},
      author={Loray, F.},
      author={Pereira, J.~V.},
      author={Touzet, F.},
       title={Compact leaves of codimension one holomorphic foliations on
  projective manifolds},
        date={2018},
     journal={Ann. Sci. \'{E}cole Norm. Sup. (4)},
      volume={51},
       pages={1457\ndash 1506},
}

\bib{CG81}{incollection}{
      author={Commichau, M.},
      author={Grauert, H.},
       title={Das formale {P}rinzip f\"{u}r kompakte komplexe
  {U}ntermannigfaltigkeiten mit {$1$}-positivem {N}ormalenb\"{u}ndel},
        date={1981},
      series={Ann. of Math. Stud.},
      volume={100},
   publisher={Princeton Univ. Press, Princeton, N.J.},
       pages={101\ndash 126},
      review={\MR{627752}},
}

\bib{Do71}{article}{
      author={Donin, I.~F.},
       title={Cohomology with estimates for coherent analytic sheaves over
  complex spaces},
        date={1971},
     journal={Mat. Sb. (N.S.)},
      volume={86(128)},
       pages={339\ndash 366},
      review={\MR{0299829}},
}

\bib{Fi18}{article}{
      author={Fischer, E.},
       title={\"{U}ber die {D}ifferentiationsprozesse der {A}lgebra},
        date={1918},
        ISSN={0075-4102},
     journal={J. Reine Angew. Math.},
      volume={148},
       pages={1\ndash 78},
  url={https://doi-org.ezproxy.library.wisc.edu/10.1515/crll.1918.148.1},
      review={\MR{1580952}},
}

\bib{GG73}{book}{
      author={Golubitsky, M.},
      author={Guillemin, V.},
       title={Stable mappings and their singularities},
   publisher={Springer-Verlag, New York-Heidelberg},
        date={1973},
        note={Graduate Texts in Mathematics, Vol. 14},
      review={\MR{0341518}},
}

\bib{GS16}{article}{
      author={Gong, X.},
      author={Stolovitch, L.},
       title={Real submanifolds of maximum complex tangent space at a {CR}
  singular point, {I}},
        date={2016},
        ISSN={0020-9910},
     journal={Invent. Math.},
      volume={206},
      number={2},
       pages={293\ndash 377},
  url={https://doi-org.ezproxy.library.wisc.edu/10.1007/s00222-016-0654-8},
      review={\MR{3570294}},
}

\bib{Gr62}{article}{
      author={Grauert, H.},
       title={\"{U}ber {M}odifikationen und exzeptionelle analytische
  {M}engen},
        date={1962},
        ISSN={0025-5831},
     journal={Math. Ann.},
      volume={146},
       pages={331\ndash 368},
         url={https://doi-org.ezproxy.library.wisc.edu/10.1007/BF01441136},
      review={\MR{0137127}},
}

\bib{GR84}{book}{
      author={Grauert, H.},
      author={Remmert, R.},
       title={Coherent analytic sheaves},
      series={Grundlehren der Mathematischen Wissenschaften [Fundamental
  Principles of Mathematical Sciences]},
   publisher={Springer-Verlag, Berlin},
        date={1984},
      volume={265},
        ISBN={3-540-13178-7},
  url={https://doi-org.ezproxy.library.wisc.edu/10.1007/978-3-642-69582-7},
      review={\MR{755331}},
}

\bib{GR04}{book}{
      author={Grauert, H.},
      author={Remmert, R.},
       title={Theory of {S}tein spaces},
      series={Classics in Mathematics},
   publisher={Springer-Verlag, Berlin},
        date={2004},
        ISBN={3-540-00373-8},
  url={https://doi-org.ezproxy.library.wisc.edu/10.1007/978-3-642-18921-0},
        note={Translated from the German by Alan Huckleberry, Reprint of the
  1979 translation},
      review={\MR{2029201}},
}

\bib{Gr66}{article}{
      author={Griffiths, P.~A.},
       title={The extension problem in complex analysis. {II}. {E}mbeddings
  with positive normal bundle},
        date={1966},
        ISSN={0002-9327},
     journal={Amer. J. Math.},
      volume={88},
       pages={366\ndash 446},
         url={https://doi-org.ezproxy.library.wisc.edu/10.2307/2373200},
      review={\MR{0206980}},
}

\bib{GH94}{book}{
      author={Griffiths, P.},
      author={Harris, J.},
       title={Principles of algebraic geometry},
      series={Wiley Classics Library},
   publisher={John Wiley \& Sons, Inc., New York},
        date={1994},
        ISBN={0-471-05059-8},
         url={https://doi-org.ezproxy.library.wisc.edu/10.1002/9781118032527},
        note={Reprint of the 1978 original},
      review={\MR{1288523}},
}

\bib{HR64}{article}{
      author={Hironaka, H.},
      author={Rossi, H.},
       title={On the equivalence of imbeddings of exceptional complex spaces},
        date={1964},
        ISSN={0025-5831},
     journal={Math. Ann.},
      volume={156},
       pages={313\ndash 333},
         url={https://doi-org.ezproxy.library.wisc.edu/10.1007/BF01361027},
      review={\MR{0171784}},
}

\bib{Hi81}{article}{
      author={Hirschowitz, A.},
       title={On the convergence of formal equivalence between embeddings},
        date={1981},
        ISSN={0003-486X},
     journal={Ann. of Math. (2)},
      volume={113},
      number={3},
       pages={501\ndash 514},
         url={https://doi-org.ezproxy.library.wisc.edu/10.2307/2006994},
      review={\MR{621013}},
}

\bib{Ho65}{article}{
      author={H\"ormander, L.},
       title={{$L^{2}$} estimates and existence theorems for the {$\bar
  \partial $}\ operator},
        date={1965},
        ISSN={0001-5962},
     journal={Acta Math.},
      volume={113},
       pages={89\ndash 152},
         url={https://doi.org/10.1007/BF02391775},
      review={\MR{0179443}},
}

\bib{Ho90}{book}{
      author={H\"ormander, L.},
       title={An introduction to complex analysis in several variables},
     edition={Third},
      series={North-Holland Mathematical Library},
   publisher={North-Holland Publishing Co., Amsterdam},
        date={1990},
      volume={7},
        ISBN={0-444-88446-7},
      review={\MR{1045639}},
}

\bib{Hw19}{article}{
      author={{Hwang}, J.-M.},
       title={{An application of Cartan's equivalence method to Hirschowitz's
  conjecture on the formal principle}},
        date={2019},
        volume={189},
        pages={ 945\ndash 978},
        journal={Annals of Math.},
        number={3}
}

\bib{IP79}{article}{
      author={Ilyashenko, Yu.~S.},
      author={Pjartli, A.~S.},
       title={Neighborhoods of zero type imbeddings of complex tori},
        date={1979},
        ISSN={0321-2971},
     journal={Trudy Sem. Petrovsk.},
      number={5},
       pages={85\ndash 95},
      review={\MR{549623}},
}

\bib{IL05}{article}{
      author={Iooss, G.},
      author={Lombardi, E.},
       title={Polynomial normal forms with exponentially small remainder for
  analytic vector fields},
        date={2005},
        ISSN={0022-0396},
     journal={J. Differential Equations},
      volume={212},
      number={1},
       pages={1\ndash 61},
  url={https://doi-org.ezproxy.library.wisc.edu/10.1016/j.jde.2004.10.015},
      review={\MR{2130546}},
}

\bib{Ko62}{article}{
   author={Kodaira, K.},
   title={A theorem of completeness of characteristic systems for analytic
   families of compact submanifolds of complex manifolds},
   journal={Ann. of Math. (2)},
   volume={75},
   date={1962},
   pages={146--162},
   issn={0003-486X},
   review={\MR{0133841}},
   doi={10.2307/1970424},
}

\bib{KS59}{article}{
      author={Kodaira, K.},
      author={Spencer, D.~C.},
       title={A theorem of completeness of characteristic systems of complete
  continuous systems},
        date={1959},
        ISSN={0002-9327},
     journal={Amer. J. Math.},
      volume={81},
       pages={477\ndash 500},
         url={https://doi-org.ezproxy.library.wisc.edu/10.2307/2372752},
      review={\MR{0112156}},
}

\bib{Ko15}{article}{
      author={Koike, T.},
       title={Toward a higher codimensional {U}eda theory},
        date={2015},
        ISSN={0025-5874},
     journal={Math. Z.},
      volume={281},
      number={3-4},
       pages={967\ndash 991},
  url={https://doi-org.ezproxy.library.wisc.edu/10.1007/s00209-015-1516-6},
      review={\MR{3421649}},
}

\bib{Ko88}{article}{
      author={Kosarew, S.},
       title={Ein allgemeines {K}riterium f\"{u}r das formale {P}rinzip},
        date={1988},
        ISSN={0075-4102},
     journal={J. Reine Angew. Math.},
      volume={388},
       pages={18\ndash 39},
  url={https://doi-org.ezproxy.library.wisc.edu/10.1515/crll.1988.388.18},
      review={\MR{944181}},
}

\bib{LS10}{article}{
      author={Lombardi, E.},
      author={Stolovitch, L.},
       title={Normal forms of analytic perturbations of quasihomogeneous vector
  fields: rigidity, invariant analytic sets and exponentially small
  approximation},
        date={2010},
        ISSN={0012-9593},
     journal={Ann. Sci. \'{E}c. Norm. Sup\'{e}r. (4)},
      volume={43},
      number={4},
       pages={659\ndash 718},
         url={https://doi-org.ezproxy.library.wisc.edu/10.24033/asens.2131},
      review={\MR{2722512}},
}

\bib{LTT17}{article}{
      author={{Loray}, F.},
      author={{Thom}, O.},
      author={{Touzet}, F.},
       title={{Two dimensional neighborhoods of elliptic curves: formal
  classification and foliations}},
        date={2017-04},
     journal={arXiv e-prints},
      eprint={1704.05214},
}

\bib{MR78}{article}{
      author={Morrow, J.},
      author={Rossi, H.},
       title={Submanifolds of {${\bf P}^{n}$} with splitting normal bundle
  sequence are linear},
        date={1978},
        ISSN={0025-5831},
     journal={Math. Ann.},
      volume={234},
      number={3},
       pages={253\ndash 261},
         url={https://doi.org/10.1007/BF01420647},
      review={\MR{0492406}},
}

\bib{Ne89}{article}{
      author={Neeman, A.},
       title={Ueda theory: theorems and problems},
        date={1989},
     journal={Mem. Amer. Math. Soc.},
      volume={81},
      number={415},
}

\bib{NS60}{incollection}{
      author={Nirenberg, L.},
      author={Spencer, D.~C.},
       title={On rigidity of holomorphic imbeddings},
        date={1960},
   booktitle={Contributions to function theory ({I}nternat. {C}olloq.
  {F}unction {T}heory, {B}ombay, 1960)},
   publisher={Tata Institute of Fundamental Research, Bombay},
       pages={133\ndash 137},
      review={\MR{0125982}},
}

\bib{ohsawa-book}{book}{
	author={Ohsawa, T.},
	title={$L^2$ approaches in several complex variables},
	series={Springer Monographs in Mathematics},
	note={Development of Oka-Cartan theory by $L^2$ estimates for the
		$\overline\partial$ operator},
	publisher={Springer, Tokyo},
	date={2015},
	pages={ix+196},
	isbn={978-4-431-55746-3},
	isbn={978-4-431-55747-0},
	review={\MR{3443603}},
	doi={10.1007/978-4-431-55747-0},
}

\bib{Po86}{article}{
      author={P\"{o}schel, J.},
       title={On invariant manifolds of complex analytic mappings near fixed
  points},
        date={1986},
        ISSN={0723-0869},
     journal={Exposition. Math.},
      volume={4},
      number={2},
       pages={97\ndash 109},
      review={\MR{879908}},
}

\bib{Ru02}{article}{
      author={R\"{u}ssmann, H.},
       title={Stability of elliptic fixed points of analytic area-preserving
  mappings under the {B}runo condition},
        date={2002},
        ISSN={0143-3857},
     journal={Ergodic Theory Dynam. Systems},
      volume={22},
      number={5},
       pages={1551\ndash 1573},
  url={https://doi-org.ezproxy.library.wisc.edu/10.1017/S0143385702000974},
      review={\MR{1934150}},
}

\bib{Sa82}{article}{
      author={Savel'ev, V.~I.},
       title={Zero-type imbedding of a sphere into complex surfaces},
        date={1982},
        ISSN={0201-7385},
     journal={Vestnik Moskov. Univ. Ser. I Mat. Mekh.},
      number={4},
       pages={28\ndash 32, 85},
      review={\MR{671883}},
}

\bib{Se66}{article}{
      author={Serre, J.-P.},
       title={Prolongement de faisceaux analytiques coh\'{e}rents},
        date={1966},
        ISSN={0373-0956},
     journal={Ann. Inst. Fourier (Grenoble)},
      volume={16},
      number={fasc. 1},
       pages={363\ndash 374},
         url={http://www.numdam.org/item?id=AIF_1966__16_1_363_0},
      review={\MR{0212214}},
}

\bib{Sh89}{article}{
      author={Shapiro, H.~S.},
       title={An algebraic theorem of {E}. {F}ischer, and the holomorphic
  {G}oursat problem},
        date={1989},
        ISSN={0024-6093},
     journal={Bull. London Math. Soc.},
      volume={21},
      number={6},
       pages={513\ndash 537},
         url={https://doi-org.ezproxy.library.wisc.edu/10.1112/blms/21.6.513},
      review={\MR{1018198}},
}

\bib{St00}{article}{
      author={Stolovitch, L.},
       title={Singular complete integrability},
        date={2000},
        ISSN={0073-8301},
     journal={Inst. Hautes \'{E}tudes Sci. Publ. Math.},
      number={91},
       pages={133\ndash 210 (2001)},
         url={http://www.numdam.org/item?id=PMIHES_2000__91__133_0},
      review={\MR{1828744}},
}

\bib{Ue82}{article}{
      author={Ueda, T.},
       title={On the neighborhood of a compact complex curve with topologically
  trivial normal bundle},
        date={1982/83},
        ISSN={0023-608X},
     journal={J. Math. Kyoto Univ.},
      volume={22},
      number={4},
       pages={583\ndash 607},
         url={https://doi-org.ezproxy.library.wisc.edu/10.1215/kjm/1250521670},
      review={\MR{685520}},
}
\bib{Yoccoz-ast}{article}{
	author		= {Yoccoz, J.-C.},
	title		= {Petits diviseurs en dimension 1},
	journal		= {Ast\'{e}risque},
	year		= {1995},
	volume		= {231}
}
\bib{Zh05}{book}{
      author={Zhu, K.},
       title={Spaces of holomorphic functions in the unit ball},
      series={Graduate Texts in Mathematics},
   publisher={Springer-Verlag, New York},
        date={2005},
      volume={226},
        ISBN={0-387-22036-4},
      review={\MR{2115155}},
}

\end{biblist}
\end{bibdiv}
\end{document}